\tikzstyle{circ}=[shape=circle,draw,color=black,inner sep=2pt]
\tikzstyle{dot}=[shape=circle,draw,color=black,fill=black,inner sep=1.5pt]
\tikzstyle{->-}=[postaction={decorate}, decoration={markings,mark=at position .6 with {\arrow[scale=3,color=black]{to}}}]
\tikzstyle{-<-}=[postaction={decorate},decoration={markings,mark=at position .4 with {\arrowreversed[scale=3,color=black]{to};}}]
\title[Hyperbolic generalized triangle groups]{Hyperbolic generalized triangle groups,\\ 
property~(T) and finite simple quotients}
\author[P.-E. Caprace]{Pierre-Emmanuel Caprace}
\thanks{P.-E.C. is a F.R.S.-FNRS senior research associate}
\address{Universit\'e catholique de Louvain, IRMP, Chemin du Cyclotron 2, bte L7.01.02, 1348 Louvain-la-Neuve, Belgique}
\email{pe.caprace@uclouvain.be}
\author[M. Conder]{Marston Conder}
\thanks{}
\address{University of Auckland, New Zealand}
\email{m.conder@auckland.ac.nz}
\author[M. Kaluba]{Marek Kaluba}
\thanks{M.K. has been supported by the National Science Center, Poland grant 2017/26/D/ST1/00103.}
\address{Adam Mickiewicz University in Poznan, Poland}
\address{Technische Universität Berlin, Chair of Discrete Mathematics/Geometry}
\email{kalmar@amu.edu.pl}
\author[S. Witzel]{Stefan Witzel}
\thanks{S.W. is funded through the DFG Heisenberg project WI 4079-6.}
\address{Mathematisches Institut, Universität Gießen, Arndtstr. 2, 35392 Gießen, Germany}
\email{Stefan.Witzel@math.uni-giessen.de}
\date{December 22, 2020}
\newtheorem{thm}{Theorem}[section]
\newtheorem{prop}[thm]{Proposition}
\newtheorem{lem}[thm]{Lemma}
\newtheorem{cor}[thm]{Corollary}
\theoremstyle{definition}
\newtheorem{qu}[thm]{Question}
\newtheorem*{qu*}{Question}
\newtheorem{rmk}[thm]{Remark}
\newtheorem{example}[thm]{Example}
\newcommand{\Zbb}{\mathbf{Z}}
\newcommand{\Fbb}{\mathbf{F}}
\newcommand{\Nbb}{\mathbf{N}}
\newcommand{\Sym}{\mathrm{Sym}}
\newcommand{\PSL}{\mathrm{PSL}}
\newcommand{\Sp}{\mathrm{Sp}}
\newcommand{\Alt}{\mathrm{Alt}}
\newcommand{\Aut}{\mathrm{Aut}}
\newcommand{\SL}{\mathrm{SL}}
\newcommand{\GL}{\mathrm{GL}}
\DeclareMathOperator{\Ker}{Ker}
\newcommand{\grp}[1]{\langle #1 \rangle}
\newcommand{\abs}[1]{\lvert {#1} \rvert}
\newcommand \vprime {{\mkern.5mu\backprime\mkern-1.5mu\prime}}
\newcommand{\newcomment}[4]{\newcounter{#2counter}
\expandafter\newcommand\csname #1\endcsname[1]{\refstepcounter{#2counter}{\color{#4}(#3\arabic{#2counter})}\marginpar{\scriptsize\raggedright\textbf{\color{#4}(#2 \arabic{#2counter}):} ##1}}}
\definecolor{darkgreen}{rgb}{0,0.6,0}
\begin{document}

\begin{abstract}
We construct several series of explicit presentations of infinite hyperbolic groups enjoying Kazhdan's property (T). Some of them are significantly shorter than the previously known shortest examples. Moreover, we show that some of those  hyperbolic Kazhdan groups   possess finite simple quotient groups of arbitrarily large rank; they constitute the first known specimens combining those properties. 
All the hyperbolic groups we consider are non-positively curved $k$-fold generalized triangle groups, i.e. groups that possess a simplicial action on a CAT($0$) triangle complex, which is sharply transitive on the set of triangles, and such that edge-stabilizers are cyclic of order $k$. 
\end{abstract}

\maketitle

\setcounter{tocdepth}{1}
{\small 
\tableofcontents
}

\section{Introduction}

It is a long-standing open question, going back to a remark of Gromov in his seminal monograph \cite[Remark~5.3.B]{Gromov_hyp}, whether every hyperbolic group is residually finite.  This question is equivalent to determining whether every non-trivial hyperbolic group has a non-trivial finite quotient (see \cite[Theorem~1.2]{KaWi} or \cite[Theorem~2]{Olsh_2000}). Using
Olshanskii's Common Quotient Theorem, those two questions can further  be shown equivalent to the following, which illustrates that the problem of residual finiteness of hyperbolic groups is related to the asymptotic properties of the finite simple groups (see \cite[\S5.3]{Cap_StAnd} for an expository account):
\begin{qu}\label{qu:S_d}
Let $\mathscr{S}_d$ be the collection of those finite simple groups that contain an isomorphic copy of the alternating group $\Alt(d)$. Does every non-elementary hyperbolic group admit a quotient in $\mathscr{S}_d$ for all $d$?
\end{qu}

A group that admits a finite quotient belonging to $\mathscr{S}_d$ for all $d$ is said to have \textbf{finite simple quotients of arbitrarily large rank}. 

Kazhdan's property (T) is relevant when trying to answer these questions negatively, i.e.\ finding a hyperbolic group that is not residually finite or does not admit finite simple quotients of arbitrarily large rank. Indeed, the groundbreaking work of Agol, Haglund and Wise implies that all compactly cubulated hyperbolic groups are residually finite, see \cite{Agol} and references therein. On the other hand, property (T) is incompatible with cocompact cubulations for an infinite group\footnote{Another known obstruction to the existence of a cocompact cubulation is provided by the work of Delzant--Gromov \cite{DG} and Delzant--Py \cite{DP} on K\"ahler groups; in particular, cocompact lattices in $SU(n, 1)$, with $n \geq 2$, are hyperbolic groups that are neither Kazhdan, nor virtually special.}, see  \cite{NibloReeves}. Moreover, the finite-dimensional unitary representations of Kazhdan groups are subjected to various rigidity theorems, see \cite{Wang75} and \cite{Rapinchuk99}. It is thus tempting to believe that a hyperbolic group with Kazhdan's property (T) should have fewer finite simple quotients than other hyperbolic groups. 

This circle of ideas caused us to systematically investigate finite quotients and property (T) for certain small hyperbolic groups. The condition ``small'' here means ``having a short presentation'' and is imposed for practical reasons: Many of our investigations involved computer-aided experiments and calculations, and groups with short presentations are generally easier to work with. On a related note, using \textsc{Magma}, we can check the existence of finite simple quotients only up to a certain order ($5 \cdot 10^7$) and we expect that for a small group the quotients in this region may give a meaningful impression of general finite simple quotients while for a larger group they will likely be noise.

For theoretic considerations, the structure of a group presentation is of course more relevant than its length. For that reason, we have focused our study on the class of \textbf{$k$-fold generalized triangle groups}, all of which have a presentation whose structure is fairly transparent. Following Lubotzky--Manning--Wilton~\cite{LMW}, we define such a group as the fundamental group of a triangle of finite groups with trivial face group,  cyclic edge groups   of order~$k$, and finite vertex groups\footnote{They should not be confused with the generalized triangle groups occuring, for example, in \cite{HowieWilliams08}, where the terminology has a completely different meaning.}. 
A $2$-fold generalized triangle group is a Coxeter group, and no infinite Coxeter group has (T) (see \cite{BJS}). The aforementionned work~\cite{LMW} provides infinitely many examples of hyperbolic $k$-fold generalized triangle groups with (T), for $k \geq 18$. In this paper, we obtain examples with $k= 5$, as well as infinite families of examples with $k$ any prime~$\geq 7$. 

For $k=5$, our examples include the following. 

\begin{thm}\label{thm:5-fold_with_(T)}
Each of the groups 
\begin{align*}
\mathscr H_{31} = \langle a, b, c  \mid  & a^5, b^5, c^5, [a, c], [b, c, b], [b, c, c, b], [b, c, c, c], \\
&a b a^2 b a^2 b a b^{-1} a b^{-1},\\
& b^2 a b a^{-1}b a^{-1} bab^2a,\\
& (bab^{-1}aba^{-1})^2
\rangle,
\end{align*}
\begin{align*}
\mathscr H_{109} = \langle a, b, c  \mid  & a^5, b^5, c^5, [a, c], [b, c, b], [b, c, c], \\
&   a  b  a  b^{-1}  a^{-1}  b  a  b  a^{-1}  b^{-1}  a^{-1}  b  a 
b^{-1}  a^{-1}  b^{-1}, \\
& 
     b  a  b  a  b^2  a^{-1}  b  a^2  b^{-2}  a^{-1}  b  a^{-1} 
b^{-1}  a^2,\\
&     b  a^{-1}  b  a  b^{-1}  a  b^2  a^{-1}  b  a  b^{-1}  a  b  a^{-1}      b^{-1}  a^2, \\
&     b  a  b^{-1}  a  b  a^{-1}  b  a^{-2}  b^{-1}  a^{-1}  b  a^{-1}
b^{-1}  a     b^{-1}  a^2,\\
&     b  a^{-1}  b  a^{-1}  b^{-2}  a  b^{-1}  a^{-1}  b^{-1}  a^{-1}  b 
a^{-2}  b^{-2}     a^2 ,\\
&     a  b  a^{-2}  b^{-1}  a^{-1}  b^{-1}  a^{-1}  b^{-2}  a  b^{-1}  a^{-2} b^2  a      b^{-1}, \\
&       a^{-2}  b^{-1}  a^{-2}  b  a  b^{-1}  a  b^{-1}  a^2  b^{-1} a b  a^{-2}     b^2
\rangle,
\end{align*}
is an infinite hyperbolic $5$-fold generalized triangle group satisfying Kazhdan's property (T). 
\end{thm}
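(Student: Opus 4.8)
The plan is to read the three blocks of relators as the data of a triangle of finite groups and then to verify, one at a time, that $G$ (either $\mathscr H_{31}$ or $\mathscr H_{109}$) is a $5$-fold generalized triangle group, infinite, hyperbolic, and Kazhdan. The first observation is that, apart from $a^5,b^5,c^5$, every relator involves only two of the three generators: $[a,c]$ uses $a,c$; the relators $[b,c,b],[b,c,c,b],[b,c,c,c]$ (respectively $[b,c,b],[b,c,c]$ for $\mathscr H_{109}$) use $b,c$; and the long relators use $a,b$. Hence $G$ is the fundamental group of the triangle of groups with vertex groups $V_{ac}=\langle a,c\rangle$, $V_{bc}=\langle b,c\rangle$, $V_{ab}=\langle a,b\rangle$, with edge groups $\langle a\rangle,\langle b\rangle,\langle c\rangle$, where $\langle x\rangle$ is shared by the two vertex groups containing $x$, and with trivial face group. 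First I would establish that each vertex group is finite and that each edge group is cyclic of order exactly $5$: by inspection $V_{ac}\cong(\mathbf{Z}/5\mathbf{Z})^2$ has order $25$, while the orders of $V_{ab}$ and $V_{bc}$ are computed by Todd--Coxeter coset enumeration, after which one checks that $a,b,c$ still have order $5$ inside the relevant vertex groups. This identifies $G$ as a $5$-fold generalized triangle group.

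Next I would put a non-positively curved metric on the development. For each vertex group $V$ with incident edge groups $\langle x\rangle,\langle y\rangle$, its link is the bipartite coset graph on $V/\langle x\rangle\sqcup V/\langle y\rangle$ with one edge joining $g\langle x\rangle$ to $g\langle y\rangle$ for each $g\in V$; in particular the link of the abelian group $V_{ac}$ is the complete bipartite graph $K_{5,5}$, and the links of $V_{ab},V_{bc}$ are read off from the enumerations above. I would then assign Gersten--Stallings angles to the three corners of a model triangle so that (i) the girth of each link, weighted by the angle at the corresponding corner, is at least $2\pi$, and (ii) the three angles sum to at most $\pi$. Because $K_{5,5}$ has girth $4$, condition (i) forces the angle at the $ac$-corner to be at least $\pi/2$; one then verifies that the other two links have girth large enough to leave room for a valid model triangle. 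By Gromov's link condition the development is CAT($0$), so $G$ is a non-positively curved $5$-fold generalized triangle group. Since the development is CAT($0$), the triangle of groups is developable, so adjacent vertex groups generate their amalgam inside $G$; in particular $G$ contains $V_{ab}*_{\langle a\rangle}V_{ac}$, which is infinite because $\langle a\rangle$ has index $\geq 2$ on both sides. Hence $G$ is infinite.

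For hyperbolicity I would try to choose the angles in (ii) with strict inequality, realizing the model triangle as a genuine hyperbolic triangle; combined with (i) this gives a CAT($-1$) metric and hence Gromov hyperbolicity. If the available girths only allow angle sum exactly $\pi$ (a Euclidean model), I would instead invoke the Flat Plane Theorem and rule out an isometrically embedded flat plane by a finite analysis of the flat vertices, those whose link carries a closed geodesic of length exactly $2\pi$: one shows such vertices cannot propagate to tile a whole plane. This is the step I expect to be the main obstacle, precisely because the forced right angle at the $K_{5,5}$-vertex pushes the model triangle to the boundary of the admissible region, so the negative-curvature margin is thin and the no-flat-plane bookkeeping is correspondingly delicate.

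Finally, property~(T) would follow from Garland's local spectral criterion (in the form of \.{Z}uk and Ballmann--Swiatkowski): as $G$ acts geometrically on the contractible CAT($0$) $2$-complex, it suffices to check that each of the three vertex links is connected with smallest positive normalized-Laplacian eigenvalue $\lambda_1>1/2$. For $K_{5,5}$ one has $\lambda_1=1$, and for the links of $V_{ab},V_{bc}$ the eigenvalues are exact rationals extracted from the coset graphs, so verifying $\lambda_1>1/2$ in all three cases is a finite exact computation that yields~(T). Should one link fail the bound, the fallback is to certify~(T) directly in the group algebra by exhibiting a sum-of-squares decomposition of $\Delta^2-\lambda\Delta$ for some $\lambda>0$ (Ozawa's reformulation, verified by semidefinite programming as in the work of Kaluba and Nowak), which proves~(T) without reference to the links.
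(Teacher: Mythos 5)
Your reduction to a triangle of finite groups, the identification of the vertex and edge groups, and the derivation of infiniteness and hyperbolicity from the strict Gersten--Stallings angle inequality all match the paper's route: the link girth triples are $(4,8,10)$ for $\mathscr H_{31}$ and $(4,6,14)$ for $\mathscr H_{109}$, so $\tfrac12+\tfrac14+\tfrac15<1$ and $\tfrac12+\tfrac13+\tfrac17<1$ hold strictly, the development carries a CAT($-1$) metric, and no flat-plane analysis is needed. Your worry about the "thin margin" at the $K_{5,5}$ corner is therefore unfounded for hyperbolicity; the real thinness shows up elsewhere.

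The genuine gap is in the property (T) step. The unweighted local criterion you invoke (\.{Z}uk, Ballmann--Swiatkowski) requires $\lambda_1(\mathrm{Lk}(v))>1/2$ at \emph{every} vertex, and this fails here --- provably and unavoidably, not as a matter of checking. The link at the vertex stabilized by $\langle a,b\rangle\cong\PSL_2(p)$ is a $5$-regular bipartite coset graph on $2|\PSL_2(p)|/5$ vertices, so by Alon--Boppana its second adjacency eigenvalue is at least $2\sqrt{4}-o(1)$ and hence $\lambda_1\leq \tfrac15+o(1)$; the certified values are $\lambda_1\approx 0.196$ for $p=109$ and $\approx 0.229$ for $p=31$. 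For $\mathscr H_{31}$ the $\langle b,c\rangle$-link (coset graph of the Sylow $5$-subgroup of $\Sp_4(5)$) also fails, with $\lambda_1=1-\sqrt{2/5}\approx 0.368$. So your "finite exact computation" returns a negative answer and the argument stops; the SOS fallback is only a proposal to run a computation whose success is not established. What is actually needed, and what the paper uses, is the \emph{weighted} form of the local criterion due to Ershov--Jaikin-Zapirain: writing $\varepsilon_i=1-\delta_i$ for the cosines of the representation angles, property (T) follows from $\varepsilon_0^2+\varepsilon_1^2+\varepsilon_2^2+2\varepsilon_0\varepsilon_1\varepsilon_2<1$, equivalently $\sum_i\arccos\varepsilon_i>\pi$. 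Since the $C_5\times C_5$ link contributes $\varepsilon_0=0$, this reduces to $\varepsilon_1^2+\varepsilon_2^2<1$, which holds: $\tfrac15+(0.8046)^2\approx 0.85$ for $\mathscr H_{109}$ and $\tfrac25+(0.7709)^2\approx 0.99$ for $\mathscr H_{31}$ (the latter just barely, which is why the generators of $\PSL_2(31)$ must be chosen so carefully and why the eigenvalue estimates must be certified rather than computed "exactly" on a graph with hundreds of thousands of vertices). Without replacing your uniform-threshold criterion by this weighted one, the (T) claim is not proved.
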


It is easy to see from the presentation that each group from Theorem~\ref{thm:5-fold_with_(T)} is indeed a $5$-fold generalized triangle group. 
For $\mathscr H_p$, the vertex groups $\langle a, b\rangle$ and $\langle c, a\rangle$ are respectively isomorphic to $\PSL_2(p)$ and $C_5 \times C_5$ where $p$ is the subscript, while the vertex group $\langle b, c\rangle$ is isomorphic to a $5$-Sylow subgroup of $\Sp_4(5)$ in the first case and to a $5$-Sylow subgroup of $\SL_3(5)$ in the second case. The girths of the associated links are $10$, $8$ and $4$ for the first group and are $14$, $6$ and $4$ for the second group, so that the natural metric space on which the groups act geometrically is a CAT($-1$) triangle complex whose simplices are hyperbolic triangles of type with angles $\pi/5, \pi/4, \pi/2$ respectively $\pi/7, \pi/3, \pi/2$. For these groups, the assistance of computer calculations was required to provide a certified estimate of the spectral gap of the link  associated to $\grp{a,b}$. The presentations of the groups $\mathscr H_{31}$ and $\mathscr H_{109}$ have $10$ and $13$ relations respectively. They can be simplified to presentations on the same numbers of generators and relators of total relator lengths $88$ and $160$, respectively.

Let now $p$ be an odd prime. We next consider $p$-fold generalized triangles groups,  each of whose vertex groups are  isomorphic to the $p$-Sylow subgroups in any of the finite groups $\mathrm{SL}_2(\mathbf F_p) \times \mathrm{SL}_2(\mathbf F_p)$, $\mathrm{SL}_3(\mathbf F_p)$ or $\mathrm{Sp}_4(\mathbf F_p)$. Adopting a terminology suggested by Ershov--Jaikin-Zapirain~\cite{EJ}, we call them \textbf{Kac--Moody--Steinberg groups}, or \textbf{KMS groups} for short. We consider~$10$ infinite families of such groups, each indexed by the prime $p$. Three of those  families appear in the following.

\begin{thm}\label{thm:KMS:intro}
For each odd prime $p$, the groups
\begin{align*}
\mathscr G_{HC_2^{(1)}}(p) = \langle a, b, c  \mid  & a^p, b^p, c^p, [a, b, a], [a, b, b], \\
& [b, c, b], [b, c, c],  [a, c, a], [a, c, c, a], [a, c, c, c]\rangle,
\end{align*}
\begin{align*}
\mathscr G_{HB_2^{(2)}}(p) = \langle a, b, c    \mid & a^p, b^p, c^p, [a, b, a], [a, b, b], \\
 & [c, b, c], [c, b, b, c], [c, b, b, b ], [c, a, c], [c, a, a, c], [c, a, a, a]\rangle,
\end{align*}
and
\begin{align*}
\widetilde{\mathscr G}_{HBC_2^{(3)}}(p) = \langle t, a, b    \mid & t^3, a^p, tat^{-1}b^{-1},  [a, b, a], [a, b, b, a], [a, b, b, b ]\rangle,
\end{align*}
are infinite hyperbolic. Moreover, for all primes $p \geq 7$ (resp. $p \geq 11$), the groups $\mathscr G_{HC_2^{(1)}}(p)$ and $\mathscr G_{HB_2^{(2)}}(p)$ (resp. $\widetilde{\mathscr G}_{HBC_2^{(3)}}(p)$)  have Kazhdan's property (T). 
\end{thm}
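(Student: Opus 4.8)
The plan is to realise each of the three groups as the fundamental group of a triangle of finite groups, to deduce infiniteness and word-hyperbolicity from a strict negative-curvature inequality on the associated triangle complex, and then to obtain property~(T) from a spectral-gap estimate on the vertex links via the Ballmann--Świątkowski/Żuk local criterion. The data of the triangle of groups can be read off each presentation: for $\mathscr G_{HC_2^{(1)}}(p)$ and $\mathscr G_{HB_2^{(2)}}(p)$ the pairwise subgroups $\grp{a,b}$, $\grp{b,c}$, $\grp{c,a}$ are the vertex groups, the cyclic groups $\grp a,\grp b,\grp c$ of order $p$ are the edge groups, and the face group is trivial; for $\widetilde{\mathscr G}_{HBC_2^{(3)}}(p)$ the element $t$ of order $3$ realises an order-$3$ symmetry cyclically permuting the three vertex groups, so the complex is the quotient of an ordinary triangle complex by this symmetry and all three vertex links coincide. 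In each case the defining commutator relations force the vertex group to be the $p$-Sylow subgroup of the advertised rank-$2$ group of Lie type: the symmetric class-$2$ relations $[x,y,x]=[x,y,y]=1$ give the Heisenberg group (type $A_2$), whereas the class-$3$ relations $[x,y,x]=[x,y,y,x]=[x,y,y,y]=1$ give the $p$-Sylow of $\Sp_4(\mathbf F_p)$ (type $B_2=C_2$).

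Next I would compute the girth $g_i$ of each vertex link, which equals twice the Coxeter parameter of the corresponding rank-$2$ residue: the type-$A_2$ links have girth $6$ and the type-$B_2$ links have girth $8$. This yields girth triples $(6,6,8)$, $(6,8,8)$ and $(8,8,8)$, for which $\sum_i 1/g_i$ equals $11/24$, $5/12$ and $3/8$ respectively, all strictly below $1/2$. Assigning to each corner the angle $\theta_i = 2\pi/g_i$, one has $\sum_i \theta_i = 2\pi\sum_i 1/g_i < \pi$, so there is a hyperbolic triangle with these angles; every injective loop in a link is combinatorially at least as long as the girth and hence has length $\geq g_i\cdot\theta_i = 2\pi$, so Gromov's link condition is met and the metric development is $\mathrm{CAT}(-1)$. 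By the theory of triangles of groups (Gersten--Stallings; Bridson--Haefliger~III.$\mathcal C$) the strict inequality means the triangle of groups is of \emph{hyperbolic type}: it is developable, the group acts properly cocompactly on the $\mathrm{CAT}(-1)$ development with a single triangle as strict fundamental domain, and is therefore infinite and word-hyperbolic.

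For property~(T) I would invoke the spectral criterion that a group acting properly cocompactly on a contractible $2$-complex whose vertex links are all connected with normalised-Laplacian spectral gap $\lambda_1>1/2$ has property~(T). The remaining, and principal, task is to estimate $\lambda_1$ for each finite bipartite link. These are $p$-regular graphs on the coset spaces $U/U_\alpha$ and $U/U_\beta$ of the unipotent vertex group $U$; since the adjacency operator commutes with the left $U$-action it decomposes over the irreducibles of $U$, and the nontrivial eigenvalues can be written as explicit character sums on $U$ (Fourier analysis on the abelianisation together with the Stone--von Neumann description of the higher-dimensional representations). For the $A_2$ links these reduce to Gauss sums of exact modulus $\sqrt p$, yielding $\lambda_1 = 1 - p^{-1/2}$; for the $B_2$ and $BC_2$ links they reduce to Kloosterman-type sums controlled by the Weil bound, giving $\lambda_1 = 1 - O(p^{-1/2})$ with a larger implied constant. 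Tracking these constants crosses the threshold $\lambda_1 = 1/2$ at $p = 7$ for the first two families and at $p = 11$ for the third, the larger threshold reflecting the smaller gap of the type-$B_2$ links.

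I expect the spectral estimate to be the main obstacle. The asymptotic bounds comfortably cover all large $p$, but near the threshold they are too crude, so one must either evaluate the relevant character sums exactly — feasible for the Gauss-sum ($A_2$) case but only conditional on sharp Kloosterman bounds for the $B_2$ and $BC_2$ cases — or verify the borderline primes by a certified numerical diagonalisation of the explicitly given finite link graphs, exactly as is done for the link of $\grp{a,b}$ in the $\mathscr H_p$ examples of Theorem~\ref{thm:5-fold_with_(T)}. A secondary technical point is the twisted family $\widetilde{\mathscr G}_{HBC_2^{(3)}}(p)$: here one must check that passing to the order-$3$ quotient still produces a complex on which the spectral criterion applies and that the single link type genuinely governs all three corners.
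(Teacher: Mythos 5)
Your overall architecture coincides with the paper's: realise each group as the fundamental group of a triangle of finite groups with cyclic edge groups of order $p$, identify the vertex groups with $\mathscr U_3(p)$ (Heisenberg) and $\mathscr U_4(p)$ (the $p$-Sylow of $\Sp_4(\mathbf F_p)$) via Proposition~\ref{prop:PresentationsUnipotent}, read off the link girths $6$ and $8$ from the Moufang polygon structure (Proposition~\ref{prop:Girth:Moufang}), and conclude infiniteness and hyperbolicity from the strict inequality $1/r_0+1/r_1+1/r_2<1$ via Theorem~\ref{thm:NPC}. The group $\widetilde{\mathscr G}_{HBC_2^{(3)}}(p)$ is handled as the index-$3$ overgroup of $\mathscr G_{HBC_2^{(3)}}(p)$, so both hyperbolicity and (T) transfer; your worry about the order-$3$ symmetry is not a real obstacle.

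The genuine gap is in the property~(T) step. You invoke the uniform spectral criterion ``every vertex link has normalised Laplacian gap $\lambda_1>1/2$.'' That criterion cannot reach the stated bound $p\geq 7$ for $\mathscr G_{HC_2^{(1)}}(p)$ and $\mathscr G_{HB_2^{(2)}}(p)$: the type-$B_2$ link satisfies $\varepsilon_{\mathscr U_4(p)}(U_1,U_4)=\sqrt{2/p}$ exactly (Proposition~\ref{prop:AngleUnipotent}), hence $\lambda_1=1-\sqrt{2/7}\approx 0.465<1/2$ at $p=7$, and the uniform criterion only kicks in at $p\geq 11$ for any group containing a $B_2$-type vertex. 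There is in fact no value of $p$ at which your stated criterion would produce the threshold $7$; the binding constraint is always the $B_2$ link. The paper instead applies the Ershov--Jaikin-Zapirain criterion (Theorem~\ref{thm:EJ}), which is a \emph{weighted} condition $\varepsilon_0^2+\varepsilon_1^2+\varepsilon_2^2+2\varepsilon_0\varepsilon_1\varepsilon_2<1$, equivalently $\alpha_0+\alpha_1+\alpha_2>\pi$ for the representation angles: at $p=7$ the two Heisenberg angles $\arccos(1/\sqrt 7)\approx 67.8^\circ$ compensate for the $B_2$ angle $\arccos(\sqrt{2/7})\approx 57.7^\circ<60^\circ$, and the polynomial conditions $p^3-8p^2+16p-8>0$ and $p^3-10p^2+25p-16>0$ give exactly $p\geq 7$. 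You need this finer criterion (or its spectral reformulation, Corollary~\ref{cor:SimplicialCx:(T)}) rather than the symmetric $1/2$ threshold. A secondary correction: the exact value $\sqrt{2/p}$ for the $B_2$ link requires neither Kloosterman sums, Weil bounds, nor certified numerics --- the nontrivial irreducibles of $\mathscr U_4(p)$ are induced from a maximal abelian subgroup, the $U_1$-fixed space in each has dimension at most~$2$ (counting roots of a quadratic mod $p$), and Cauchy--Schwarz gives the bound; the whole proof is silicon-free, unlike the $\PSL_2(p)$ computations in Theorem~\ref{thm:5-fold_with_(T)}.
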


The choice of notation  will be justified in Section~\ref{sec:KMS} below. 
The group $\mathscr G_{HC_2^{(1)}}(7)$ has a presentation on $3$ generators in which, after simplifications, the total  length of the relators is $87$. The generator $b$ of the group $\widetilde{\mathscr G}_{HBC_2^{(3)}}(p)$ is redundant. In fact, the group $\widetilde{\mathscr G}_{HBC_2^{(3)}}(11)$ has a presentation on $2$ generators and $5$ relators which, after simplifications, has a total relator length of $72$. This yields a significant improvement on the main result of  \cite{HypT}, which provides an example of a presentation of an infinite hyperbolic Kazhdan group which, after simplifications, has $4$ generators and $16$ relators, for a total relator length of  $555$. 

Similarly as in the proof of Theorem~\ref{thm:5-fold_with_(T)}, Property (T) is established for the KMS groups using the aforementioned criterion due to  Ershov--Jaikin-Zapirain, see Theorem~\ref{thm:EJ} below. In the case of KMS groups, the exact value of the representation angles can be computed  by hand, so that the proof of Theorem~\ref{thm:KMS:intro} is silicon-free. We refer to  Section~\ref{sec:KMS} for the details,  including other series of infinite hyperbolic Kazhdan groups. 

The following result reveals another remarkable feature of the KMS groups. 

\begin{thm}\label{thm:KMS:intro:2}
For each odd prime $p$, the group $\mathscr G_{HB_2^{(2)}}(p)$ has finite simple quotients of arbitrarily large rank.
\end{thm}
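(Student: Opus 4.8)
The statement to prove is that $\mathscr G_{HB_2^{(2)}}(p)$ admits a finite quotient in $\mathscr S_d$ for \emph{every} $d$. The plan is to reduce this to producing surjections onto a family of finite simple groups of Lie type whose rank is unbounded, and then to construct such surjections by hand. First I would record the reduction: since $\mathscr S_d$ consists of the finite simple groups containing $\Alt(d)$, and since the signed-permutation (Weyl) subgroup of $\Sp_{2n}(\Fbb_p)$ contains a copy of $\Alt(n)$ that survives in the simple quotient $\mathrm{PSp}_{2n}(p)$ (the scalar $-I$ is not a permutation matrix, so $\Alt(n)$ injects), it suffices to exhibit, for arbitrarily large $n$, a surjection $\mathscr G_{HB_2^{(2)}}(p) \twoheadrightarrow \mathrm{PSp}_{2n}(p)$. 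Indeed such a map provides a quotient in $\mathscr S_n$, and $n \to \infty$.

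The heart of the argument is the construction of these surjections. The defining relators of $\mathscr G_{HB_2^{(2)}}(p)$ encode precisely the Steinberg commutator relations of three root subgroups of order $p$ whose pairwise types are $A_2$ (for $a,b$), $C_2$ (for $b,c$), and $C_2$ (for $c,a$). I would realize these inside $\Sp_{2n}(\Fbb_p)$ by a three-colouring of a generating set of root subgroups, adapted to a symplectic basis $e_1, \dots, e_n$: partition the chosen root elements into classes $\mathcal A, \mathcal B, \mathcal C$ so that (i) root subgroups of the same colour commute, whence the products $\bar a, \bar b, \bar c$ over the three classes each have order exactly $p$; (ii) the pairwise interaction of the colours reproduces, cell by cell, a single $A_2$ block for $\{\bar a, \bar b\}$ and a single $C_2$ block for each of $\{\bar b, \bar c\}$ and $\{\bar c, \bar a\}$, so that the three two-generated subgroups have exactly the prescribed Sylow isomorphism types and every defining relator of $\mathscr G_{HB_2^{(2)}}(p)$ is satisfied; and (iii) the three colours together exhaust a generating set of root subgroups of $\Sp_{2n}$. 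Choosing the colouring to be periodic (a band or circulant pattern) along the basis makes the local verifications (i)--(ii) independent of $n$, reducing them to a single bounded computation. The bundling in (i) is essential: three \emph{single} root subgroups would span only a rank-$3$ subsystem, so the growth of rank comes precisely from spreading each generator across the whole basis.

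By (i)--(ii) the assignment $a \mapsto \bar a$, $b \mapsto \bar b$, $c \mapsto \bar c$ respects all relators and so extends to a homomorphism $\phi_n \colon \mathscr G_{HB_2^{(2)}}(p) \to \Sp_{2n}(\Fbb_p)$; by (iii) its image contains a full set of root subgroups and is therefore all of $\Sp_{2n}(\Fbb_p)$, and composing with the projection to the simple quotient yields the desired surjection onto $\mathrm{PSp}_{2n}(p)$. Letting $n$ range over the admissible values and applying the reduction of the first paragraph then finishes the proof.

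The main obstacle is step (iii): proving that the three bundled elements generate the \emph{entire} symplectic group for all large $n$, rather than some proper subgroup. Here I would show that $\langle \bar a, \bar b, \bar c\rangle$ acts irreducibly on $\Fbb_p^{2n}$ and contains a long-root transvection, and then invoke the classification of irreducible subgroups of $\Sp_{2n}$ generated by transvections to conclude that the subgroup must be the whole group; a more hands-on alternative is a direct BN-pair argument recovering each simple root subgroup from iterated commutators of $\bar a, \bar b, \bar c$. A secondary point, settled by the periodicity once and for all, is to pin down the exact infinite set of admissible $n$ and to confirm that each local $A_2$- and $C_2$-block has the prescribed Sylow isomorphism type, so that no relator becomes vacuous and no unwanted relation is forced in the image.
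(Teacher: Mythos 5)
Your global strategy---spread each generator of $\mathscr G_{HB_2^{(2)}}(p)$ over many commuting unipotent pieces so as to map onto classical groups of unbounded rank, then use $\Alt(n)\leq \mathrm{PSp}_{2n}(p)$---is the right one and matches the paper's (Kassabov-inspired) route, including the final reduction. The gap is in the middle. First, the colouring is never exhibited, and the two requirements you impose on it pull in opposite directions: the relations of $\mathscr G_{HB_2^{(2)}}(p)$ force every cross-coloured commutator $[x_{\alpha_i},x_{\beta_j}]$ to be central in $\langle \bar a,\bar b\rangle$ (and similarly for the $\mathscr U_4$ pairs), i.e.\ they demand very limited interaction among the chosen roots, while generation of $\Sp_{2n}(\Fbb_p)$ demands that the additive closure of those same roots be the entire root system (in particular that it escape every positive subsystem, since no two chosen roots may be opposite). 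Second, and more seriously, you treat step (iii) as a technical hurdle to be cleared by ``irreducibility plus the classification of transvection subgroups''; but establishing irreducibility is the whole theorem, and the paper shows that the naive version of your plan provably fails for a sibling group: by Proposition~\ref{prop:UaUbUc}, \emph{every} bundled block-unipotent homomorphism $\mathscr G_{\widetilde{A_2}}(p)\to\SL_{3k}(\Fbb_q)$ has image with a subnormal series whose factors are $p$-groups or subgroups of $\GL_3$, no matter how the bundling data are chosen. A fixed periodic or circulant pattern with prime-field entries is exactly the kind of choice that lands in a bounded-rank (diagonal, tensor-decomposable, or field-extension) subgroup.

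The paper's actual proof (Propositions~\ref{prop:Morphism-HC2-block} and~\ref{prop:Quotient-in-S_d}) keeps the bundling data as three \emph{free} $k\times k$ matrix parameters $M_a,M_b,M_c$ placed in a $4\times 4$ block-unipotent, $\Sp_4$-shaped configuration inside $\SL_{4k}(\Fbb_p)$, and extracts largeness of the image from a very specific choice: $M_a,M_b$ are nilpotent shift matrices and $M_c$ is rigged so that $(M_aM_b+M_bM_a)M_c$ is a Singer element. By Proposition~\ref{prop:SL2-block} this makes $\langle V'_c,[V'_a,V'_b]\rangle$ a field-extension subgroup $\SL_2(\Fbb_{p^k})\leq\SL_{2k}(\Fbb_p)$, and Shangzhi Li's theorem on overgroups of such subgroups (Theorem~\ref{thm:Li}), combined with a third unipotent that fails to normalize it (the condition $M_1M_2M_3\neq M_3M_2M_1$), forces the relevant subgroup to contain $\mathrm{Sp}_{2k}(\Fbb_p)$ or to be all of $\SL_{2k}(\Fbb_p)$; perfectness of the image then pushes this into every nontrivial simple quotient. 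Some input of this strength---an explicitly irreducible configuration together with a subgroup-classification theorem---is what your step (iii) is missing, and without it the argument does not close.
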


The proof is inspired by the seminal work of M.~Kassabov \cite[\S4.1]{Kassabov2007}, and uses also important results of Shangzhi Li~\cite{Li} on the subgroup structure of the special linear groups. 
Combining Theorems~\ref{thm:KMS:intro} and~\ref{thm:KMS:intro:2}, we obtain the following direct consequence. 

\begin{cor}\label{cor:intro}
There exists a hyperbolic Kazhdan group which possesses finite simple quotients of arbitrarily large rank.
\end{cor}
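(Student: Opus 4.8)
The plan is essentially to observe that this corollary is an immediate consequence of the two preceding theorems, applied to a \emph{single} well-chosen group rather than to different groups. First I would fix a prime $p$ in the range where both theorems apply. Since Theorem~\ref{thm:KMS:intro} guarantees property (T) for $\mathscr G_{HB_2^{(2)}}(p)$ only for $p \geq 7$, whereas Theorem~\ref{thm:KMS:intro:2} holds for \emph{every} odd prime $p$, the two ranges are compatible and one may simply take any prime $p \geq 7$ (for instance $p = 7$).

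With such a $p$ fixed, the group $G := \mathscr G_{HB_2^{(2)}}(p)$ is the desired witness. Indeed, by Theorem~\ref{thm:KMS:intro}, the group $G$ is infinite, hyperbolic, and satisfies Kazhdan's property (T); and by Theorem~\ref{thm:KMS:intro:2}, the same group $G$ possesses finite simple quotients of arbitrarily large rank. Thus $G$ is a hyperbolic Kazhdan group with finite simple quotients of arbitrarily large rank, which establishes the claim.

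There is no genuine obstacle at the level of the corollary itself: the entire mathematical content is carried by Theorems~\ref{thm:KMS:intro} and~\ref{thm:KMS:intro:2}, and the only point requiring attention is the bookkeeping that both conclusions can be secured \emph{simultaneously} for one and the same group. This is exactly why it matters that Theorem~\ref{thm:KMS:intro:2} was stated for the family $\mathscr G_{HB_2^{(2)}}(p)$, i.e. for precisely one of the three families appearing in Theorem~\ref{thm:KMS:intro}, so that the property-(T) conclusion and the large-rank-quotient conclusion refer to the same object. Accordingly, I expect the proof to consist of little more than citing the two theorems and selecting an admissible prime.
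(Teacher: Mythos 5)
Your argument is exactly the paper's: the corollary is stated there as a direct consequence of Theorems~\ref{thm:KMS:intro} and~\ref{thm:KMS:intro:2}, applied to the single group $\mathscr G_{HB_2^{(2)}}(p)$ for a prime $p \geq 7$. Your additional check that the two ranges of admissible primes overlap is the right (and only) point of bookkeeping.
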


Further progress towards Question~\ref{qu:S_d} could be accomplished through a finer analysis of the finite simple quotients of specific hyperbolic groups. Indeed, it is conceivable that a given hyperbolic  group possesses quotients in $\mathscr S_d$ for all $d$, but that for $d$ large enough, the only such quotients are of a restricted type. This leads us to the following:

\begin{qu}\label{qu:KMS}
Let $p$ be an odd prime and $G$ be a hyperbolic KMS group over $\mathbf F_p$. Does $G$  have  quotients isomorphic to $\Alt(d)$, for infinitely many $d$'s? Is there a $d$ such that the Lie-type quotients of $G$ belonging to $\mathscr S_d$ are subjected to type or characteristic restrictions?  
\end{qu}

A significant portion of the paper is devoted to a systematic experimental study of the smallest  non-positively curved $3$-fold generalized triangle groups. In particular, we provide evidence that such a $3$-fold generalized triangle group  cannot have property (T).  Our experiments and their outcome are described in Section~\ref{sec:TriTri} and in the Appendices. Those investigations involved an extensive use of the \textsc{Magma} algebra system \cite{Magma}. They led us to propose a reformulation of the question whether all hyperbolic groups are residually finite, see Question~\ref{qu:RepVariety} below.

\medskip

The article is organized as follows. In Section~\ref{sec:EJ} we recall a criterion by Ershov--Jaikin-Zapirain for a group to have property (T) in terms of the \emph{representation angle} of certain subgroups. We relate the representation angle to the spectral gap of the associated coset graph, following ideas of Dymara--Januszkiewicz and I.~Oppenheim. We also compute the representation angle of various finite groups that will later appear as vertex stabilizers within generalized triangle groups. Generalized triangle groups are introduced in Section~\ref{sec:TriangleGroups}. The following two sections are concerned with $3$-fold generalized triangle groups: in Section~\ref{sec:small_cubic_graphs} we collect information on small $3$-regular graphs with edge-transitive automorphism groups, which in the sequel will play the roles of links and stabilizers of vertices respectively. In Section~\ref{sec:TriTri} we perform a systematic study of all possible $3$-fold generalized triangle groups that can be built out of these graphs. The list of groups is presented in Appendix~\ref{sec:PresentationList} while many of their properties are listed in Appendix~\ref{sec:Tables}. In Section~\ref{sec:five-fold} we provide examples of $5$-fold generalized triangle groups with property (T), proving in particular Theorem~\ref{thm:5-fold_with_(T)}. Finally, Section~\ref{sec:KMS} is devoted to Kac--Moody--Steinberg groups and contains the proofs of Theorem~\ref{thm:KMS:intro} and~\ref{thm:KMS:intro:2}.

\subsection*{Acknowledgements}

This project has benefited from conversations with Martin Deraux, Jason Manning, Nicolas Monod, Colva Roney-Dougal, Mikael de la Salle and Henry Wilton. It is a pleasure to thank them for their comments and suggestions. We also thank Izhar Oppenheim his comments.

\section{The Ershov--Jaikin-Zapirain criterion for property (T)}\label{sec:EJ}

A criterion for property (T) that is especially suited for the groups we shall consider is the following one, due to Ershov--Jaikin-Zapirain \cite{EJ} (see also   \cite{Kassabov}). We first need some terminology. 

\subsection{The representation angle between two subgroups}\label{sec:UnitaryAngle}

Given a  group $X$ generated by two subgroups $A, B \leq X$, and a unitary representation $(V, \pi)$ of $X$ without non-zero $X$-invariant vectors, we define
$$\varepsilon_X(A, B; \pi) = \sup\big\{\frac{|\langle u, v \rangle |}{\| u \| \| v \|} \mid u \in V^A \setminus \{0\}, v \in V^B \setminus \{0\}\big\}.
$$
That quantity should be interpreted as the cosine of the angle between the fixed spaces $V^A$ and $V^B$. If $V^A = \{0\}$ or $V^B = \{0\}$, we put $\varepsilon_X(A, B; \pi) =0$. If $\pi$ is an arbitrary unitary representation, we define  $\varepsilon_X(A, B; \pi)$ as $\varepsilon_X(A, B;\pi_0)$, where $\pi_0 \leq \pi$ is the sub-representation of $\pi$ defined on the orthogonal complement of the subspace of $X$-invariant vectors. 
The supremum of $\varepsilon_X(A, B; \pi) $ taken over all unitary representations $(V, \pi)$ of $X$ with $V^X = \{0\}$, is denoted by 
$$\varepsilon_X(A, B).$$
In case the group $X$ is finite, the quantity $\varepsilon_X(A, B)$ coincides with the supremum of   $\varepsilon_X(A, B; \pi) $ taken over all \textit{irreducible} non-trivial unitary representations $(V, \pi)$ of $X$. A spectral interpresentation of the quantity $\varepsilon_X(A, B; \pi) $ will be presented in Section~\ref{sec:Spec_Coset_graph}.

Let 
 $\alpha \in [0, \pi/2]$ be defined by $\alpha = \arccos(\varepsilon_X(A, B))$. The number $\alpha$ is called the \textbf{representation angle} associated with the triple $(X; A, B)$.

\begin{example}\label{ex:0}
Let $X = \langle a, b | a^m, b^n, aba^{-1}b^{-1}  \rangle$ be the direct product $C_m \times C_n$.   Then $\varepsilon_X(\langle  a \rangle, \langle b \rangle) = 0$. 
\end{example}

\begin{example}\label{ex:1}
Let $X = \langle a, b | a^2, b^2, (ab)^r \rangle$ be the dihedral group of order $2r$. Then $\varepsilon_X(\langle  a \rangle, \langle b \rangle) = \cos(\pi/r)$. 
\end{example}

\begin{example}\label{ex:2}
Let $p$ be a prime and $X = \langle a, b | a^p, b^p, [a, b, a], [a, b, b] \rangle$ be the Heisenberg group over $\mathbf F_p$. As shown in \cite[\S4.1]{EJ}, we have $\varepsilon_X(\langle  a \rangle, \langle b \rangle) = 1/\sqrt p$. 
\end{example}

\begin{example}\label{ex:3}
Let $p>2$ be a prime and $r >1$ be an integer such that $r$ divides $p-1$. Let also $\omega \in \mathbf N$  such that $\omega^r = 1 \mod p $ and $\omega^j \neq 1 \mod p$ for $j=1, \dots, r-1$. Then 
$$X = \langle a, b | a^r, b^r, (ab)^p, (ab)^\omega  a^{-1} b^{-1}\rangle$$
is the Frobenius group $C_{p} \rtimes C_r$ of order $pr$. The quantity $\varepsilon_X(\langle  a \rangle, \langle b \rangle) $ can be computed as follows. 

\begin{lem}\label{lem:Frobenius}
Let $\zeta = e^{2\pi i/p}$ and $\alpha =  \sum_{j=0}^{r-1} \zeta^{\omega^j} $. Let also $C(\alpha)$ denote the set of all conjugates of $\alpha$ in the cyclotomic field $\mathbf Q(\zeta)$.  
For any two distinct cyclic subgroups $A, B \leq X$ of order~$r$, we have  
$$\varepsilon_X(A,  B) =  \frac 1 r \sup\big \{ |\beta| \mid \beta \in C(\alpha)\big\}.$$
If $r = \frac{p-1} 2$, then 
$$\varepsilon_X(A,  B) = 
\left\{
\begin{array}{ll}
\frac{\sqrt p +1} {p-1} & \text{if } p \equiv 1 \mod 4,\\
\frac{\sqrt{p+1}}{p-1} & \text{if } p \equiv 3 \mod 4.
\end{array}
\right.$$
\end{lem}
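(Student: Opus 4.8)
The plan is to exploit the Frobenius structure of $X$ together with the fact, recorded above, that for a finite group $\varepsilon_X(A,B)$ is the supremum of $\varepsilon_X(A,B;\pi)$ over the nontrivial \emph{irreducible} unitary representations $\pi$. Writing $c=ab$, the relation $(ab)^\omega=ba$ gives $c^p=1$ and $a^{-1}ca=a^{-1}(ab)a=ba=c^\omega$, so $\langle c\rangle\cong C_p$ is normal, $a$ acts on it by $c\mapsto c^\omega$, and $X=\langle c\rangle\rtimes\langle a\rangle\cong C_p\rtimes C_r$ is a Frobenius group. Its cyclic subgroups of order $r$ are exactly the Frobenius complements, namely the conjugates $c^m\langle a\rangle c^{-m}$ with $m\in\{0,\dots,p-1\}$. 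Since conjugation by $g\in X$ induces through $\pi(g)$ an isometry $V^{A}\to V^{gAg^{-1}}$, the quantity $\varepsilon_X(A,B)$ is invariant under simultaneous conjugation; hence I may assume $A=\langle a\rangle$ and $B=c^m\langle a\rangle c^{-m}$ for some $m\in\{1,\dots,p-1\}$, and it will suffice to show the answer is independent of $m$.

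Next I would classify the relevant representations. The one-dimensional ones factor through $X^{\mathrm{ab}}=X/\langle c\rangle\cong C_r$, onto which each complement maps isomorphically; thus every nontrivial one-dimensional $\pi$ has $V^A=0$ and contributes $0$. The remaining nontrivial irreducibles are the $(p-1)/r$ induced representations $\pi_\psi=\mathrm{Ind}_{\langle c\rangle}^X\psi$, where $\psi(c)=\zeta^t$ with $t\in(\mathbf Z/p)^\times$, two values of $t$ giving the same $\pi_\psi$ exactly when they lie in the same $\langle\omega\rangle$-coset. In the basis $(v_0,\dots,v_{r-1})$ of $\pi_\psi$ arising from the coset representatives $1,a,\dots,a^{r-1}$, the element $a$ acts by the cyclic shift $v_j\mapsto v_{j+1}$ and $c$ acts diagonally by $c\,v_j=\zeta^{t\omega^j}v_j$. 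Since the shift action of $a$ is the regular representation of $\langle a\rangle$, the space $V^A$ is one-dimensional, spanned by $\mathbf 1=v_0+\cdots+v_{r-1}$; consequently $V^B=\pi_\psi(c^m)V^A$ is the line spanned by $w_m=\pi_\psi(c^m)\mathbf 1=\sum_{j=0}^{r-1}\zeta^{tm\omega^j}v_j$.

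Because both fixed spaces are lines, $\varepsilon_X(A,B;\pi_\psi)$ is simply the cosine of the angle between them. With $\|\mathbf 1\|^2=\|w_m\|^2=r$ and $\langle\mathbf 1,w_m\rangle=\sum_{j=0}^{r-1}\zeta^{tm\omega^j}$, I obtain $\varepsilon_X(A,B;\pi_\psi)=\frac1r\,\bigl|\sum_{j=0}^{r-1}\zeta^{tm\omega^j}\bigr|=\frac1r\,|\sigma_{tm}(\alpha)|$, where $\sigma_s\in\mathrm{Gal}(\mathbf Q(\zeta)/\mathbf Q)$ is the automorphism $\zeta\mapsto\zeta^s$ and $\alpha=\sum_{j=0}^{r-1}\zeta^{\omega^j}$. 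As $t$ runs over $(\mathbf Z/p)^\times$ with $m\neq 0$ fixed, the product $tm$ also runs over all of $(\mathbf Z/p)^\times$, so the numbers $\sigma_{tm}(\alpha)$ are precisely the conjugates forming $C(\alpha)$; taking the supremum over the $\pi_\psi$ therefore yields $\varepsilon_X(A,B)=\frac1r\sup\{|\beta|:\beta\in C(\alpha)\}$, independently of $m$. This proves the first formula and simultaneously shows it holds for every distinct pair of complements.

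For the special case $r=(p-1)/2$, the subgroup $\langle\omega\rangle\le(\mathbf Z/p)^\times$ of order $(p-1)/2$ is the group of quadratic residues, so $\alpha$ is the quadratic Gaussian period $\sum_{s\in\mathrm{QR}}\zeta^s$ and $C(\alpha)=\{\eta_+,\eta_-\}$ consists of the half-sums over residues and non-residues. From $\eta_++\eta_-=-1$ together with the classical evaluation of the quadratic Gauss sum $\eta_+-\eta_-=\sqrt{p^*}$, where $p^*=(-1)^{(p-1)/2}p$, I get $\eta_\pm=\tfrac12(-1\pm\sqrt{p^*})$. For $p\equiv 1\pmod 4$ these are real, the larger modulus being $\tfrac12(\sqrt p+1)$; for $p\equiv 3\pmod 4$ they are complex conjugates of common modulus $\tfrac12\sqrt{p+1}$. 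Dividing by $r=(p-1)/2$ produces exactly the two displayed values. The one genuinely delicate ingredient is the evaluation of the quadratic Gauss sum: its modulus $\sqrt p$ is elementary, but the determination of $p^*$ (equivalently its reality versus purely imaginary nature) is the substantive classical input. The main conceptual step, however, is the identification of the overlap $\langle\mathbf 1,w_m\rangle$ of the two fixed lines with a Gaussian period, after which the whole computation reduces to standard cyclotomy.
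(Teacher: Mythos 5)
Your proof is correct and follows essentially the same route as the paper: the same classification of the irreducible representations of $C_p\rtimes C_r$, the same identification of the one-dimensional fixed spaces $V^A$ and $V^B$ and of their overlap with a Gaussian period, and the same Gauss-sum evaluation in the case $r=(p-1)/2$. The only (harmless) variation is in the reduction to a particular pair $(A,B)$: the paper invokes the double transitivity of $\Aut(X)$ on the conjugacy class of complements, whereas you parametrize $B=c^mAc^{-m}$ and observe directly that the resulting set of values $\{\lvert\sigma_{tm}(\alpha)\rvert\}$ is independent of $m$.
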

\begin{proof}
The group $X \cong C_p \rtimes C_r$ is a normal subgroup of $C_p \rtimes C_{p-1} \cong \mathbf F_p \rtimes \mathbf F_p^*$. The latter acts doubly transitively on its unique conjugacy class of subgroups of order~$r$. Thus $\Aut(X)$ is doubly transitive on the set of cyclic subgroups of $X$ of order~$r$. It follows that  $\varepsilon_X(A,  B)$ is independent of the choice of the two distinct subgroups $A, B$ of order~$r$. 

Now we set $A = \langle a \rangle$ and $B = ab A b^{-1} a^{-1}$. The group $X$ has $r$ inequivalent (irreducible) representations of degree~$1$, and $\frac{p-1} r$ of degree~$r$. Each of the latter is obtained by inducing a degree~$1$ representation of the cyclic subgroup $C_p = \langle ab \rangle$ to the whole group. Those representations can be described as follows. 

Let $\zeta= e^{2\pi i/p}$  and $e_0, \dots, e_{r-1}$ be an orthonormal basis of $V := \mathbf C^r$. For each $n = 0, \dots, p-1$, a representation $\rho_n \colon X \to \mathrm{GL}_r(\mathbf C)$ is determined by setting $\rho_n(a)e_j = e_{j+1}$ (with indices taken modulo $r$), and $\rho_n(ab)e_j = \zeta^{n\omega^j} e_j$. 
The fixed space $V^A$ is of dimension~$1$; it is spanned by $\sum_{j=0}^{r-1} e_j$. Since $B =  ab A b^{-1} a^{-1}$, we have $V^B = \rho_n(ab)V^A$, which is spanned by $\rho_n(ab)(\sum_{j=0}^{r-1} e_j) = \sum_{j=0}^{r-1} \zeta^{n\omega^j} e_j$. We obtain
$$\varepsilon_X(A, B; \rho_n) =  \frac{| \sum_{j=0}^{r-1} \zeta^{n\omega^j} |} r.$$
The desired conclusion follows, since $C(\alpha) = \{ \sum_{j=0}^{r-1} \zeta^{n\omega^j} \mid n=1, \dots, p-1\}$. 

In the special case where $r = \frac{p-1} 2$, we observe that $\{1, \omega, \omega^2, \dots, \omega^{r-1}\}$ is an index~$2$ subgroup of the multiplicative group $\mathbf F_p^*$, which consists of the squares. It follows that $C(\alpha)$ consists of exactly two elements, namely $\alpha$ and $\beta = \sum_{j=0}^{r-1} \zeta^{a\omega^j}$, where $a$ is represents a non-square in $\mathbf F_p$.  We also recall that the  \textbf{quadratic Gauss sum} $g(s; p)$ is defined as
$$g(s; p) = \sum_{n=0}^{p-1} \zeta^{sn^2}.$$
Observe that $1+2\alpha = g(1;p)$ and $1+2\beta = g(a;p)$. Moreover $1+\alpha + \beta =\sum_{n=0}^{p-1} \zeta^n =  0$. A theorem of Gauss (see \cite[Theorem~1.2.4]{GaussSums}) ensures that 
$$
g(1;p ) = 
\left\{
\begin{array}{ll}
\sqrt p  & \text{if } p \equiv 1 \mod 4,\\
i \sqrt{p} & \text{if } p \equiv 3 \mod 4.
\end{array}
\right.$$
Therefore, we obtain $|\alpha|= \frac{\sqrt p -1} 2$ and $|\beta| = \frac{\sqrt p +1} 2$ if $p \equiv 1 \mod 4$, and $|\alpha|= |\beta| = \frac{\sqrt{p+1}} 2$ if $p \equiv 3 \mod 4$. The required result now follows from the first part of the lemma.
\end{proof}
\end{example}

The algebraic integer $\alpha$ is called a \textbf{Gaussian period}. Estimates of the absolute norm of Gaussian periods have been established in the literature: see \cite{Myerson}, \cite{Duke} and \cite{Habegger}.

\subsection{A criterion  for property (T)}

The relevance of the notion of unitary angle between subgroups comes from the following striking result,  due to Ershov--Jaikin-Zapirain.

\begin{thm}[{\cite[Theorem~5.9]{EJ}}] \label{thm:EJ}
Let $G$ be a locally compact group generated by three open subgroups $A_0, A_1, A_2$. For $i \mod 3$, set $X_i = \langle A_{i-1} \cup A_{i+1}\rangle$ and $\varepsilon_i = \varepsilon_{X_i}(A_{i-1}, A_{i+1})$. Assume that $X_i$ has relative property (T) in $G$ (e.g. $X_i$ is compact, or finite) for all $i$. If
$$\varepsilon_0^2 + \varepsilon_1^2+ \varepsilon_2^2+ 2\varepsilon_0 \varepsilon_1\varepsilon_2 < 1,$$
then $G$ has Kazhdan's property (T).	
\end{thm}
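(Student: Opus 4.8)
The plan is to prove the contrapositive in its spectral form: fixing a unitary representation $(V,\pi)$ of $G$ with $V^G = \{0\}$, I would show that $V$ admits a uniform spectral gap, i.e.\ that there is no net of almost-invariant unit vectors. For each $i \bmod 3$ let $P_i$ denote the orthogonal projection of $V$ onto the fixed subspace $V^{A_i}$, and $Q_i$ the orthogonal projection onto $V^{X_i}$. Since $X_i = \langle A_{i-1}\cup A_{i+1}\rangle$, one has the lattice identities $V^{X_i} = V^{A_{i-1}} \cap V^{A_{i+1}}$, so that $V^{X_{i-1}}$ and $V^{X_{i+1}}$ are both contained in $V^{A_i}$, while $V^{X_0}\cap V^{X_1}\cap V^{X_2} = V^{A_0}\cap V^{A_1}\cap V^{A_2} = V^G = \{0\}$. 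The whole difficulty is thus to quantify how the three subspaces $V^{A_i}$ are forced apart by the angle bounds $\varepsilon_i$, given that their total intersection is trivial.

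First I would translate almost-invariance into Hilbert-space geometry. Suppose $(\xi_n)$ is a net of unit vectors that is almost invariant under a compact generating set of $G$. By the relative property (T) of $X_i$ in $G$, such vectors are asymptotically fixed by $X_i$, i.e.\ $\|(1-Q_i)\xi_n\| \to 0$ for each $i$; since $V^{X_{i\pm 1}} \subseteq V^{A_i}$ this already yields $\|(1-P_i)\xi_n\|\to 0$ as well, so I never need a separate property (T) for the $A_i$ themselves. The problem now reduces to a purely geometric statement: three closed subspaces $V^{A_0}, V^{A_1}, V^{A_2}$ with trivial total intersection cannot contain a net of almost-common unit vectors, provided they are sufficiently spread apart in the sense measured by the $\varepsilon_i$.

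The heart of the argument is the extraction of pairwise angle inequalities from the definition of $\varepsilon_i = \varepsilon_{X_i}(A_{i-1}, A_{i+1})$ and their assembly through a positive-definiteness condition. The key identity is
\[
\langle P_{i-1}\xi,\, P_{i+1}\xi\rangle = \|Q_i\xi\|^2 + \big\langle (1-Q_i)P_{i-1}\xi,\ (1-Q_i)P_{i+1}\xi\big\rangle,
\]
valid because $Q_i$ projects onto $V^{X_i}\subseteq V^{A_{i-1}}\cap V^{A_{i+1}}$, whence $Q_iP_{i-1}\xi = Q_i\xi = Q_iP_{i+1}\xi$. The two factors in the last inner product are orthogonal to $V^{X_i}$ and lie in $V^{A_{i-1}}$, $V^{A_{i+1}}$ respectively; by the very definition of the representation angle, computed inside the restriction of $(V,\pi)$ to $X_i$ on the orthogonal complement of its invariant vectors, that term is bounded in modulus by $\varepsilon_i\,\|(1-Q_i)P_{i-1}\xi\|\,\|(1-Q_i)P_{i+1}\xi\|$. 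Running this over $i=0,1,2$ and recording the three norms as a vector $t = (t_0,t_1,t_2)$, the resulting quadratic estimate is governed by the symmetric matrix $N$ with diagonal entries $1$ and off-diagonal entry $-\varepsilon_k$ in position $(i,j)$, where $k$ is the remaining index. A direct computation gives $\det N = 1 - (\varepsilon_0^2 + \varepsilon_1^2 + \varepsilon_2^2 + 2\varepsilon_0\varepsilon_1\varepsilon_2)$, so the hypothesis of the theorem is exactly the statement that $N$ is positive definite. Positive-definiteness of $N$ upgrades the pairwise bounds into a genuine \emph{$\varepsilon$-orthogonality} of the three fixed subspaces relative to the common subspaces $V^{X_i}$, in the spirit of Dymara--Januszkiewicz, yielding a uniform constant $c = \lambda_{\min}(N) > 0$ with $\sum_i \|(1-P_i)\xi\|^2 \ge c\,\|\xi\|^2$ once the $V^{X_i}$-components are accounted for. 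This lower bound is the desired spectral gap and contradicts the existence of almost-invariant unit vectors.

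The step I expect to be the main obstacle is precisely the bookkeeping of the correction terms $\|Q_i\xi\|^2$: because the shared subspaces $V^{X_i}$ may be large, the almost-invariant vectors concentrate on them, and a single angle bound $\varepsilon_i < 1$ gives nothing on its own. The combination must be global, and it is here that relative property (T) and the positive-definiteness of $N$ have to interlock: relative (T) guarantees that $\xi_n$ is asymptotically carried by each $V^{X_i}$ simultaneously, while $\det N > 0$ quantitatively forbids a unit vector from lying near all three of these subspaces given that their total intersection $V^G$ is trivial. Making this interaction quantitative --- i.e.\ turning the three local bounds, each weakened by its $V^{X_i}$-correction, into the single inequality $\sum_i\|(1-P_i)\xi\|^2 \ge c\,\|\xi\|^2$ with $c$ depending only on $\lambda_{\min}(N)$ --- is the technical crux, and the clean determinant identity above is what makes the cubic threshold of the theorem the sharp one.
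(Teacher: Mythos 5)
First, note that the paper does not prove this statement at all: it is quoted verbatim from Ershov--Jaikin-Zapirain \cite[Theorem~5.9]{EJ}, with only a remark that the same proof carries over to open subgroups of a locally compact group. So your proposal has to be judged against the argument in \cite{EJ} rather than against anything in this paper. Much of your setup is correct and matches that argument in spirit: the reduction of property (T) to a uniform bound $\sum_i\|(1-P_i)\xi\|^2\geq c\|\xi\|^2$ on the orthogonal complement of $V^G$, using relative property (T) of the $X_i$ to pass from almost invariance under $G$ to proximity to the $V^{X_i}$ and hence to the $V^{A_i}$; the identity $\langle P_{i-1}\xi,P_{i+1}\xi\rangle=\|Q_i\xi\|^2+\langle(1-Q_i)P_{i-1}\xi,(1-Q_i)P_{i+1}\xi\rangle$ (valid since $Q_iP_{i\pm1}=Q_i$); the estimate of the second term by $\varepsilon_i$ applied in the subrepresentation $(V^{X_i})^\perp$; and the identification of the hypothesis with $\det N>0$, i.e.\ positive definiteness of $N$, which is indeed the correct algebraic reformulation.

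The gap is the assembly step, which you yourself flag as ``the technical crux'' but do not carry out — and it is exactly there that the theorem lives. Concretely: if you feed almost invariance into your three inequalities, then $P_{i\pm1}\xi\approx\xi$, so $\mathrm{Re}\langle P_{i-1}\xi,P_{i+1}\xi\rangle\approx 1$ while the right-hand side is $\approx\|Q_i\xi\|^2+\varepsilon_i\|(1-Q_i)\xi\|^2=1-(1-\varepsilon_i)\|(1-Q_i)\xi\|^2$; the only conclusion is $\|(1-Q_i)\xi\|\to 0$, which is already guaranteed by relative property (T) and uses neither the matrix $N$ nor the hypothesis $V^G=\{0\}$. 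No contradiction is reached, and the quadratic form $t\mapsto\langle Nt,t\rangle$ you invoke is never actually shown to dominate any quantity of interest (there are in fact six norms $\|(1-Q_i)P_{i\pm1}\xi\|$, not three, until almost invariance collapses them — at which point the inequality becomes vacuous as above). What is genuinely needed, and missing, is a quantitative statement forbidding a unit vector from being simultaneously close to the three subspaces $V^{X_0},V^{X_1},V^{X_2}$, whose pairwise intersections equal $V^G=\{0\}$; this requires controlling the angles between the $V^{X_i}$ themselves (equivalently, the codistance of the triple $V^{A_0},V^{A_1},V^{A_2}$) in terms of the $\varepsilon_i$, including the components lying in the pairwise intersections. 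That multi-subspace estimate — carried out in \cite{EJ} via the codistance formalism, and in Kassabov's reformulation via the condition $\alpha_0+\alpha_1+\alpha_2>\pi$ — is the actual content of the theorem, and your proposal stops exactly where it begins.
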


\begin{rmk}
Let 
 $\alpha_i \in [0, \pi/2]$ be defined by $\alpha_i = \arccos(\varepsilon_i)$. As observed by M.~Kassabov \cite{Kassabov}, the condition that $\varepsilon_0^2 + \varepsilon_1^2+ \varepsilon_2^2+ 2\varepsilon_0 \varepsilon_1\varepsilon_2 < 1$ is equivalent to the requirement that 
$$\alpha_0+ \alpha_1 + \alpha_2> \pi.$$
\end{rmk}

\begin{rmk}
In the reference \cite{EJ}, the result cited above is stated for an abstract group $G$; the same proof provides the version stated above, where $G$ is possibly non-discrete and the subgroups $A_0, A_1, A_2$ are open. In the rest of this paper, we will apply Theorem~\ref{thm:EJ} to  a discrete group $G$, except in Corollary~\ref{cor:SimplicialCx:(T)} (a result which will not be used elsewhere in the paper). In particular, the proofs of the main theorems stated in the introduction only rely on the application of Theorem~\ref{thm:EJ} to   discrete groups. 
\end{rmk}

\subsection{Representation angle and the spectrum of coset graphs}\label{sec:Spec_Coset_graph}

Given a group $X$ and two subgroups $A, B \leq X$, we define the \textbf{coset graph} of $X$ with respect to $\{A, B\}$ as the bipartite graph 
$$\Gamma_X(A, B)$$ 
with vertex set $X/A \sqcup X/B$ and edge set $X/A \cap B$, where the incidence relation is the relation of inclusion. 

In this section, we relate the number $\varepsilon_X(A, B)$ introduced in Section~\ref{sec:UnitaryAngle} with the spectrum of the coset graph $\Gamma_X(A, B)$. 

In the special case where $A \cap B = \{e\}$, the coset graph $\Gamma_X(A, B)$ is tightly related to the Cayley graph of $X$ with respect to $A \cup B \setminus \{e\}$. In order to describe that relation, we recall that the \textbf{line graph} associated with a graph $\mathcal G = (V, E)$ is the graph $\mathscr L(\mathcal G)$ with vertex set $E$, and where two vertices are adjacent if they represent edges sharing a vertex. The following observation is straightforward from the definition. 

\begin{lem}\label{lem:LineGraph}
Let $X$ be a group and $A, B\leq X$ be subgroups such that $X = \langle A \cup B\rangle$ and $A \cap B = \{e\}$. Then the line graph of the coset graph $\Gamma_X(A, B)$ is isomorphic to the Cayley graph of $X$ with respect to the generating set $A \cup B \setminus \{e\}$.\qed
\end{lem}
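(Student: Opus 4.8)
The plan is to unwind both constructions explicitly and exhibit a bijection on vertices that respects adjacency. Recall that the coset graph $\Gamma_X(A,B)$ has vertex set $X/A \sqcup X/B$ and, since $A \cap B = \{e\}$, its edge set is $X/(A\cap B) = X$ itself, with each group element $x \in X$ incident to the coset $xA$ and the coset $xB$. Thus edges of $\Gamma_X(A,B)$ are naturally labelled by elements of $X$. The line graph $\mathscr L(\Gamma_X(A,B))$ therefore has vertex set $X$, and two elements $x, y \in X$ are adjacent in the line graph precisely when the corresponding edges share a vertex of $\Gamma_X(A,B)$, i.e. when $xA = yA$ or $xB = yB$, with $x \neq y$.

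First I would translate this adjacency condition into the language of the generating set. The relation $xA = yA$ with $x \neq y$ is equivalent to $x^{-1}y \in A \setminus \{e\}$, and likewise $xB = yB$ is equivalent to $x^{-1}y \in B \setminus \{e\}$. Hence $x$ and $y$ are adjacent in $\mathscr L(\Gamma_X(A,B))$ if and only if $x^{-1}y \in (A \cup B) \setminus \{e\}$. On the other hand, the Cayley graph of $X$ with respect to the generating set $S = (A \cup B)\setminus\{e\}$ has vertex set $X$, with $x$ and $y$ adjacent exactly when $x^{-1}y \in S$. Since $A$ and $B$ are subgroups, $S$ is symmetric (closed under inversion) and does not contain $e$, so the Cayley graph is a well-defined simple graph, and the two adjacency conditions coincide verbatim.

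The identity map on $X$ is therefore the desired graph isomorphism: it is a bijection of vertex sets and, by the computation above, preserves and reflects adjacency. The only point requiring a moment of care is that the line graph is simple, i.e. that distinct edges of $\Gamma_X(A,B)$ give distinct elements of $X$ and that no pair of edges shares more than one vertex; this is immediate since edges are literally indexed by elements of $X$, and $A \cap B = \{e\}$ guarantees that an element lying in both $A \setminus\{e\}$ and $B\setminus\{e\}$ cannot occur, so the two ways of being adjacent never collide. The hypothesis $X = \langle A \cup B \rangle$ plays no role in establishing the isomorphism itself but merely ensures that $S$ generates $X$, so that the relevant Cayley graph is connected and the statement is the intended one.

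I do not expect a genuine obstacle here; the result is essentially a bookkeeping exercise, as the lemma's own phrasing (``straightforward from the definition'') signals. The subtlest issue is purely notational, namely keeping the labelling of edges by $X$ consistent with the coset incidences, but once the adjacency conditions are written side by side the identification is forced.
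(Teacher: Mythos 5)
Your proof is correct and is exactly the intended argument: the paper marks this lemma as immediate from the definitions and gives no proof, and your unwinding (edges of $\Gamma_X(A,B)$ indexed by $X = X/(A\cap B)$, with adjacency of edges $x,y$ equivalent to $x^{-1}y \in (A\cup B)\setminus\{e\}$) is the standard verification. Your remark that $A \cap B = \{e\}$ prevents two edges from sharing both endpoints, so the line graph is simple, is the one point worth spelling out and you handle it correctly.
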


The following result is inspired by the work of I.~Oppenheim \cite{Oppenheim} and from the first step of the proof of Lemma 4.6 in \cite{DJ02}. 

\begin{thm}\label{thm:Spectrum}
Let $X$ be a finite group and $A, B \leq X$ be proper subgroups such that $X = \langle A \cup B \rangle$. Let also $\Delta$ be the combinatorial Laplacian on the coset graph $\Gamma_X(A, B)$. 
\begin{enumerate}[(i)]
\item For every  unitary representation $\pi$ of $X$, the real number $1-\varepsilon_X(A, B;\pi)$ is an eigenvalue of $\Delta$.

\item $1- \varepsilon_X(A, B)$ is the smallest positive eigenvalue of $\Delta$. 
\end{enumerate}
\end{thm}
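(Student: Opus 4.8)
The plan is to determine the full spectrum of $\Delta$ by realizing the coset graph inside the regular representation of $X$ and diagonalizing the relevant averaging operators through the Peter--Weyl decomposition. Recall that $\Delta = I - \mathcal{M}$, where $\mathcal M = \left(\begin{smallmatrix} 0 & N\\ N^* & 0\end{smallmatrix}\right)$ is the normalized adjacency operator of the bipartite graph $\Gamma_X(A,B)$ on $\ell^2(X/A)\oplus\ell^2(X/B)$ and $N\colon \ell^2(X/B)\to\ell^2(X/A)$ is its normalized bipartite adjacency. The graph is biregular, every $X/A$-vertex having degree $d_A = [A:A\cap B]$ and every $X/B$-vertex degree $d_B = [B:A\cap B]$, and it is connected because $X = \langle A\cup B\rangle$. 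Hence the eigenvalues of $\Delta$ are exactly the numbers $1\pm s$ with $s$ a singular value of $N$, the Perron value $s=1$ is simple and yields the eigenvalue $0$, and both assertions reduce to identifying the singular values of $N$ with representation angles.

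First I would embed both coordinate spaces into $\ell^2(X)$. Let $Q_A = \frac{1}{|A|}\sum_{a\in A} R(a)$ and $Q_B = \frac{1}{|B|}\sum_{b\in B} R(b)$ be the orthogonal projections onto the right-$A$- and right-$B$-invariant functions, which are canonically $\ell^2(X/A)$ and $\ell^2(X/B)$ through the unitary identification $e_{xA}\mapsto |A|^{-1/2}\mathbf 1_{xA}$. A direct matrix-coefficient computation gives $\langle Q_A\mathbf 1_{yB}, \mathbf 1_{xA}\rangle = |A\cap B|$ when $xA\cap yB\neq\emptyset$ (that is, when $xA$ and $yB$ are adjacent) and $0$ otherwise; dividing by $\|\mathbf 1_{xA}\|\,\|\mathbf 1_{yB}\| = \sqrt{|A|\,|B|}$ yields precisely the normalized-adjacency entry $1/\sqrt{d_A d_B}$. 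Thus, under these identifications, $N$ is the compression $Q_A|_{\mathrm{Im}\, Q_B}$, so its singular values are the cosines of the principal angles between the subspaces $\ell^2(X/A)$ and $\ell^2(X/B)$ of $\ell^2(X)$.

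The key step is to diagonalize. Since $Q_A$ and $Q_B$ are built from right translations, they commute with the left regular representation $L$, and I would split $\ell^2(X)\cong\bigoplus_\sigma V_\sigma\otimes V_\sigma^*$ over the irreducible representations $\sigma$ of $X$, with $L$ acting on the first factor and the right regular representation on the second. On the $\sigma$-block one has $Q_A = \mathrm{id}_{V_\sigma}\otimes P_A^{\sigma^*}$ and $Q_B = \mathrm{id}_{V_\sigma}\otimes P_B^{\sigma^*}$, where $P_A^{\sigma^*} = \frac{1}{|A|}\sum_{a\in A}\sigma^*(a)$ is the orthogonal projection onto $(V_\sigma^*)^A$. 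Consequently the singular values of $N$ coming from $\sigma$ are the cosines of the principal angles between $(V_\sigma^*)^A$ and $(V_\sigma^*)^B$, each with multiplicity $\dim\sigma$; recalling that $\varepsilon_X(A,B;\sigma) = \|P_A^\sigma P_B^\sigma\|$, the largest of them is $\varepsilon_X(A,B;\sigma^*) = \varepsilon_X(A,B;\sigma)$ (complex conjugation is anti-unitary and preserves these angles). The trivial representation contributes the single value $1$, which is the simple Perron value by connectedness, while every nontrivial $\sigma$ contributes values strictly below $1$: a cosine equal to $1$ would produce a nonzero vector fixed by both $A$ and $B$, hence by $\langle A\cup B\rangle = X$, which is impossible for a nontrivial irreducible representation.

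With this dictionary both parts follow. For (i), decomposing $\pi$ into irreducibles makes the projections onto $V^A$ and $V^B$ block-diagonal, so $\varepsilon_X(A,B;\pi)$ equals the maximum of $\varepsilon_X(A,B;\sigma)$ over the nontrivial irreducible constituents $\sigma$ of $\pi$; being a singular value of $N$, the number $1 - \varepsilon_X(A,B;\pi)$ is an eigenvalue of $\Delta$ (the degenerate case where $\pi$ is a multiple of the trivial representation, for which the convention sets $\varepsilon = 0$, plays no role in the sequel). For (ii), the quantity $\varepsilon_X(A,B) = \max_{\sigma\neq\mathbf 1}\varepsilon_X(A,B;\sigma)$ is the largest singular value of $N$ lying strictly below the simple Perron value $1$, so $1 - \varepsilon_X(A,B)$ is the second-smallest eigenvalue of $\Delta$, namely its smallest positive eigenvalue. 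The step I expect to be the main obstacle is the simultaneous diagonalization of $Q_A$ and $Q_B$ through Peter--Weyl with the correct multiplicity bookkeeping, together with the two boundary facts underpinning (ii)---simplicity of the Perron value and strictness of $\varepsilon_X(A,B;\sigma) < 1$ for nontrivial $\sigma$---both of which rely essentially on the hypothesis $X = \langle A\cup B\rangle$.
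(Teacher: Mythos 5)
Your argument is correct and amounts to a self-contained proof that is organized differently from the paper's. The paper gives two proofs: the first sets $P = p_Ap_Bp_X^{\perp}$ (in the form $p_Ap_B-p_X$), identifies $\varepsilon_X(A,B;\pi)$ with $\|P\|$, and then \emph{cites} Oppenheim's Lemma~4.19 for the fact that the eigenvalues of $P^*P$ in the regular representation are the numbers $(1-\eta)^2$ with $\eta$ an eigenvalue of $\Delta$; the second proof avoids that citation by a direct Dirichlet-form computation $\langle \Delta F,F\rangle_\Gamma=\|f_A\|^2+\|f_B\|^2-2\mathrm{Re}\langle f_A,f_B\rangle$ borrowed from Dymara--Januszkiewicz, which yields only the extremal inequality in both directions. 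You instead prove the spectral correspondence from scratch: you realize the symmetrically normalized bipartite adjacency $N$ as the compression $Q_A|_{\mathrm{Im}\,Q_B}$ inside $\ell^2(X)$, diagonalize via Peter--Weyl, and read off the full spectrum of $\Delta$ as $\{1\pm\cos\theta\}$ over all principal angles $\theta$ between $(V_\sigma)^A$ and $(V_\sigma)^B$, each with multiplicity $\dim\sigma$. This buys more than the theorem asks for -- the complete spectrum with multiplicities, which is exactly the mechanism behind the Lafferty--Rockmore-type computations the paper alludes to -- at the cost of the multiplicity bookkeeping you rightly identify as the delicate step; your treatment of the two boundary facts (simplicity of the Perron value from connectedness, and strictness of $\varepsilon_\sigma<1$ for nontrivial $\sigma$ from $X=\langle A\cup B\rangle$) is exactly right.

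One small caveat, which is really an infelicity of the theorem's statement rather than of your proof: in assertion (i), if $\varepsilon_X(A,B;\pi)=0$ because every nontrivial constituent $\sigma$ of $\pi$ has $V_\sigma^A=\{0\}$ or $V_\sigma^B=\{0\}$ (so the convention forces $\varepsilon=0$), the claim requires $1$ to be an eigenvalue of $\Delta$, which can fail -- e.g.\ for the cube, where $\Delta$ has spectrum $\{0,2/3,4/3,2\}$ and the nontrivial one-dimensional characters of $\Alt(4)$ have no $A$-fixed vectors. Your parenthetical only excuses the case where $\pi$ is a multiple of the trivial representation, so it does not quite cover this; but the same issue afflicts the paper's own proof, and the statement is only ever used for representations with nonzero fixed vectors, so nothing downstream is affected.
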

\begin{proof}
Let $V$ be the vector space of the representation $\pi$, and let $V^A$, $V^B$ and $V^X$ be the subspaces constisting of the $A$-, $B$- and $X$-invariant vectors respectively. Let also $p_A$, $p_B$ and $p_X$ denote the orthogonal projections on $V^A$, $V^B$ and $V^X$. One checks that $\varepsilon_X(A, B;\pi)$ coincides with the operator norm $\|p_A p_B-p_X\|$, see \cite[Remark~3.8]{Oppenheim}. Since $X$ is a finite group, the representation  $\pi$ is a direct sum of irreducible subrepresentations. There is thus no loss of generality in assuming that $\pi$ is irreducible and non-trivial; in particular, we assume henceforth that $V$ is finite-dimensional.  

Set $P = p_A p_B-p_X$. 
For any non-zero vector $x \in V$, we have 
    $$\|Px\|^2  =  \langle Px, Px \rangle = \langle x, P^*Px\rangle 
     \leq \|x\| \|P^*Px \| $$
 by the Cauchy-Schwarz inequality. In particular, we have $\|Px\|^2 \leq \mu \|x\|^2$, where $\mu$ is the largest eigenvalue of the positive operator $P^*P$; moreover the equality case is achieved if $x$ is a $\mu$-eigenvector of $P^*P$.  Since $\pi$ is a subrepresentation of the left-regular representation $\lambda_X$ of $X$, we deduce from \cite[Lemma~4.19]{Oppenheim} that every eigenvalue of $P^*P$ is of the form $(1-\eta)^2$, where $\eta$ is an eigenvalue of $\Delta$. We deduce that $\|P\| = \sqrt \mu = 1-\eta$ for some eigenvalue $\eta$ of $\Delta$. This proves (i). 

Applying the same reasoning to the regular representation $\lambda_X$ of $X$, we deduce that $\varepsilon_X(A, B) = \varepsilon_X(A, B;\lambda_X) \leq 1-\eta_2$, where $\eta_2$ is the smallest positive   eigenvalue of $\Delta$. On the other hand, we know from \cite[Lemma~4.19]{Oppenheim} that if $\pi =\lambda_X$ is the left-regular representation, then the set of eigenvalues of $P^*P$ coincides with  $\{(1-\eta)^2 \mid \eta \text{ is an eigenvalue of } \Delta\}$. Thus the largest eigenvalue of $P^*P$ is equal to $(1-\eta_2)^2$, so that $\varepsilon_X(A, B) = 1-\eta_2$ as required. 
\end{proof} 

The following alternative argument does not rely on \cite[Lemma~4.19]{Oppenheim}, but is inspired instead by the first step of the proof of Lemma 4.6 in \cite{DJ02}. 

\begin{proof}[Alternative proof of Theorem~\ref{thm:Spectrum}(ii)]
Set $d_A = [A:A\cap B]$ and $d_B = [B:A\cap B]$. Consider the Hermitian space $\ell^2(\Gamma)$ of complex-valued functions  defined on the vertex set of $\Gamma$. The  inner product is defined by $\langle \phi , \psi \rangle_\Gamma = \sum_{v \in X/A} d_A \phi(v) \overline{\psi(v)} + \sum_{v \in X/B} d_B  \phi(v) \overline{\psi(v)}$. 
Let $\Delta$ be the combinatorial Laplacian on $\Gamma = \Gamma_X(A, B)$. Thus, for a complex-valued function $f$ defined on the vertex set of $\Gamma$, we have $\Delta(f)(v) = f(v) - \frac 1 {d_A} \sum_{w \in N(v)} f(w)$ if $v \in X/A$, and $\Delta(f)(v) = f(v) - \frac 1 {d_B} \sum_{w \in N(v)} f(w)$ if $v \in X/A$, where $N(v)$ denotes the set of neighbours of $v$ in the graph $\Gamma$. With respect to the inner product on $\ell^2(\Gamma)$ defined above, the Laplacian $\Delta$ is a positive operator. Moreover, its spectrum is symmetric around $1$ (see Proposition~2.14 in \cite{Oppenheim}). In particular the spectrum of $\Delta$ is contained in the interval $[0, 2]$, and we have $\delta \in (0, 1]$. 

 Let   $\ell^2_0(\Gamma)$ be the subspace defined as the orthogonal complement of the space of constant functions on $\Gamma$. Since $\Gamma$ is connected, the latter is the eigenspace associated with the eigenvalue $0$ of $\Delta$. We also consider   the right regular representation of $X$ on $\ell^2(X)$, and its $X$-invariant subspace   $\ell_0^2(X)$ consisting of the functions with zero sum. We normalize the counting measure on $X$ so that the full measure of the subgroup $A \cap B$ is~$1$. 
 With this normalization, the inner product $\langle \cdot , \cdot \rangle_X$ on $\ell^2(X)$ is such that for all $\phi, \psi \in \ell^2(X)$ that are $A \cap B$-invariant, we have $\langle \phi , \psi \rangle_X= \sum_{y \in X/A\cap B} \phi(y)\overline{\psi(y)}$, where $\phi$ and $\psi$ are viewed as functions on the coset space $X/A\cap B$. 

Since every non-trivial irreducible  unitary representation of $X$ is contained in the left regular representation $\lambda$ of $X$,  we have $\varepsilon_X(A, B)  = \varepsilon_X(A, B;\lambda)$.
 Let now $f_A, f_B \in \ell^2(X)$ be functions that are $A$- and $B$-invariant respectively. Then $f_A, f_B$ may be viewed as function on $X/A$ and $X/B$ respectively. Denoting by $F = f_A \sqcup f_B$ the function on $X/A \sqcup X/B$ defined in the natural way, we see that $F$ is an element of $\ell^2(\Gamma)$. In view of the normalizations of the inner products on $\ell^2(X)$ and $\ell^2(\Gamma)$ chosen above, we have $\|F\|^2_\Gamma = \|f_A\|_X^2 + \|f_B\|_X^2$. Borrowing a computation from  the proof of Lemma 4.6 in \cite{DJ02}, we obtain
\begin{align*}
    \langle \Delta F, F \rangle_\Gamma 
     = & \sum_{v \in X/A} d_A \big( f_A(v) - \frac 1 {d_A} \sum_{w \in N(v)} f_B(w)\big)\overline{f_A(v)}\\
     & + 
    \sum_{v \in X/B} d_B \big( f_B(v) - \frac 1 {d_B} \sum_{w \in N(v)} f_A(w)\big)\overline{f_B(v)}\\
     = & \  \| F \|_\Gamma^2 - \sum_{v \in X/A}\sum_{w \in N(v)} f_B(w) \overline{f_A(v)} - \sum_{v \in X/B}\sum_{w \in N(v)} f_A(w) \overline{f_B(v)}\\
     = & \ \| F \|_\Gamma^2 - \sum_{(v, w) \in E(\Gamma)} f_B(w)\overline{f_A(v)} + f_A(v) \overline{f_B(w)}\\
     = & \ \| F \|_\Gamma^2 - 2 \mathrm{Re}\big(\sum_{(v, w) \in E(\Gamma)} f_A(v) \overline{f_B(w)}\big)\\
     = & \ \| f_A \|_X^2 +  \| f_B \|_X^2 - 2 \mathrm{Re}\big( \langle f_A, f_B \rangle_X \big).
\end{align*}
If $f_A, f_B$ have norm~$1$, or more generally if  $\|F\|_\Gamma^2 = 2$, then we obtain $\mathrm{Re}\big( \langle f_A, f_B \rangle_X \big) = 1 - \frac { \langle \Delta F, F \rangle_\Gamma}{ \| F\|_\Gamma^2}$. 

We now suppose in addition that $f_A$ and $f_B$ have zero sum, i.e. they belong to $\ell^2_0(X)$. Then $F$ belongs to $\ell^2_0(\Gamma)$. 
By definition, the smallest eigenvalue of the restriction of $\Delta$ to $\ell^2_0(\Gamma)$ is $\delta$. Therefore we have $\frac { \langle \Delta F, F \rangle_\Gamma}{ \| F\|_\Gamma^2} \geq \delta$ since $\Delta$ is positive. We infer that $\mathrm{Re}\big( \langle f_A, f_B \rangle_X \big) \leq 1- \delta$ for all $f_A, f_B$ of norm~$1$ belonging to $\ell^2_0(X)$. For any two such functions $f_A, f_B$, we write there exists a complex number $\theta$ of modulus~$1$ such that $ \langle f_A , f_B\rangle = |\langle  f_A, f_B\rangle|e^{i\theta} $. Therefore $\langle e^{-i\theta} f_A, f_B\rangle$ is a positive real number, and since $e^{-i\theta} f_A$ is an $A$-invariant vector of norm~$1$ in $\ell_0^2(X)$, we deduce from the above that 
$|\langle  f_A, f_B\rangle| = \langle e^{-i\theta} f_A, f_B\rangle \leq 1-\delta$. In particular we have $\varepsilon_X(A, B) \leq 1-\delta$. 

To prove the converse inequality, we choose a $\delta$-eigenvector $F \in \ell^2_0(\Gamma)$ of $\Delta$. Up to scaling, we may  assume that $\|F\|_\Gamma^2 = 2$. Let $f_A$ (resp. $f_B$) be the restriction of $F$ to $X/A$ (resp. $X/B$). We view $f_A, f_B$ as elements of $\ell^2(X)$. By \cite[Proposition~2.16]{Oppenheim}, we have $f_A, f_B \in \ell^2_0(X)$.  

From the computation above, we deduce that $|\langle f_A, f_B \rangle_X| \geq \mathrm{Re}\big( \langle f_A, f_B \rangle_X \big)  = 1 - \frac { \langle \Delta F, F \rangle_\Gamma}{ \| F\|_\Gamma^2} = 1 - \delta$. Therefore $\varepsilon_X(A, B) \geq 1-\delta$. 
\end{proof}

Theorem~\ref{thm:Spectrum} has several useful consequences. First observe that if $\Gamma$ is any finite bipartite graph and $X \leq \Aut(X)$ acts edge-transitively, by preserving the canonical bipartition of $\Gamma$, then $\Gamma$ can  be identified with the coset graph $\Gamma_X(A, B)$, where $A, B$ are the stabilizers in $X$ of two adjacent vertices in $\Gamma$. Thus Theorem~\ref{thm:Spectrum} provides a way to compute the spectral gap of certain edge-transitive bipartite graphs using representation theory. For Cayley graphs, this is rather standard, see for example \cite{LaffertyRockmore} or  \cite{DSV}.

Theorem~\ref{thm:Spectrum} can also be used in the other direction, to compute the representation angle of some triple $(X, A, B)$ using spectral graph theory. We record the following. 

\begin{cor}\label{cor:Spec_Cayley}
Let $X$ be a finite group and $A, B \leq X$ be  subgroups such that $X = \langle A \cup B \rangle$. Assume that $[A:A \cap B] = [B : A \cap B] = k$. Let $\eta_2$ (resp.  $\lambda_2$) be the second largest eigenvalue of the adjacency matrix of the coset graph $\Gamma_X(A, B)$ (resp. the Cayley graph of $X$ with respect to the generating set $A \cup B \setminus \{e\}$). 

Then $\varepsilon_X(A, B) = \frac{\eta_2} k$. 

If in addition $A \cap B = \{1\}$, then  $\varepsilon_X(A, B) =  \frac{\lambda_2 - k +2} k$.
\end{cor}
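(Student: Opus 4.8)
The plan is to deduce both formulas from Theorem~\ref{thm:Spectrum}(ii), which identifies $1 - \varepsilon_X(A,B)$ with the smallest positive eigenvalue $\delta$ of the combinatorial Laplacian $\Delta$ on $\Gamma = \Gamma_X(A, B)$. First I would record that the hypothesis $[A:A\cap B] = [B:A\cap B] = k$ makes $\Gamma$ a $k$-regular bipartite graph, so that in the notation of the alternative proof one has $d_A = d_B = k$ and hence $\Delta = I - \tfrac{1}{k}\mathcal{A}$, where $\mathcal{A}$ denotes the adjacency matrix of $\Gamma$. The eigenvalues of $\Delta$ are thus exactly the numbers $1 - \mu/k$, as $\mu$ runs over the eigenvalues of $\mathcal{A}$. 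Since $X = \langle A \cup B\rangle$, the graph $\Gamma$ is connected, so its largest adjacency eigenvalue equals $k$ and has multiplicity one; this eigenvalue corresponds to the eigenvalue $0$ of $\Delta$ afforded by the constant functions. Consequently the smallest positive eigenvalue of $\Delta$ is $\delta = 1 - \eta_2/k$, where $\eta_2$ is the second largest eigenvalue of $\mathcal{A}$. Theorem~\ref{thm:Spectrum}(ii) then gives $\varepsilon_X(A,B) = 1 - \delta = \eta_2/k$, which is the first assertion.

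For the second assertion I would pass from $\Gamma$ to its line graph using Lemma~\ref{lem:LineGraph}: when $A \cap B = \{1\}$, the line graph $\mathscr{L}(\Gamma)$ is isomorphic to the Cayley graph of $X$ with respect to $A \cup B \setminus \{e\}$. The relation between the spectra of a graph and its line graph is classical and flows from the (unsigned) vertex--edge incidence matrix $N$ of $\Gamma$, which satisfies $NN^{\mathsf{T}} = \mathcal{A} + kI$ (as $\Gamma$ is $k$-regular) and $N^{\mathsf{T}}N = \mathcal{A}(\mathscr{L}(\Gamma)) + 2I$. Because $NN^{\mathsf{T}}$ and $N^{\mathsf{T}}N$ share the same nonzero eigenvalues, every adjacency eigenvalue $\mu \neq -k$ of $\Gamma$ produces the eigenvalue $\mu + k - 2$ of the Cayley graph, the remaining eigenvalues of the Cayley graph all being equal to $-2$. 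In particular $\mu = k$ yields the top eigenvalue $2k - 2$ (the regularity degree of the Cayley graph), and $\mu = \eta_2$ yields $\eta_2 + k - 2$.

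It then remains to check that $\eta_2 + k - 2$ is genuinely the second largest eigenvalue $\lambda_2$ of the Cayley graph, i.e.\ that it exceeds the spurious value $-2$. This is where one must use that $\Gamma$ is connected \emph{and} bipartite: the incidence matrix of a connected bipartite graph on $n$ vertices has rank $n-1$, so the eigenvalue $-k$ of $\mathcal{A}$ (which is forced to appear by the symmetry of the spectrum of a bipartite graph) has multiplicity exactly one, whence $\eta_2 > -k$ strictly and therefore $\eta_2 + k - 2 > -2$. It follows that $\lambda_2 = \eta_2 + k - 2$, i.e.\ $\eta_2 = \lambda_2 - k + 2$, and substituting into the first formula gives $\varepsilon_X(A,B) = (\lambda_2 - k + 2)/k$.

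I expect the only delicate point to be this last comparison: one has to rule out that the kernel of $N$ (the $-2$ eigenvalues of $\mathscr{L}(\Gamma)$) overtakes $\eta_2 + k - 2$, which is precisely what the simplicity of the extremal eigenvalue $-k$ for a connected bipartite graph guarantees. Everything else is a routine translation between the Laplacian spectrum of $\Gamma$, the adjacency spectrum of $\Gamma$, and the incidence-matrix identities linking $\Gamma$ to $\mathscr{L}(\Gamma)$.
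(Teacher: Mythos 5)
Your argument is correct and follows essentially the same route as the paper: the first identity comes from Theorem~\ref{thm:Spectrum}(ii) together with $\delta = 1 - \eta_2/k$ for the $k$-regular graph $\Gamma$, and the second from Lemma~\ref{lem:LineGraph} combined with the classical line-graph spectrum relation (which the paper simply cites from Cvetkovi\'c rather than deriving via the incidence matrix). Your explicit check that $\eta_2 + k - 2$ beats the spurious eigenvalue $-2$, using simplicity of the eigenvalue $-k$ for a connected bipartite graph, is a worthwhile detail that the citation glosses over, but it does not change the approach.
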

\begin{proof}
Since $\delta = 1 - \frac{\eta_2} k$, where $\delta$ denote the smallest positive eigenvalue of the Laplacian on $\Gamma_X(A, B)$, we deduce directly from 
 Theorem~\ref{thm:Spectrum} that  $\varepsilon_X(A, B) = \frac{\eta_2} k$. We now assume that $A \cap B = \{1\}$. It then follows  from Lemma~\ref{lem:LineGraph} that the Cayley graph of $X$ with respect to the generating set $A \cup B \setminus \{e\}$ is isomorphic to the line graph $\mathscr L(\Gamma)$ of $\Gamma = \Gamma_X(A, B)$. The relation between the spectrum of a graph and the spectrum of its line graph is well known, see Theorem~1 in \cite{Cvtekovic75}. In particular we have $\eta_2 = \lambda_2 - k +2$. The result follows. 
\end{proof}

Theorem~\ref{thm:Spectrum} can also be combined with important results on spectral graph theory to provide upper bounds on the representation angle.

\begin{cor}\label{cor:Bounds}
Let $X$ be a finite group and $A, B \leq X$ be  subgroups such that $X = \langle A \cup B \rangle$. Assume that $[A:A \cap B] = [B : A \cap B] = k$. Let $D$ be the diameter of $\Gamma_X(A, B)$. Then 
$$\varepsilon_X(A, B) \geq \frac{2\sqrt{k-1}} k (1 - \frac 1 {\lfloor D/2 \rfloor}) + \frac 1 {k\lfloor D/2 \rfloor}.$$
\end{cor}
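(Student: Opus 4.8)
The plan is to reduce the statement to a classical Alon--Boppana-type lower bound on the second eigenvalue of a regular graph, via the spectral identity already established. First I would observe that the coset graph $\Gamma = \Gamma_X(A, B)$ is $k$-regular: a vertex $gA \in X/A$ lies on exactly the edges $h(A\cap B)$ with $h \in gA$, of which there are $[A : A\cap B] = k$, and symmetrically every vertex of $X/B$ has degree $[B : A\cap B] = k$. Moreover $\Gamma$ is bipartite and, since $X = \langle A \cup B\rangle$, connected, so its Perron eigenvalue is $k$ and is simple. By the first assertion of Corollary~\ref{cor:Spec_Cayley} we have $\varepsilon_X(A, B) = \eta_2/k$, where $\eta_2$ is the second largest eigenvalue of the adjacency matrix of $\Gamma$. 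Consequently the corollary is equivalent to the purely graph-theoretic inequality
\[
\eta_2 \;\geq\; 2\sqrt{k-1}\Bigl(1 - \tfrac{1}{\lfloor D/2\rfloor}\Bigr) + \tfrac{1}{\lfloor D/2\rfloor} \;=\; 2\sqrt{k-1} - \frac{2\sqrt{k-1}-1}{\lfloor D/2\rfloor},
\]
which is exactly the explicit form of the Alon--Boppana bound, in the version due to Nilli.

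To establish that inequality I would argue by the Courant--Fischer min-max principle. Write $m = \lfloor D/2\rfloor$ and let $M$ denote the adjacency operator of $\Gamma$. Choose two vertices $u, v$ with $d(u, v) = D$; then the balls of radius $m-1$ around $u$ and $v$ are disjoint. On each such ball I would place a test function mimicking the radial $\ell^2$-eigenfunction of the infinite $k$-regular tree, namely one whose value on the sphere of radius $j$ about the centre is $(k-1)^{-j/2}$ for $0 \leq j \leq m-1$ and $0$ beyond. A direct computation of the Rayleigh quotient $\langle M f, f\rangle/\langle f, f\rangle$, using only that consecutive spheres in a $k$-regular graph grow by a factor at most $k-1$, shows that it is at least $2\sqrt{k-1} - (2\sqrt{k-1}-1)/m$. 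Since the two test functions have disjoint supports, they span a two-dimensional space on which every nonzero vector has Rayleigh quotient bounded below by this same value; min-max then yields the bound for $\eta_2$ rather than for the Perron eigenvalue $\eta_1 = k$. Dividing by $k$ gives the corollary.

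The Rayleigh-quotient estimate itself is routine. I expect the genuine difficulty to lie in the boundary bookkeeping needed to land on the precise constant in the statement: converting the diameter datum $d(u, v) = D$ into the admissible radius $\lfloor D/2\rfloor$ for disjoint balls, and---if one prefers Nilli's formulation in terms of the distance between two edges rather than two vertices---absorbing the resulting additive shift in the distance so that the denominator comes out as exactly $\lfloor D/2\rfloor$. For this reason the cleanest route is to quote Nilli's theorem in its sharp explicit form and simply apply it to the $k$-regular graph $\Gamma$ of diameter $D$, the only preparatory work being the regularity check and the appeal to Corollary~\ref{cor:Spec_Cayley}.
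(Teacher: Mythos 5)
Your proposal is correct and is essentially the paper's own argument: the paper's proof is the one-line ``Immediate from Corollary~\ref{cor:Spec_Cayley} and Theorem~1 in \cite{Nilli}'', i.e.\ exactly the reduction you describe (regularity of the coset graph, $\varepsilon_X(A,B)=\eta_2/k$, then Nilli's explicit Alon--Boppana bound). Your additional sketch of Nilli's test-function argument and the remark about the diameter-versus-edge-distance bookkeeping are accurate but go beyond what the paper records.
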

\begin{proof}
Immediate from Corollary~\ref{cor:Spec_Cayley} and Theorem~1 in \cite{Nilli}. 
\end{proof}

We refer to \cite{Friedman_Duke1993} for a slightly stronger bound.
 The lower bound from Corollary~\ref{cor:Bounds} is especially useful when the order of group $X$ is large compared to the order of $A$ and $B$, since for $k$ fixed,  the diameter $D$ is bounded below by a logarithmic function of the order of $X$. 
 
 Recall that a $k$-regular graph is a \textbf{Ramanujan graph} if the second largest eigenvalue of its adjacency matrix is at most~$2\sqrt{k-1}$. 

 \begin{cor}\label{cor:Ramanujan}
 Let $X$ be a finite group and $A, B \leq X$ be  subgroups such that $X = \langle A \cup B \rangle$. Assume that $[A:A \cap B] = [B : A \cap B] = k$. Then the coset graph $\Gamma_X(A, B)$ is Ramanujan if and only if $\varepsilon_X(A, B) \leq \frac{2\sqrt{k-1}} k$.
 \end{cor}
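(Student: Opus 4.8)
The plan is to read the statement off directly from Corollary~\ref{cor:Spec_Cayley} together with the definition of a Ramanujan graph. The first thing I would record is that under the hypothesis $[A:A\cap B]=[B:A\cap B]=k$ the coset graph $\Gamma_X(A,B)$ is $k$-regular: a vertex $gA\in X/A$ is incident precisely to the edges corresponding to the cosets of $A\cap B$ contained in $gA$, of which there are $[A:A\cap B]=k$, and symmetrically every vertex $gB\in X/B$ has degree $[B:A\cap B]=k$. Moreover, $\Gamma_X(A,B)$ is connected because $X=\langle A\cup B\rangle$. Thus the Ramanujan condition indeed applies to this graph.

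Next I would unwind the definition. By the definition recalled just before the statement, the $k$-regular graph $\Gamma_X(A,B)$ is Ramanujan if and only if its second largest adjacency eigenvalue $\eta_2$ satisfies $\eta_2\leq 2\sqrt{k-1}$. (Since $\Gamma_X(A,B)$ is a connected bipartite graph, its spectrum is symmetric about $0$, so this single inequality is equivalent to the usual requirement that every eigenvalue $\neq\pm k$ have absolute value at most $2\sqrt{k-1}$.) Now I invoke Corollary~\ref{cor:Spec_Cayley}, which under exactly these hypotheses yields
$$\varepsilon_X(A,B)=\frac{\eta_2}{k},\qquad\text{equivalently}\qquad \eta_2=k\,\varepsilon_X(A,B).$$
Substituting this into the Ramanujan inequality turns $\eta_2\leq 2\sqrt{k-1}$ into $k\,\varepsilon_X(A,B)\leq 2\sqrt{k-1}$, that is, $\varepsilon_X(A,B)\leq \frac{2\sqrt{k-1}}{k}$, which is precisely the claimed equivalence.

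There is essentially no genuine obstacle here: the entire analytic content is already carried by Corollary~\ref{cor:Spec_Cayley} (which in turn rests on Theorem~\ref{thm:Spectrum}), and what remains is purely bookkeeping. The only points that warrant a line of care are the verification that $\Gamma_X(A,B)$ is $k$-regular and connected, so that the notion of a Ramanujan graph is meaningful for it, and the observation that the ``second largest eigenvalue'' appearing in the definition of Ramanujan is literally the quantity $\eta_2$ of Corollary~\ref{cor:Spec_Cayley}. Once these are in place, the proof is a one-line substitution.
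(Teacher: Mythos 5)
Your proof is correct and follows exactly the route the paper takes: the paper's own proof is the one-line remark ``Immediate from Corollary~\ref{cor:Spec_Cayley},'' and your argument simply spells out that substitution (together with the routine checks of $k$-regularity and connectedness) in full.
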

\begin{proof}
Immediate from Corollary~\ref{cor:Spec_Cayley}.
\end{proof}

We end this section with two very specific computation of a representation angle, relying on Corollary~\ref{cor:Spec_Cayley}. Their proofs have been computer-aided. 

\begin{prop}\label{prop:SL2(p)}
Let $(p,a,b,g,\varphi)$ be one of the tuples
\begin{align*}
\left(5,
\begin{pmatrix}
4   & 2 \\
3    & 3
\end{pmatrix},
\begin{pmatrix}
1   & 2 \\
0   & 1
\end{pmatrix},
6,
2.2360679775\right),\\
\left(9,
\begin{pmatrix}
  \zeta^5   & \zeta \\
2     & \zeta^6
\end{pmatrix},
\begin{pmatrix}
\zeta^3   & \zeta^6\\
\zeta^5    & \zeta^3
\end{pmatrix},
8,
3.16227766017\right)\text{,}
\end{align*}
where $\zeta \in \Fbb_9$ is a root of the Conway polynomial $x^2+2x+2 \in \Fbb_3[x]$.
Let $X = \SL_2(p)$ and let $A$ and $B$ be the cyclic subgroups generated by the matrices $a$ and $b$. Then
\begin{enumerate}[(i)]
\item $A$ and $B$ are cyclic of order~$5$.
\item  The girth of $\Gamma_X(A, B)$ equals $g$. 
\item $\abs{5\varepsilon_X(A, B) - \varphi} < 10^{-10}$.
\end{enumerate}
\end{prop}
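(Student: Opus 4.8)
The plan is to verify all three assertions by explicit, finite computations inside $X = \SL_2(\Fbb_p)$, since each is a statement about a concrete finite group and an explicitly described finite bipartite graph, and then to feed the outcome into Corollary~\ref{cor:Spec_Cayley}.

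First I would dispose of (i). The matrices $a$ and $b$ lie in $\SL_2(\Fbb_p)$ (one checks $\det a = \det b = 1$), so it suffices to compute their successive powers and confirm $a, a^2, a^3, a^4 \neq I$ while $a^5 = I$, and likewise for $b$. In characteristic $5$ both matrices over $\Fbb_5$ are unipotent (each has trace $2$), which already forces order $5$; over $\Fbb_9$ the order is read off directly from the power computation. Since $A$ and $B$ have prime order $5$, their intersection is either trivial or equals both, so checking $b \notin A$ gives $A \neq B$, whence $A \cap B = \{1\}$ and $[A : A\cap B] = [B : A \cap B] = 5$. I would also verify that $a$ and $b$ generate $X$, so that $\Gamma_X(A,B)$ is connected. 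These are precisely the hypotheses needed to invoke Corollary~\ref{cor:Spec_Cayley} with $k = 5$.

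Next I would build the coset graph $\Gamma = \Gamma_X(A, B)$ explicitly by enumerating its $2\,|X|/5$ vertices $X/A \sqcup X/B$ and joining $gA$ to $gB$ for each $g \in X$ (legitimate because $A \cap B = \{1\}$ makes edges correspond to group elements). For (ii), the girth of $\Gamma$ is computed by a breadth-first search for the shortest cycle; equivalently, since a non-backtracking closed cycle of length $2m$ corresponds to a nontrivial relation $a_1 b_1 a_2 b_2 \cdots a_m b_m = 1$ with each $a_i$ a nontrivial power of $a$ and each $b_i$ a nontrivial power of $b$, one searches for the shortest such alternating word, increasing $m$ until a relation appears. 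This terminates quickly and returns the value $g$. For (iii), Corollary~\ref{cor:Spec_Cayley} gives $5\,\varepsilon_X(A, B) = \eta_2$, the second largest eigenvalue of the adjacency matrix of $\Gamma$; since $\Gamma$ is a connected $5$-regular bipartite graph, its largest eigenvalue is $5$ with multiplicity one, so $\eta_2$ is genuinely the next one down. The adjacency matrix is a $0$--$1$ integer matrix, so I would compute its characteristic polynomial exactly over $\mathbf{Z}$, isolate the root just below $5$, and certify that it differs from $\varphi$ by less than $10^{-10}$.

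The main obstacle is precisely this certification: a floating-point eigensolver does not by itself deliver a rigorous bound of $10^{-10}$. I would instead work with the exact integer characteristic polynomial, either by exhibiting the relevant quadratic factor explicitly (the numerical values strongly suggest $\eta_2 = \sqrt 5$ for $p=5$ and $\eta_2 = \sqrt{10}$ for $p=9$, i.e.\ a factor $x^2 - 5$ respectively $x^2 - 10$), or by Sturm/interval root isolation, evaluating the integer polynomial at rationals bracketing $\varphi \pm 10^{-10}$ and checking the sign pattern, together with a root count confirming that no other eigenvalue lies in the interval $(\eta_2, 5)$. Recognising the exact quadratic factor is the cleaner route, as it reduces the final inequality to comparing $\varphi$ with the known decimal expansion of $\sqrt 5$ (respectively $\sqrt{10}$), which holds comfortably within the stated tolerance.
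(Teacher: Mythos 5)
Your proposal is correct and takes essentially the same approach as the paper: assertions (i) and (ii) are checked by direct computation in \textsc{Magma}, the coset graph is built explicitly, and (iii) is reduced via Corollary~\ref{cor:Spec_Cayley} to certifying its second-largest adjacency eigenvalue, which the paper does with certified ball arithmetic (\textsc{Arblib}) rather than your exact characteristic-polynomial/Sturm route. Note that the paper does not actually establish the exact factors $x^2-5$ and $x^2-10$; it records the values $\sqrt 5$ and $\sqrt{10}$ only as conjectural in the remark following the proposition, so if you pursue the ``cleaner route'' of exhibiting these factors you would be proving slightly more than the stated $10^{-10}$ bound.
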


\begin{rmk}
The exact values of $5\varepsilon_X(A, B)$ are probably $\sqrt 5$ and $\sqrt{10}$ respectively, for the two triples $(X, A, B )$ appearing in Proposition~\ref{prop:SL2(p)}.
\end{rmk}

\begin{proof}
Assertions (i) and (ii) are straightforward to check with \textsc{Magma}. The verification of (iii) also required computer calculations, but is more involved. The coset graph of $X$ with respect to the subgroup $A$ and $B$ was computed with \textsc{Magma}. Its eigenvalues then were computed in \textsc{Julia} using \textsc{Arblib}  \cite{Johansson2017arb}.
\end{proof}

\begin{prop}\label{prop:PSL2(p)}
Let $(p,a,b,g,\varphi)$ be one of the tuples
\begin{align*}
\left(31,
\begin{pmatrix}
8   & 14 \\
4     & 11
\end{pmatrix},
\begin{pmatrix}
23   & 0 \\
14    & 27
\end{pmatrix},
10,
3.85410196624\right),\\
\left(41,
\begin{pmatrix}
  0   & 28 \\
19     & 35
\end{pmatrix},
\begin{pmatrix}
38   & 27\\
2    & 9
\end{pmatrix},
10,
3.82842712474\right),\\
\left(109,
\begin{pmatrix}
  0   & 1 \\
-1     & 11
\end{pmatrix},
\begin{pmatrix}
  57   & 2 \\
52     & 42
\end{pmatrix},
14,
4.02260136849\right),\\
\left(131,
\begin{pmatrix}
  -58   & -24\\
-58     & 46
\end{pmatrix},
\begin{pmatrix}
  0   & -3 \\
44     & -12
\end{pmatrix},
14,
3.98383854575\right)\text{.}
\end{align*}
Let $X = \PSL_2(p)$ and let $A$ and $B$ be the cyclic subgroups generated by the natural images of the matrices $a$ and $b$. Then
\begin{enumerate}[(i)]
\item $A$ and $B$ are cyclic of order~$5$.
\item  The girth of $\Gamma_X(A, B)$ equals $g$. 
\item $\abs{5\varepsilon_X(A, B) - \varphi} < 10^{-10}$.
\end{enumerate}
\end{prop}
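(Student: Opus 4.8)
The plan is to reduce all three assertions to finite computations, exactly as in the proof of Proposition~\ref{prop:SL2(p)}; the only genuinely delicate point is the certified estimate in~(iii). The unifying observation is that since $A$ and $B$ are cyclic of prime order~$5$, once one knows they are distinct their intersection is forced to be trivial, so $[A:A\cap B]=[B:A\cap B]=5$ and we are in the setting of Corollary~\ref{cor:Spec_Cayley} with $k=5$ and $A\cap B=\{1\}$.

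For assertion~(i), together with the hypotheses needed to invoke Corollary~\ref{cor:Spec_Cayley}, I would carry out three direct checks in \textsc{Magma}: that the natural images $\bar a,\bar b$ in $\PSL_2(p)=\SL_2(p)/\{\pm I\}$ satisfy $a^5\equiv\pm I \pmod p$ while $a^j\not\equiv\pm I$ for $1\le j\le 4$ (and likewise for $b$), so that both have order~$5$; that $\langle \bar a,\bar b\rangle=\PSL_2(p)$, giving $X=\langle A\cup B\rangle$; and that $A\ne B$, whence $A\cap B=\{1\}$. Assertion~(ii) is then a finite graph computation: the coset graph $\Gamma_X(A,B)$ is $5$-regular and bipartite, and its girth is obtained by a breadth-first search in \textsc{Magma}.

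For assertion~(iii), Corollary~\ref{cor:Spec_Cayley} yields $\varepsilon_X(A,B)=\eta_2/5$, where $\eta_2$ is the second largest eigenvalue of the adjacency matrix of $\Gamma_X(A,B)$; equivalently $5\varepsilon_X(A,B)=\eta_2$, so the claim $\abs{5\varepsilon_X(A,B)-\varphi}<10^{-10}$ is precisely the assertion that $\eta_2$ lies within $10^{-10}$ of the tabulated value~$\varphi$. (One could instead route through the Cayley graph using Lemma~\ref{lem:LineGraph} and the relation $\varepsilon_X(A,B)=(\lambda_2-3)/5$, but the coset graph has only $2\abs{X}/5$ vertices against $\abs{X}$ for the Cayley graph, so it is the cheaper object to diagonalize.) The whole proposition thus reduces to building $\Gamma_X(A,B)$ in \textsc{Magma} and computing $\eta_2$.

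The hard part is the inequality ``$<10^{-10}$'': it demands a \emph{certified} eigenvalue bound rather than a floating-point approximation. For the larger primes the coset graph is sizeable --- for $p=131$ it has $2\abs{\PSL_2(131)}/5=449\,592$ vertices --- so the bottleneck is computing the second largest eigenvalue of a large (albeit sparse, $5$-regular) symmetric matrix to guaranteed accuracy. I would handle this with rigorous ball arithmetic via \textsc{Arblib} in \textsc{Julia}, which propagates error bounds through the entire computation and returns an interval provably containing $\eta_2$; it then suffices to check that this interval lies inside $(\varphi-10^{-10},\varphi+10^{-10})$. Exploiting sparsity, and any symmetry of the graph allowing the adjacency operator to be block-diagonalized, would keep the certified computation within reach.
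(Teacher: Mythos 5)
Your treatment of (i) and (ii), and your reduction of (iii) via Corollary~\ref{cor:Spec_Cayley} to a certified bound on the second largest adjacency eigenvalue, all match the paper (the paper phrases (iii) through the Cayley graph, $5\varepsilon_X(A,B)=\lambda_2-3$, rather than the coset graph, but both readings of Corollary~\ref{cor:Spec_Cayley} are legitimate). The gap is in how the certification is actually carried out. You propose to run certified ball arithmetic directly on the full sparse adjacency matrix, which for $p=131$ has about $4.5\times 10^5$ rows. The certified eigenvalue routines in \textsc{Arblib} are dense, cubic-cost, high-precision algorithms; they are nowhere near applicable at that size. Nor does a residual/Rayleigh-quotient certificate for an approximate eigenvector suffice: that only shows \emph{some} eigenvalue lies within the residual of your approximation, whereas (iii) requires a two-sided bound on the \emph{second largest} eigenvalue specifically, hence a proof that no eigenvalue lies in $(\varphi+10^{-10},\,5)$. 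Certifying that exclusion (e.g.\ by an inertia count of $M-(\varphi+10^{-10})I$ in interval arithmetic) is not realistic at this scale either. So as written, the ``hard part'' you correctly identify is not actually resolved.

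The missing idea — which you brush against with the phrase ``any symmetry of the graph allowing the adjacency operator to be block-diagonalized'', but treat as an optional optimization — is in fact the essential step in the paper's proof. Since every non-trivial irreducible representation of the finite simple group $X$ occurs in the regular representation, the Cayley-graph adjacency operator (convolution by the indicator of $A\cup B\setminus\{e\}$) block-diagonalizes as $\sum_{i=1}^{4}\rho(a)^i+\rho(b)^i$ over the irreducible representations $\rho$ of $X$, and $\lambda_2$ is the maximum of the top eigenvalues of these Hermitian blocks over the non-trivial $\rho$. Using the explicit models of the irreducible representations of $\PSL_2(p)$ (Piatetski-Shapiro), each block has dimension at most $p+1\le 132$, and \emph{those} are the matrices fed to \textsc{Arblib} for certified eigenvalue bounds. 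Without making this decomposition the backbone of the argument (and noting that the full-graph ARPACK computation serves only as an uncertified sanity check), your proof of (iii) does not go through.
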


\begin{proof}
Assertions (i) and (ii) are straightforward to check with \textsc{Magma}. The verification of (iii) also required computer calculations, but is much more involved. Let $\lambda_2$ be the second largest eigenvalue of the adjacency matrix of the Cayley graph of $X$ with respect to $A \cup B \setminus\{e\}$. By Corollary~\ref{cor:Spec_Cayley}, we have $5\varepsilon_X(A, B) = \lambda_2 - 3$. Thus we must prove that $\abs{\lambda_2 - 3 - \varphi} < 10^{-10}$. For that purpose, we followed the following computer-assisted computational approach. 

A numerical estimate $\lambda_2 \approx \varphi + 3$ of the eigenvalue was obtained using standard ARPACK eigenvalue routines on the full Cayley graph $\Gamma_X(A,B)$. However, these computations lack certificates of the accuracy of the approximation. In order to obtain the required certification, we have computed the largest eigenvalue of the hermitian operator $\sum_{i=1}^4 \rho(a)^i + \rho(b)^i$ for each non-trivial irreducible representation $\rho$ of $X$ individually. Explicit realizations of those representations are described in \cite{Piat-Shap} (see also \cite{LaffertyRockmore}). One implementation of those irreducible representations was realized with \textsc{Magma}. Another, independent implementation \cite{RamanujanGraphs.jl} was realized in \textsc{Julia} \cite{Julia-2017}.
The certification, including provably correct bounds, was obtained using the \textsc{Arblib} library \cite{Johansson2017arb} and certified eigenvalue computations therein.
The largest eigenvalue among non-trivial irreducible representations of $X$ satisfies (iii).

The obtained values agree numerically with those obtained in \textsc{Magma}.
\end{proof}

\begin{rmk}\label{rem:PSL2(109)}
The group $\mathrm{PSL}_2(109)$ is the smallest finite simple quotient of the free product $C_5 * C_5$ such that the associated coset graph has girth~$\geq 14$. In view of Corollary~\ref{cor:Ramanujan}, we see that the coset graph $\Gamma_X(A, B)$ from the case $p = 109$ in Proposition~\ref{prop:PSL2(p)} is not a Ramanujan graph. The largest eigenvalue among non-trivial irreducible representations in this case is afforded by the principal representation associated to character $\nu_5\colon \mathbf{F}_{109}^* \to \mathbf{C}$, defined by $\nu_5(\alpha) = \zeta^5_{54}$ (where the generator of $\mathbf{F}_{109}^*$ $\alpha = 6_{109}$ was chosen).
\end{rmk}

\begin{rmk}\label{rem:PSL2(131)}
The three coset graphs $\Gamma_X(A,B)$ for $p \ne 109$ in Proposition~\ref{prop:PSL2(p)} are Ramanujan by Corollary~\ref{cor:Ramanujan}.
\end{rmk}

\subsection{Non-discrete groups}

Theorem~\ref{thm:Spectrum} is formulated for a finite group $X$. It is useful to observe that the result extends easily to a possibly non-discrete compact groups (this is actually the set-up originally adopted  in \cite{DJ02}). 

\begin{prop}\label{prop:CompactGroup}
Let $X$ be a compact group and $A, B \leq X$ be proper open subgroups such that $X = \langle A \cup B\rangle$. Let $K = \bigcap_{g \in X} g(A\cap B)g^{-1}$ be the normal core of $A\cap B$, which is an open normal subgroup (hence of finite index) in $X$. Set $\bar X = X/K$, $\bar A = A/K$ and $\bar B = B/K$. Then $\Gamma_X(A, B) = \Gamma_{\bar X}(\bar A, \bar B)$ and $\varepsilon_X(A, B) = \varepsilon_{\bar X}(\bar A, \bar B)$. 

In particular, we have $\varepsilon_X(A, B) = 1- \delta$, where $\delta$ is the smallest positive eigenvalue of the combinatorial Laplacian on the coset graph $\Gamma_X(A, B)$.
\end{prop}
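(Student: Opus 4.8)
The plan is to reduce the entire statement to the finite quotient $\bar X = X/K$ and then invoke Theorem~\ref{thm:Spectrum}. First I would record the structural properties of $K$ asserted in the statement: since $A\cap B$ is open in the compact group $X$ it has finite index, and its normalizer contains $A\cap B$, so it too has finite index; hence $A\cap B$ has only finitely many conjugates, and $K$, being their intersection, is a finite intersection of open subgroups, therefore itself open, normal, and of finite index. The one fact I really want to extract and use repeatedly is the chain of containments $K \leq A\cap B \leq A, B$.

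With that in hand, the graph identity is essentially formal. Because $K$ is contained in $A$, in $B$, and in $A\cap B$, each of the coset spaces $X/A$, $X/B$ and $X/(A\cap B)$ is canonically identified with $\bar X/\bar A$, $\bar X/\bar B$ and $\bar X/(\bar A\cap\bar B)$, where $\bar A\cap\bar B = (A\cap B)/K$. Comparing directly with the definition of the coset graph (vertex set $X/A\sqcup X/B$, edge set $X/(A\cap B)$, incidence by inclusion), this yields $\Gamma_X(A,B) = \Gamma_{\bar X}(\bar A,\bar B)$ verbatim, incidences included.

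For the equality of representation angles, the key observation is that for \emph{any} unitary representation $(V,\pi)$ of $X$ one has $V^A \subseteq V^K$ and $V^B \subseteq V^K$, simply because $K \leq A$ and $K \leq B$; moreover $V^K$ is $X$-invariant since $K$ is normal (for $g\in X$ and $v\in V^K$ the vector $\pi(g)v$ is fixed by $gKg^{-1}=K$), so the restriction of $\pi$ to $V^K$ descends to a representation $\bar\pi$ of $\bar X$. As $V^A, V^B\subseteq V^K$, the supremum defining $\varepsilon_X(A,B;\pi)$ ranges only over vectors lying in $V^K$, whence $\varepsilon_X(A,B;\pi) = \varepsilon_{\bar X}(\bar A,\bar B;\bar\pi)$. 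I would then run this identification in both directions: pulling a representation of $\bar X$ back to $X$ along $X\to\bar X$ gives $\varepsilon_{\bar X}(\bar A,\bar B)\leq\varepsilon_X(A,B)$, while restricting an arbitrary representation of $X$ to $V^K$ gives the reverse inequality, so that $\varepsilon_X(A,B) = \varepsilon_{\bar X}(\bar A,\bar B)$. The step I expect to be the main obstacle is the invariant-vector bookkeeping built into the definition of $\varepsilon$: one must check that passing to $\bar\pi$ neither creates nor destroys invariant vectors. This is where I would be careful, using that $V^X\subseteq V^K$ always holds and that the $\bar X$-fixed vectors of $V^K$ are precisely $V^X$, so the reduction to the orthocomplement of the invariant subspace is compatible on both sides.

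Finally, $\bar X$ is finite, $\bar A$ and $\bar B$ are proper subgroups (as $A, B$ are proper and contain $K$), and $\bar X = \langle \bar A\cup\bar B\rangle$ since $X=\langle A\cup B\rangle$. Thus Theorem~\ref{thm:Spectrum}(ii) applies to the triple $(\bar X;\bar A,\bar B)$ and identifies $\varepsilon_{\bar X}(\bar A,\bar B)$ with $1-\delta$, where $\delta$ is the smallest positive eigenvalue of the combinatorial Laplacian on $\Gamma_{\bar X}(\bar A,\bar B) = \Gamma_X(A,B)$. Combining with the equality of angles established above yields the \emph{in particular} clause and completes the argument.
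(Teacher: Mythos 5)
Your argument is correct, and it reaches the same reduction to Theorem~\ref{thm:Spectrum} as the paper, but the mechanism for the key inequality $\varepsilon_X(A,B;\pi)\leq\varepsilon_{\bar X}(\bar A,\bar B)$ is genuinely different. The paper first reduces to an irreducible non-trivial $\pi$ with $\varepsilon_X(A,B;\pi)>0$, notes that then $V^A\neq\{0\}$, and uses irreducibility (the $X$-orbit of an $A$-fixed vector spans $V$) to conclude that the normal core of $A$, hence $K$, lies in $\Ker(\pi)$, so that $\pi$ itself descends to $\bar X$. You instead work with an arbitrary representation and restrict to the $X$-invariant subspace $V^K$, which contains both $V^A$ and $V^B$ and on which the action factors through $\bar X$; the angle computed in $V$ therefore coincides with the angle computed in the $\bar X$-representation $V^K$. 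Your route avoids both the reduction to irreducibles and the appeal to irreducibility, at the cost of the invariant-vector bookkeeping you correctly flag and resolve (namely $V^X=(V^K)^{\bar X}$, so passing to the orthocomplement of the invariant vectors is compatible on both sides); the paper's route dispenses with that bookkeeping because once $K\leq\Ker(\pi)$ the two representations are literally the same. Both are complete proofs, and the graph identity and the final application of Theorem~\ref{thm:Spectrum} are handled identically.
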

\begin{proof}
The equality $\Gamma_X(A, B) = \Gamma_{\bar X}(\bar A, \bar B)$ is straightforward from the definitions. 

Since any unitary representation of $\bar X$ is also a continuous representation of $X$, we deduce that $\varepsilon_X(A, B) \geq \varepsilon_{\bar X}(\bar A, \bar B)$. In particular, the required equality holds if $\varepsilon_X(A, B) = 0$. Assume now that $\varepsilon_X(A, B) > 0$ and let $\pi$ be an irreducible non-trivial unitary representation of $X$ such that  $\varepsilon_X(A, B; \pi) >0$. Denoting by $V$ the vector space on which $\pi$ is defined, we must have $V^A \neq \{0\}$, since otherwise we have $\varepsilon_X(A, B; \pi)=0$ by definition. Given a non-zero $A$-fixed vector $v \in V$, the $X$-orbit of $v$ spans $V$ (by the irreducibility of $\pi$), so that the normal core $\bigcap_{g \in X} g Ag^{-1}$, which contains $K$, acts trivially on $V$. This shows that $K \leq \Ker(\pi)$, so that $\pi$ descends to a representation of the finite group $\bar X$. Therefore, we have $\varepsilon_X(A, B; \pi) \leq \varepsilon_{\bar X}(\bar A, \bar B)$. The required equality follows. 

The last assertion of the Proposition is a direct consequence of Theorem~\ref{thm:Spectrum}.
\end{proof}

Combining Proposition~\ref{prop:CompactGroup} with Theorem~\ref{thm:EJ}, we obtain a noteworthy criterion ensuring that certain (possibly non-discrete) automorphism groups of $2$-dimensional simplicial complexes have property (T). 

\begin{cor}\label{cor:SimplicialCx:(T)}
Let $Y$ be a $2$-dimensional, connected, locally finite, simplicial complex and $G\leq \Aut(Y)$ be a closed subgroup of its automorphism group. Assume that $G$ acts simplicially, and transitively on the $2$-simplices. Let $v_0, v_1, v_2 \in Y^{(0)}$ be vertices spanning a $2$-simplex. For each $i \in \{0, 1, 2\}$, we assume that the link $\mathrm{Lk}_Y(v_i)$ is connected and we denote by $\delta_i$   the smallest positive eigenvalue of the combinatorial Laplacian on $\mathrm{Lk}_Y(v_i)$.  If
$$(1-\delta_0)^2+(1-\delta_1)^2+(1-\delta_2)^2 + 2(1-\delta_0)(1-\delta_1)(1-\delta_2)<1,$$
then $G$ has Kazhdan's property (T).
\end{cor}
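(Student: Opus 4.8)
The plan is to reduce Corollary~\ref{cor:SimplicialCx:(T)} to a direct application of the Ershov--Jaikin-Zapirain criterion (Theorem~\ref{thm:EJ}), with the local spectral data translated into representation angles via Proposition~\ref{prop:CompactGroup}. First I would set $A_i = \mathrm{Stab}_G(v_i)$ for $i \in \{0,1,2\}$. Since $G$ is closed in $\Aut(Y)$ and $Y$ is locally finite, each $A_i$ is a compact open subgroup of $G$. Because $G$ acts transitively on the $2$-simplices and the complex is connected, the three stabilizers $A_0, A_1, A_2$ generate $G$; this is a standard fact for groups acting simplicially and chamber-transitively on a connected complex, which I would record briefly. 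Each $A_i$, being compact, has relative property (T) in $G$, so the hypotheses of Theorem~\ref{thm:EJ} concerning the subgroups $A_{i-1}, A_{i+1}$ are met.

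Next I would identify the groups $X_i = \langle A_{i-1} \cup A_{i+1}\rangle$ and their relevant coset graphs. The key geometric observation is that the link $\mathrm{Lk}_Y(v_i)$ is precisely the coset graph $\Gamma_{X_i}(A_{i-1}, A_{i+1})$: the vertices of the link adjacent to $v_i$ are the images under the $1$-skeleton of $v_{i-1}$ and $v_{i+1}$, and since $G$ (hence $X_i$) acts on the star of $v_i$ transitively on the edges emanating into the two relevant orbits, the bipartite coset graph with vertex set $X_i/A_{i-1} \sqcup X_i/A_{i+1}$ is exactly the link, by the remark following Theorem~\ref{thm:Spectrum} on edge-transitive bipartite graphs. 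One must verify here that $X_i$ acts edge-transitively on $\mathrm{Lk}_Y(v_i)$ preserving the bipartition, which follows again from the chamber-transitivity of $G$ together with the definition of $X_i$ as generated by the two neighbouring vertex stabilizers.

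I would then invoke Proposition~\ref{prop:CompactGroup} applied to the compact group $X_i$ with its open subgroups $A_{i-1}, A_{i+1}$: this yields $\varepsilon_{X_i}(A_{i-1}, A_{i+1}) = 1 - \delta_i$, where $\delta_i$ is the smallest positive eigenvalue of the combinatorial Laplacian on $\Gamma_{X_i}(A_{i-1}, A_{i+1}) = \mathrm{Lk}_Y(v_i)$. Writing $\varepsilon_i = \varepsilon_{X_i}(A_{i-1}, A_{i+1}) = 1 - \delta_i$, the inequality hypothesis of the corollary is exactly
$$\varepsilon_0^2 + \varepsilon_1^2 + \varepsilon_2^2 + 2\varepsilon_0\varepsilon_1\varepsilon_2 < 1,$$
which is the spectral condition required by Theorem~\ref{thm:EJ}. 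Applying that theorem then immediately gives property~(T) for $G$.

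The main obstacle I anticipate is the careful matching of the combinatorial link with the abstract coset graph, and in particular checking that $X_i$ (rather than the full stabilizer $A_i$) acts with the right transitivity on $\mathrm{Lk}_Y(v_i)$ so that the identification $\Gamma_{X_i}(A_{i-1},A_{i+1}) = \mathrm{Lk}_Y(v_i)$ genuinely holds. A subtlety is that $X_i$ need not equal the full stabilizer $A_i$, yet one only needs $X_i$ to realize the link as its coset graph on the two relevant orbits of link-vertices; this is where chamber-transitivity of $G$ and the local structure of the $2$-simplex around $v_i$ must be combined. Everything else is a mechanical substitution into the hypotheses of Theorem~\ref{thm:EJ} via Proposition~\ref{prop:CompactGroup}.
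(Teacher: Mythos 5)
Your overall strategy (feed the link spectra into Theorem~\ref{thm:EJ} via Proposition~\ref{prop:CompactGroup}) is the right one and is what the paper does, but your choice of subgroups is wrong, and this is not a cosmetic issue: it breaks both hypotheses of Theorem~\ref{thm:EJ}. You set $A_i = \mathrm{Stab}_G(v_i)$ (vertex stabilizers) and $X_i = \langle A_{i-1}\cup A_{i+1}\rangle$. Then $X_i = \langle G_{v_{i-1}}\cup G_{v_{i+1}}\rangle$ is in general a \emph{non-compact} subgroup of $G$ (it contains the whole orbit of $v_{i-1}$ under $G_{v_{i+1}}$ and vice versa, and is typically of finite index in $G$ or even all of $G$), so it is neither compact nor known to have relative property~(T) in $G$ -- you verified relative (T) for the $A_i$'s, but Theorem~\ref{thm:EJ} requires it for the $X_i$'s. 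Worse, the claimed identification $\mathrm{Lk}_Y(v_i) = \Gamma_{X_i}(A_{i-1},A_{i+1})$ is false: the vertex set of that coset graph is $X_i/G_{v_{i-1}} \sqcup X_i/G_{v_{i+1}}$, which parametrizes the $X_i$-orbits of $v_{i-1}$ and $v_{i+1}$ inside $Y$ -- a generally infinite subgraph of the $1$-skeleton of $Y$ -- not the finite set of vertices adjacent to $v_i$. Indeed $X_i$ does not even stabilize $v_i$, so it cannot act on $\mathrm{Lk}_Y(v_i)$. Consequently Proposition~\ref{prop:CompactGroup} does not apply and the equality $\varepsilon_{X_i}(A_{i-1},A_{i+1}) = 1-\delta_i$ is unjustified.

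The correct decomposition reverses the roles of edges and vertices: take $A_i$ to be the stabilizer of the edge $[v_{i-1},v_{i+1}]$ opposite $v_i$, and $X_i = G_{v_i}$ the vertex stabilizer. Then $A_{i-1}$ and $A_{i+1}$ stabilize the two edges of the base $2$-simplex containing $v_i$, hence lie in $X_i$; local finiteness and closedness of $G$ make $X_i$ compact (so relative (T) holds); transitivity on $2$-simplices plus connectedness of the link give $X_i = \langle A_{i-1}\cup A_{i+1}\rangle$ and identify $\mathrm{Lk}_Y(v_i)$ with $\Gamma_{X_i}(A_{i-1},A_{i+1})$ (the link vertices are the $X_i$-orbits of the two edges at $v_i$, with stabilizers $A_{i\pm1}$); connectedness of $Y$ gives $G = \langle A_0\cup A_1\cup A_2\rangle$; and Proposition~\ref{prop:CompactGroup} then yields $\varepsilon_{X_i}(A_{i-1},A_{i+1}) = 1-\delta_i$, after which Theorem~\ref{thm:EJ} concludes.
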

\begin{proof}
For each $i \mod 3$, let $A_i \leq G$ denote the stabilizer of the edge $[v_{i-1}, v_{i+1}]$ and $X_i = G_{v_i}$ be the stabilizer of the vertex $v_i$. Since $G$ is closed in $\Aut(Y)$ and since $Y$ is locally finite, it follows that $X_i$ and $A_i$ are compact groups. Since the $G$-action on $Y$ is simplicial and transitive on the $2$-simplices, the connectedness of the link $\mathrm{Lk}_Y(v_i)$ implies that $X_i$ is generated by $A_{i-1} \cup A_{i+1}$. Since $Y$ is connected, we deduce that $G$ is generated by $A_0\cup A_1 \cup A_2$. We may now invoke Theorem~\ref{thm:EJ}, whose hypotheses are satisfied in view of Proposition~\ref{prop:CompactGroup}. 
\end{proof}

\section{Generalized triangle groups}\label{sec:TriangleGroups}

\subsection{Non-positively curved triangles of finite groups}

A \textbf{triangle of groups} is a simple complex of groups $G(\mathcal T)$ over the poset $\mathcal T$ of all subsets of $\{1, 2, 3\}$ (see \cite[Example II.12.17(1)]{BH}). In this note, we shall assume that the group attached to the $2$-face is trivial. Thus $G(\mathcal T)$ consists in a collection of $6$ groups $A_0, A_1, A_2, X_0, X_1, X_2$ and $6$ monomorphisms $\varphi_{i; i-1} \colon A_i \to X_{i-1}$ and  $\varphi_{i; i+1} \colon A_i \to X_{i+1}$ with $i=0, 1, 2 \mod 3$. The fundamental group of $G(\mathcal T)$, denoted by $\widehat{G(\mathcal T)}$, is the direct limit of this system of groups and monomorphisms.  Triangles of groups appear in the work of Neumann--Neumann \cite{NN53}; the non-positively curved case has been studied by Gersten--Stallings (see \cite{Stallings1991} and \cite{GS91}). 

The following result is well known.

\begin{thm}\label{thm:NPC}
Let $G(\mathcal T) = (X_i, A_j ; \varphi_{i; i\pm 1})$ be a triangle of groups with trivial face group and let $G = \widehat{G(\mathcal T)}$ be its fundamental group.  
Assume that $X_i$ is generated by the images of $A_{i-1}$ and  $A_{i+1}$. For $i=0, 1, 2$, let $\Gamma_i$ be the coset graph $\Gamma_{X_i}\big(\varphi_{i-1; i}(A_{i-1}), \varphi_{i+1; i}(A_{i+1})\big)$ and $r_i$ be half of its girth. If 
$$\frac 1 {r_0} + \frac 1 {r_1}  +  \frac 1 {r_2} \leq 1,$$
then the following assertions hold. 
\begin{enumerate}[(i)]
	\item $G$  acts isometrically, by simplicial automorphisms, on a $\mathrm{CAT}(0)$ simplicial complex $Y(\mathcal T)$ of dimension~$2$. The action has a $2$-simplex $(v_0, v_1, v_2)$ as strict fundamental domain, which is isometric to a euclidean or hyperbolic triangle with angles $(\pi/r_0, \pi/r_1, \pi/r_2)$. For $i=0, 1, 2$, the link of $Y(\mathcal T)$ at the vertex $v_i$ is isomorphic $\Gamma_i$, and the stabilizer $G_{v_i}$ is isomorphic to $X_i$.  
	
	\item Every finite subgroup of $G$ is conjugate to a subgroup of   $G_{v_i}$ for some $i =0, 1, 2$. 
	
	\item If none of the monomorphisms $\varphi_{i; i\pm 1}$ is surjective, then $G$ is infinite. 
	
\item If there is a homomorphism $\psi \colon G \to F$ to a finite group, whose restriction to $G_{v_i}$ is injective for each $i=0, 1, 2$, then $G$ is virtually torsion-free. 

\item If   
$$\frac 1 {r_0} + \frac 1 {r_1}  +  \frac 1 {r_2} <1,$$
then $G$ is non-elementary hyperbolic, and the simplicial complex $Y(\mathcal T)$ carries a $G$-invariant $\mathrm{CAT}(-1)$ metric. 
\end{enumerate}

\end{thm}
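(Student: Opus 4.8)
The plan is to realize $G(\mathcal T)$ as a non-positively curved complex of groups in the sense of Bridson--Haefliger \cite[III.$\mathcal C$]{BH} (equivalently, a triangle of groups \`a la Gersten--Stallings \cite{GS91,Stallings1991}), to prove it is developable, and to take $Y(\mathcal T)$ to be its development. Concretely, one metrizes the model $2$-simplex as a Euclidean triangle when $\frac1{r_0}+\frac1{r_1}+\frac1{r_2}=1$ and as a hyperbolic triangle of curvature $-1$ when the sum is $<1$, in both cases with angles $(\pi/r_0,\pi/r_1,\pi/r_2)$; such a triangle exists and has zero (resp. positive) area since its angle sum is $\leq\pi$ (Gauss--Bonnet). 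The geometric heart of the matter is that the combinatorial structure of the complex of groups is governed by the vertex groups $X_i$ and the edge inclusions $\varphi_{i\pm1;i}$, and that the local development at the vertex $v_i$ is exactly the coset graph $\Gamma_i=\Gamma_{X_i}(\varphi_{i-1;i}(A_{i-1}),\varphi_{i+1;i}(A_{i+1}))$. Assigning each edge of $\Gamma_i$ the length $\pi/r_i$ (the angle of the model triangle at $v_i$), the shortest closed loop in $\Gamma_i$ has combinatorial length equal to the girth $2r_i$, hence metric length exactly $2r_i\cdot(\pi/r_i)=2\pi$.

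First I would verify Gromov's link condition: a metric graph is $\mathrm{CAT}(1)$ precisely when it contains no closed geodesic shorter than $2\pi$, which by the girth computation holds for each $\Gamma_i$. By the link criterion for $M_\kappa$-polyhedral complexes \cite[II.5]{BH}, this makes the local developments $\mathrm{CAT}(0)$ (resp. $\mathrm{CAT}(-1)$), so the complex of groups is non-positively curved and hence developable \cite[Theorem~III.$\mathcal C$.4.17]{BH}. Its development $Y(\mathcal T)$ is simply connected and locally $\mathrm{CAT}(0)$ (resp. locally $\mathrm{CAT}(-1)$), so the Cartan--Hadamard theorem \cite[II.4.1]{BH} upgrades this to a global $\mathrm{CAT}(0)$ (resp. $\mathrm{CAT}(-1)$) metric. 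By construction $G=\widehat{G(\mathcal T)}$ is the fundamental group of the complex of groups and acts on $Y(\mathcal T)$ simplicially with the model triangle as strict fundamental domain, simply transitively on the $2$-simplices; the stabilizer of $v_i$ is $X_i$ and the link of $v_i$ is $\Gamma_i$. This proves (i), and establishes the $\mathrm{CAT}(-1)$ assertion in (v).

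The remaining assertions follow formally from the $\mathrm{CAT}(0)$ geometry. For (ii), a finite subgroup of $G$ fixes a point $p\in Y(\mathcal T)$ by the Bruhat--Tits fixed point theorem \cite[II.2.8]{BH}; since cell stabilizers are trivial (for triangles), conjugates of the edge groups $A_j$, or conjugates of the vertex groups $X_i$, and each $A_j$ embeds in a neighbouring $X_i$, the subgroup is conjugate into some $G_{v_i}$. For (iv), let $N=\ker\psi$, which has finite index; by (ii) every nontrivial finite subgroup is conjugate into some $G_{v_i}$ on which $\psi$ is injective, so $N$ meets every finite subgroup trivially and is therefore torsion-free, whence $G$ is virtually torsion-free. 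For (iii), the non-surjectivity of the maps $\varphi_{i;i\pm1}$ forces each part of the bipartition of every link $\Gamma_i$ to have at least two vertices while keeping the girth finite, so $Y(\mathcal T)$ branches at every vertex and cannot be a compact $\mathrm{CAT}(0)$ complex; as $G$ acts simply transitively on the (then infinitely many) $2$-simplices, $G$ is infinite. Finally, in (v) the space $Y(\mathcal T)$ is a proper $\mathrm{CAT}(-1)$ complex on which $G$ acts properly (finite cell stabilizers) and cocompactly, so $G$ is Gromov-hyperbolic by the Milnor--\v{S}varc lemma; it is non-elementary because $Y(\mathcal T)$ is genuinely two-dimensional with branching links and is thus quasi-isometric neither to a point nor to a line.

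I expect the main obstacle to be the passage from the local link condition to the global $\mathrm{CAT}(0)$ property, that is, establishing developability of the complex of groups together with simple-connectedness of its development, so that Cartan--Hadamard applies. This is precisely the input that must be imported from the Gersten--Stallings / Bridson--Haefliger theory rather than reproved by hand, which is why the statement is labelled ``well known''. By contrast, the identification of the vertex links with the coset graphs $\Gamma_i$ and the girth-to-systole computation are essentially bookkeeping, and parts (ii)--(iv) are routine once the $\mathrm{CAT}(0)$ structure is in place.
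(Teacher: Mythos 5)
Your proposal follows essentially the same route as the paper: parts (i), (ii) and (v) are exactly the content of \cite[Theorem~II.12.28]{BH}, which the paper simply cites rather than re-deriving via the link condition and Cartan--Hadamard as you do, and your argument for (iv) is the paper's verbatim. The one step you leave informal is in (iii): the assertion that a $\mathrm{CAT}(0)$ complex that ``branches at every vertex'' cannot be compact is precisely where the paper invokes \cite[Proposition~II.5.10]{BH} --- non-surjectivity of the $\varphi_{i; i\pm 1}$ forces every vertex of every link $\Gamma_i$ to have degree~$\geq 2$ (no free faces), so every geodesic segment of $Y(\mathcal T)$ extends to a bi-infinite geodesic, whence $Y(\mathcal T)$ is unbounded and $G$ is infinite by cocompactness of the action; you should supply that justification in place of the remark about each side of the bipartition having at least two vertices.
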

\begin{proof}
For (i), (ii) and (v), see \cite[Theorem~II.12.28]{BH}. 

For (iii), observe that this condition implies that the graphs $\Gamma_i$ have no vertices of degree~$1$. In view of (i), we deduce from \cite[Proposition~II.5.10]{BH} that  every geodesic segment in $Y(\mathcal T)$ is contained in a bi-infinite geodesic. In particular $Y(\mathcal T)$ is unbouded, hence $G$ is infinite by (i). 

(iv) Let $\psi \colon G \to F$  be a homomorphism to a finite group whose restriction to $X_i$ is injective for each $i=0, 1, 2$. If $g \in G$ is a torsion element, then $xgx^{-1} \in X_i$ for some $x \in G$ and $i\in \{0, 1, 2\}$ by (ii).  Since $\psi |_{X_i}$ is injective, we deduce that if $g$ is non-trivial, then $\psi(x) \psi(g) \psi(x)^{-1}$ is non-trivial, hence   $\psi(g)$ is equally non-trivial. Thus $\Ker(\psi)$ is a torsion-free subgroup of finite index in $G$. 
\end{proof}

Following  \cite{LMW}, in case the edge-groups $A_0, A_1, A_2$ are all cyclic, and none of the monomorphisms $\varphi_{i; i\pm 1}$ is surjective, we say that    $\widehat{G(\mathcal Y)}$ is   a \textbf{generalized triangle group}. When the $A_i$'s are all cyclic of order~$k$, we say that $G$ is a \textbf{$k$-fold triangle group}. The triple $(r_0, r_1, r_2)$ will be called the \textbf{half girth type} of the generalized triangle group $G$. The triple $(\Gamma_0, \Gamma_1, \Gamma_2)$ is called its \textbf{link type}. If the half girth type satisfies the inequality $1/r_0 + 1/r_1 + 1/r_2 \leq 1$, we say that the triangle group $G(\mathcal T)$ is \textbf{non-positively curved}. 

In the special case $k=3$, which is of core interest in this paper,  we say that  $\widehat{G(\mathcal Y)}$ is   a \textbf{trivalent triangle group}. 

If the half girth type of $G$ is one of $(3,3,3)$, $(2,4,4)$, or $(2,3,6)$, Theorem~\ref{thm:NPC} does not provide any conclusion regarding the hyperbolicity of $G$. As we shall see, a triangle group  can be hyperbolic or not. To that end, we will use the following critera in order to construct subgroups isomorphic to $\Zbb^2$.

\begin{lem}\label{lem:translation_length}
Consider the setup of Theorem~\ref{thm:NPC} and assume that $r_0=r_1=r_2=3$. Choose the metric on $Y(\mathcal T)$ so that edges have length $1$. Let $\{i,j,k\} = \{0,1,2\}$. Let $a,a',a'',a''' \in A_i \setminus \{1\}$, $b,b',b'',b''',b^{\prime\vprime},b^\vprime \in A_j \setminus \{1\}$ and $c,c'',c'' \in A_k \setminus \{1\}$.
\begin{enumerate}
\item The element $abcb'$ acts on $Y(\mathcal T)$ with translation length $\sqrt{3}$.
\item The element $abca'b'c'$ acts on $Y(\mathcal T)$ with translation length $3$.
\item The element $abcb'\; a'b''c'b'''$ acts on $Y(\mathcal T)$ with translation length $2\sqrt{3}$.\label{item:trans_2}
\item The element $abca'b'c'\; a''b''c''b'''$ acts on $Y(\mathcal T)$ with translation length $\sqrt{21}$.
\item The element $abcb' \;  a'b''c'b''' \; a''b^{\prime\vprime}c''b^{\vprime}$ acts on $Y(\mathcal T)$ with translation length $3\sqrt{3}$.\label{item:trans_3}
\end{enumerate}
In each case there is a non-empty open subset $U$ of the triangle labeled $1$ such that the element moves every point of $\overline{U}$ by the translation length and, in particular, is hyperbolic.
\end{lem}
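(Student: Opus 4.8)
The plan is to compute each translation length by exhibiting an explicit geodesic axis inside the CAT($0$) complex $Y(\mathcal T)$ and analysing the local picture around the barycentric path traced by the word. Since $r_0=r_1=r_2=3$, each $2$-simplex is an equilateral Euclidean triangle with all angles $\pi/3$, and the link of every vertex $v_i$ is a graph of girth $2r_i=6$. The key geometric fact I would use is that, because each link has girth $6$, any locally geodesic edge-path in a link that turns by an angle $<\pi$ at each interior vertex but closes up only after traversing at least $6$ edges detects whether a path in $Y(\mathcal T)$ is a local geodesic: a path crossing a sequence of triangles is a local geodesic precisely when, at each vertex it passes, the two incident edges subtend an angle $\geq\pi$ in the link, i.e. their images in the link graph are at distance $\geq 3$. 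So the first step is to translate each generator (a nontrivial element of some cyclic edge-group $A_i$) into a rotation about the corresponding vertex, and to read off how the word $abcb'$, etc., transports the fundamental triangle across a chain of triangles.

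First I would fix the fundamental triangle (\emph{the triangle labeled $1$}) with vertices $v_0,v_1,v_2$ and set up coordinates. Each letter, being a nontrivial element of an edge-stabilizer $A_i$, fixes an edge pointwise and acts on the complex; geometrically applying the word $g$ moves the fundamental triangle to $g\cdot(\text{fundamental triangle})$, and the concatenation of triangles $\{1, g_1, g_1g_2, \dots\}$ (where $g_j$ are the partial products) forms a ``gallery'' or strip. I would show that for each of the five words this strip develops onto a flat region of the Euclidean plane (or onto a geodesic corridor), so that the displacement of a suitably chosen interior point of $\overline U$ can be computed by planar trigonometry. For the words of types (1),(3),(5) the letters alternate in a pattern $i,j,k,j$ that causes the strip to develop into a straight Euclidean corridor, giving translation lengths $\sqrt3$, $2\sqrt3$, $3\sqrt3$ respectively (each block of four triangles contributing a displacement of $\sqrt3$); for types (2),(4) the symmetric pattern $i,j,k$ repeated produces displacement $3$ per hexagonal cycle and $\sqrt{21}$ over the doubled word, which I would get from the law of cosines applied to the polygonal path with the relevant turning angles.

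The heart of the argument, and the step I expect to be the main obstacle, is verifying that the developed path is genuinely a \emph{local geodesic} at every vertex it crosses, since only then does the Euclidean displacement equal the translation length (by the flat-strip / CAT($0$) axis criterion, cf. \cite[II.6]{BH}). This is exactly where the half-girth hypothesis $r_i=3$ enters: the angle condition at a traversed vertex $v$ reduces to the statement that the entering and exiting edges of the strip are at distance $\geq 3$ in the link $\Gamma$ at $v$, and girth $6$ forbids shortcuts that would let the path backtrack. I would need to check, letter by letter in each of the five words, that consecutive edges never coincide and never lie at link-distance $\leq 2$; the hypotheses $a,a',\dots\in A_i\setminus\{1\}$ (and the alternation of the three edge-groups) are precisely what guarantee this, since an element of $A_i$ cannot send an edge at $v_i$ to an adjacent one without violating girth. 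Once local geodesy is established on a periodic bi-infinite extension of the strip, the path is a genuine axis, its period is the computed Euclidean length, and the translation length equals that period.

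Finally, to produce the open set $U$, I would take $U$ to be a small open neighbourhood, inside the fundamental triangle, of a generic interior point of the axis corridor constructed above; every point of $\overline U$ lies in the flat strip on which $g$ acts by a Euclidean translation of the stated length, so $g$ moves each such point by exactly the translation length. Hyperbolicity of $g$ (in the sense that it is a hyperbolic isometry with an axis) is then immediate, since $g$ translates along a genuine geodesic line. I would present the five cases as variations of a single computation, carrying out the type-(1) case in detail and indicating the bookkeeping changes for the longer words, since the geometric mechanism is identical and only the length of the developed corridor changes.
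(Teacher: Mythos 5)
Your overall strategy is the same as the paper's: develop the gallery of triangles labeled by the prefixes of the word into the Euclidean plane, use the girth-$6$ condition on the vertex links to see that the development is metrically faithful, and read off the displacement by planar geometry (the five lengths $\sqrt3, 3, 2\sqrt3, \sqrt{21}, 3\sqrt3$ are the five shortest vector lengths of the $\tilde A_2$ root lattice, as the paper notes afterwards). Where you differ is in how faithfulness is certified. The paper shows that the developing map $\pi\colon Z\to\mathbb{E}^2$ on the finite prefix-complex $Z$ is locally $1$-Lipschitz (this is where girth $6$ enters, packaged in one line) and hence restricts to an isometry on any subset with convex image by \cite[Proposition~II.4.14]{BH}; it then computes the displacement of points of $\overline{U}$ directly and identifies this constant value with the translation length by convexity of the displacement function. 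You instead propose to exhibit one periodic local geodesic and verify the Gromov link condition at every vertex it traverses. That route can be made to work, but as written it has two soft spots. First, your stated reason for the link condition --- that a nontrivial element of $A_i$ cannot carry an edge at $v_i$ to an adjacent one --- only yields link-distance $\geq 2$ between an edge and its image, i.e.\ angle $\geq 2\pi/3$, which is not the angle $\geq\pi$ you need on \emph{both} sides of the path at a traversed vertex. The correct mechanism is that the fan of gallery triangles at such a vertex develops onto a planar sector of angle exactly $\pi$ along the path, and the complementary side, concatenated with the fan, closes up to a cycle in the link; girth $6$ forces that cycle to have total angle $\geq 2\pi$, so the complementary angle is again $\geq\pi$. (This also requires checking that the prefix triangles meeting a given vertex are pairwise distinct and form an embedded arc in its link, which is where the hypotheses on the letters are actually consumed.) Second, producing $U$ as a neighbourhood of a point on a single axis does not by itself show that \emph{every} point of $\overline{U}$ is moved by the translation length; you need either to observe that the whole strip between the two bounding lines is an isometrically embedded flat strip on which $g$ acts by translation, or to fall back on the paper's convexity argument for the displacement function. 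Neither issue is fatal, but both are exactly the places where the real content of the proof lives, and your proposal defers them.
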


\begin{figure}[h]
\centering
\begin{tikzpicture}[scale=1.25]
\draw (90:1) -- (210:1) -- (330:1) -- cycle;
\node at (0,0) {$1$};
\node[dot,fill=blue] at (210:1) {};
\begin{scope}[shift={($(30:1)$)}]
\draw (30:1) -- (150:1) -- (270:1) -- cycle;
\node at (0,0) {$a$};
\node[dot,fill=red] at (150:1) {};
\end{scope}
\begin{scope}[shift={($(30:1) + (330:1)$)}]
\draw (90:1) -- (210:1) -- (330:1) -- cycle;
\node at (0,0) {$ab$};
\node[dot,fill=yellow] at (210:1) {};
\end{scope}
\begin{scope}[shift={($(30:2) + (330:1)$)}]
\draw (30:1) -- (150:1) -- (270:1) -- cycle;
\node at (0,0) {$abc$};
\node[dot,fill=red] at (270:1) {};
\end{scope}
\begin{scope}[shift={($(30:2) + (330:1) + (90:1)$)}]
\draw (90:1) -- (210:1) -- (330:1) -- cycle;
\node at (0,0) {$abcb'$};
\node[dot,fill=blue] at (210:1) {};
\node[dot,fill=red] at (90:1) {};
\node[dot,fill=yellow] at (330:1) {};
\end{scope}
\draw[dashed,gray] (210:1) -- ($(210:1) + (30:3) + (330:1.5) + (90:1.5)$);
\draw[dashed,gray] (330:1) -- ($(330:1) + (30:2) + (330:1) + (90:1)$);
\end{tikzpicture}
\begin{tikzpicture}[scale=1.25]
\draw (90:1) -- (210:1) -- (330:1) -- cycle;
\node at (0,0) {$1$};
\node[dot,fill=blue] at (210:1) {};
\begin{scope}[shift={($(30:1)$)}]
\draw (30:1) -- (150:1) -- (270:1) -- cycle;
\node at (0,0) {$a$};
\node[dot,fill=red] at (150:1) {};
\end{scope}
\begin{scope}[shift={($(30:1) + (330:1)$)}]
\draw (90:1) -- (210:1) -- (330:1) -- cycle;
\node at (0,0) {$ab$};
\node[dot,fill=yellow] at (210:1) {};
\end{scope}
\begin{scope}[shift={($(30:2) + (330:1)$)}]
\draw (30:1) -- (150:1) -- (270:1) -- cycle;
\node at (0,0) {$abc$};
\node[dot,fill=blue] at (150:1) {};
\end{scope}
\begin{scope}[shift={($(30:2) + (330:2)$)}]
\draw (90:1) -- (210:1) -- (330:1) -- cycle;
\node at (0,0) {$abca'$};
\node[dot,fill=red] at (210:1) {};
\end{scope}
\begin{scope}[shift={($(30:3) + (330:2)$)}]
\draw (30:1) -- (150:1) -- (270:1) -- cycle;
\node at (0,.2) {$abca'b'$};
\node[dot,fill=yellow] at (150:1) {};
\end{scope}
\begin{scope}[shift={($(30:3) + (330:3)$)}]
\draw (90:1) -- (210:1) -- (330:1) -- cycle;
\node at (0,-.2) {$abca'b'c'$};
\node[dot,fill=blue] at (210:1) {};
\node[dot,fill=yellow] at (330:1) {};
\node[dot,fill=red] at (90:1) {};
\end{scope}
\draw[dashed,gray,thick] ($(210:1) + (30:1.5pt)$) -- ($(330:4) + (30:3)+(150:1.5pt)$);
\draw[dashed,gray,thick] ($(90:1) + (210:1.5pt)$) -- ($(90:1) + (330:3) + (30:3)+ (330:1.5pt)$);
\end{tikzpicture}

\bigskip

\begin{tikzpicture}[scale=1.25]
\draw (90:1) -- (210:1) -- (330:1) -- cycle;
\node at (0,0) {$1$};
\node[dot,fill=blue] at (210:1) {};
\begin{scope}[shift={($(30:1)$)}]
\draw (30:1) -- (150:1) -- (270:1) -- cycle;
\node at (0,0) {$a$};
\node[dot,fill=red] at (150:1) {};
\end{scope}
\begin{scope}[shift={($(30:1) + (330:1)$)}]
\draw (90:1) -- (210:1) -- (330:1) -- cycle;
\node at (0,0) {$ab$};
\node[dot,fill=yellow] at (210:1) {};
\end{scope}
\begin{scope}[shift={($(30:2) + (330:1)$)}]
\draw (30:1) -- (150:1) -- (270:1) -- cycle;
\node at (0,0) {$abc$};
\node[dot,fill=blue] at (150:1) {};
\end{scope}
\begin{scope}[shift={($(30:2) + (330:2)$)}]
\draw (90:1) -- (210:1) -- (330:1) -- cycle;
\node at (0,0) {$abca'$};
\node[dot,fill=red] at (210:1) {};
\end{scope}
\begin{scope}[shift={($(30:3) + (330:2)$)}]
\draw (30:1) -- (150:1) -- (270:1) -- cycle;
\node at (0,.2) {$abca'b'$};
\node[dot,fill=yellow] at (150:1) {};
\end{scope}
\begin{scope}[shift={($(30:3) + (330:3)$)}]
\draw (90:1) -- (210:1) -- (330:1) -- cycle;
\node at (0,.2) {$t=$};
\node at (0,-.2) {$abca'b'c'$};
\node[dot,fill=blue] at (210:1) {};
\end{scope}
\begin{scope}[shift={($(30:4) + (330:3)$)}]
\draw (30:1) -- (150:1) -- (270:1) -- cycle;
\node at (0,0) {$ta''$};
\node[dot,fill=red] at (150:1) {};
\end{scope}
\begin{scope}[shift={($(30:4) + (330:4)$)}]
\draw (90:1) -- (210:1) -- (330:1) -- cycle;
\node at (0,0) {$ta''b''$};
\node[dot,fill=yellow] at (210:1) {};
\end{scope}
\begin{scope}[shift={($(30:5) + (330:4)$)}]
\draw (30:1) -- (150:1) -- (270:1) -- cycle;
\node at (0,.2) {$ta''b''c''$};
\node[dot,fill=red] at (270:1) {};
\end{scope}
\begin{scope}[shift={($(30:5) + (330:4) + (90:1)$)}]
\draw (90:1) -- (210:1) -- (330:1) -- cycle;
\node at (0,-.25) {$ta''b''c''b'''$};
\node[dot,fill=red] at (90:1) {};
\node[dot,fill=yellow] at (330:1) {};
\node[dot,fill=blue] at (210:1) {};
\end{scope}
\begin{scope}[overlay]
\path[name path=edge] ($(30:5) + (330:4) + (90:2)$) -- ($(30:5) + (330:5) + (90:1)$);
\path[name path=axis] (210:1) -- ($(210:1) + (30:10) + (330:8) + (90:2)$);
\draw[dashed,gray, name intersections={of=axis and edge, by=E}] (210:1) -- (E);
\draw[dashed,gray] (330:1) -- ($(330:1) + (30:5) + (330:4) + (90:1)$);
\end{scope}
\end{tikzpicture}

\caption{The complex $Z$ in the various cases of \protect{Lemma~\ref{lem:translation_length}}. The complexes for cases \eqref{item:trans_2} and \eqref{item:trans_3} are not drawn; they consists of two respectively three of the complexes for the first case glued together.}
\label{fig:kms_complex}
\end{figure}
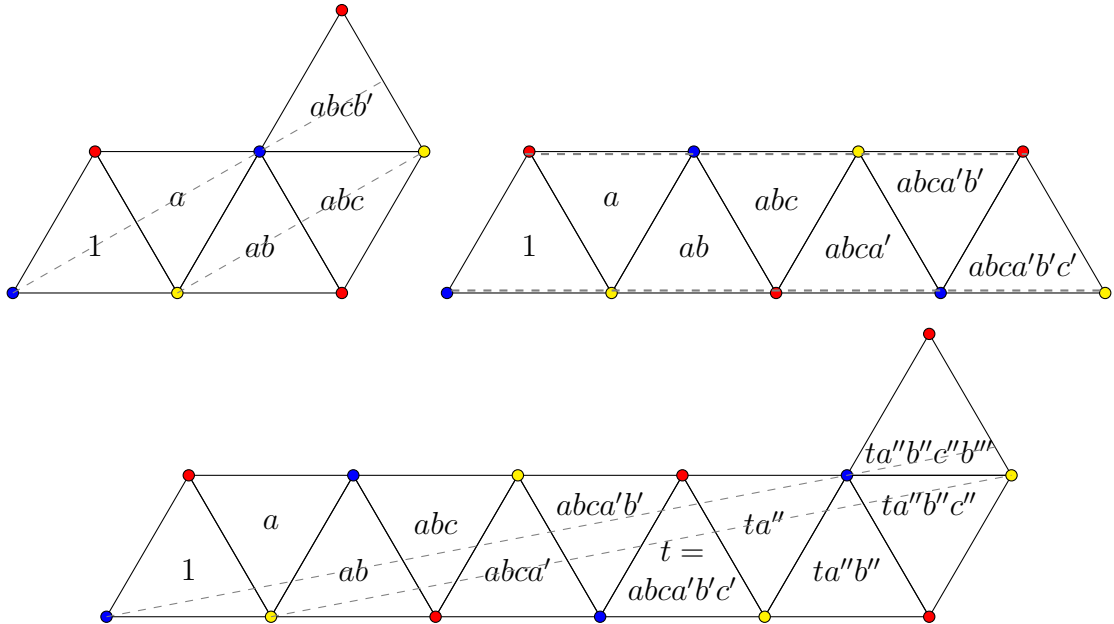

\begin{proof}
We take $\sigma$ to be the base triangle in $Y(\mathcal T)$ whose vertex stabilizers contain $\grp{a,b}$, $\grp{a,c}$ and $\grp{b,c}$ and label the triangle $x.\sigma$ by $x$. Let $g$ be one the words from the statement and let $Z$ be the complex consisting of triangles labeled by prefixes of $g$ with identifications as in $Y(\mathcal T)$, see Figure~\ref{fig:kms_complex}.

Then $Z$ is a subcomplex of $Y(\mathcal{T})$ and we equip it with the induced metric. Let $\pi \colon Z \to \mathbb{E}^2$ be the map to the Euclidean plane indicated in the figure. We claim that if $C \subseteq Z$ is such that $\pi(C)$ is convex, then $\pi|_C$ is an isometry. In order to prove this, note first that $\pi$ is locally $1$-Lipschitz since every vertex link of $Y(\mathcal{T})$ has girth $6$. Let $\hat{Z}$ be $Z$ equipped with the length metric (the metric on $Z$ and on $\pi(Z)$ induce the same length metric). Then the identity map $\iota \colon \hat{Z} \to Z$ is $1$-Lipschitz. Now if $\pi(C)$ is convex, then $(\pi \circ \iota)|_C$ is an isometry, hence $\iota|_C$ is a local isometry. The claim now follows by applying \cite[Proposition~II.4.14]{BH}.

Let $U$ be the open region obtained by intersecting $\sigma$ with the region between the two dashed lines. Taking $C$ to be convex hull of $\overline{U}$ and $g\overline{U}$, it follows from the above discussion that every point of $\overline{U}$ is moved by the claimed distance. Since the displacement function $d_g \colon Y(\mathcal T) \to [0,\infty)$ is convex and constant on $U$, it attains its minimum in $U$: If $x \in U$ and there were a point $y$ of smaller displacement, convexity would imply that $d_g(z) < d_g(x)$ for every point $z \in (x,y]$.
\end{proof}

\begin{lem}\label{lem:translation_length_c}
Consider the setup of Theorem~\ref{thm:NPC} and assume that $r_0=r_1=2, r_2=4$. Choose the metric on $Y(\mathcal T)$ so that the long edges have length $1$. Let $\{i,j\} = \{0,1\}$. Let $a,a',a'',a''' \in A_i \setminus \{1\}$, $b,b',b'',b''' \in A_j \setminus \{1\}$ and $c,c'',c'',c''' \in A_2 \setminus \{1\}$.
\begin{enumerate}
\item The element $acbc'$ acts on $Y(\mathcal T)$ with translation length $\sqrt{2}$.
\item The element $aca'bc'b'$ acts on $Y(\mathcal T)$ with translation length $2$.
\item The element $acbc' \; a'c''b'c'''$ acts on $Y(\mathcal T)$ with translation length $2\sqrt{2}$.\label{item:trans_2a_c}
\item The element $aca'bc'a''b'c''b''c'''$ acts on $Y(\mathcal T)$ with translation length $\sqrt{10}$.
\item The element $aca'bc'b' \; a''c''a'''b''c'''b'''$ acts on $Y(\mathcal T)$ with translation length $4$.\label{item:trans_2b_c}
\end{enumerate}
In each case there is a non-empty open subset $U$ of the triangle labeled $1$ such that the element moves every point of $\overline{U}$ by the translation length and, in particular, is hyperbolic.
\end{lem}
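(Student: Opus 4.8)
The plan is to run the argument of Lemma~\ref{lem:translation_length} essentially verbatim, the only change being that the Euclidean equilateral triangle is replaced by the right-angled isoceles triangle with angles $(\pi/4,\pi/4,\pi/2)$. As before I would take $\sigma$ to be the base triangle of $Y(\mathcal T)$, label $x.\sigma$ by $x$, and record the roles of the generators dictated by the statement: the two short edges (legs) of $\sigma$ are fixed by $\grp{a}\le A_i$ and $\grp{b}\le A_j$, while the hypotenuse --- the long edge, normalised to length~$1$, so that each leg has length $1/\sqrt2$ --- is fixed by $\grp{c}\le A_2$. For a word $g$ from the statement I would let $Z$ be the subcomplex of $Y(\mathcal T)$ formed by the triangles labelled by the prefixes of $g$, glued as in $Y(\mathcal T)$, and develop it into the plane by the map $\pi\colon Z\to\mathbb{E}^2$ laying those triangles out flat, in the same manner as in Figure~\ref{fig:kms_complex} for the equilateral case.

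The conceptual core is identical to the earlier proof and I would simply transplant it. The map $\pi$ is locally $1$-Lipschitz because every vertex link of $Y(\mathcal T)$ has systole $2\pi$: the three links have girths $8$, $8$, $4$ and edge lengths (equal to the corresponding corner angles) $\pi/4$, $\pi/4$, $\pi/2$, so each systole equals $2\pi$ and $Y(\mathcal T)$ is non-positively curved. Writing $\hat Z$ for $Z$ equipped with its length metric, the identity $\iota\colon\hat Z\to Z$ is $1$-Lipschitz, and whenever $\pi(C)$ is convex the composite $(\pi\circ\iota)|_C$ is an isometry, forcing $\iota|_C$ to be a local isometry; \cite[Proposition~II.4.14]{BH} then upgrades this to the statement that $\pi|_C$ is an isometry onto $\pi(C)$.

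To extract the translation lengths I would take $U$ to be the intersection of $\sigma$ with the strip bounded by the two boundary lines of the development (the analogues of the dashed lines in Figure~\ref{fig:kms_complex}) and let $C$ be the convex hull of $\overline U\cup g\overline U$. Reading the displacement straight off the flat picture shows that every point of $\overline U$ is moved by the indicated distance; since the displacement function $d_g$ is convex and constant on $U$, this common value is its minimum, hence the translation length, and $g$ is hyperbolic. Cases (3) and (5) are then obtained by concatenating two copies of the developments of cases (1) and (2) respectively, which accounts for the doubled lengths $2\sqrt2$ and $4$.

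The only real work --- and the main obstacle --- is combinatorial: producing the correct flat development for each of the five words, i.e.\ checking that the consecutive triangles actually assemble into a planar region on which $\pi$ restricts to an isometry (so that the development does not fold over itself), and then measuring the translation vector. Cases (1)--(2) yield the two basic parallelogram-shaped strips with translation vectors of lengths $\sqrt2$ and $2$, and everything reduces to these except case (4), the word $aca'bc'a''b'c''b''c'''$, whose slightly asymmetric strip has to be drawn with care in order to confirm that its translation vector has length $\sqrt{10}$.
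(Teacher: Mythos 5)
Your proposal is correct and is exactly the paper's proof: the paper disposes of this lemma in one line ("completely analogous to Lemma~\ref{lem:translation_length}, see Figure~\ref{fig:kms_complex_c}"), and you have spelled out that analogy with the right identifications — $A_2$ fixing the hypotenuse, the legs of length $1/\sqrt{2}$, the $\pi/2$-vertex carrying the girth-$4$ link and the $\pi/4$-vertices the girth-$8$ links so that the systole condition holds — together with the correct translation vectors read off the flat developments (cases (3) and (5) being concatenations of (1) and (2), matching the figure's caption). The only residual task you flag, drawing the developments and checking they do not fold, is precisely what the paper delegates to its figure, so nothing is missing relative to the published argument.
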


\begin{proof}
The proof is completely analogous to that of Lemma~\ref{lem:translation_length}, see Figure~\ref{fig:kms_complex_c}.
\end{proof}

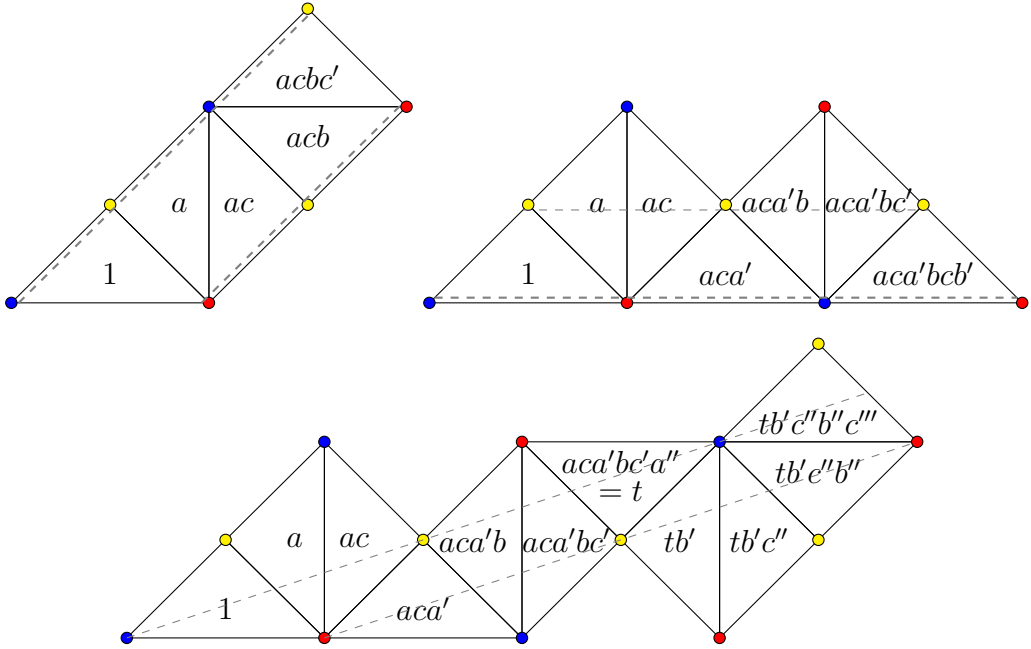
\begin{figure}
\centering
\begin{tikzpicture}[scale=1.3]
\draw (90:1) -- (0:1) -- (180:1) -- cycle;
\node at (90:.3) {$1$};
\node[dot,fill=blue] at (180:1) {};
\begin{scope}[shift={($(0:1)+(90:1)$)}]
\draw (180:1) -- (90:1) -- (270:1) -- cycle;
\node at (180:.3) {$a$};
\node[dot,fill=yellow] at (180:1) {};
\end{scope}
\begin{scope}[shift={($(0:1)+(90:1)$)}]
\draw (90:1) -- (0:1) -- (270:1) -- cycle;
\node at (0:.3) {$ac$};
\node[dot,fill=red] at (270:1) {};
\end{scope}
\begin{scope}[shift={($(0:2)+(90:2)$)}]
\draw (270:1) -- (0:1) -- (180:1) -- cycle;
\node at (270:.3) {$acb$};
\node[dot,fill=yellow] at (270:1) {};
\end{scope}
\begin{scope}[shift={($(0:2)+(90:2)$)}]
\draw (90:1) -- (0:1) -- (180:1) -- cycle;
\node at (90:.3) {$acbc'$};
\node[dot,fill=blue] at (180:1) {};
\node[dot,fill=red] at (0:1) {};
\node[dot,fill=yellow] at (90:1) {};
\end{scope}
\draw[dashed,gray,thick] ($(180:1) + (315:1.5pt) + (45:1.5pt)$) -- ($(0:2) + (90:3) +  (315:1.5pt)$);
\draw[dashed,gray,thick] ($(0:1) + (135:1.5pt) + (225:1.5pt)$) -- ($(0:3) + (90:2) +  (135:1.5pt)$);
\end{tikzpicture}
\begin{tikzpicture}[scale=1.3]
\draw (90:1) -- (0:1) -- (180:1) -- cycle;
\node at (90:.3) {$1$};
\node[dot,fill=blue] at (180:1) {};
\begin{scope}[shift={($(0:1)+(90:1)$)}]
\draw (180:1) -- (90:1) -- (270:1) -- cycle;
\node at (180:.3) {$a$};
\node[dot,fill=yellow] at (180:1) {};
\end{scope}
\begin{scope}[shift={($(0:1)+(90:1)$)}]
\draw (90:1) -- (0:1) -- (270:1) -- cycle;
\node at (0:.3) {$ac$};
\node[dot,fill=blue] at (90:1) {};
\end{scope}
\begin{scope}[shift={($(0:2)$)}]
\draw (90:1) -- (0:1) -- (180:1) -- cycle;
\node at (90:.3) {$aca'$};
\node[dot,fill=red] at (180:1) {};
\end{scope}
\begin{scope}[shift={($(0:3)+(90:1)$)}]
\draw (180:1) -- (90:1) -- (270:1) -- cycle;
\node at (180:.5) {$\smash{aca'b}\mathllap{\phantom{a}}$};
\node[dot,fill=yellow] at (180:1) {};
\end{scope}
\begin{scope}[shift={($(0:3)+(90:1)$)}]
\draw (90:1) -- (0:1) -- (270:1) -- cycle;
\node at (0:.45) {$\smash{aca'bc'}\mathllap{\phantom{a}}$};
\node[dot,fill=red] at (90:1) {};
\end{scope}
\begin{scope}[shift={($(0:4)$)}]
\draw (90:1) -- (0:1) -- (180:1) -- cycle;
\node at (90:.3) {$aca'bcb'$};
\node[dot,fill=yellow] at (90:1) {};
\node[dot,fill=blue] at (180:1) {};
\node[dot,fill=red] at (0:1) {};
\end{scope}
\draw[dashed,gray,thick] ($(180:1) + (90:1.5pt) + (0:1.5pt)$) -- ($(0:5) + (90:1.5pt) + (180:1.5pt)$);
\draw[dashed,gray] ($(90:1) + (270:1.5pt) + (180:1.5pt)$) -- ($(90:1) + (0:4) + (270:1.5pt) + (0:1.5pt)$);
\end{tikzpicture}
\bigskip

\begin{tikzpicture}[scale=1.3]
\draw (90:1) -- (0:1) -- (180:1) -- cycle;
\node at (90:.3) {$1$};
\node[dot,fill=blue] at (180:1) {};
\begin{scope}[shift={($(0:1)+(90:1)$)}]
\draw (180:1) -- (90:1) -- (270:1) -- cycle;
\node at (180:.3) {$a$};
\node[dot,fill=yellow] at (180:1) {};
\end{scope}
\begin{scope}[shift={($(0:1)+(90:1)$)}]
\draw (90:1) -- (0:1) -- (270:1) -- cycle;
\node at (0:.3) {$ac$};
\node[dot,fill=blue] at (90:1) {};
\end{scope}
\begin{scope}[shift={($(0:2)$)}]
\draw (90:1) -- (0:1) -- (180:1) -- cycle;
\node at (90:.3) {$aca'$};
\node[dot,fill=red] at (180:1) {};
\end{scope}
\begin{scope}[shift={($(0:3)+(90:1)$)}]
\draw (180:1) -- (90:1) -- (270:1) -- cycle;
\node at (180:.5) {$aca'b$};
\node[dot,fill=yellow] at (180:1) {};
\end{scope}
\begin{scope}[shift={($(0:3)+(90:1)$)}]
\draw (90:1) -- (0:1) -- (270:1) -- cycle;
\node at (0:.45) {$aca'bc'$};
\node[dot,fill=blue] at (270:1) {};
\end{scope}
\begin{scope}[shift={($(0:4)+(90:2)$)}]
\draw (270:1) -- (0:1) -- (180:1) -- cycle;
\node at (270:.2) {$aca'bc'a''$};
\node at (270:.5) {$=t$};
\node[dot,fill=red] at (180:1) {};
\end{scope}
\begin{scope}[shift={($(0:5)+(90:1)$)}]
\draw (180:1) -- (90:1) -- (270:1) -- cycle;
\node at (180:.4) {$tb'$};
\node[dot,fill=yellow] at (180:1) {};
\end{scope}
\begin{scope}[shift={($(0:5)+(90:1)$)}]
\draw (90:1) -- (0:1) -- (270:1) -- cycle;
\node at (0:.4) {$tb'c''$};
\node[dot,fill=red] at (270:1) {};
\end{scope}
\begin{scope}[shift={($(0:6)+(90:2)$)}]
\draw (270:1) -- (0:1) -- (180:1) -- cycle;
\node at (270:.3) {$tb'c''b''$};
\node[dot,fill=yellow] at (270:1) {};
\end{scope}
\begin{scope}[shift={($(0:6)+(90:2)$)}]
\draw (90:1) -- (0:1) -- (180:1) -- cycle;
\node at (90:.2) {$tb'c''b''c'''$};
\node[dot,fill=blue] at (180:1) {};
\node[dot,fill=red] at (0:1) {};
\node[dot,fill=yellow] at (90:1) {};
\end{scope}
\begin{scope}[overlay]
\path[name path=axis] ($(180:1)$) -- ($(0:11) + (90:4)$);
\path[name path=edge] ($(0:7) + (90:2)$) -- ($(0:6) + (90:3)$);
\draw[dashed,gray, name intersections={of=axis and edge, by=E}] (180:1) -- (E);
\draw[dashed,gray] ($(0:1)$) -- ($(0:7) + (90:2)$);
\end{scope}
\end{tikzpicture}

\caption{The relevant complexes for \protect{Lemma~\ref{lem:translation_length_c}}. The complexes for cases \eqref{item:trans_2a_c} and \eqref{item:trans_2b_c} are not drawn as they consists of several copies of the complexes for the other case glued together.}
\label{fig:kms_complex_c}
\end{figure}

\begin{cor}\label{cor:z_squared}
Let $x$ and $y$ be two elements as in Lemma~\ref{lem:translation_length} or~\ref{lem:translation_length_c} with translation lengths $\abs{x}$ and $\abs{y}$. If $x$ and $y$ commute then $\grp{x,y} \cong \Zbb \times \Zbb$ unless there are $k,\ell \in \Nbb$ relatively prime such that $\abs{x}/\abs{y} = k/\ell$ and $x^{\ell} \ne y^{\pm k}$.
\end{cor}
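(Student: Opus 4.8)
The plan is to determine the abstract isomorphism type of the abelian group $\grp{x,y}$ from the geometry of commuting semisimple isometries of the $\mathrm{CAT}(0)$ complex $Y(\mathcal T)$. Since $G$ acts on $Y(\mathcal T)$ properly (the links are finite, so the complex is locally finite and the cell stabilizers are finite) and cocompactly (a single triangle is a strict fundamental domain by Theorem~\ref{thm:NPC}), every element of $G$ acts as a \emph{semisimple} isometry by \cite[II.6.10]{BH}; an infinite-order element is then necessarily hyperbolic, as is the case for $x$ and $y$ by Lemmas~\ref{lem:translation_length} and~\ref{lem:translation_length_c}. The first step is a purely algebraic reduction. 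As $x,y$ commute, $\grp{x,y}$ is abelian and generated by two elements, hence of the form $\Zbb/a \oplus \Zbb/b$ with $a \mid b$. Since $x$ has infinite order its free rank is at least $1$, and the key remark is that a two-generated abelian group whose free rank equals $2$ must be torsion-free (a group $\Zbb^2 \oplus T$ with $T \neq \{1\}$ requires at least three generators). Thus $\grp{x,y}\cong\Zbb\times\Zbb$ if and only if its free rank is $2$, i.e.\ if and only if there is \emph{no} nontrivial relation $x^m y^n = 1$.

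The second step extracts a common flat. Writing $\mathrm{Min}(x)\cong C\times\Rb$ with $x$ acting as $(\mathrm{id},\tau_x)$, where $\tau_x$ is translation by $\abs x$, the relation $[x,y]=1$ forces $y$ to preserve $\mathrm{Min}(x)$ and to respect the splitting, say $y=(\psi,\tau_y)$ with $\psi\in\mathrm{Isom}(C)$, so that $\abs y^2 = \abs\psi^2 + \tau_y^2$. If $\psi$ is hyperbolic then the axes of $x$ and $y$ are transverse and span a flat $\mathbb{E}^2$ on which $\grp{x,y}$ acts by two $\Rb$-linearly independent translations; properness makes this the action of a rank-$2$ lattice, so $\grp{x,y}\cong\Zbb\times\Zbb$ and no relation $x^m y^n=1$ holds. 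Note that in this transverse regime the ratio $\abs x/\abs y$ is unconstrained (it may be rational or irrational), which is why an irrational ratio will suffice, but is not necessary, to land here.

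The third and delicate step treats the complementary, parallel regime, and I expect it to be the main obstacle. Suppose $\psi$ is elliptic, so that $y$ translates by $\pm\abs y$ along a geodesic parallel to the axis of $x$; the resulting $\Zbb^2$-action on the common translation direction sends $(m,n)$ to $m\abs x \pm n\abs y$, and properness forces its image to be discrete in $\Rb$, whence $\abs x/\abs y=k/\ell\in\Qb$ with $\gcd(k,\ell)=1$. Then $x^{\ell}$ and $y^{\pm k}$ translate by equal amounts along that direction, so $w:=x^{\ell}y^{\mp k}$ has vanishing translation in the axis direction and is therefore elliptic; hence either $w=1$, giving the commensurability relation $x^{\ell}=y^{\pm k}$ and $\grp{x,y}\cong\Zbb$, or $w$ is a nontrivial finite-order element and the free rank is again $1$. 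In every parallel case the free rank drops to $1$ and $\grp{x,y}\not\cong\Zbb\times\Zbb$, so this is precisely the exceptional, commensurable situation isolated in the statement: $\grp{x,y}\cong\Zbb\times\Zbb$ holds exactly in the transverse regime, and an irrational ratio $\abs x/\abs y$ rules out the parallel regime outright. The genuinely subtle point to nail down is the flat-strip phenomenon \emph{inside} the parallel regime—deciding whether the elliptic element $w=x^{\ell}y^{\mp k}$ is trivial (a common axis, giving $\Zbb$) or a nontrivial reflection of a flat strip of positive width (giving $\Zbb\times(\text{torsion})$)—since it is exactly this analysis, together with the rational-ratio constraint coming from properness, that pins down the commensurability relation between the powers $x^{\ell}$ and $y^{\pm k}$ as the only obstruction to $\grp{x,y}\cong\Zbb\times\Zbb$.
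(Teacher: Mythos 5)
Your strategy---split $\mathrm{Min}(x)\cong C\times\Rb$, write $y=(\psi,\tau_y)$, and distinguish the transverse case ($\psi$ hyperbolic on $C$) from the parallel case ($\psi$ elliptic)---is a genuinely different route from the paper's, which argues by contraposition: it assumes a relation $x^{\ell'}=y^{\pm k'}$ and works forward from it. Your transverse case is correct, and your derivation of the rationality of $\abs{x}/\abs{y}$ from properness in the parallel case is sound in outline (modulo small points you gloss over, such as why $\psi$ is semisimple and why $\abs{y}^2=\abs{\psi}^2+\tau_y^2$). But the gap you flag at the end is real, and it is the entire content of the corollary rather than a technicality. In the parallel regime you obtain only that $w=x^{\ell}y^{\mp k}$ is elliptic, hence torsion, and you leave open whether $w=1$ or $w$ is a nontrivial finite-order element. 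The corollary asserts---and its use in Section~\ref{sec:TriTriSetup} requires---that the \emph{only} obstruction to $\grp{x,y}\cong\Zbb\times\Zbb$ is the literal identity $x^{\ell}=y^{\pm k}$ in $G$, which is what makes the hypothesis verifiable by a word-problem computation. (The sign ``$\ne$'' in the displayed statement should be read as ``$=$''; both the paper's proof and your own identification of the exceptional case as the commensurable one confirm this.)

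The missing ingredient is exactly what the paper's proof turns on, and it is specific to these complexes rather than general $\mathrm{CAT}(0)$ technology. First, Lemmas~\ref{lem:translation_length} and~\ref{lem:translation_length_c} provide, for each of $x$ and $y$, an axis meeting the open set $U$ in the interior of the base triangle; combined with the existence of a common axis (your line $\{c_0\}\times\Rb$ with $c_0\in\mathrm{Fix}(\psi)$), this yields a point $p$ in the interior of the base triangle lying on a common axis of $x$ and $y$. Second, because the face group of the triangle of groups is trivial, $G$ acts freely on the set of $2$-simplices of $Y(\mathcal T)$, so the stabilizer of $p$ is trivial. Since $x^{\ell}$ and $y^{\pm k}$ translate $p$ by the same amount $\ell\abs{x}=k\abs{y}$ along this common axis, $w$ fixes $p$ and is therefore trivial. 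This rules out your nontrivial-torsion branch and pins the exceptional case down to $x^{\ell}=y^{\pm k}$, completing the argument.
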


\begin{proof}
Assume that $x$ and $y$ commute and do not span $\Zbb \times \Zbb$. Then there are $k',\ell' \in \Nbb$ such that $x^{\ell'} = y^{k'}$ or $x^{\ell'} = y^{-k'}$. From now on assume without loss the former. Then $x$ and $y$ have a common axis and it follows that $\abs{x}/\abs{y} = k'/\ell'$. Now let $k = k'/\operatorname{gcd}(k',\ell')$ and $\ell = \ell'/\operatorname{gcd}(k',\ell')$.

There is a point $p$ in the interior of the triangle labeled $1$ that lies on an axis for $x$ as well as an axis for $y$. This follows from the facts that $x$ and $y$ have a common axis and that each has an axis that meets the interior of the triangle labeled $1$. It follows that $x^\ell.p = y^k.p$ and hence that the triangle labeled $1$ is taken to the same triangle by $x^\ell$ and $y^k$. Since the action of $G$ on triangles is free, it follows that $x^\ell = y^k$.
\end{proof}

\subsection{Acylindrical hyperbolicity}

Under the hypotheses of Theorem~\ref{thm:NPC}, the group  $G = \widehat{G(\mathcal T)}$ acts properly and cocompactly on the $2$-dimensional CAT($0$) complex $Y(\mathcal T)$. If $\frac 1 {r_0} + \frac 1 {r_1}  +  \frac 1 {r_2} <1,$ then the complex is piecewise hyperbolic and the length metric on $Y(\mathcal T)$ is globally CAT($-1$). As recorded in Theorem~\ref{thm:NPC}, the group $G$ is then non-elementary  hyperbolic. In particular, it is acylindrically hyperbolic (see \cite{Osin} for an extensive account of this notion). The latter property actually also holds in most cases if we have $\frac 1 {r_0} + \frac 1 {r_1}  +  \frac 1 {r_2} =1$: indeed, the only exception occurs when all  the vertex links are generalized polygons, in which case the complex $Y(\mathcal T)$ is a $2$-dimensional Euclidean building. The following result combines the Rank Rigidity theorem of Ballmann--Brin \cite{BB} with a result of A.~Sisto \cite{Sisto}.

\begin{thm}\label{thm:Acyl}
Retain the notation of Theorem~\ref{thm:NPC} and assume that $\frac 1 {r_0} + \frac 1 {r_1}  +  \frac 1 {r_2} \leq 1$. If the equality holds, assume in addition that the graphs $\Gamma_0$, $\Gamma_1$ and $\Gamma_2$ are not all generalized polygons. Then $G = \widehat{G(\mathcal T)}$ contains an element acting as a rank one isometry on the $\mathrm{CAT}(0)$ complex $Y(\mathcal T)$. In particular $G$ is acylindrically hyperbolic, hence SQ-universal. 
\end{thm}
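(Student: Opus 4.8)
The plan is to produce in $G$ an isometry of $Y(\mathcal T)$ that acts as a \emph{rank one} (equivalently, contracting) isometry, and then to invoke Sisto's criterion \cite{Sisto}, according to which a group acting properly on a geodesic metric space and containing a contracting isometry is either virtually cyclic or acylindrically hyperbolic. By Theorem~\ref{thm:NPC}, the group $G$ acts properly and cocompactly on the $2$-dimensional CAT(0) complex $Y(\mathcal T)$, so $G$ is quasi-isometric to $Y(\mathcal T)$; since the latter is a genuinely $2$-dimensional complex (every simplex lies in a $2$-simplex, as $G$ is transitive on $2$-simplices) and is unbounded, it is not quasi-isometric to a line, whence $G$ is not virtually cyclic. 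Consequently, once a rank one isometry is exhibited, Sisto's criterion forces $G$ to be acylindrically hyperbolic, and SQ-universality then follows because acylindrically hyperbolic groups are SQ-universal (see \cite{Osin}).

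It remains to construct a rank one isometry, and here I would distinguish the two cases. First, if $\frac 1 {r_0} + \frac 1 {r_1} + \frac 1 {r_2} < 1$, then by Theorem~\ref{thm:NPC}(v) the complex $Y(\mathcal T)$ carries a $G$-invariant CAT($-1$) metric and $G$ is non-elementary hyperbolic. Any infinite-order element of $G$ then acts as an axial (loxodromic) isometry, and in a CAT($-1$) space every axial isometry is automatically rank one, since its axis cannot bound a flat half-plane. This settles the strict-inequality case; note that the hypothesis on the links is not needed here.

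The heart of the matter is the equality case $\frac 1 {r_0} + \frac 1 {r_1} + \frac 1 {r_2} = 1$, where $Y(\mathcal T)$ is a piecewise Euclidean CAT(0) complex. Here I would appeal to the Rank Rigidity Theorem of Ballmann--Brin \cite{BB} for $2$-dimensional CAT(0) complexes. Its hypotheses are met: $Y(\mathcal T)$ is locally finite (the vertex groups are finite), the action is cocompact, and $Y(\mathcal T)$ is geodesically complete, since condition (iii) of Theorem~\ref{thm:NPC} guarantees that the links have no vertex of degree~$1$ and hence that every geodesic segment extends. The theorem then asserts that either $G$ contains a rank one isometry, or else $Y(\mathcal T)$ is a $2$-dimensional Euclidean building, possibly reducible (i.e.\ a product of two trees). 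To rule out the second alternative, I would use that the link of a vertex in a $2$-dimensional Euclidean building is a spherical building of rank~$1$, that is, a generalized polygon (in the reducible case a generalized $2$-gon, i.e.\ a complete bipartite graph). Thus, were $Y(\mathcal T)$ a Euclidean building, all of the links $\Gamma_0, \Gamma_1, \Gamma_2$ would be generalized polygons, contrary to the standing hypothesis. Hence $G$ must contain a rank one isometry, which completes the argument in both cases.

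The main obstacle I anticipate is the precise invocation of Ballmann--Brin's rank rigidity together with the identification of its exceptional output (Euclidean buildings, including the reducible product case) with exactly the configuration in which all vertex links are generalized polygons. Verifying geodesic completeness and correctly dispatching the reducible (product-of-trees) alternative deserve the most care, whereas the CAT($-1$) case and the concluding application of Sisto's theorem are routine.
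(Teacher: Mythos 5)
Your argument is correct and follows essentially the same route as the paper: the strict-inequality case is handled via the CAT($-1$) metric, the equality case via Ballmann--Brin rank rigidity (with the Euclidean-building alternative excluded because its vertex links would all be generalized polygons), and the conclusion via Sisto and Osin; you are in fact somewhat more careful than the paper in explicitly exhibiting a rank one axial isometry in the CAT($-1$) case, ruling out virtual cyclicity, and checking geodesic completeness. The only quibble is terminological: the link of a vertex in a $2$-dimensional Euclidean building is a spherical building of \emph{rank two} (dimension one), not rank one, though your identification of it with a generalized polygon is what matters and is correct.
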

\begin{proof}
The discussion preceding the statement covers the case 
$\frac 1 {r_0} + \frac 1 {r_1}  +  \frac 1 {r_2} <1$. We now assume that we have  equality: $\frac 1 {r_0} + \frac 1 {r_1}  +  \frac 1 {r_2} =1$. The complex $Y(\mathcal T)$ is then piecewise Euclidean, and the length metric on $Y(\mathcal T)$ is globally CAT($0$). We may then invoke the main result of \cite{BB}, which yields the following dichotomy: either the links $\Gamma_0$, $\Gamma_1$ and $\Gamma_2$ are all generalized polygons, and then $Y(\mathcal T)$ is a Euclidean building, or it is not the case, and then the group $G$ contains an element that acts as a rank one isometry on $Y(\mathcal T)$. The latter property implies in turn that $G$ is acylindrically hyperbolic (this follows from the main results of \cite{Sisto}, see also \cite[Cor.~3.4]{CapHum}). It follows from \cite[Th.~8.1]{Osin} that $G$ is SQ-universal. 
\end{proof}

\subsection{Distinguishing isomorphism classes}\label{sec:Isom-triangle-groups}

Generalized triangle groups are generated by torsion elements. It follows that 
isomorphisms between non-positively curved triangle groups can be analyzed with the help of the  Bruhat--Tits Fixed Point Theorem. 

\begin{prop}\label{prop:HomomorphismTriangleGroups}
Let $k \geq 2$ (resp. $k' \geq 2$) be an integer, let $G$ (resp. $G'$) be a non-positively curved $k$-fold (resp. $k'$-fold) triangle group of half girth type $(r_0, r_1, r_2)$ (resp. $(r'_0, r'_1, r'_2)$). Assume that none of the vertex groups of $G$ is cyclic. Let $Y$ (resp. $Y'$) the associated CAT($0$) simplicial complex, as in Theorem~\ref{thm:NPC}(i). Let also $(v_0, v_1, v_2)$ be a $2$-simplex in $Y$, and let $\psi \colon G \to G'$ be a homomorphism whose restriction to $G_{v_i}$ is injective for all $i \in \{0, 1, 2\}$. 

Assume that $r_i, r'_j \in \{2, 3, 4\}$ for all $i, j \in \{0, 1, 2\}$, with $r_0 \leq r_1 \leq r_2$ and $r'_0 \leq r'_1 \leq r'_2$. Then the following assertions hold.
\begin{enumerate}[(i)]
    \item $k$ divides $k'$. 
        \item For each vertex $y$ in $Y$, the group $\psi(G_y)$ fixes a unique vertex $y'$ in $Y'$.
            \item If $1/r'_0 + 1/r'_1 + 1/r'_2 <1$, then $(v'_0, v'_1, v'_2)$ is a $2$-simplex of $Y'$. 
    
    \item If $\psi$ is an isomorphism, then $(v'_0, v'_1, v'_2)$ is a $2$-simplex of $Y'$, and the assignments $y \mapsto y'$ extend to a $\psi$-equivariant isometry $Y \to Y'$. 
\end{enumerate}
\end{prop}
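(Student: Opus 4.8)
The plan is to use the Bruhat--Tits fixed point theorem together with the sharp transitivity of the $G'$-action on the $2$-simplices of $Y'$ to produce a $\psi$-equivariant map on vertices, and then to promote it to a simplicial map, and finally to an isometry, under the successively stronger hypotheses of (iii) and (iv).

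\emph{Assertion (ii) and the vertex map.} Since $\psi$ is injective on $G_y$ and $G_y$ is finite, $\psi(G_y)$ is a finite subgroup of $G'$; by Theorem~\ref{thm:NPC}(ii) it is conjugate into some vertex stabilizer of $Y'$, so it fixes at least one vertex. For uniqueness I would show that its fixed-point set $\mathrm{Fix}(\psi(G_y))$, which is closed and convex in the $\mathrm{CAT}(0)$ complex $Y'$, is a single vertex. First, it meets no open $2$-simplex: a nontrivial group fixing an interior point would stabilize that simplex, contradicting the triviality of triangle stabilizers. Hence $\mathrm{Fix}(\psi(G_y))$ lies in the $1$-skeleton; if it contained a second point, a nondegenerate geodesic inside it would leave the fixed vertex along an edge, which $\psi(G_y)$ would then fix pointwise, forcing $\psi(G_y)$ into the cyclic edge group of order $k'$ --- contradicting the hypothesis that $G_y$, hence $\psi(G_y)$, is non-cyclic. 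Writing $y'$ for the unique fixed vertex, the relation $G_{gy}=gG_yg^{-1}$ shows that $\psi(G_{gy})=\psi(g)\psi(G_y)\psi(g)^{-1}$ fixes $\psi(g)\cdot y'$; by uniqueness $(gy)'=\psi(g)\cdot y'$, so $y\mapsto y'$ is $\psi$-equivariant.

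\emph{Assertion (i).} The edge group $A_i\cong C_k$ lies in $G_{v_{i-1}}\cap G_{v_{i+1}}$, so $\psi(A_i)$ fixes both $v_{i-1}'$ and $v_{i+1}'$. If these are distinct, the geodesic joining them is fixed by $\psi(A_i)$ and, by the argument above, runs along edges each fixed pointwise; thus $\psi(A_i)$ embeds into an edge group of order $k'$, and injectivity gives $|\psi(A_i)|=k$, whence $k\mid k'$. The point that requires the standing hypotheses is therefore that $v_{i-1}'\ne v_{i+1}'$ for at least one $i$, equivalently that $\psi(G)$ has no global fixed vertex in $Y'$. I expect to exclude a common fixed vertex using the non-cyclic hypothesis together with the restriction $r_j'\in\{2,3,4\}$, which pins down the vertex groups of $G'$ tightly enough that they cannot simultaneously receive injections of all three vertex groups of $G$ unless $k\mid k'$; in the isomorphism case this is immediate, since $G'$ acts cocompactly and so fixes no vertex.

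\emph{Assertions (iii) and (iv).} Define $F\colon Y\to Y'$ on vertices by $F(y)=y'$; by equivariance and the sharp transitivity of $G$ on $2$-simplices, $F$ extends to a $\psi$-equivariant simplicial map as soon as the image $(v_0',v_1',v_2')$ of the fundamental triangle is a $2$-simplex of $Y'$. To obtain this I would show that the three vertices are pairwise distinct and pairwise joined by a single edge --- the edge fixed by the corresponding $\psi(A_i)$, whose fixed set is exactly that edge once $\psi(A_i)$ is identified with a subgroup of an edge group --- and that at each $v_i'$ the two incident edges point in distinct link directions, which holds because $\psi(G_{v_i})=\langle\psi(A_{i-1}),\psi(A_{i+1})\rangle$ fixes only $v_i'$. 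The girth hypotheses $r_j'\in\{2,3,4\}$ together with the $\mathrm{CAT}(-1)$ geometry in (iii) then force this triangle to bound a genuine $2$-simplex rather than an empty cycle or a longer geodesic polygon. For (iv), applying (i) to both $\psi$ and $\psi^{-1}$ first yields $k=k'$, so each edge group maps isomorphically; then $F$ is a simplicial $\psi$-equivariant map that is a local isometry (local injectivity comes from injectivity on vertex groups, and the links of $Y$ and $Y'$ match because their girths agree), and being a local isometry between simply connected $\mathrm{CAT}(0)$ complexes it is a global isometry.

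\emph{Main obstacle.} The genuinely hard step is the non-degeneracy input feeding (i) and (iii)/(iv): turning the purely algebraic injectivity on vertex groups into the geometric statement that the images $v_0',v_1',v_2'$ are distinct and span a $2$-simplex. This is exactly where the hypotheses that no vertex group of $G$ is cyclic and that $r_i,r_j'\in\{2,3,4\}$ are indispensable, since they constrain the finite vertex groups and their cyclic subgroups just enough to rule out the degenerate configurations.
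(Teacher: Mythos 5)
Your overall strategy coincides with the paper's: Bruhat--Tits fixed points for the finite groups $\psi(G_{v_i})$, uniqueness of the fixed vertex via the dichotomy ``cyclic edge stabilizers versus non-cyclic vertex groups,'' equivariance of $y\mapsto y'$, and a local-isometry-extends-globally argument for (iv). Part (ii) as you present it is essentially the paper's first paragraph. But there are two places where you have named the difficulty and then asserted the conclusion instead of proving it, and these are precisely where the paper's real work lies.

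First, for (iii) you write that the girth hypotheses ``together with the $\mathrm{CAT}(-1)$ geometry \ldots force this triangle to bound a genuine $2$-simplex rather than an empty cycle or a longer geodesic polygon.'' That sentence is the statement of (iii), not an argument for it. The paper's proof has two quantitative ingredients you are missing. (a) Each Alexandrov angle $\alpha_i=\angle_{v'_i}(v'_{i-1},v'_{i+1})$ is a positive integer multiple of $\pi/r'_j$; combining the $\mathrm{CAT}(0)$ comparison $\alpha_0+\alpha_1+\alpha_2\leq\pi$ with $r'_j\leq 4$ forces $\alpha_i=\pi/r'_j$ exactly, except for the borderline case $(r'_0,r'_1,r'_2)=(4,4,4)$ with some $\alpha_i=\pi/2$, which is excluded by the Flat Triangle Lemma. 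Your weaker observation that the two edge directions at $v'_i$ are merely \emph{distinct} does not suffice. (b) If the geodesic triangle $(v'_0,v'_1,v'_2)$ did not bound a $2$-simplex, a minimal simply connected subcomplex spanning it would contain at least four $2$-simplices, hence a ruled surface of area at least $4(\pi-2\pi/3-\pi/4)=\pi/3$, while the comparison triangle (all angles $\geq\pi/4$) has area at most $\pi/4$; this contradicts the area comparison for ruled surfaces. Without some such argument nothing rules out $[v'_{i-1},v'_{i+1}]$ being a path of several edges. Relatedly, your claim that the fixed set of $\psi(A_i)$ ``is exactly that edge'' is false: a nontrivial subgroup of a cyclic edge group fixes a convex subtree of the $1$-skeleton that may be much larger than one edge, which is exactly why the degenerate configurations you want to exclude are not excluded for free.

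Second, for (i) you correctly isolate the issue that one must know $v'_{i-1}\neq v'_{i+1}$ for some $i$ before the cyclic group $\psi(A_i)$ can be embedded into an edge stabilizer of order $k'$, but your proposed resolution (``the restriction $r'_j\in\{2,3,4\}$ pins down the vertex groups of $G'$ tightly enough\ldots'') is not an argument --- the girth bound does not bound the order of the vertex groups of $G'$, so there is no obvious obstruction to all three images sharing a fixed vertex that can be read off from the sizes of the groups alone. This nondegeneracy is part of the same geometric analysis as (iii)/(iv) and needs to be supplied, not hoped for.
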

\begin{proof}
Since the group $\psi(G_{v_i})$ is finite, we deduce from Theorem~\ref{thm:NPC}(ii) that it fixes some vertex $v'_i$ of the complex $Y'$. The group $\psi(G_{v_i}) \cap \psi(G_{v_{i+1}}) = \psi(G_{v_i, v_{i+1}})$ is cyclic of order~$k$, and fixes pointwise the geodesic segment $[v'_i, v'_{i+1}]$. Since a point in the interior of a $2$-simplex of $Y'$ has a trivial stabilizer in $G'$, we infer that $[v'_i, v'_{i+1}]$ is entirely contained in the $1$-skeleton of $Y'$. Since $G_{v_i}$ is not cyclic whereas the stabilizer of every edge of $Y'$ in $G'$ is cyclic, we infer that $v'_i$ is the unique vertex of $Y'$ fixed by $\psi(G_{v_i})$. Since $(v_0, v_1, v_2)$ is a strict fundamental domain for the $G$-action on $Y$ by Theorem~\ref{thm:NPC}(i), we deduce that $\psi(G_y)$ fixes a unique vertex $y'$ in $Y'$ for every vertex $y$ in $Y$. This proves (i) and (ii). 

From the previous paragraph, we deduce that  for all $i$, the Alexandrov angle $\alpha_i := \angle_{v'_i}(v'_{i-1}, v'_{i+1})$ is an integer multiple of $\pi/r'_j$ for some $j \in \{0, 1, 2\}$. Since $Y'$ is CAT($0$), it follows from \cite[Proposition~II.1.7(4)]{BH} that $\alpha_0 + \alpha_1 + \alpha_2 \leq \pi$. Moreover, we must have $\alpha_i >0$ for all $i$ since otherwise  $\psi(G_{v_i})$, which is generated by $\psi(G_{v_i, v_{i-1}}) \cup \psi(G_{v_i, v_{i+1}})$, would fix an edge of $Y'$. This is is impossible since $G_{v_i}$ is not cyclic, whereas the stabilizer of every edge of $Y'$ is cyclic. Therefore, since $\alpha_i$ is an integer multiple of $\pi/r'_j$ and $r'_j \in \{2, 3, 4\}$, the are only two possibilities: either  $\alpha_i = \pi/r'_j$, or $(r'_0, r'_1, r'_2) = (4, 4, 4)$ and $\alpha_i = \pi/2$. The second possibility is however excluded by the Flat Triangle Lemma, see \cite[Proposition~II.2.9]{BH}, since in that case $Y'$ is CAT($-1$) and, therefore, it does not contain any flat triangle.

Recall from Theorem~\ref{thm:NPC}(i) that the link of $Y$ at $v_i$ is isomorphic to the coset graph of $G_{v_i}$ with respect to  the cyclic groups $G_{v_i, v_{i-1}}$ and $G_{v_i, v_{i+1}}$. From the previous paragraph, we deduce that $\psi$ induces a simplicial embedding of the link of $Y$ at $v_i$ into the link of $Y'$ at $v'_i$. 

Let us now assume in addition that $1/r'_0 + 1/r'_1 + 1/r'_2 <1$, and assume now for a contradiction that the geodesic triangle $\Delta = (v'_0, v'_1, v'_2)$ does not span  a $2$-simplex of $Y'$. 
Then, by the previous paragraph, the geodesic path $[v'_{i-1}, v'_{i+1}]$, which is contained in the $1$-skeleton of $Y'$, must contain at least one vertex different from $v'_{i-1}$ and $v'_{i+1}$. Therefore, any simply connected subcomplex of $Y'$ containing $v'_0$, $v'_1$ and $ v'_2$ has at least~$4$ simplices of dimension~$2$. Let us fix a smallest possible such  subcomplex, and view it  as a ruled surface bounded by $\Delta$ in the sense of \cite[III.H.2, p.~426]{BH}. We see that the area of that ruled surface is at least $4$ times the area of a hyperbolic triangle with angles $(\pi/r'_0, \pi/r'_1, \pi/r'_2)$. In particular it is at least $4(\pi -2\pi/3-\pi/4) = 4\pi/12 = \pi/3$. 

Let us now consider the area of  a comparison triangle for $\Delta$ in the hyperbolic plane. Since $\alpha_i \in \{\pi/4, \pi/3\}$, we deduce from \cite[Proposition~II.1.7(4)]{BH} that, in this  comparison triangle, every inner angle is at least $\pi/4$. Therefore the area of that comparison triangle is at most $\pi - 3/4\pi = \pi/4$. Thus we see that the area of the ruled surface bounded by $\Delta$ in $Y'$ is strictly greater than the area of a comparison triangle in the hyperbolic plane. This contradicts  \cite[Proposition~III.H.2.16]{BH}.

Let us finally  assume  that $\psi$ is an isomorphism (but relax the hypothesis that $1/r'_0 + 1/r'_1 + 1/r'_2 <1$). Applying the  discussion on the links above to the inverse map $\psi^{-1}$, we deduce that $\psi$ induces a simplicial isomorphism from the link of $Y$ at $v_i$ to the link of $Y'$ at $v'_i$. In particular, we have $r_i =r'_i$. By the definition of the complexes $Y$ and $Y'$, the metric is completely determined by the girth type. More precisely, the Alexandrov angle $\angle_{v_i}(v_{i-1}, v_{i+1})$ (resp. $\angle_{v'_i}(v'_{i-1}, v'_{i+1})$) is equal to $\pi/r_i$ (resp. $\pi/r'_i$). Therefore $\psi$ induces an isometry from the link of $Y$ at $v_i$ to the link of $Y'$ are $v'_i$. Assertions (iv) follows, in view of the fact that   local isometries extend to global isometries (see \cite[Proposition~II.4.14]{BH}). 
\end{proof}

\begin{cor}\label{cor:isomorphism}
Let $k \geq 2$ (resp. $k' \geq 2$) be an integer, let $G$ (resp. $G'$) be a non-positively curved $k$-fold (resp. $k'$-fold) triangle group of half girth type $(r_0, r_1, r_2)$ (resp. $(r'_0, r'_1, r'_2)$), none of whose vertex groups of $G$ is cyclic. We denote by $A_0, A_1, A_2$ (resp. $A'_0, A'_1, A'_2$) the natural images of the defining edge groups into $G$ (resp. $G'$). Assume that  $r_i$ and $ r'_j$ belong to $ \{2, 3, 4\}$ for all $i, j \in \{0, 1, 2\}$. 
Let  $\psi \colon G \to G'$ be an isomorphism. 

Then there exist an element  $g \in G'$ and a permutation $\sigma \in \Sym(\{0, 1, 2\})$ such that 
$$g \psi(A_i) g^{-1} = A'_{\sigma(i)}$$
for all $i \in \{0, 1, 2\}$.   
In particular $k = k'$. 
\end{cor}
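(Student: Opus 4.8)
The plan is to reduce the statement to Proposition~\ref{prop:HomomorphismTriangleGroups}(iv) and then to bookkeep how the resulting $\psi$-equivariant isometry interacts with the combinatorics of the two complexes. Since the labelling of the vertices and edges of a triangle group by $\{0,1,2\}$ is a free choice, I would first relabel the vertices of $G$ and of $G'$ so that $r_0 \leq r_1 \leq r_2$ and $r'_0 \leq r'_1 \leq r'_2$; these relabellings are permutations that will be absorbed into the final permutation $\sigma$, so they cost nothing. With the hypotheses of the Proposition now in force, I would invoke part~(iv) to obtain a $\psi$-equivariant isometry $\Phi \colon Y \to Y'$ such that the vertices $v'_i := \Phi(v_i)$ together span a $2$-simplex of $Y'$.

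The equivariance of $\Phi$ translates group theory into geometry: for each $i$ it gives $G'_{v'_i} = \psi(G_{v_i})$, and since the natural image $A_i$ equals $G_{v_{i-1}} \cap G_{v_{i+1}}$, the stabiliser in $G$ of the edge $[v_{i-1}, v_{i+1}]$, it gives $\psi(A_i) = G'_{v'_{i-1}} \cap G'_{v'_{i+1}}$, the stabiliser in $G'$ of the edge $[v'_{i-1}, v'_{i+1}]$ (here the endpoints carry distinct vertex-types, so no element of $G'$ swaps them). Next I would use that, as the development of a triangle of groups, the complex $Y'$ carries a $G'$-invariant colouring of its vertices by the three vertex-types, with $G'$ acting sharply transitively on $2$-simplices, the face group being trivial. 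Writing $w_0, w_1, w_2$ for the standard fundamental simplex of $Y'$, with $w_j$ of type~$j$ and $A'_j = \mathrm{Stab}_{G'}([w_{j-1}, w_{j+1}])$, I define $\sigma(i)$ to be the type of the vertex $v'_i$; because the three vertices of a $2$-simplex carry all three distinct types, $\sigma$ is a permutation of $\{0,1,2\}$. Sharp transitivity then produces a unique $g \in G'$ with $g \cdot v'_i = w_{\sigma(i)}$ for all $i$.

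It remains to compute. Conjugating the edge stabiliser, $g\,\psi(A_i)\,g^{-1} = \mathrm{Stab}_{G'}\big(g\cdot[v'_{i-1}, v'_{i+1}]\big) = \mathrm{Stab}_{G'}\big([w_{\sigma(i-1)}, w_{\sigma(i+1)}]\big)$. Since $\sigma$ is a bijection of $\{0,1,2\}$ and $\{i-1, i, i+1\} = \{0,1,2\}$, the complement of $\{\sigma(i-1), \sigma(i+1)\}$ is exactly $\sigma(i)$, so the edge in question is the one opposite $w_{\sigma(i)}$ and its stabiliser is $A'_{\sigma(i)}$. This yields $g\,\psi(A_i)\,g^{-1} = A'_{\sigma(i)}$ for all $i$. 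Finally, $\psi(A_i)$ is cyclic of order~$k$, being the injective image of $A_i$, while $A'_{\sigma(i)}$ is cyclic of order~$k'$; being conjugate, they have the same order, so $k = k'$.

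I expect the only genuinely delicate points to be the two structural facts about $Y'$ that make $\sigma$ and $g$ well defined, namely that the $G'$-action preserves vertex-type and is sharply transitive on $2$-simplices, together with the verification that conjugation by $g$ sends the edge opposite $v'_i$ to the edge opposite $w_{\sigma(i)}$. The heavy lifting, the existence of the equivariant isometry, is already contained in Proposition~\ref{prop:HomomorphismTriangleGroups}, so the Corollary itself is essentially this combinatorial bookkeeping.
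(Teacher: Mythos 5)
Your proposal is correct and follows essentially the same route as the paper: apply Proposition~\ref{prop:HomomorphismTriangleGroups}(iv) to get the image $2$-simplex $(v'_0,v'_1,v'_2)$, use the strict fundamental domain $(w_0,w_1,w_2)$ to produce $g$ and $\sigma$, and translate edge stabilizers. Your explicit handling of the relabelling needed to meet the ordering hypothesis $r_0\leq r_1\leq r_2$ of the Proposition, and of the type-preservation that pins down $\sigma$, just spells out steps the paper leaves implicit.
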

\begin{proof}
Let $Y$ (resp. $Y'$) the associated CAT($0$) simplicial complex, as in Theorem~\ref{thm:NPC}(i). Let also $(v_0, v_1, v_2)$ be the $2$-simplex in $Y$ whose edges $[v_1, v_2]$, $[v_2, v_0]$ and $[v_0, v_1]$ are respectively fixed by $A_0, A_1$ and $A_2$. Similarly let $(w_0, w_1, w_2)$ be be the $2$-simplex in $Y'$ whose edges are respectively fixed by $A'_0, A'_1$ and $A'_2$. By Proposition~\ref{prop:HomomorphismTriangleGroups}(iv), there exists a $2$-simplex $(v'_0, v'_1, v'_2)$ in $Y'$ such that $\psi(G_{v_i}) \leq G'_{v'_i}$ for all $i \in \{0, 1, 2\}$. Since $(w_0, w_1, w_2)$ is a strict fundamental domain for the $G'$-action on $Y'$, there  exist an element  $g \in G'$ and a permutation $\sigma \in \Sym(\{0, 1, 2\})$ such that $gv'_i = w_{\sigma(i)}$ for all $i \in \{0, 1, 2\}$. The conclusion follows by Proposition~\ref{prop:HomomorphismTriangleGroups}.
\end{proof}

It is convenient to reformulate Corollary~\ref{cor:isomorphism} using the following terminology. Let $G(\mathcal T) = (X_i, A_j; \varphi_{i, i\pm 1})$ and $G(\mathcal T') = (X'_i, A'_j; \varphi'_{i, i\pm 1})$ be triangles of groups with trivial face groups. We say that $G(\mathcal T)$ and $G(\mathcal T')$ are \textbf{equivalent} there is a permutation $\sigma\in \Sym(\{0, 1, 2\})$ and, for each $i \mod 3$, an isomorphism $\alpha_i \colon A'_i \to A_{\sigma(i)}$ and an isomorphism $\beta_i \colon X_i \to X'_{\sigma^{-1}(i)}$  such that 
$$\varphi'_{i, i\pm 1} = \beta_{\sigma(i\pm 1)} \circ \varphi_{\sigma(i), \sigma(i\pm 1)} \circ \alpha_i.$$
Denoting by $\rho \in \Sym(\{0, 1, 2\})$ the $3$-cycle defined by $\rho(i)=i+1$, we have $\sigma(i \pm 1) = \sigma \rho^{\pm 1}(i)$. Thus the image of $\beta_{\sigma(i\pm 1)}$ is $X'_{\rho^{\pm 1}(i)} = X'_{i\pm 1}$ as required. 

It is clear that two equivalent triangles of groups have isomorphic fundamental groups. The conclusion of Corollary~\ref{cor:isomorphism} is that, under the corresponding hypotheses, the converse holds: two triangles of groups have isomorphic fundamental groups if and only if they are equivalent.

A well-known theorem of Z.~Sela ensures that one-ended torsion-free hyperbolic groups are co-Hopfian (see \cite[Theorem~4.4]{Sela97}). We recover a very special variation on that result. 

\begin{cor}\label{cor:co-Hopf}
Let $k \geq 2$  be an integer, let $G$ be a $k$-fold triangle group of type $(r_0, r_1, r_2)$,  none of whose  vertex groups is cyclic.  If $r_i \in \{3, 4\}$ for all $i \in \{0, 1, 2\}$ and $(r_0, r_1, r_2) \neq (3, 3, 3)$, then every injective homomorphism $G \to G$ is surjective. In other words $G$ is co-Hopfian. 
\end{cor}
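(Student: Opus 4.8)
The plan is to apply Proposition~\ref{prop:HomomorphismTriangleGroups} to the injective endomorphism $\psi\colon G\to G$, read as a homomorphism $G\to G'=G$. Since $r_i\in\{3,4\}$ and $(r_0,r_1,r_2)\neq(3,3,3)$, we have $1/r_0+1/r_1+1/r_2\le 1/3+1/3+1/4<1$, so by Theorem~\ref{thm:NPC}(v) the complex $Y=Y'$ is $\mathrm{CAT}(-1)$, and the hypotheses of the Proposition are met: the $r_i$ lie in $\{2,3,4\}$, no vertex group is cyclic, and $\psi$ is injective on each $G_{v_i}$ because it is injective on $G$. Fixing the fundamental $2$-simplex $(v_0,v_1,v_2)$, part (iii) produces a $2$-simplex $(v'_0,v'_1,v'_2)$ of $Y$ with $\psi(G_{v_i})\le G_{v'_i}$, and the argument in its proof yields, for each $i$, a simplicial embedding $\iota_i\colon \mathrm{Lk}_Y(v_i)\hookrightarrow \mathrm{Lk}_Y(v'_i)$ of links.

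First I would upgrade each $\iota_i$ to an isomorphism. Each link is a connected $k$-regular bipartite graph (the edge groups are cyclic of order $k$ with trivial pairwise intersection), and $\mathrm{Lk}_Y(v_i)$ has combinatorial girth $2r_i$. Since a subgraph has girth at least that of the ambient graph, the girth $2s_i$ of $\mathrm{Lk}_Y(v'_i)$ satisfies $s_i\le r_i$. On the other hand $(v'_0,v'_1,v'_2)$ is isometric to $(v_0,v_1,v_2)$, because $G$ acts transitively on $2$-simplices; hence the corner angles, and therefore the link-girths, form the same multiset, $\{s_0,s_1,s_2\}=\{r_0,r_1,r_2\}$. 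Summing the inequalities $s_i\le r_i$ forces $s_i=r_i$ for every $i$. Thus $\mathrm{Lk}_Y(v_i)$ and $\mathrm{Lk}_Y(v'_i)$ share the girth $2r_i$ and the angle at $v'_i$ equals $\pi/r_i$. Surjectivity of $\iota_i$ is now immediate from $k$-regularity: an injective, adjacency-preserving map sends the $k$ neighbours of a vertex onto all $k$ neighbours of its image, so the image contains full neighbourhoods and is a union of connected components; connectedness of the target makes $\iota_i$ onto, hence an isomorphism.

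Next I would assemble a global map. As the angle at $v'_i$ equals the angle $\pi/r_i$ at $v_i$, there is an isometry $(v_0,v_1,v_2)\to(v'_0,v'_1,v'_2)$ with $v_i\mapsto v'_i$; and since $\psi(A_i)$ is cyclic of order $k$ inside the order-$k$ cyclic stabilizer of $[v'_{i-1},v'_{i+1}]$, it equals that stabilizer. These compatibilities (together with $\psi(G_{v_i})\le G_{v'_i}$) make the $\psi$-equivariant extension $f\colon Y\to Y$, defined by $f(g\cdot x)=\psi(g)\cdot f(x)$ for $x$ in the fundamental simplex, well defined and simplicial, mapping each triangle isometrically. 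Its induced link maps are the isomorphisms $\iota_i$, so $f$ maps a neighbourhood of each vertex isometrically onto a full neighbourhood of its image; thus $f$ is a local homeomorphism, and $f(Y)$ is open in $Y$. Being the image of a simplicial map, $f(Y)$ is a subcomplex, hence closed; since $Y$ is connected and $f(Y)$ is nonempty, $f(Y)=Y$. Finally, $f$ surjective means the triangles $\{\psi(g)\cdot f(\sigma)\}_{g\in G}$ exhaust the $2$-simplices of $Y$; writing $f(\sigma)=g_0\sigma$ and using that $G$ acts \emph{simply} transitively on $2$-simplices, this reads $\psi(G)g_0=G$, whence $\psi(G)=G$, as required.

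The main obstacle is the upgrade in the second step from embedding to isomorphism: a priori $f$ could be an isometric embedding onto a proper convex subcomplex, leaving $\psi$ non-surjective. What rules this out is the combination of equal degree ($k$-regularity, which forces the link embeddings to be onto) with the girth/multiset comparison (which forces the corner angles to match, so that $f$ is genuinely simplicial and isometric on triangles). It is precisely here that the hypotheses $r_i\in\{3,4\}$ and $(r_0,r_1,r_2)\neq(3,3,3)$ — equivalently the strict curvature inequality underlying Proposition~\ref{prop:HomomorphismTriangleGroups}(iii) — are essential.
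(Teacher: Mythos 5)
Your proof is correct, and it rests on the same engine the paper invokes, namely Proposition~\ref{prop:HomomorphismTriangleGroups}(iii) applied to $\psi\colon G\to G$ (the hypotheses $r_i\in\{3,4\}$, $(r_0,r_1,r_2)\neq(3,3,3)$ give exactly the strict inequality $1/r_0+1/r_1+1/r_2<1$ needed there). Where you diverge is in the endgame: the paper's proof is the one-word "immediate", and the intended finish is purely algebraic and is in fact already contained in one sentence of your own write-up. Once you know that $(v'_0,v'_1,v'_2)$ is a $2$-simplex and that $\psi(A_i)$, being cyclic of order $k$ inside the order-$k$ cyclic stabilizer of the edge $[v'_{i-1},v'_{i+1}]$, equals that stabilizer, you are done: writing $\{v'_0,v'_1,v'_2\}=g\{v_0,v_1,v_2\}$, the three edge stabilizers of that simplex are $gA_{\tau(0)}g^{-1},gA_{\tau(1)}g^{-1},gA_{\tau(2)}g^{-1}$ for some permutation $\tau$, so $\psi(G)=\langle\psi(A_0),\psi(A_1),\psi(A_2)\rangle=g\langle A_0,A_1,A_2\rangle g^{-1}=gGg^{-1}=G$, since $G$ is generated by the images of its edge groups. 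Everything after that sentence in your argument — the upgrade of the link embeddings to isomorphisms via $k$-regularity and the girth multiset comparison, the construction of the $\psi$-equivariant map $f\colon Y\to Y$, the local-homeomorphism and open-closed argument — is valid (I checked the individual steps: the girth inequality goes the right way, the regularity argument for surjectivity of $\iota_i$ is sound, and the final deduction $\psi(G)g_0=G\Rightarrow\psi(G)=G$ is fine), but it is all redundant. What the geometric detour buys you is a self-contained picture of $\psi$ as a covering-type self-map of $Y$, which is conceptually pleasant; what the algebraic shortcut buys you is a three-line proof. One small correction of emphasis: the hypotheses on the $r_i$ are essential not for matching corner angles (that follows automatically from transitivity on $2$-simplices once $(v'_0,v'_1,v'_2)$ is known to be a simplex) but for making Proposition~\ref{prop:HomomorphismTriangleGroups}(iii) available in the first place.
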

\begin{proof}
Immediate from Proposition~\ref{prop:HomomorphismTriangleGroups}. 
\end{proof}

\section{Small edge-regular cubic graphs}\label{sec:small_cubic_graphs}

In view of the previous section, in order to build the smallest non-positively curved $3$-fold generalized  triangle  groups, we should determine which of the small edge-transitive  bipartite trivalent graphs (also called \textbf{cubic graphs}) admit a group $X$ acting regularly on the edge set, preserving the bipartition. Such graphs will a fortiori coincide with the coset graph of $X$ with respect to the stabilizers $A, B$ of two adjacent vertices, that must both be of order~$3$ by construction. We shall rely on classification results that describe all cubic graphs admitting an edge-transitive automorphism group, up to a certain size (see \cite{ConderMorton95}, \cite{CB02} and  \cite{CMM06}).

We consider graphs of order~$\leq 54$. Since the order of a bipartite cubic graph of girth $2r$ is at least $2^{r+1}-2$, we obtain $r \leq 4$. 
Therefore, it suffices to consider graphs of girth $4$, $6$ and $8$ in the context we have adopted. For graphs of girth~$\leq 6$, we shall moreover limit ourselves to graphs of order~$\leq 30$.

In the following, we provide the exact values of the cosine of the representation angle associated with the triple $(X, A, B)$ under consideration, computed formally by investigating systematically all irreducible representations of $X$. Corollary~\ref{cor:Spec_Cayley}, together with the \textsc{Magma} tools computing the spectrum of graphs, can be used to provide a computational confirmation. 

\subsection{Girth $4$}
Among the small cubic graphs with an edge-transitive automorphism group, only two have girth~$4$. 

\subsubsection{Order $6$: the complete bipartite graph}

Let $$X = \langle a, b | a^3, b^3, a b a^{-1} b^{-1}\rangle$$ and set $A = \langle a \rangle$ and $B = \langle b \rangle$. 

The group $X$ is the direct product $C_3 \times C_3$. The coset graph $\Gamma_X(A, B)$ is the \textbf{complete bipartie graph} $K_{3, 3}$. By Example~\ref{ex:0}, we have 
$$\varepsilon_X(A, B) = 0.$$
The corresponding angle is $90^\circ$. 

\subsubsection{Order $8$: the cube}\label{sec:cube}

Let $$X = \langle a, b | a^3, b^3, a b ab\rangle$$ and set $A = \langle a \rangle$ and $B = \langle b \rangle$. 

The group $X$ is the alternating group $\Alt(4)$. The coset graph $\Gamma_X(A, B)$ is the $1$-skeleton of the \textbf{cube}. Computations show that 
$$\varepsilon_X(A, B) = \frac 1 3.$$
The corresponding angle is $70.53^\circ$.

\subsection{Girth $6$}

In girth~$6$, we focus on graphs of order~$\leq 30$. We see from \cite{ConderMorton95} and \cite{CMM06} that there are exactly $6$ such graphs, respectively of order $14, 16, 18, 20, 24$ and $26$. The graph of order~$20$ is the \textbf{Desargues graph}. It can be viewed as the coset graph of $X = \Alt(5)$ with respect to $A = \langle (1, 2), (1, 2, 3) \rangle$ and $B = \langle (3,4), (3, 4, 5) \rangle$. We claim that it does not admit any automorphism group acting regularly on the edges preserving the bipartition, so it is excluded from our list.  Indeed, such a group would have order~$30$. In every group of order~$30$, any Sylow $3$-subgroup, which is cyclic of order~$3$, is normal. Therefore, no group of order~$30$ is generated by a pair of cyclic subgroups of order~$3$. Thus the Desargues graph cannot be the coset graph of a group of order~$30$, as claimed. 

Each of the $5$ remaining graphs is discussed below. 

\subsubsection{Order $14$: the Heawood graph}\label{sec:Heawood}

Let $$X = \langle a, b | a^3, b^3, a b a^{-1} b^{-1} ab\rangle$$ and set $A = \langle a \rangle$ and $B = \langle b \rangle$. 

The group $X$ is the Frobenius group of order~$21$. The coset graph $\Gamma_X(A, B)$ is the \textbf{Heawood graph}, which is the incidence graph of the projective plane of order~$2$. By Lemma~\ref{lem:Frobenius}, we have 
$$\varepsilon_X(A, B) = \sqrt 2/3.$$

The corresponding angle is $\approx 61.87^\circ$. 

\subsubsection{Order $16$: the M\"obius--Kantor graph}

Let $$X = \langle a, b | a^3, b^3, abab^{-1}a^{-1}b^{-1} \rangle$$ and set $A = \langle a \rangle$ and $B = \langle b \rangle$.

The group $X$ is isomorphic to $\mathrm{SL}_2(\mathbf F_3)$, its order is~$24$. The coset graph $\Gamma_X(A, B)$ is the \textbf{M\"obius--Kantor graph}. This group coincides with the complex reflection group with Coxeter diagram
\begin{center}
\begin{figure}[h]
\psset{xunit=1cm,yunit=1cm,algebraic=true,dimen=middle,dotstyle=o,dotsize=5pt 0,linewidth=1.6pt,arrowsize=3pt 2,arrowinset=0.25}
\begin{tikzpicture}[scale=3]
\node[circle,draw,thick] (l) at (0,0) {$3$};
\node[circle,draw,thick] (r) at (1,0) {$3$};
\draw (l) edge[thick] node[above] {$3$} (r);
\end{tikzpicture}
	\end{figure}
\end{center}
Computations show that 
$$\varepsilon_X(A, B) = \sqrt 3/3.$$
The infimum is achieved by a faithful representation as a complex reflection group in $\mathrm{SU}(2)$.

The corresponding angle is $\approx 54.74^\circ$. 

The group $X$ has one quotient that occurred before, namely $\Alt(4)$. A homomorphism to the presentation of  $\Alt(4)$ from Section~\ref{sec:cube} is obtained by mapping $a$ to $a$ and $b$ to $b^{-1}$. 

\subsubsection{Order $18$: the Pappus graph}\label{sec:Pappus}

Let $$X = \langle a, b | a^3, b^3, (ab)^3, (ab^{-1})^3 \rangle$$ and set $A = \langle a \rangle$ and $B = \langle b \rangle$. 

The group $X$ is isomorphic to the Heisenberg group over $\mathbf F_3$, its order is~$27$. The coset graph $\Gamma_X(A, B)$ is the \textbf{Pappus graph}. As mentionned in Example~\ref{ex:2} above, we have
$$\varepsilon_X(A, B) = \sqrt 3/3.$$

The corresponding angle is $\approx 54.74^\circ$. 

The group $X$ has one quotient that occurred before, namely the direct product $C_3 \times C_3$. 

\subsubsection{Order $24$: the Nauru graph}

Let $$X = \langle a, b | a^3, b^3, (ab)^3, ab a^{-1} b a^{-1}b^{-1}ab^{-1} \rangle$$ and set $A = \langle a \rangle$ and $B = \langle b \rangle$. 

The group $X$ is isomorphic to the direct product $\Alt(4) \times C_3$, its order is~$36$. The coset graph $\Gamma_X(A, B)$ is the \textbf{Nauru graph}. Computations show that 
$$\varepsilon_X(A, B) = 2/3.$$

The corresponding angle is $\approx 48.19^\circ$. 

The group $X$ has two quotients that occurred before, namely $C_3 \times C_3$ and $\Alt(4)$. 

\subsubsection{Order $26$}\label{sec:F26A}

Let $$X = \langle a, b | a^3, b^3, (ab)^3, a  b  a^{-1}  b  a^{-1}  b  a^{-1}  b^{-1} \rangle$$ and set $A = \langle a \rangle$ and $B = \langle b \rangle$. 

The group $X$ is isomorphic to the Frobenius group of order $39$. The coset graph $\Gamma_X(A, B)$ is the cubic graph denoted by \textbf{F26A} in the Foster census.  Using Lemma~\ref{lem:Frobenius}, we find that 
$$\varepsilon_X(A, B) = \frac{|\zeta^2 + \zeta^5+ \zeta^6| }{3},$$
where $\zeta = e^{2\pi i/13}$. 

The corresponding angle is $\approx 46.26^\circ$.

\subsection{Girth $8$}

In girth~$8$, we focus on graphs of order~$\leq 54$. We see from from \cite{CB02} and \cite{CMM06}  that there are exactly $4$ such graphs, respectively of order $30, 40, 48$ and $54$. The graph of order~$30$ is the Tutte $8$-cage $\mathcal T$, which is also the incidence graph of the generalized quadrangle of order~$2$.   It does not admit any automorphism group acting regularly on the edges preserving the bipartition, so it is excluded from our list. Indeed, since $\mathcal T$ is of order~$30$, an edge-regular automorphism group must have order~$45$; on the other hand, the index~$2$ subgroup of $\Aut(\mathcal T)$ preserving the bipartition is isomorphic to $\Sym(6)$, which does not have any subgroup of order~$45$. Each of the three remaining ones is discussed below.

\subsubsection{Order $40$: the double cover of the dodecahedron}

Let $$X = \langle a, b | a^3, b^3, (ab^{-1} a b)^2, (a^{-1} b^{-1} a b^{-1})^2 \rangle$$ and set $A = \langle a \rangle$ and $B = \langle b \rangle$. 

The group $X$ is isomorphic to $\Alt(5)$, which is of order $60$: indeed, there is an isomorphism mapping $a$ to $(1, 2, 3)$ and $b$ to $(3, 4, 5)$. The coset graph $\Gamma_X(A, B)$ is a double cover of the $1$-skeleton of the dodecahedron.  Computations show that 
$$\varepsilon_X(A, B) = \sqrt 5/3.$$
The infimum is achieved by the irreducible representations in degree~$3$.

The corresponding angle is $\approx 41.81^\circ$.

\subsubsection{Order $48$}

Let $$X = \langle a, b | a^3, b^3, (ab)^2(a^{-1}b^{-1})^2\rangle$$ and set $A = \langle a \rangle$ and $B = \langle b \rangle$. 

The group $X$ is isomorphic to the direct product $\mathrm{SL}_2(\mathbf F_3) \times C_3$, its order is~$72$. The coset graph $\Gamma_X(A, B)$ is of girth~$8$. This group coincides with the complex reflection group with Coxeter diagram
	\begin{center}
	\begin{figure}[h]
\psset{xunit=1cm,yunit=1cm,algebraic=true,dimen=middle,dotstyle=o,dotsize=5pt 0,linewidth=1.6pt,arrowsize=3pt 2,arrowinset=0.25}
\begin{tikzpicture}[scale=3]
\node[circle,draw,thick] (l) at (0,0) {$3$};
\node[circle,draw,thick] (r) at (1,0) {$3$};
\draw (l) edge[thick] node[above] {$4$} (r);
\end{tikzpicture}
\end{figure}
\end{center}
Computations show that 
$$\varepsilon_X(A, B) = \sqrt{2/3}.$$
The infimum is achieved by a faithful representation as a complex reflection group in $\mathrm{SU}(2)$.

The corresponding angle is $\approx  35.26^\circ$. 

The group $X$ has $4$ quotients that occurred before: adding the relation $(ab)^4$ gives a presentation of $SL_2(\Fbb_3)$ (order $24$, acting on the M\"obius-Kantor graph of order $16$). A homomorphism to the presentation above is given by taking $a$ to $a$ and $b$ to $b^{-1}$. The second quotient is obtained by adding the relation $(ab^{-1})^3$. This is a presentation of the group $\Alt(4) \times C_3$ (order $36$, acting on the Nauru graph of order $24$). A homomorphism to the previous presentation is given by taking $a$ to $a$ and $b$ to $b^{-1}$. One can then postcompose with quotient maps mentioned above, and obtain quotients isomorphic to $C_3 \times C_3$ and $\Alt(4)$.

\subsubsection{Order $54$: the Gray graph}

Let $$X = \langle a, b |  a^3, b^3,
a  b  a^{-1}  b^{-1}  a^{-1}  b  a  b^{-1}, 
(a  b  a^{-1}  b)^3\rangle$$
and set $A = \langle a \rangle$ and $B = \langle b \rangle$. 

The group $X$ is isomorphic to the wreath product $C_3 \wr C_3$. It is also isomorphic to the $3$-Sylow subgroup in $\mathrm{Sp}_4(\mathbf F_3)$. Alternatively, it can be viewed as a non-trivial split extension of the Heisenberg group over $\mathbf F_3$ by $C_3$. Its order is~$81$. The coset graph $\Gamma_X(A, B)$ is  the \textbf{Gray graph}. An alternative presentation will be provided in Proposition~\ref{prop:PresentationsUnipotent}(ii) below. Moreover, 
by Proposition~\ref{prop:AngleUnipotent}(ii), we have 
$$\varepsilon_X(A, B) = \sqrt{2/3}.$$

The corresponding angle is $\approx 35.26^\circ$. 

The group $X$ has two quotients that occurred before. Adding the relation $(ab)^3$ gives a presentation of the Heisenberg group over $\Fbb_3$ (order $27$, acting on the Pappus graph of order $18$). The homomorphism to the   presentation from Section~\ref{sec:Pappus} is given by taking $a$ to $a$ and $b$ to $b$. Post-composing with a quotient map mentioned above, we also obtain $C_3 \times C_3$ as a quotient. 

\begin{rmk}\label{rem:F54A}
We emphasize that the Gray graph is \textit{not} vertex-transitive (it is semi-symmetric, but not symmetric), while each of  the other $9$ graphs above  admits a vertex-transitive automorphism group. Moreover, the Gray graph should not be confused with the graph denoted by \textbf{F54A} in the Foster census. The latter graph is bipartite, of order~$54$, symmetric, and of girth~$6$ (and therefore excluded from our list, since fro graphs of girth~$\leq 6$, we only considered graphs of order~$\leq 30$).
\end{rmk}

\begin{rmk}\label{rem:39=exception}
For $9$ of the $10$ triples $(X, A, B)$ catalogued above, we have $$\varepsilon_X(A, B) \in \{\frac{\sqrt n} 3 \mid n=0, 1, 2, 3, 4, 5, 6\}.$$ 
The only exception is the Frobenius group of order~$39$, whose associated graph is $F26A$, see Section~\ref{sec:F26A}. This is also the only group involving the prime $13$ (the other groups involve only $2, 3, 5, 7$).
\end{rmk}

\begin{rmk}\label{rem:Raman}
It follows from Corollary~\ref{cor:Ramanujan} that the $10$ graphs above are all Ramanujan graphs. 
\end{rmk}

\section{Non-positively curved trivalent triangle groups: an experimental case study}\label{sec:TriTri}
\subsection{Overview}
A $3$-fold generalized triangle group is called a \textbf{trivalent triangle group}. 
We have undertaken a systematic enumeration of non-positively curved trivalent triangle groups with the smallest possible vertex links. In this introduction, we present an overview of our experimental set-up and of the outcome of our computations, and refer to the next subsections of the text for details.

In a trivalent triangle group, 
every vertex link is an edge-regular trivalent graph (also called cubic graph).  In order to satisfy the non-positive curvature condition, the inequality $\frac{2\pi}{g_0} + \frac{2\pi}{g_1} + \frac{2\pi}{g_2} \leq \pi$ must be satisfied, where $g_i$ denotes the girth of the $i$-th vertex link in the triangle of groups under consideration (see Theorem~\ref{thm:NPC}). The bound on the order of the graphs we have imposed implies that $g_i \leq 8$ for all $i$. The non-positive curvature condition therefore implies that the only girths to be considered are $4$,  $6$ and $8$. The previous section describes  the $2$ smallest edge-regular cubic graphs of girth~$4$, the $5$ smallest edge-regular cubic graphs of girth~$6$, and the $3$ smallest edge-regular cubic graphs of girth~$8$. (It is important to emphasize that not every edge-transitive cubic graph is edge-regular.) This leads us to a set $\mathcal X$ of $10$ graphs, and a corresponding set of $10$ finite groups acting regularly on the edges of those graphs.

In the following, we describe an enumeration of all the possible trivalent triangle groups, all of whose  vertex groups belong to $\mathcal X$. There are exactly $252$ inequivalent triangles of groups with vertex groups in $\mathcal X$. Corollary~\ref{cor:isomorphism} ensures that two inequivalent triangles of groups from that list yield non-isomorphic fundamental groups. We have thus obtained a list of $252$ non-isomorphic trivalent triangle groups, that are all infinite. A list of their presentations is included in Section~\ref{sec:PresentationList}. 

When the girths of the vertex links satisfy the strict inequality $\frac{2\pi}{g_0} + \frac{2\pi}{g_1} + \frac{2\pi}{g_2} < \pi$, the corresponding trivalent triangle group is hyperbolic (see Theorem~\ref{thm:NPC}). Among the $252$ groups of our list, $149$ satisfy that condition. Among the remaining $103$ groups, $38$ are hyperbolic while $65$ contain a subgroup isomorphic to $\mathbf Z \times \mathbf Z$ and are thus not hyperbolic. It is noteworthy that there exist pairs $(H_1, H_2)$ of trivalent triangle groups sharing the same triple of vertex groups, such that $H_1$ is hyperbolic but $H_2$ is not (see for example the pairs $(G^{14, 14, 24}_0, G^{14, 14, 24}_1)$ or $(G^{24, 24, 24}_0, G^{24, 24, 24}_1)$ in Appendix~\ref{sec:Tables}).

It is important to underline that four of the groups from our list had been introduced by M.~Ronan~\cite{Ronan} and studied by various authors. These Ronan groups are those obtained by imposing that   the three vertex links are all isomorphic to the Heawood graph, which is the smallest graph among the $8$ graphs we have selected. The Ronan groups act properly, chamber-transitively on $\widetilde A_2$-buildings. Therefore, they  have Kazhdan's property (T), and they are \textit{not} hyperbolic. Moreover, it is conjectured that there exists some $d$ such that none of them has a quotient in $\mathscr S_d$ (two of the Ronan groups are arithmetic, see \cite{KMW84}, hence  for them, this conjectural assertion follows from Serre's conjecture on the congruence subgroup problem \cite[Conjecture~1]{Rapinchuk92}; for the other two Ronan groups, see \cite[Conjecture~1.5]{BCL}).

Let us now describe our findings regarding property (T). Theorem~\ref{thm:EJ}  confirms that the four Ronan groups have property (T), but  happens to be inconclusive for the remaining $248$ groups on our list. On the other hand, by enumerating subgroups of low index, we found that the majority of those $248$ have finite index subgroups with infinite abelianization, and thus fail to have property (T). 
On the remaining groups, 
we have also run\footnote{The code used in the project is available under \url{https://git.wmi.amu.edu.pl/kalmar/SmallHyperbolic}} the algorithmic tools developed in \cite{Kaluba2019}
to check property (T), without reaching any conclusion. Moreover, further results confirming the absence of property (T) for a large subset of those groups have more recently been obtained by C.~Ashcroft~\cite[Corollary~C]{Ash}. 

In addition, we could also prove that $4$ of our groups have an unbounded isometric action on the real hyperbolic $3$-space, while $9$ of them have an unbounded isometric action on the complex hyperbolic plane, and $5$ have an unbounded isometric action on the complex hyperbolic $3$-space (see Section~\ref{sec:HyperbolicActions}). The existence of such actions is also an obstruction to property (T). 
Among those groups, one has all its vertex groups isomorphic to $\mathrm{SL}_2(\mathbf F_4) \cong \Alt(5)$, and has a Kleinian quotient group. Two others have all their vertex groups   isomorphic to $\mathrm{SL}_2(\mathbf F_3)$, and admit quotient groups that are arithmetic and non-arithmetic lattices in $\mathrm{SU}(2, 1)$. These lattices were first introduced and studied by G.~Mostow~\cite{Mostow}. We view those specific triangle groups as relatives of the $k$-fold generalized triangle groups considered in \cite{LMW}: indeed, they share the property of having all their vertex groups isomorphic to $\mathrm{(P)SL}_2(\mathbf F_q)$ or  $\mathrm{(P)GL}_2(\mathbf F_q)$ for some prime power $q$.

According to an unpublished result of Y.~Shalom and T.~Steger, the four Ronan groups are \textbf{hereditarily just-infinite} (this means that each of their proper quotients is finite, and that this property is inherited by their finite index subgroups). None of the other $248$ groups on our list has this property: indeed, all of them are acylindrically hyperbolic, hence SQ-universal (see Theorem~\ref{thm:Acyl}). Thus, as far as \textit{infinite} quotients are concerned, the four Ronan groups constitute an exception in our sample.  

Let us now describe our findings regarding finite simple quotients. All the data we collected is displayed in Section~\ref{sec:Tables}. We performed a systematic search of finite simple quotients of order~$\leq 5\cdot 10^7$,
and a systematic search of alternating quotients of degree~$\leq 30$. For various subclasses, this upper bound could be extended up to degree~$\leq 40$. The outputs of those computations show that when the girth triple $(g_0, g_1, g_2)$ is $(8, 8, 8)$, the corresponding triangle group has a tremendous amount of finite simple quotients, including most alternating groups of degree between $20$ and $40$. Moreover, all of them are virtually torsion-free. When the girth triple is not $(8, 8, 8)$ the situation is less clear. For each group with girth triple $(6, 8, 8)$ on our list, we could find some (and usually many) non-abelian finite simple quotients.  On the other hand, for some of the hyperbolic groups with girth triple $(4, 8, 8)$, $(6, 6, 6)$ and $(6, 6, 8)$ on our list, we could not find any (non-abelian) finite simple quotient at all. We underline that our investigations of the finite simple quotients of the groups from our sample was primarily based on a systematic enumeration of all simple quotients of finite order up to a certain upper bound (namely $5 \cdot 10^7)$, and a systematic enumeration of all subgroups of finite index up to a certain upper bound (typically $30$ or $40$). Occasionally, we have also found individual finite simple quotients, usually of much larger order, by exhibiting explicit linear representations in degree $d \leq  6$ in characteristic~$0$, and then by extracting congruence quotients. The latter construction was achieved by investigating certain representation varieties of the groups under consideration, using the \textsc{Magma} tools in algebraic geometry (see Section~\ref{sec:LinearReps} and Remark~\ref{rem:deg6} below). 

Our list of trivalent triangle groups interpolates between two extremes. At one end of the spectrum, we have the four Ronan groups,  that are very rigid non-hyperbolic groups with conjecturally no finite simple quotient of arbitrarily large rank. At the opposite end, we have the $17$ trivalent groups from our list all of whose vertex links have girth $8$. Each of the latter has a tremendous amount of finite simple quotients compared to the other groups from the list. It is tempting to believe that those $17$ groups admit $\Alt(d)$-quotients for  all but finitely many $d$, and that they are all virtually special (the latter conjecture has recently been partly verified by C.~Ashcroft, see \cite[Corollary~C]{Ash}). The other groups on our list have properties that appear to be  intermediate compared to those extremes. This led  some of us to speculate, at an intermediate stage of this project,  that some of them, with girth triple $(6, 6, 6)$ or $(6, 6, 8)$, could fail to admit quotients in $\mathscr S_d$ for all $d$. We emphasize that the two hyperbolic groups that are `as close as possible' to the four Ronan groups are those denoted by $G_0^{14, 14, 24}$ and $G_0^{14, 14, 26}$ on our list.

\subsection{Enumerating small trivalent triangle  groups}
\label{sec:TriTriSetup}

Let $\mathcal L$ be the set consisting of the~$10$ graphs listed in Section~\ref{sec:small_cubic_graphs}. We have performed a systematic enumeration of all the non-positively curved triangles $G(\mathcal Y)$ of finite groups with trivial face group and cyclic edge groups of order~$3$, such that the link in the local development at every vertex belongs to $\mathcal L$. Their half girth type is thus an element of the set 
$$\{(2, 4, 4), (3, 3, 3), (3, 3, 4), (3, 4, 4), (4, 4, 4)\}.$$  
Let $\mathbf Y$ be the collection of those $G(\mathcal Y)$. It follows from Corollary~\ref{cor:isomorphism} that if two elements $G(\mathcal Y_1), G(\mathcal Y_2) \in \mathbf Y$ are inequivalent, their fundamental groups $\widehat{G(\mathcal Y_1)}$ and $\widehat{G(\mathcal Y_2)}$ are not isomorphic (see Section~\ref{sec:Isom-triangle-groups} for the definition of the notion of \textit{equivalence} of triangles of groups).

For each graph $L \in \mathcal L$, let $|L|$ be its order (i.e. the cardinality of its vertex set) and  $X^{|L|} = \langle a, b\rangle$ be the group acting regularly on the edges of $L$, generated by a pair  of elements of order~$3$, as it appears in the list from Section~\ref{sec:small_cubic_graphs}. We need to describe, for each such group $X^{|L|}$, the group $\Aut(X^{|L|})_{\{A, B\}}$ of those automorphisms of $X^{|L|}$ that stabilizer the pair $\{A, B\}$, where $A = \langle a\rangle$  and $B = \langle b \rangle$. By definition, every element $\alpha \in \Aut(X^{|L|})_{\{A, B\}}$ permutes the set $\{a, a^{-1}, b, b^{-1}\}$; moreover $\alpha$ is uniquely determined by its action on that set. Thus $\Aut(X^{|L|})_{\{A, B\}}$ is isomorphic to a subgroup of the dihedral group of order~$8$. In Table~\ref{table:Aut(X)}, we provide a generating set for $\Aut(X^{|L|})_{\{A, B\}}$ as a collection of permutations of the set $\{a, a^{-1}, b, b^{-1}\}$. The case-by-case verification is straightforward.

\begin{table}[h]
    \centering
    \begin{tabular}{|c|c|c|}
    \hline
       $X = \langle A, B\rangle$  & Order of $\Aut(X)_{\{A, B\}}$  & Generators of $\Aut(X)_{\{A, B\}}$\\
       \hline
       $X^6, X^{18}, X^{40}$ & 8 & $(a \ b)(a^{-1}\ b^{-1}), (a\ a^{-1})$\\
$X^8, X^{16}, X^{24}, X^{48}$ & 4 & $(a \ b)(a^{-1}\ b^{-1}), (a\ a^{-1})(b\ b^{-1})$\\
$X^{14}, X^{26}$ & 2 & $(a \ b^{-1})(a^{-1}\ b)$\\
$X^{54}$ & 4 & $(a \ a^{-1}), (b\ b^{-1})$\\
\hline
    \end{tabular}
    
    \vspace{.3cm}
    \caption{Descrition of $\Aut(X^{|L|})_{\{A, B\}}$ for $L \in \mathcal L$}
    \label{table:Aut(X)}
\end{table}

We are now ready for the following.

\begin{prop}\label{prop:enumeration}
The set $\mathbf Y$ consists of $252$ inequivalent triangles of finite groups. Their fundamental groups are pairwise non-isomorphic. Their presentations are those listed in Appendix~\ref{sec:PresentationList}.
\end{prop}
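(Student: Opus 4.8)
The plan is to count the elements of $\mathbf Y$ up to equivalence by encoding each triangle of groups as discrete gluing data, to deduce non-isomorphism of the fundamental groups from Corollary~\ref{cor:isomorphism}, and to read off the presentations from the direct-limit description of the fundamental group.

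First I would parametrize. A triangle of groups in $\mathbf Y$ is obtained by assigning to each vertex $v_i$ one of the ten groups $X^{|L|}$ of Section~\ref{sec:small_cubic_graphs}, each carrying its distinguished generating pair $\{A, B\} = \{\langle a\rangle, \langle b\rangle\}$ of cyclic subgroups of order $3$, and then specifying the six monomorphisms $\varphi_{i; i\pm 1}$. Since each $X_i$ must be generated by the images of its two incident edge groups with coset graph isomorphic to the prescribed link $L_i$, and since such a generating pair lies in a single $\Aut(X_i)$-orbit, after applying an automorphism we may take the images to be $\{A, B\}$, so that each monomorphism sends the chosen generator of its edge group to an element of $\{a_i, a_i^{-1}, b_i, b_i^{-1}\}$, subject to the constraint that the two maps into a common vertex hit the two distinct subgroups $A$ and $B$. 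Thus, for a fixed ordered triple of vertex groups, the raw gluing data is a finite set (eight choices per vertex), and equivalence of triangles of groups is realized by the action of the finite group generated by (a) the permutations $\sigma \in \Sym(\{0,1,2\})$ preserving the vertex-group assignment, (b) the inversions $t_i \mapsto t_i^{-1}$ of the three edge-group generators, and (c) the residual vertex automorphisms $\beta_i \in \Aut(X_i)_{\{A,B\}}$, whose action on $\{a, a^{-1}, b, b^{-1}\}$ is recorded in Table~\ref{table:Aut(X)}.

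The enumeration is then a finite orbit-counting problem, which I would organize according to the five admissible half girth types $(2,4,4)$, $(3,3,3)$, $(3,3,4)$, $(3,4,4)$, $(4,4,4)$ --- equivalently the girth triples $(4,8,8)$, $(6,6,6)$, $(6,6,8)$, $(6,8,8)$, $(8,8,8)$ --- and, within each, according to the multiset of vertex groups, the extra vertex-permutation symmetry appearing precisely when two or three of the assigned groups coincide. Since equivalence preserves the multiset of vertex groups, distinct multisets never collide, so it suffices to count orbits separately in each case using the orders and generators of $\Aut(X^{|L|})_{\{A,B\}}$ from Table~\ref{table:Aut(X)}; summing these yields the total of $252$ inequivalent triangles. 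This bookkeeping, finite but delicate and well suited to a computer check, is the main obstacle.

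For the remaining two assertions: each vertex group $X^{|L|}$ is non-cyclic (the smallest being $C_3 \times C_3$), and every admissible half girth type has all entries in $\{2,3,4\}$, so Corollary~\ref{cor:isomorphism} applies. An isomorphism between two of the fundamental groups restricts to an injection on each finite vertex stabilizer, hence by Corollary~\ref{cor:isomorphism} forces the underlying triangles of groups to be equivalent; since the $252$ triangles were selected pairwise inequivalent, their fundamental groups are pairwise non-isomorphic. Finally, the fundamental group of each triangle of groups, being the direct limit of the defining system, admits the presentation with generators $a, b, c$ (the three edge-group generators), relations $a^3, b^3, c^3$, and the defining relations of the three vertex groups $\langle a, b\rangle$, $\langle b, c\rangle$, $\langle c, a\rangle$ rewritten in the edge generators exactly as in Section~\ref{sec:small_cubic_graphs}; transcribing these for each triangle on the list reproduces the presentations of Appendix~\ref{sec:PresentationList}.
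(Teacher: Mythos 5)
Your proposal is correct and follows essentially the same route as the paper: encode each triangle of groups as finite gluing data (a choice among eight identifications per vertex), count orbits under the finite group generated by the $\Aut(X_i)_{\{A,B\}}$ of Table~\ref{table:Aut(X)}, the edge-generator inversions, and the permutations of coinciding vertex groups (a computation the paper carries out in \textsc{Magma} after coding the data into $\mathbf F_2^6$ with an affine $\Delta$-action), then invoke Corollary~\ref{cor:isomorphism} for pairwise non-isomorphism and the direct-limit description for the presentations. The only substantive point you make explicit that the paper leaves implicit is that the relevant generating pair of order-$3$ subgroups lies in a single $\Aut(X_i)$-orbit, which is needed for completeness of the enumeration.
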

\begin{proof}[About the proof]
A triangle of groups $G(\mathcal{Y}) \in \mathbf{Y}$ is determined by the following data: the three edge groups $A_i = \grp{c_i}$ which are cyclic of order $3$; the vertex groups $X_i = \grp{a_i,b_i}$, each of which is one of the $10$~groups whose presentation is given in Section~\ref{sec:small_cubic_graphs}, and the homomorphisms $\varphi_{i-1,i}, \varphi_{i+1,i}$ which amount to identifying the pair $(a_i,b_i)$ with one of the eight pairs $(c_{i-1}^{\pm 1}, c_{i+1}^{\pm1}), (c_{i+1}^{\pm 1}, c_{i-1}^{\pm 1})$. Once the $(A_i)_{i=0, 1, 2}$ and $(X_i)_{i=0, 1, 2}$ are fixed this leads to $8^3$ possible triangles of groups. However, replacing $c_i$ by $c_i^{-1}$ in a given triangle of groups and also inverting $c_i$ in the identifications coming from the $\varphi_{i,j}$ leads to a triangle of groups that is obviously equivalent to the original one. Thus at most $4^3 = 64$ of these triangles of groups are inequivalent. Therefore, once the triple $(X_0, X_1, X_2)$ of vertex groups is fixed, we may encode all the possible triangles of groups $G(\mathcal{Y})$ by an element of the $6$-dimensional vector space $\mathbf F_2^6$ as follows. To a vector $v= (v_0, v_1, \dots, v_5) \in \mathbf F_2^6$, we associate a unique triangle of groups $G(\mathcal{Y})$  defined as follows. For each $i \in \{ 0, 1, 2\}$, we rename the generating pair $(a_i,b_i)$ in $X_i$ according to the  rule:
$$(x_i, y_i) = \left\{
\begin{array}{cl}
(a_i, b_i) & \text{if } v_{2i} = 0\\
(b_i, a_i) & \text{if } v_{2i} = 1.
\end{array}\right.$$
The connecting homomorphisms $\varphi_{i-1,i}, \varphi_{i+1,i}$ are then uniquely determined by the following identifications:
$$
\begin{array}{cl}
y_i = x_{i+1} & \text{if } v_{2i+1} = 0\\
y_i = x_{i+1}^{-1} & \text{if } v_{2i+1} = 1,
\end{array}$$
where, as usual, the index $i$ is taken modulo~$3$. 

Using that parametrization, we can now determine the equivalence classes of triangles of groups, keeping the triple  $(X_0, X_1, X_2)$ fixed, as follows. The equivalence classes of triangles of groups coincide with the orbits of a finite group $\Delta$ determined by the groups $\Aut(X_i)_{\{A_i, B_i\}}$ described in Table~\ref{table:Aut(X)}, for $i=0, 1, 2$, and the permutations of $\{X_0, X_1, X_2\}$ permuting  identical vertex groups. Let us illustrate this by two examples. If the group $\Aut(X_0)_{\{A_0, B_0\}}$ contains the permutation $(a_0\  b_0^{-1})(a_0^{-1} b_0)$, it follows that for every vector $v= (v_0, v_1, \dots, v_5)$, the triangle of group determined by $v$ is equivalent to the triangle of groups determined by $(v_0+1, v_1+1, v_2, v_3, v_4, v_5+1)$. This means that $\Delta$ contains the translation $v \mapsto v + (1, 1, 0, 0, 0, 1)$. Similarly, if the groups $X_0$ and $X_1$ are identical, so that the assignments $(a_0, b_0) \mapsto (a_1, b_1)$ define an isomorphism, then $\Delta$ contains the linear transformation $(v_0, v_1, v_2, v_3, v_4, v_5) \mapsto (v_2, v_3, v_0, v_1, v_4, v_5)$. Therefore, using the coding we have introduced above, we see that the group $\Delta$ acts on $\mathbf F_2^6$ by affine transformations. The equivalence classes of triangle presentations with vertex groups $(X_0, X_1, X_2)$ are nothing but the $\Delta$-orbits on the space $\mathbf F_2^6$. This computation is now easily implemented in \textsc{Magma}.

It is straightforward to obtain a presentation for the fundamental group of a triangle of groups $G(\mathcal Y)$ given by these data: it is generated by the $a_i$ and presented by the relations of the $X_i$ with the appropriate identifications.

Section~\ref{sec:PresentationList} lists these fundamental groups in the form $G^{m_1,m_2,m_3}_\ell$ where the number $m_i$ is the order of the cubic graph on which the groups $X_i$ acts edge-transitively and the number $\ell \in \{0,\ldots,63\}$ corresponds to an element of the index set $\mathbf F_2^6$ as described above (explicitly $(v_0,\ldots,v_5)$ corresponds to $32(1-v_0) + 16v_1 + 8(1-v_2)+4v_3 + 2(1-v_4) + v_5$).

In view of Proposition~\ref{prop:HomomorphismTriangleGroups}, two inequivalent triangles of groups have non-isomorphic fundamental groups. 
This leads to a computation of isomorphism classes; for each class Section~\ref{sec:PresentationList} lists the representative with smallest index $\ell$.

An independent verification has also been realized using  the \textsc{Magma} call \sloppy  \texttt{SearchForIsomorphism($G^{m_1,m_2,m_3}_\ell$,$G^{m_1',m_2',m_3'}_{\ell'}$,$3$)}, for all $\ell \neq  \ell' \in \{0, 1, \dots 63\}$. This  searches for  an isomorphism taking the generators of $G^{m_1,m_2,m_3}_\ell$ to generators or inverses of generators of $G^{m_1',m_2',m_3'}_{\ell'}$. Although there is no guaranty that every such isomorphism will be found, it turns out that the outcome confirms the list displayed in Section~\ref{sec:PresentationList}.

We observe that $\Delta$ depends on $X_i$ only through the group $\Aut(X_i)_{\{A_i, B_i\}}$. For example, the number of equivalence classes for the triple $(X^{14}, X^{16}, X^{18})$ must be equal to the number of equivalence classes with the triple $(X^{26}, X^{40}, X^{48})$ since, in view of Table~\ref{table:Aut(X)}, the affine group $\Delta$ will be identical in those two cases. 
\end{proof}

Information about these groups is tabulated in Section~\ref{sec:Tables}. That information was mostly obtained by basic use of \textsc{Magma} and is described before the tables. There are a few exceptions. Information on Kazhdan's property (T) is incomplete. The four groups $G^{14,14,14}_\ell, \ell \in \{0,1,2,6\}$ are Ronan's groups \cite{Ronan} that are uniform lattices on $\tilde{A}_2$-buildings and therefore are well-known to have property (T). This is also recovered by Theorem~\ref{thm:EJ} using that $\varepsilon_{X_i}(A_{i-1},A_{i+1}) = \sqrt{2}/3$, see Section~\ref{sec:Heawood}. For all the other groups Theorem~\ref{thm:EJ} is inconclusive. It turns out that many of them have a finite index subgroup with infinite abelianization. Moreover, some of them admit unbounded isometric actions on real or comlex hyperbolic spaces, which is also an obstruction is property (T) (see Section~\ref{sec:HyperbolicActions} below).

The information on alternating quotients is obtained by running a systematic search of subgroups of small index, using the \textsc{Magma} call \texttt{LowIndexSubgroups}, then extracting the corresponding coset action and testing whether the corresponding quotient group is alternating. That procedure was run up to a certain upper bound on the index, which was fixed for each half girth type, and is clearly indicated in the tables.

The table for groups of half girth type $(2, 4, 4)$ and $(3,3,3)$ also contains information on hyperbolicity of the groups. It uses an automatic structure that is found (for all trivalent triangle groups) by Holt's \textsc{kbmag} using the \textsc{Magma} call \texttt{$\mathit{isaut}, \mathit{GA}$ := IsAutomaticGroup($G$)}. For those groups that are hyperbolic it can be confirmed using \texttt{IsHyperbolicGroup($\mathit{GA}$ :\ MaxTries:=$20$)}. For the groups that are not, the table provides two elements that generate $\Zbb \times \Zbb$ by Corollary~\ref{cor:z_squared}, where the hypotheses can be verified using the automatic group $\mathit{GA}$.

Although this is not relevant for verifying non-hyperbolicity, we briefly explain how we found these elements. A copy of $\Zbb \times \Zbb$ in a generalized triangle group of half-girth type  $(3,3,3)$ or $(2, 4, 4)$ will act on a flat plane of the CAT($0$) complex $Y$ associated with $G(\mathcal{Y})$, leaving the vertex coloring by conjugacy classes of stabilizers invariant. It is therefore canonically a finite-index subgroup of the isomorphism group of that colored tiled plane, which is the root lattice $\Lambda$ of type $\tilde{A}_2$ or $\tilde C_2$, respectively. If the plane contains the base simplex $\sigma$, the vectors of the five shortest lengths in $\Lambda$ are represented by words as in Lemma~\ref{lem:translation_length}, see Figure~\ref{fig:root_lattice}.
In searching for generators of $\Zbb \times \Zbb$, we enumerated pairs $(g_1,g_2)$ of such words with $\abs{g_1} \ge \abs{g_2}$ in lexicographic order of their translation lengths. Using the automatic structure, we could then have \textsc{Magma} check whether they commute and use Lemma~\ref{lem:translation_length} to decide whether the cyclic groups they generate are commensurate.

\begin{figure}
    \centering~\hfill
    \includegraphics{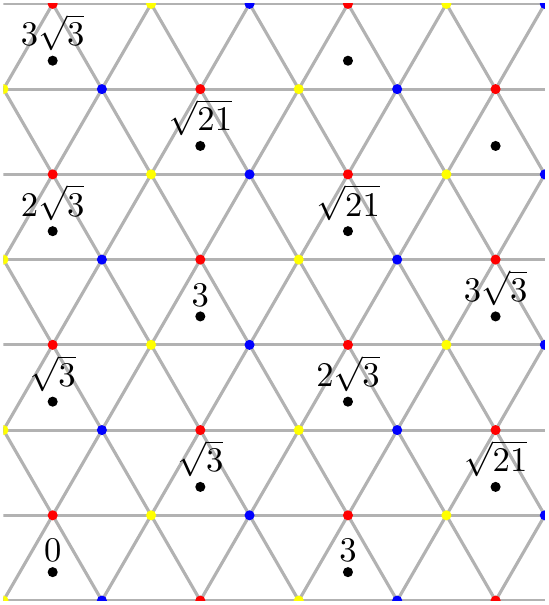}\hfill
    \includegraphics{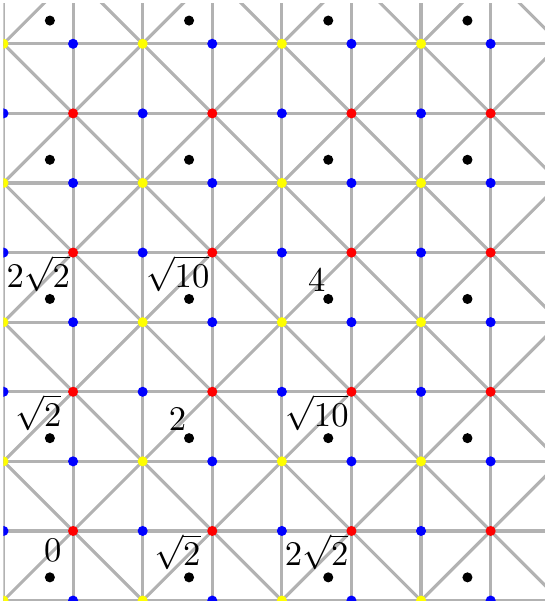}\hfill\phantom{a}
    \caption{Short vectors in the root lattices of type $\tilde{A}_2$ and $\tilde{C}_2$ (the (longer) edges have length $1$).}
    \label{fig:root_lattice}
\end{figure}

\begin{rmk}
As can be seen in the table in Section~\ref{sec:tab_333}, the elements of the first pair we found always satisfy $\abs{g_1} = \abs{g_2}$. It is not a priori clear to us why we would never find pairs with $\abs{g_1} = 2\sqrt{3}$ and $\abs{g_2} = \sqrt{3}$ or with $\abs{g_1} = 2\sqrt{3}$ and $\abs{g_2} = 3$ first.
\end{rmk}

\begin{rmk}
If we take edge lengths to be $1$, the covolume of the root lattice $\Lambda$ of type $\tilde{A}_2$ is $\frac{3\sqrt{3}}{2}$. As a consequence of the previous remark, the covolumes of the copies of $\Zbb \times \Zbb$ that we first find in trivalent triangle groups of half-girth type $(3,3,3)$ are $\frac{3\sqrt{3}}{2} \cdot \{1, 3, 4, 7, 9\}$. These are generally not the copies of $\Zbb \times \Zbb$ with smallest covolume: Ronan's group $G_1^{14,14,14}$ contains a copy of $\Zbb \times \Zbb$ of covolume $\frac{3\sqrt{3}}{2} \cdot 3$ (with $(\abs{g_1},\abs{g_2}) = (2\sqrt{3},\sqrt{3})$) while our search first finds one of covolume $\frac{3\sqrt{3}}{2} \cdot 4$ (with $(\abs{g_1},\abs{g_2}) = (3,3)$).

It would be interesting to know which covolumes (asymptotically) appear with which multiplicity in a given generalized triangle group. Such information is not known even in the case of Ronan's groups.
\end{rmk}

\subsection{Linear representations with infinite image in characteristic~$0$}\label{sec:LinearReps}

In the following sections,  we construct, for certain trivalent triangle groups, explicit low degree representations in characteristic~$0$ with infinite image. We have used the following methods. First, the \textsc{Magma} call \texttt{L2Quotients} computes the finite quotients of  a finitely presented group $G = \langle S | R\rangle$ of the form $\mathrm{PSL}_2(q)$ or $\mathrm{PGL}_2(q)$. When this algorithm ensures the existence of infinitely many such quotients in infinitely many different characteristics, one expects the group $G$ to have a representation in $\mathrm{PGL}_2(\mathbf C)$ with infinite image. This is confirmed in our sample (see Section~\ref{sec:Tables} and Proposition~\ref{prop:ActionOnH^3} below). If the group $G$ is $2$-generated, there is a similar function \texttt{L3Quotients}. This does not apply to any trivalent triangle group from our sample, since they fail to be $2$-generated, but it does apply to certain extensions described in Section~\ref{sec:CyclicExt_trivalent}. Fifty-four  groups from our sample have a subgroup of index~$3$ which is $2$-generated, but the presentation of that subgroup happens to be too complicated for the \texttt{L3Quotients}-algorithm to reach a conclusion in a reasonable computing time. 
An alternative approach attempts to build explicit points of the representation variety of $G$ in $\mathrm{GL}_d(\mathbf C)$, exploiting  the fact that triangle groups are generated by torsion subgroups. More precisely, given a triangle group $G = \langle a, b, c \mid R\rangle$, we can start with an explicit representation $\rho \colon X \to \mathrm{GL}_d(F)$ of the finite group $X = \langle a, b \rangle$, where $F$ is a number field. The possibility to define $\rho(c)$ in such a way that $\rho$ extends to a representation of $G$ can be explored as follows. First define $C$  as a matrix whose entries are $d^2$ indeterminates. Each of the relators from the presentation of $G$ yields an identity, each of which corresponds to $d^2$ polynomial equations in the indeterminates above. The collection of those matrices $C$ satisfying those identities is thus a complex algebraic variety, which can be studied with the \textsc{Magma} tools in algebraic geometry. In particular, the dimension of that variety can be computed. When the dimension is~$-1$, the representation $\rho$ cannot be extended to $G$, whereas if the dimension is~$0$,  \textsc{Magma} finds the rational points $C$ over the ground field $F$. We can then set $\rho(c) = C$ and obtain a representations of $G$. Of course, there is no guarantee that the representation $\rho$ constructed in that way has infinite image. This method has been used to identify some of the representations described below. 

\subsection{Actions on real and complex hyperbolic spaces}\label{sec:HyperbolicActions}

The goal of this section is to explain that some of the groups appearing in the enumeration above admit representations with unbounded image in $\mathrm{SO}(3, 1)$,  in $\mathrm{U}(2, 1)$ or in $\mathrm U(3, 1)$. We start be recalling that the existence of such a representation is an obstruction to Kazhdan's property (T). 

\begin{thm}[{\cite[Theorem~2.7.2]{BHV}}]\label{thm:SO(n, 1)-SU(n, 1)}
Let $n \geq 2$.  For any group $\Gamma$ with Kazhdan's property (T), the image of any homomorphism of $\Gamma$ to $O(n, 1)$ or $U(n, 1)$ has compact closure.  
\end{thm}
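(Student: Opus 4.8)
The statement is a special case of a general incompatibility principle: a group with property (T) cannot act on a real or complex hyperbolic space without a bounded orbit, because the isometry groups $O(n,1)$ and $U(n,1)$ enjoy the Haagerup property (a-T-menability), which is incompatible with property (T) unless the group in question is compact. I would organize the argument in three steps, followed by an analysis of the key geometric input.

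First, I would reduce the statement to a question about closed subgroups. Let $\pi \colon \Gamma \to G$ be a homomorphism, where $G$ denotes $O(n,1)$ or $U(n,1)$, and set $H = \overline{\pi(\Gamma)}$, a closed subgroup of $G$. The homomorphism $\Gamma \to H$ has dense image, and since property (T) is inherited by the target of a continuous homomorphism with dense image (see \cite[Theorem~1.3.4]{BHV}), the group $H$ inherits property (T) from $\Gamma$. It therefore suffices to prove that every closed subgroup $H \leq G$ with property (T) is compact. Second, I would invoke the a-T-menability of $G$. Since $G$ acts properly by isometries on the associated hyperbolic space $X$, namely $X = \mathbf{H}^n_{\mathbf{R}}$ for $O(n,1)$ and $X = \mathbf{H}^n_{\mathbf{C}}$ for $U(n,1)$, this action yields a proper, continuous, conditionally negative definite function $\psi \colon G \to [0,\infty)$. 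As the Haagerup property passes to closed subgroups, the restriction $\psi|_H$ is again a proper continuous conditionally negative definite function on $H$.

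Third, I would use the characterization of property (T) in terms of conditionally negative definite functions: a ($\sigma$-compact, locally compact) group has property (T) if and only if every continuous conditionally negative definite function on it is bounded. This is the cohomological form of the Delorme--Guichardet theorem, equivalent to the implication that (T) entails property (FH). Applying it to $H$, the function $\psi|_H$ is bounded, say $\psi|_H \leq M$. Combined with properness, this gives $H = (\psi|_H)^{-1}([0,M])$, which is relatively compact; being closed in $G$, it is compact. This proves the reduced statement, hence the theorem.

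The main obstacle lies in the construction of the proper conditionally negative definite function in the second step. For real hyperbolic space the input is clean, since the distance function $(x,y) \mapsto d(x,y)$ on $\mathbf{H}^n_{\mathbf{R}}$ is itself conditionally negative definite, so that $\psi(g) = d(g \cdot o, o)$ for a fixed basepoint $o$ works directly. For complex hyperbolic space the distance function fails to be conditionally negative definite once $n \geq 2$, so the argument must replace it by a suitable equivalent kernel, or appeal directly to the known a-T-menability of $\mathrm{SU}(n,1)$. This is the delicate geometric point, and it is exactly what distinguishes $O(n,1)$ and $U(n,1)$ from the other rank-one simple Lie groups $\mathrm{Sp}(n,1)$ and $F_4^{-20}$, which do possess property (T).
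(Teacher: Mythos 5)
The paper does not prove this statement; it is quoted verbatim from Bekka--de la Harpe--Valette \cite[Theorem~2.7.2]{BHV}, and your argument is essentially the proof given there: pass to the closure of the image (property (T) is inherited along homomorphisms with dense image), produce a proper continuous conditionally negative definite function on $O(n,1)$ resp.\ $U(n,1)$ from the isometric action on the associated hyperbolic space, and conclude by the Delorme--Guichardet characterization that this function is bounded on the image, hence the image is relatively compact. One correction to your closing paragraph: the geodesic distance on \emph{complex} hyperbolic space $\mathbf{H}^n_{\mathbf{C}}$ is in fact a kernel conditionally of negative type for all $n$ --- this is the theorem of Faraut--Harzallah, which is precisely the geometric input used in \cite{BHV} --- so no replacement kernel is needed; it is only for the quaternionic hyperbolic spaces and the Cayley plane (the rank-one spaces attached to $\mathrm{Sp}(n,1)$ and $F_4^{-20}$) that conditional negative definiteness fails, which is consistent with those groups having property (T).
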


In other words, if a group $\Gamma$ is capable of acting by isometries on a real or complex hyperbolic space without a global fixed point, then $\Gamma$ does not have (T).

\begin{prop}\label{prop:ActionOnH^3}
For $\Gamma \in \{G_0^{8, 40, 40}, G_0^{16, 40, 40}, G_0^{24, 40, 40}, G_0^{40, 40, 48}\}$, there is a representation $\Gamma \to \mathrm{SO}(3, 1)$ whose image is a non-discrete, Zariski dense subgroup. 

The group $G_0^{40, 40, 40}$ has a representation $\Gamma \to \mathrm{SO}(3, 1)$ whose image is a cocompact lattice (namely, an index $2$ subgroup of the compact hyperbolic Coxeter group of type $(3, 5, 3)$). 

In particular, none of those five groups has Kazhdan's property (T). 
\end{prop}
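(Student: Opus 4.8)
The plan is to produce, for each of the five groups $\Gamma$, an explicit homomorphism $\Gamma \to \mathrm{SO}(3,1)$ whose image is not relatively compact, and then to quote Theorem~\ref{thm:SO(n, 1)-SU(n, 1)} to rule out property~(T). The construction is streamlined by the fact that $\Gamma = \widehat{G(\mathcal T)}$ is the colimit of its vertex and edge groups: a homomorphism to a target group $H$ is the same datum as a compatible family of homomorphisms from the vertex groups $X_0, X_1, X_2$ that restrict to the same map on each edge group $A_i \cong C_3$. Thus it suffices to exhibit three copies of the relevant finite groups inside $\mathrm{SO}(3,1)$ that pairwise share the prescribed order-$3$ subgroups, matching the combinatorial data recorded in Section~\ref{sec:small_cubic_graphs}.

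For $G_0^{40,40,40}$, all three vertex groups are the icosahedral rotation group $\Alt(5)$, and each edge group is a $C_3$ sitting inside it as a face-rotation. I would realize this configuration geometrically inside the $\{3,5,3\}$ honeycomb of $\mathbb{H}^3$, whose symmetry group is the cocompact hyperbolic Coxeter group $W$ of type $(3,5,3)$. In this honeycomb three icosahedral cells meet around each edge, and two cells adjacent across a triangular $2$-face share the $C_3$ fixing that face. Choosing the three cell-centres $p_0, p_1, p_2$ surrounding a common edge, the orientation-preserving stabilisers $\mathrm{Stab}^+(p_i) \cong \Alt(5)$ intersect pairwise in these $C_3$'s, so that $(\mathrm{Stab}^+(p_i); C_3)$ is a triangle of groups of exactly the required type. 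I would then verify, via the equivalence of triangles of groups (Corollary~\ref{cor:isomorphism}), that it is equivalent to the defining triangle of $G_0^{40,40,40}$; the universal property yields a homomorphism $G_0^{40,40,40} \to W^+ \leq \mathrm{SO}(3,1)$ onto the subgroup generated by the three stabilisers. Since these generate the orientation-preserving subgroup $W^+$, which has index $2$ in the cocompact lattice $W$ and is therefore itself a cocompact lattice, the image is as claimed.

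For the four remaining groups, the vertex groups mix $\Alt(4)$, $\SL_2(\mathbf F_3)$, $\Alt(4)\times C_3$, $\SL_2(\mathbf F_3)\times C_3$ and $\Alt(5)$, none of which assemble into a discrete reflection group; here I would build the representation by the extension technique of Section~\ref{sec:LinearReps}. Fixing an explicit faithful rotation representation of one $\Alt(5)$ vertex group in $\mathrm{SO}(3) \leq \mathrm{SO}(3,1)$, I would treat the image of the remaining generator as a matrix of indeterminates constrained by the defining relators, cut out the resulting representation variety, and locate a point over a suitable number field. It then remains to verify that the image is Zariski dense and non-discrete. Zariski density can be checked by showing the image lies in no proper algebraic subgroup of $\mathrm{SO}(3,1)$ — for which it suffices that it contain two non-commuting semisimple elements generating an irreducible subgroup — and this already forces non-relative-compactness, since any compact subgroup lies in a proper Zariski-closed conjugate of $\mathrm{SO}(3)$. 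Non-discreteness, though not needed for the (T) conclusion, can be certified by exhibiting an elliptic element whose rotation angle is an irrational multiple of $\pi$, so that its powers accumulate at the identity. In all five cases the image is therefore not relatively compact, and Theorem~\ref{thm:SO(n, 1)-SU(n, 1)} forbids property~(T).

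The main obstacle is twofold and is essentially verification rather than invention. For $G_0^{40,40,40}$ the delicate point is the combinatorial matching: one must check that the embeddings $C_3 \hookrightarrow \Alt(5)$ imposed by the honeycomb coincide, up to the equivalence of triangles of groups, with those dictated by the presentation labelled $(40,40,40)_0$, and that the three icosahedral cell-stabilisers generate all of $W^+$ rather than a proper subgroup. For the four non-discrete cases the hard part is certifying Zariski density and non-discreteness of a representation that is \emph{found} by cutting out the representation variety rather than guessed in closed form, so that the verification is an algebraic-geometry-assisted computation; this is precisely the kind of silicon-aided argument described in Section~\ref{sec:LinearReps}.
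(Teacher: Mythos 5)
Your treatment of $G_0^{40,40,40}$ is essentially the paper's argument in different clothing: the paper takes an equilateral hyperbolic triangle whose angles have cosine $\sqrt 5/3$ and the three order-$3$ rotations about its sides, which is exactly your configuration of three icosahedral cell-stabilizers of the $\{3,5,3\}$ honeycomb around a common edge. The two verifications you flag are handled more cheaply than you propose: the identification of the image as an index-$2$ subgroup of the $(3,5,3)$ Coxeter group (including the generation issue) is read off from the Gehring--Martin classification of discrete groups generated by such rotations, and no combinatorial matching of triangles of groups is needed because the enumeration contains a \emph{unique} trivalent triangle group with all three vertex groups isomorphic to $\Alt(5)$, so any group generated by three order-$3$ elements with the right pairwise subgroups is automatically a quotient of $G_0^{40,40,40}$.

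For the four non-lattice groups your proposal has a genuine gap: you describe a search (cut out the representation variety, locate a point, then certify Zariski density and non-discreteness) but give no argument that the search succeeds, and the existence of such a point is precisely what the proposition asserts. The paper avoids this by making the same synthetic construction work. The key observation is that the isomorphism type of the group generated by two order-$3$ rotations about axes through a common point of $\mathbb{H}^3$ depends only on the angle between the axes, with cosine $1/3$ yielding $\Alt(4)$ and cosine $\sqrt 5/3$ yielding $\Alt(5)$; since $\arccos(1/3)+2\arccos(\sqrt 5/3)<\pi$, a hyperbolic triangle with these angles exists, and the rotations about its sides generate a quotient of $G_0^{8,40,40}$ (again the unique trivalent triangle group with vertex groups $\Alt(4),\Alt(5),\Alt(5)$). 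The second idea you miss is the reduction via the epimorphisms of Section~\ref{sec:Epimorphisms}: $G_0^{16,40,40}$, $G_0^{24,40,40}$ and $G_0^{40,40,48}$ all surject onto $G_0^{8,40,40}$, their extra vertex groups $\SL_2(\mathbf F_3)$, $\Alt(4)\times C_3$ and $\SL_2(\mathbf F_3)\times C_3$ mapping onto $\Alt(4)$ (indeed $\SL_2(\mathbf F_3)$ has no faithful representation in $\mathrm{O}(3)$, so any homomorphism to $\mathrm{SO}(3,1)$ must kill its centre), so a single representation handles all four groups at once. Finally, Zariski density is obtained from the Karpelevich--Mostow theorem (the image fixes no point of hyperbolic space or its ideal boundary and preserves no proper closed convex subset), which is a cleaner criterion than your irreducibility test, and non-discreteness --- which the proposition does assert, so your deferral of it is also a gap --- comes again from the Gehring--Martin classification rather than from an irrational elliptic element that you would still have to exhibit.
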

\begin{proof}
Let $G = \mathrm{SO}(3, 1)$. 
In the hyperbolic $3$-space $X$, consider a geodesic triangle $\mathcal T$ with vertices $v_0, v_1, v_2$ and, for $i \mod 3$, let  $\ell_i$ be the geodesic line through $v_{i-1}, v_{i+1}$. Let $\rho_i \in G$ be a rotation of an angle $2\pi/3$ around $\ell_i$. Notice that the stabilizer $G_{v_i}$ is isomorphic to $O(3)$. Its action on the unit tangent sphere at $v_i$ is transitive on the set of ordered pairs at any given distance. Therefore, the isomorphism type of the subgroup $\langle \rho_{i-1}, \rho_{i+1} \rangle \leq G_{v_i}$ depends only on the angle formed by $\ell_{i-1}$ and $\ell_{i+1}$. 

If we choose $v_0, v_1, v_2$ so that the cosines of the inner angles of $\mathcal T$ are respectively $1/3$, $\sqrt 5/3$ and $\sqrt 5/3$ (which is possible since the sum of those three angles is~$< \pi$), it follows that the subgroup $\langle \rho_1, \rho_2\rangle \cong \Alt(4)$ and $\langle \rho_0, \rho_1\rangle \cong \langle \rho_0, \rho_2\rangle \cong \Alt(5)$. Therefore $\Lambda = \langle \rho_0, \rho_1, \rho_2\rangle$ is a quotient of the trivalent triangle group $G_0^{8, 40, 40}$,  which is the unique trivalent triangle group whose vertex groups are respectively isomorphic to $ \Alt(4)$, $\Alt(5)$, $\Alt(5)$. In view of the epimorphisms recorded in Section~\ref{sec:Epimorphisms}, this implies that $\Lambda$ is a common quotient of $G_0^{8, 40, 40}$, $G_0^{16, 40, 40}$, $G_0^{24, 40, 40}$ and $G_0^{40, 40, 48}$. 

By construction $\Lambda$ does not fix any point in $X$ or in the ideal boundary of $X$. Moreover $\Lambda$ does not preserve any non-empty  closed convex subset strictly contained in $X$. It then follows from the Karpelevich--Mostow Theorem that $\Lambda$ is Zariski-dense (see \cite[Proposition~2.8]{CaMo_discrete}). The fact that $\Lambda$ is non-discrete follows from the classification in \cite{GM}. 

If we choose $v_0, v_1, v_2$ so that the cosines of the inner angles of $\mathcal T$ are all equal to $\sqrt 5/3$, then the group $\langle \rho_i, \rho_{i+1}\rangle \cong \Alt(5)$ for all $i$ and $\Lambda' = \langle \rho_0, \rho_1, \rho_2\rangle$ is a cocompact lattice in $G$ contained, as an index~$2$ subgroup in the compact hyperbolic Coxeter group of type $(3, 5, 3)$ (this follows from the classification in \cite{GM}). Since $G_0^{40, 40, 40}$ is the unique trivalent triangle group  whose vertex groups are all isomorphic to $ \Alt(5)$, we deduce that $\Lambda'$ is a quotient of $G_0^{40, 40, 40}$.
\end{proof}

We also note that the representations afforded by Proposition~\ref{prop:ActionOnH^3} also provide a theoretical confirmation of the occurrence of  infinitely many $L_2(q)$-quotients found by the computer calculations (see Section~\ref{sec:Tables}). 

\begin{cor}\label{cor:StrongApprox}
Each of the groups $G_0^{8, 40, 40}$, $G_0^{16, 40, 40}$, $G_0^{24, 40, 40}$,  $G_0^{40, 40, 40}$ and $G_0^{40, 40, 48}$ admits finite simple quotients of the form $\mathrm{PSL}_2(q)$ for infinitely many values of $q$. 
\end{cor}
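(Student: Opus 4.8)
The plan is to derive Corollary~\ref{cor:StrongApprox} from Proposition~\ref{prop:ActionOnH^3} via the strong approximation theorem for Zariski-dense subgroups. First I would reduce the five groups to two cases using the common quotients already exhibited in the proof of Proposition~\ref{prop:ActionOnH^3}. The four groups $G_0^{8,40,40}$, $G_0^{16,40,40}$, $G_0^{24,40,40}$ and $G_0^{40,40,48}$ all surject onto the single Zariski-dense subgroup $\Lambda = \langle \rho_0, \rho_1, \rho_2\rangle \leq \mathrm{SO}(3,1)$ produced there, while $G_0^{40,40,40}$ surjects onto the cocompact lattice $\Lambda' \leq \mathrm{SO}(3,1)$; since $\mathrm{SO}(3,1)$ has no compact factors, the Borel density theorem makes $\Lambda'$ Zariski dense as well. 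Thus it suffices to treat a fixed finitely generated Zariski-dense subgroup $L \in \{\Lambda, \Lambda'\}$: any $\PSL_2(q)$-quotient of $L$ pulls back to a $\PSL_2(q)$-quotient of each group mapping onto it. As the rotations $\rho_i$ have finite order and rational angles, their matrix entries are algebraic, so $L$ may be taken inside $\mathrm{SO}(3,1)\cap \GL_4(F)$ for a number field $F$.

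Next I would pass to the $\SL_2$-picture through the exceptional isogeny $\SL_2(\mathbf{C})\to \mathrm{SO}^\circ(3,1)$, under which $\mathrm{SO}^\circ(3,1)\cong \PSL_2(\mathbf{C})$. Replacing $L$ by its finite-index subgroup $L\cap \mathrm{SO}^\circ(3,1)$, I would regard it as a finitely generated subgroup of $\mathrm{PGL}_2(F)$. A short argument on Zariski closures shows that $L$ is in fact Zariski dense in $\mathrm{PGL}_2$ over $\mathbf{C}$: its complex Zariski closure $H$ is a complex algebraic subgroup whose set of points is real-Zariski-closed and contains $L$, hence contains the real-Zariski-closure of $L$, namely the $6$-real-dimensional group $\mathrm{SO}^\circ(3,1)=\PSL_2(\mathbf{C})$, forcing $H=\mathrm{PGL}_2$. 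Choosing lifts in $\SL_2(\overline{F})$ of a finite generating set of $L$, I would let $\tilde L$ be the group they generate; it is finitely generated, defined over a number field $F'$, and Zariski dense in the simply connected, absolutely almost simple group $\SL_2$ (its closure surjects onto $\mathrm{PGL}_2$ and is $3$-dimensional, hence equals $\SL_2$).

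Then I would invoke strong approximation for Zariski-dense subgroups, in the form due to Matthews–Vaserstein–Weisfeiler and Weisfeiler (building on Nori): for all but finitely many primes $\mathfrak{p}$ of the ring of $S$-integers of $F'$ containing $\tilde L$, the reduction map $\tilde L \to \SL_2(\mathcal{O}_{F'}/\mathfrak{p}) = \SL_2(q)$ is surjective, where $q=N(\mathfrak{p})$. Composing with $\SL_2(q)\to \PSL_2(q)$ and using the commutative square relating the reductions of $\tilde L$ and of $L = \pi(\tilde L)$, I would conclude that $L\to \PSL_2(q)$ is onto for almost all $\mathfrak{p}$. Since $\PSL_2(q)$ is simple for $q\geq 4$ and the norms $N(\mathfrak{p})$ take infinitely many values, this produces simple quotients $\PSL_2(q)$ for infinitely many $q$, for $L$ and hence for each of the five groups.

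The main obstacle I anticipate is the bookkeeping forced by the fact that $\mathrm{SO}(3,1)$ is of adjoint type rather than simply connected: strong approximation is usable only for $\SL_2$, so one must lift through the isogeny, verify that Zariski density survives the lift, and — crucially — check that surjectivity of the reduction descends from $\tilde L$ back to $L$, so that the resulting $\PSL_2(q)$ genuinely is a quotient of $L$. Establishing Zariski density of $L$ in the complex group $\mathrm{PGL}_2$ (rather than in some proper real form) is the other delicate point, and it is precisely what the Zariski density in the $6$-dimensional $\mathrm{SO}(3,1)$ furnished by Proposition~\ref{prop:ActionOnH^3} supplies.
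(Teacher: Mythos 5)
Your proposal is correct and follows essentially the same route as the paper: the paper's proof likewise cites the Zariski-dense representations into $\mathrm{SO}(3,1)$ from Proposition~\ref{prop:ActionOnH^3} (invoking Borel density for the lattice case), passes to $\mathrm{SL}_2(\mathbf C)$, and concludes by the Strong Approximation theorem of Weisfeiler and Nori. Your write-up merely spells out the standard bookkeeping (lifting through the isogeny, Zariski density of the lift, descent of surjectivity) that the paper leaves implicit.
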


\begin{proof}
By Proposition~\ref{prop:ActionOnH^3}, each of those groups has a Zariski dense representation in $ \mathrm{SO}(3, 1)$ (in the case of  $G_0^{40, 40, 40}$, this follows from the Borel density theorem), hence in $ \mathrm{SL}_2(\mathbf C)$. The conclusion follows from the Strong Approximation of Weisfeiler and Nori (see \cite{Weisfeiler} and \cite{Nori}). 
\end{proof}

\begin{prop}\label{prop:ComplexHyperbolic}
For $\Gamma \in \{G_0^{16, 16, 16}, G_1^{16, 16, 48}, G_0^{16, 48, 48}, G_1^{48, 48, 48}\}$, there is a representation $\Gamma \to \mathrm{SU}(2, 1)$ whose image is a lattice. In particular $\Gamma$ does not have (T), and $\Gamma$ admits finite simple quotients of the form $A_2(q)$ or ${}^2 A_2(q^2)$ for infinitely many values of $q$. 
\end{prop}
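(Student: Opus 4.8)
The plan is to prove Proposition~\ref{prop:ComplexHyperbolic} by an argument parallel to that of Proposition~\ref{prop:ActionOnH^3}, with the complex hyperbolic plane $\mathbf{H}^2_{\mathbf C}$ and order-$3$ complex reflections playing the roles of real hyperbolic $3$-space and order-$3$ rotations. Recall from Section~\ref{sec:small_cubic_graphs} that the two vertex groups occurring here, namely $X^{16}\cong\mathrm{SL}_2(\mathbf F_3)$ and $X^{48}\cong\mathrm{SL}_2(\mathbf F_3)\times C_3$, are precisely the rank-$2$ complex reflection groups with diagrams $3[3]3$ and $3[4]3$, each generated by two complex reflections of order~$3$ and realized faithfully in $\mathrm{SU}(2)$. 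I would work inside $G=\mathrm{SU}(2,1)$, acting on $\mathbf{H}^2_{\mathbf C}$, and fix three complex geodesics (mirrors) $C_0,C_1,C_2$ meeting pairwise in three \emph{distinct} points $v_0,v_1,v_2$, with $C_{i-1}\cap C_{i+1}=v_i$; for each $i$ let $R_i\in G$ be a complex reflection of order~$3$ with mirror $C_i$.

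The local analysis is the complex analogue of the transitivity of $O(3)$ exploited in Proposition~\ref{prop:ActionOnH^3}: the stabiliser $G_{v_i}\cong\mathrm{U}(2)$ acts transitively on ordered pairs of complex lines through $v_i$ meeting at a prescribed angle, so the isomorphism type of the finite group $\langle R_{i-1},R_{i+1}\rangle\leq G_{v_i}$ depends only on that angle. For a discrete set of angles the pair $(R_{i-1},R_{i+1})$ satisfies the braid relation of length $q$ and generates the finite Shephard group $3[q]3$, which is $X^{16}$ when $q=3$ and $X^{48}$ when $q=4$. Choosing the three pairwise angles to realise the braid-length triple $(3,3,3)$, $(3,3,4)$, $(3,4,4)$ or $(4,4,4)$ thus makes $\langle R_{i-1},R_{i+1}\rangle$ isomorphic to the prescribed vertex group of $G_0^{16,16,16}$, $G_1^{16,16,48}$, $G_0^{16,48,48}$ or $G_1^{48,48,48}$; orienting each $R_i$ suitably matches the generators to the edge-group data encoded by the index~$\ell$. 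By the defining universal property of the fundamental group of a triangle of groups (Theorem~\ref{thm:NPC}), these compatible identifications assemble into a homomorphism $\Gamma\to G$ whose image is $\Lambda=\langle R_0,R_1,R_2\rangle$.

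The main obstacle is to show that $\Lambda$ is a lattice. Because the three mirrors meet pairwise in three distinct points rather than in a single common point, $\Lambda$ is infinite; but its discreteness and finite covolume are by no means automatic, and in fact hold only for a special value of the residual phase (Cartan angular) invariant of the triple $(C_0,C_1,C_2)$. This is exactly what Mostow's explicit construction of fundamental polyhedra in $\mathbf{H}^2_{\mathbf C}$ supplies, and I would invoke \cite{Mostow} to select, for each of the four branching triples, a configuration for which $\Lambda$ is a lattice; in the symmetric case $(3,3,3)$ one recovers Mostow's groups $\Gamma(3,t)$, among which there are both arithmetic and non-arithmetic lattices. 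All the genuine difficulty is concentrated in this step; the remaining arguments are formal.

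It then remains to read off the two asserted consequences. Since $\Lambda$ is a lattice, hence an unbounded subgroup of $\mathrm{U}(2,1)$, Theorem~\ref{thm:SO(n, 1)-SU(n, 1)} immediately rules out property~(T) for $\Gamma$. For the finite simple quotients, $\Lambda$ is finitely generated (being the image of the finitely presented group $\Gamma$) and, by the Borel density theorem, Zariski dense in the simply connected semisimple group $\mathrm{SU}(2,1)$ of type $A_2$. Exactly as in the proof of Corollary~\ref{cor:StrongApprox}, the Strong Approximation theorem of Weisfeiler and Nori (see \cite{Weisfeiler} and \cite{Nori}) then produces surjections of $\Lambda$, and hence of $\Gamma$, onto $\mathrm{PSL}_3(q)=A_2(q)$ at primes splitting in the defining CM field and onto $\mathrm{PSU}_3(q)={}^2A_2(q^2)$ at inert primes, giving infinitely many simple quotients of each requested type.
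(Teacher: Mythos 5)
Your handling of $G_0^{16,16,16}$ and your derivation of the two consequences (failure of (T) via Theorem~\ref{thm:SO(n, 1)-SU(n, 1)}; the simple quotients via Borel density and strong approximation) agree with the paper. The gap lies in the other three cases. Mostow's construction in \cite{Mostow} produces the lattices $\Gamma(p,t)$, generated by three order-$p$ complex reflections that are cyclically permuted by a symmetry of order~$3$; in particular every pair of generators satisfies the braid relation of length~$3$. That reference therefore does not supply lattices realizing the braid-length triples $(3,3,4)$, $(3,4,4)$ or $(4,4,4)$ that your construction requires for $G_1^{16,16,48}$, $G_0^{16,48,48}$ and $G_1^{48,48,48}$, and whether discrete, finite-covolume groups generated by three order-$3$ complex reflections with those braid data exist at all is a substantially harder question (belonging to the later theory of non-symmetric complex hyperbolic triangle groups), not settled by anything you cite. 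Note that the paper itself, when it writes down explicit braid-$4$ configurations in Proposition~\ref{prop:ComplexHyperbolic:2}, is careful to claim only that the image is unbounded, not that it is discrete or a lattice.

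The missing idea, which makes the statement nearly immediate once the $(3,3,3)$ case is done, is that no separate lattice is needed for the other three groups: the surjection from the order-$72$ vertex group onto $\mathrm{SL}_2(\mathbf F_3)$ given by $a\mapsto a$, $b\mapsto b^{-1}$ (with kernel normally generated by $(ab)^4$, recorded in Section~\ref{sec:small_cubic_graphs}) is compatible with the edge groups and hence induces surjective homomorphisms
$$G_1^{48, 48, 48} \to G_0^{16, 48, 48} \to G_1^{16, 16, 48} \to G_0^{16, 16, 16}.$$
Composing these with the single Mostow lattice representation of $G_0^{16,16,16}$ gives representations of all four groups with the same lattice image. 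With that reduction inserted in place of your case-by-case appeal to \cite{Mostow}, the rest of your argument goes through.
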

\begin{proof}
The assignment $x \to a$ and $y \to b^{-1}$ extends to a surjective homomorphism $\langle x, y \mid x^3, y^3, xyxyx^{-1}y^{-1}x^{-1}y^{-1}\rangle \to \langle a, b \mid a^3, b^3, abab^{-1}a^{-1} b^{-1}  \rangle$ whose kernel is the normal closure of $(xy)^4$. 
Therefore, we have surjective homomorphisms
$$G_1^{48, 48, 48} \to G_0^{16, 48, 48} \to G_1^{16, 16, 48} \to G_0^{16, 16, 16}.$$
In particular it suffices to prove the required assertion for $\Gamma = G_0^{16, 16, 16}$. 
As observed above, the three vertex groups of $\Gamma$ are each isomorphic to a complex reflection group, and the existence of a quotient of $\Gamma$ embedding as a lattice in $U(2, 1)$  follows from \cite[Theorem~A]{Mostow} (it is in fact easy to arrange that this lattice be contained in $\mathrm{SU}(2, 1)$, see \cite[Theorem~4.7]{Parker}). Explicit representations are provided in \cite[\S 9.1]{Mostow} or in \cite[Theorem~4.7]{Parker}. The  assertion on the failure of property (T) then follows from Theorem~\ref{thm:SO(n, 1)-SU(n, 1)}. By Borel density, the image of $\Gamma$ is Zariski dense in $SU(2, 1)$. Strong approximation yields  finite simple quotients of the form $A_2(q)$ or ${}^2 A_2(q^2)$ for infinitely many values of $q$. 
\end{proof}

\begin{rmk}\label{rem:G_16_16_16_0}
The group $G_0^{16, 16, 16}$ has an autormorphism of order~$3$ that cyclically permutes the generators. One checks that the corresponding semi-direct product  $G_0^{16, 16, 16} \rtimes C_3$ admits the following presentation:
$$\langle a, b \mid a^3, b^3, a b^{-1} a  b  a  b^{-1}  a^{-1}  b  a^{-1}  b^{-1}  a^{-1} b \rangle.$$
Since that group is $2$-generated, we may invoke the \texttt{L3Quotients} algorithm in \textsc{Magma}, which confirms the occurrence of infinitely many $A_2(q)$-quotients. Any of those quotients descends to a quotient of $G_0^{16, 16, 16}$, since a non-abelian simple group does not have proper subgroups of index~$3$. 
\end{rmk}

\begin{rmk}
By Theorem~\ref{thm:NPC}, the groups $G_0^{16, 16, 16}$ and $G_1^{16, 16,16}$ act cocompactly on $2$-dimensional simplicial complexes, all of whose vertex links are isomorphic to the M\"obius--Kantor graph. A  systematic study of such complexes has been conducted by Sylvain Barr\'e and Mika\"el Pichot, see \cite{BarrePichot} and references therein. 
\end{rmk}

\begin{prop}\label{prop:ComplexHyperbolic:2}
For $\Gamma \in \{G_0^{6, 48, 48}, G_0^{16, 16, 48},  G_1^{16, 48, 48}, G_1^{24, 48, 48}, G_0^{48, 48, 48}\}$, there is a representation $\Gamma \to \mathrm{U}(2, 1)$ whose image does not have compact closure. In particular $\Gamma$ does not have (T).
\end{prop}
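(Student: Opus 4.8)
The plan is to mirror the proof of Proposition~\ref{prop:ActionOnH^3}, working in the complex hyperbolic plane $X = \mathbf{H}^2_{\mathbf C}$ with $G = \mathrm{U}(2,1) = \mathrm{Isom}(X)$ in place of real hyperbolic $3$-space. Each representation will send the three edge-group generators $c_0, c_1, c_2$ of $\Gamma$ to complex reflections $\rho_0, \rho_1, \rho_2$ of order~$3$, that is, elements of $G$ with eigenvalues $1, 1, \omega$ (where $\omega = e^{2\pi i /3}$) whose fixed-point set in $X$ is a complex geodesic $\ell_i$. The mirrors $\ell_0, \ell_1, \ell_2$ bound a geodesic triangle with vertices $v_i = \ell_{i-1}\cap\ell_{i+1}$. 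Since the stabilizer $G_{v_i}$ is isomorphic to $\mathrm{U}(2)$, two order-$3$ complex reflections whose mirrors meet at $v_i$ generate a finite rank-$2$ complex reflection group whose isomorphism type is governed by the Hermitian angle between the mirrors. By the Shephard--Todd classification these realize $C_3 \times C_3 = X^6$ (orthogonal, commuting mirrors), $X^{16}\cong\mathrm{SL}_2(\mathbf F_3)$ and $X^{48}\cong\mathrm{SL}_2(\mathbf F_3)\times C_3$ (the two diagrams of Section~\ref{sec:small_cubic_graphs}, with edge labels $3$ and $4$), while $X^{24}\cong\mathrm{Alt}(4)\times C_3$ is recovered as the image of $X^{48}$ under $\mathrm{SL}_2(\mathbf F_3)\to\mathrm{PSL}_2(\mathbf F_3)=\mathrm{Alt}(4)$. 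Once $\rho_0,\rho_1,\rho_2$ are found with each pair $\langle\rho_{i-1},\rho_{i+1}\rangle$ a quotient of the prescribed vertex group $X_i$, all relations of $\Gamma$ hold, so $\Lambda = \langle\rho_0,\rho_1,\rho_2\rangle$ is a quotient of $\Gamma$ and we obtain a homomorphism $\Gamma\to G$.

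For three of the five groups I would bypass the geometric construction and reduce to Proposition~\ref{prop:ComplexHyperbolic} by surjections, exactly as in its proof. The surjection $X^{48}\to X^{16}$ used there ($a\mapsto a$, $b\mapsto b^{-1}$, with kernel the normal closure of $(ab)^4$) replaces one $X^{48}$-vertex by an $X^{16}$-vertex; applying it to a single vertex of the triangle group and tracking the edge identifications via the epimorphisms of Section~\ref{sec:Epimorphisms} yields
\begin{align*}
G_0^{16,16,48} \twoheadrightarrow G_0^{16,16,16}, \qquad G_1^{16,48,48} \twoheadrightarrow G_1^{16,16,48}, \qquad G_0^{48,48,48} \twoheadrightarrow G_0^{16,48,48}.
\end{align*}
Each target is a lattice in $\mathrm{SU}(2,1)\leq\mathrm{U}(2,1)$ by Proposition~\ref{prop:ComplexHyperbolic}, so composing the surjection with the lattice embedding produces a representation of the source group with Zariski-dense, in particular unbounded, image.

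The remaining two groups, $G_0^{6,48,48}$ and $G_1^{24,48,48}$, have a vertex group ($C_3\times C_3$, resp.\ $\mathrm{Alt}(4)\times C_3$) that is not a vertex group of any group in Proposition~\ref{prop:ComplexHyperbolic}, so these I would treat by the direct triangle construction: choose three complex geodesics in $X$ with the pairwise Hermitian angles realizing the respective vertex groups (an orthogonal pair for the $C_3\times C_3$ vertex of $G_0^{6,48,48}$, and the edge-label-$4$ and $\mathrm{Alt}(4)\times C_3$ angles for $G_1^{24,48,48}$), and verify by complex hyperbolic trigonometry that a non-degenerate triangle with this angular data actually exists in $X$ (the analogue of the ``angle cosines small enough'' condition of Proposition~\ref{prop:ActionOnH^3}). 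For unboundedness the argument is uniform: the fixed-point set in $X$ of the order-$3$ complex reflection $\rho_i$ is exactly its mirror $\ell_i$, so a common fixed point of $\Lambda$ would lie in $\ell_0\cap\ell_1\cap\ell_2=\varnothing$; since a relatively compact subgroup of $G$ fixes the circumcenter of a bounded orbit in the Hadamard manifold $X$, the group $\Lambda$ has non-compact closure. Theorem~\ref{thm:SO(n, 1)-SU(n, 1)} then rules out property~(T).

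The hard part is the direct construction for $G_0^{6,48,48}$ and especially $G_1^{24,48,48}$: one must exhibit a genuinely two-complex-dimensional, non-degenerate triangle in $X$ whose three complex-geodesic sides meet in the prescribed finite reflection groups. This requires pinning down the correct value of the angular invariant of the complex hyperbolic triangle --- it is exactly this invariant that distinguishes the ``$0$'' and ``$1$'' versions of a triangle group sharing the same vertex-group triple --- and, for $G_1^{24,48,48}$, checking that two order-$3$ complex reflections can be placed at an angle making them generate a quotient of $\mathrm{Alt}(4)\times C_3$ rather than the larger $X^{48}$. I expect this is cleanest to carry out by importing an explicit configuration of reflections from Mostow's work, as in the proof of Proposition~\ref{prop:ComplexHyperbolic}, and verifying that the relevant triangle group surjects onto the resulting $\Lambda$.
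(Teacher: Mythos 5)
Your reduction of $G_0^{16,16,48}$, $G_1^{16,48,48}$ and $G_0^{48,48,48}$ to Proposition~\ref{prop:ComplexHyperbolic} does not work, because all three of your claimed surjections carry the wrong subscript. The quotient map $X^{48}\to X^{16}$ necessarily inverts exactly one of the two generators (its kernel is the unique normal subgroup of order~$3$, and the induced map on generating pairs is $(a,b)\mapsto(a,b^{-1})$ up to the symmetries of Table~\ref{table:Aut(X)}), so it toggles the $\mathbf F_2^6$-coordinate that distinguishes the equivalence classes. The surjections actually recorded in Section~\ref{sec:Epimorphisms} are $G_0^{48,48,48}\to G_1^{16,48,48}\to G_0^{16,16,48}\to G_1^{16,16,16}$, which is disjoint from the chain $G_1^{48,48,48}\to G_0^{16,48,48}\to G_1^{16,16,48}\to G_0^{16,16,16}$ covered by Proposition~\ref{prop:ComplexHyperbolic}; by Corollary~\ref{cor:isomorphism} the groups in the two chains are pairwise non-isomorphic, so you cannot borrow the Mostow lattices from that proposition. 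This is precisely why the present statement is separate from Proposition~\ref{prop:ComplexHyperbolic} and only asserts an unbounded image rather than a lattice: the five groups here form the \emph{other} equivalence class, and what is needed (and what the paper supplies) is a fresh explicit Mostow-type representation of the bottom group $G_0^{16,16,48}$ of the chain, given by concrete matrices preserving a Hermitian form of signature $(2,1)$, with non-compactness deduced from irreducibility of the action on $\mathbf C^3$.

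For the remaining two groups your instinct to use the Mostow configuration for $G_0^{6,48,48}$ matches the paper, but the ``direct construction'' you propose for $G_1^{24,48,48}$ is both unnecessary and problematic as described: $\Alt(4)\times C_3$ has no faithful $2$-dimensional representation (its irreducible representations have degrees $1$ and $3$), so two order-$3$ complex reflections fixing a common point of $\mathbf H^2_{\mathbf C}$ can only generate a proper quotient of that vertex group, and the relevant quotient is $C_3\times C_3$. In other words the representation you would build factors through a $(6,48,48)$-group anyway, and the clean way to say this is the surjection $G_0^{48,48,48}\to G_1^{24,48,48}\to G_0^{6,48,48}$ from Section~\ref{sec:Epimorphisms} (note again the label bookkeeping: the target is $G_0^{6,48,48}$, not $G_1^{24,48,48}$ treated on its own). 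Once both chains are reduced to $G_0^{6,48,48}$ and $G_0^{16,16,48}$, explicit reflection representations of those two groups finish the argument; your circumcenter argument for non-compactness is a valid alternative to the irreducibility argument used in the paper.
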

\begin{proof}
As in the proof of Proposition~\ref{prop:ComplexHyperbolic} (see also Section~\ref{sec:Epimorphisms}), we have surjective homomorphisms
$$G_0^{48, 48, 48} \to G_1^{24, 48, 48} \to G_0^{6, 48, 48}.$$
The three vertex groups of the trivalent triangle group  
\[
G_0^{6, 48, 48} =   \langle a, b, c\mid{} a^{3}, b^{3}, c^{3}, bab^{-1}a^{-1},\ (cb)^{2}(c^{-1}b^{-1})^{2}, (ac)^{2}(a^{-1}c^{-1})^{2}\rangle
\]
are complex reflection groups, so that the group $G_0^{6, 48, 48}$ is a complex hyperbolic triangle group with Coxeter--Mostow diagram 
\begin{center}
\begin{figure}[h]
\psset{xunit=1cm,yunit=1cm,algebraic=true,dimen=middle,dotstyle=o,dotsize=5pt 0,linewidth=1.6pt,arrowsize=3pt 2,arrowinset=0.25}
\begin{tikzpicture}[scale=3]
\node[circle,draw,thick] (l) at (0,0) {$3$};
\node[circle,draw,thick] (c) at (1,0) {$3$};
\node[circle,draw,thick] (r) at (2,0) {$3$};
\draw (l) edge[thick] node[above] {$4$} (c);
\draw (c) edge[thick] node[above] {$4$} (r);
\end{tikzpicture}
	\end{figure}
\end{center}
Following Mostow \cite{Mostow}, we obtain  a representation in $\mathrm{GL}_3(\mathbf C)$ mapping $(a, b, c)$ to $(A, B, C)$, where 
$A = \left(
\begin{array}{ccc}
\omega & (\omega -1)\frac{\sqrt 6} 3 & 0\\
0 & 1 & 0\\
0 & 0 & 1
\end{array}\right)$, 
$B = \left(
\begin{array}{ccc}
1 & 0 & 0\\
0 & 1 & 0\\
0 & (\omega -1)\frac{\sqrt 6} 3 & \omega 
\end{array}\right)$, 
and 
$C = \left(
\begin{array}{ccc}
1 & 0 & 0\\
(\omega -1)\frac{\sqrt 6} 3 & \omega  & (\omega -1)\frac{\sqrt 6} 3\\
0 & 0 & 1
\end{array}\right)$, 
and $\omega = \omega = e^{2\pi i/3}$. 
The matrices $A, B, C$ preserve the Hermitian form with Gram matrix
$\left(
\begin{array}{ccc}
1 & \frac{\sqrt 6} 3 & 0\\
\frac{\sqrt 6} 3 & 1 & \frac{\sqrt 6} 3\\
0 & \frac{\sqrt 6} 3 & 1
\end{array}\right)$, which is non-degenerate with signature $(2, 1)$. 
Moreover, by (2.3.3) in \cite{Mostow}, the image of the representation  acts irreducibly on $\mathbf C^3$, and therefore its closure is not compact. The failure of property (T) follows from Theorem~\ref{thm:SO(n, 1)-SU(n, 1)}.

Similarly, we have surjective homomorphisms
$$G_0^{48, 48, 48} \to G_1^{16, 48, 48} \to G_0^{16, 16, 48}.$$ 
The group $G^{16,16,48}_{0} = \langle a, b, c\mid{} a^{3}, b^{3}, c^{3}, baba^{-1}b^{-1}a^{-1}, cbcb^{-1}c^{-1}b^{-1}, (ac)^{2}(a^{-1}c^{-1})^{2}\rangle$ can be represented as a complex hyperbolic triangle group in $\mathrm{GL}_3(\mathbf C)$ by mapping $(a, b, c)$ to the triple $(A, B, C)$, where 
$A = \left(
\begin{array}{ccc}
\omega & (\omega -1)\frac{\sqrt 6} 3 & (\omega -1)\frac{\sqrt 3} 3 \\
0 & 1 & 0\\
0 & 0 & 1
\end{array}\right)$, 
$B = \left(
\begin{array}{ccc}
1 & 0 & 0\\
0 & 1 & 0\\
 (\omega -1)\frac{\sqrt 3} 3 & (\omega -1)\frac{\sqrt 3} 3 & \omega 
\end{array}\right)$, 
and 
$C = \left(
\begin{array}{ccc}
1 & 0 & 0\\
(\omega -1)\frac{\sqrt 6} 3 & \omega  & (\omega -1)\frac{\sqrt 3} 3\\
0 & 0 & 1
\end{array}\right)$. The matrices $A, B, C$ preserve the Hermitian form with Gram matrix
$\left(
\begin{array}{ccc}
1 & \frac{\sqrt 6} 3 & \frac{\sqrt 3} 3\\
\frac{\sqrt 6} 3 & 1 & \frac{\sqrt 3} 3\\
\frac{\sqrt 3} 3 & \frac{\sqrt 3} 3 & 1
\end{array}\right)$, which is non-degenerate with signature $(2, 1)$. The other conclusions follow as before. 
\end{proof}

\begin{prop}\label{prop:ComplexHyperbolic:3}
For $\Gamma \in \{G_1^{24, 24, 24}, G_0^{24, 24, 48}, G_1^{24, 48, 48}, G_0^{48, 48, 48}\}$, there is a representation  $\Gamma \to  \mathrm{U}(3, 1)$ whose image does not have compact closure. In particular $\Gamma$ does not have (T).
\end{prop}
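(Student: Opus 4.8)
The plan is to mirror the proofs of Propositions~\ref{prop:ComplexHyperbolic} and~\ref{prop:ComplexHyperbolic:2}: first reduce the four claims to a single one by means of epimorphisms, and then produce an explicit $4$-dimensional representation preserving a Hermitian form of signature $(3,1)$. The vertex group of a graph of order~$48$, namely $\mathrm{SL}_2(\mathbf F_3)\times C_3$, surjects onto the vertex group of a graph of order~$24$, namely $\Alt(4)\times C_3$, upon adding the relation recorded in Section~\ref{sec:small_cubic_graphs}. As in Section~\ref{sec:Epimorphisms}, this yields a chain of surjections
\[
G_0^{48,48,48}\longrightarrow G_1^{24,48,48}\longrightarrow G_0^{24,24,48}\longrightarrow G_1^{24,24,24}.
\]
Given a representation of the terminal group $G_1^{24,24,24}$ whose image has non-compact closure, precomposing it with each of these surjections produces a representation of the corresponding group with the same image. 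Hence it suffices to treat $\Gamma=G_1^{24,24,24}$, whose three vertex groups are all isomorphic to $\Alt(4)\times C_3$.

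Next I would construct the representation. The group $\Alt(4)\times C_3$ has no faithful representation of degree $\leq 2$, and its faithful degree-$3$ representation $3\otimes\chi$ is irreducible, so its unique (up to a real scalar) invariant Hermitian form is positive-definite. Consequently that degree-$3$ representation lands in the compact group $\mathrm{U}(3)$ and cannot be used to build a complex hyperbolic triangle group in the way of Proposition~\ref{prop:ComplexHyperbolic:2}; this is precisely why one is forced up to $\mathbf{CH}^3$. The sought representation $\rho\colon\Gamma\to\mathrm{GL}_4(\mathbf C)$ must therefore restrict to each vertex group $\langle a,b\rangle,\langle b,c\rangle,\langle c,a\rangle$ as a sum $3\oplus 1$ of the faithful degree-$3$ representation and a character. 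To find $\rho$ I would apply the representation-variety method of Section~\ref{sec:LinearReps}: fix explicit $4\times4$ matrices realizing $3\oplus1$ on the finite group $\langle a,b\rangle$, introduce an indeterminate matrix for the image of the third generator $c$, impose the defining relations of $\Gamma$, and solve the resulting polynomial system over a number field of the shape $\mathbf Q(\omega)$ with $\omega=e^{2\pi i/3}$ (possibly after adjoining a quadratic surd). This yields explicit matrices $A,B,C$ of order~$3$ together with a $\Gamma$-invariant Hermitian form $H$; a finite computation then certifies that $H$ is non-degenerate of signature $(3,1)$, so that $\rho(\Gamma)\leq\mathrm{U}(3,1)$.

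Finally, to see that $\rho(\Gamma)$ has non-compact closure I would verify that $\rho$ is irreducible on $\mathbf C^4$, which is the routine check that no proper subspace is simultaneously invariant under the three vertex groups. If the closure of $\rho(\Gamma)$ were compact it would preserve a positive-definite Hermitian form; but by irreducibility and Schur's lemma the space of invariant Hermitian forms is one-dimensional over $\mathbf R$, forcing that form to be a real multiple of $H$, which is impossible since $H$ has signature $(3,1)$. Hence the closure is non-compact, and the failure of Kazhdan's property~(T) follows from Theorem~\ref{thm:SO(n, 1)-SU(n, 1)}. The main obstacle is the construction step: nothing guarantees a priori that the finite-group representation extends across the edge group so as to preserve an indefinite form in signature exactly $(3,1)$ while staying irreducible, and it is precisely here that the Mostow-style explicit matrices of \cite{Mostow} together with the algebraic-geometry computation of Section~\ref{sec:LinearReps} carry the weight. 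One could also bypass the irreducibility argument altogether by exhibiting a single word in $a,b,c$ whose image has an eigenvalue off the unit circle, that is, a loxodromic isometry of $\mathbf{CH}^3$, which already forces the image to be unbounded.
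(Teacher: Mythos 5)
Your overall architecture coincides with the paper's: the same chain of epimorphisms $G_0^{48,48,48}\to G_1^{24,48,48}\to G_0^{24,24,48}\to G_1^{24,24,24}$ reduces everything to $\Gamma=G_1^{24,24,24}$, the representation is sought in degree $4$ restricting to each vertex group $\Alt(4)\times C_3$ as $3\oplus 1$, and non-compactness of the closure is deduced from irreducibility on $\mathbf C^4$ together with the Schur-lemma observation that the only invariant Hermitian forms are real multiples of one of signature $(3,1)$. These surrounding arguments are correct, and your signature argument is in fact spelled out in more detail than in the paper, which merely asserts that irreducibility rules out conjugacy into a compact subgroup.

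The genuine gap is that the representation itself is never produced. The proposition is an existence statement, and your proof of existence consists of describing a search (set up the representation variety over $\mathbf Q(\omega)$, impose the relations, solve) without showing that the search succeeds; you acknowledge this yourself when you write that nothing guarantees a priori that the degree-$3\oplus 1$ representation of $\langle a,b\rangle$ extends across the edge groups while preserving a non-degenerate form of signature exactly $(3,1)$. The paper closes exactly this gap by exhibiting explicit matrices $A,B,C\in\mathrm{GL}_4(\mathbf Z[\omega])$ (with $A,B$ block-diagonal of the form $3\oplus 1$ and $C$ a full $4\times 4$ matrix) preserving the diagonal Hermitian form with entries $(1,1,1,-4)$, and then verifying irreducibility by noting that the only nontrivial $\langle A,B\rangle$-invariant subspaces, namely $\mathrm{span}(e_1,e_2,e_3)$ and $\mathrm{span}(e_4)$, are not $C$-invariant. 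Your alternative suggestion of exhibiting a single loxodromic word suffers from the same defect, since it presupposes the representation whose existence is the point at issue. To make the proposal into a proof you would need either to carry out the computation and record the resulting matrices, or to give an abstract argument that the relevant representation variety is non-empty; neither is present.
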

\begin{proof}   
As before, we first notice the existence of surjective homomorphisms $G_0^{48, 48, 48} \to  G_1^{24, 48, 48} \to G_0^{24, 24, 48} \to G_1^{24, 24, 24}$, so that it suffices to consider the case where $\Gamma = G_1^{24, 24, 24}$. 
Set $\omega = e^{2\pi i/3}$. The representation sends $(a, b, c)$ to $(A, B, C)$, where 
$A = \left(\begin{array}{cccc}
      0   &   0 & 1     &  0\\
      1  & 0  &    0    &  0\\
0  &    1      & 0   & 0\\
      0 & 0 & 0 & 1
\end{array}\right)$, 
$B = \left(\begin{array}{cccc}
      0   &   0 & \omega     &  0\\
      -\omega  & 0  &    0    &  0\\
0  &    -\omega      & 0 & 0\\
      0 & 0 & 0 & 1
\end{array}\right)$ 
and 
$$C = 
\left(\begin{array}{cccc}
      \omega   &   -1  &-\omega - 2     &  4\\
      \omega  &-\omega - 2  &    -1     &  4\\
2\omega + 1   &    \omega      & \omega   & -4\omega\\
      \omega   &   -1   &   -1  &-\omega + 3
\end{array}\right).$$ 
The Hermitian form whose Gram matrix is the diagonal matrix with coefficients $(1, 1, 1, -4)$ is preserved by  $Q = \langle A, B, C\rangle$, so that $Q$ is contained in $U(3, 1)$. 
The finite group $\langle A, B\rangle$ acts irreducibly on the $3$-dimensional subspace spanned by the first three vectors of the canonical basis. Since that subspace is not invariant under $C$, it follows that $Q$ acts irreducibly on $\mathbf C^4$. This implies that $Q$ is not conjugate to a compact subgroup of $U(3, 1)$. 
\end{proof}

\begin{rmk}\label{rem:CongruenceImage}
Observe that $Q$ is contained in $\mathrm{GL}_4(\mathbf Z[\omega])$. It is thus a discrete subgroup. Using the \textsc{Magma} call \texttt{CongruenceImage}, followed by \texttt{LMGChiefFactors}, one deduces from Proposition~\ref{prop:ComplexHyperbolic:3} that $\Gamma$ has finite simple quotients isomorphic to $\mathrm{PSU}_4(5)$, $\mathrm{PSL}_4(7)$, $\mathrm{PSU}_4(11)$, $\mathrm{PSL}_4(13)$ and $\mathrm{PSU}_4(17)$.
\end{rmk}

\begin{rmk}
Using the representation variety approach described  in Section~\ref{sec:LinearReps}, we found that the groups  $G_1^{26, 26, 26}$ and $G_{21}^{26, 26, 26}$ both have also  a representation to $ \mathrm{U}(3, 1)$ with unbounded image. The coefficients are however too long to be included here. As above, we can compute the first few congruence quotients of those linear images, and deduce that $G_1^{26, 26, 26}$ and $G_{21}^{26, 26, 26}$ both have finite simple quotients isomorphic to $\mathrm{PSU}_4(25)$, $\mathrm{PSU}_4(49)$, $\mathrm{PSU}_4(121)$, $\mathrm{PSU}_4(17)$ and $\mathrm{PSU}_4(361)$. 
\end{rmk}

\subsection{Cyclic extensions of triangle groups}\label{sec:CyclicExt_trivalent}

The existence of an automorphism of order~$3$ that cyclically permutes the generators, observed in Remark~\ref{rem:G_16_16_16_0} for the group $G_0^{16, 16, 16}$, actually applies to each group of the form $G_0^{k, k, k}$ from our sample, namely with $k \in \{14, 16, 18, 24, 26, 40, 48, 54\}$. The corresponding semi-direct product $G_0^{k, k, k} \rtimes C_3$ is denoted by $\widetilde G_0^{k, k, k}$. A presentation of $\widetilde G_0^{k, k, k}$ can be obtained as follows. Denoting by $R^k$ the relators involving only $a$ and $b$ in the presentation of $G_0^{k, k, k}$, we have
$$\widetilde G_0^{k, k, k}\cong \langle t, a, b | R^k, t^3, tat^{-1}b^{-1}\rangle.$$
Clearly, the generator $b$ is redundant, and $\widetilde G_0^{k, k, k}$ is a $2$-generator group. In fact, after simplifications, the presentation of $\widetilde G_0^{k, k, k}$ is usually much shorter than the presentation of $G_0^{k, k, k}$ (see Section~\ref{sec:Presentation_extended}). 

Let $X$ be the subgroup of $\widetilde G_0^{k, k, k}$ generated by $a$ and $b$. Thus $X$ is a finite group with  presentation $X\cong \langle a, b| R^k\rangle$.
The group $\widetilde G_0^{k, k, k}$ is a quotient of the HNN-extension $\langle t, a, b | R^k,  tat^{-1}b^{-1}\rangle$ of the finite group $X$ (which is a virtually free group) by the single extra relation $t^3 = 1$. 
That very specific structure of  $\widetilde G_0^{k, k, k}$ can be used to construct homomorphisms  $\rho \colon \widetilde G_0^{k, k, k} \to H$ to a given target $H$, as follows.    Assume given a homomorphism $\rho\colon X \to H$ to a group $H$. Assume moreover that there is an element $\tau \in H$ that conjugates $\rho(a)$ to $\rho(b)$. Then any element 
$T$ belonging to the coset $\tau C_H(\rho(a))$ of the centralizer $C_H(\rho(a))$
conjugates $\rho(a)$ to $\rho(b)$. Therefore, the assignment $\rho(t) = T$ defines a homomorphism $\rho \colon \widetilde G_0^{k, k, k} \to H$ if and only if $T^3=1$. This method has been implemented to construct the representation described in the following.

\begin{prop}\label{prop:Rep_E^18}
Let $\omega = e^{2\pi i/3}$ and $\zeta = e^{2\pi i/9}  \in \mathbf C$. The assignments $(a, b, c) \mapsto (A, B, C)$, where 
$$
A=
\left(\begin{array}{ccc}
0 & 0 & 1\\
\omega & 0 & 0\\
0 & 1 & 0
\end{array}
\right), 
\hspace{.5cm}
B=
\left(\begin{array}{ccc}
0 & 1 & 0\\
0 & 0 & 1\\
\omega & 0 & 0
\end{array}
\right), 
\hspace{.5cm}
C=
\left(\begin{array}{ccc}
\omega+1 & 0 & -\omega\\
0 & 0 & \omega\\
\omega+1 & \omega & \omega^2
\end{array}
\right),
$$ 
define a homomorphism $\rho\colon G_0^{18, 18, 18} \to \mathrm{PGL}_3(\mathbf Z[\omega])$ with Zariski dense image. 

Moreover, the extra assignment $t \mapsto  T$, where 
$$
T = \left(\begin{array}{ccc}
 2\zeta^4 + \zeta^3 + \zeta - 1 &    2\zeta^4 + \zeta^3 + \zeta - 1   &   -\zeta^4 + \zeta^3 + \zeta + 2\\
-\zeta^4 - 2\zeta^3 - 2\zeta - 1   &  -\zeta^4 + \zeta^3 + \zeta + 2   & 2\zeta^4 + \zeta^3 + \zeta - 1\\
 2\zeta^4 + \zeta^3 + \zeta - 1  &  2\zeta^4 + \zeta^3 + \zeta - 1  & -\zeta^4 - 2\zeta^3 - 2\zeta - 1
\end{array}
\right),
$$
defines an extension of $\rho$ to a homomorphism $\rho\colon \widetilde G_0^{18, 18, 18} \to \mathrm{PGL}_3(\mathbf Z[\zeta])$. 
\end{prop}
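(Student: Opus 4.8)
The plan is to prove the two assertions by separating the purely computational content—verifying that the given matrices satisfy the defining relators, modulo scalars—from the one genuinely structural point, namely Zariski density.

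First I would pin down the presentation. By Section~\ref{sec:Pappus} each vertex group of $G_0^{18,18,18}$ is the Heisenberg group over $\mathbf{F}_3$, with relators $x^3,y^3,(xy)^3,(xy^{-1})^3$; hence $G_0^{18,18,18}$ is presented on $a,b,c$ by the three cubes together with the six Heisenberg relators for the pairs $(a,b),(b,c),(c,a)$. To see that $(a,b,c)\mapsto(A,B,C)$ defines a homomorphism into $\mathrm{PGL}_3$, I would check directly that each of $A,B,C$ cubes to a scalar matrix (one finds $A^3=B^3=\omega I$, and likewise $C^3$ scalar) and that for each pair the two Heisenberg relators evaluate to scalars; for instance $AB=\mathrm{diag}(\omega,\omega,1)$ and $AB^{-1}$ is a cyclic permutation matrix, both of projective order $3$. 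The pairs $(b,c)$ and $(c,a)$ are handled by the identical computation, or, once $T$ is available, by conjugating the $(a,b)$ relations by $T$. Along the way one records that $\langle A,B\rangle$ is the \emph{faithful} image of the Heisenberg group $H_3(\mathbf{F}_3)$ of order $27$ acting by its Schrödinger representation—the commutator $[A,B]=\mathrm{diag}(1,\omega,\omega^2)$ is central but non-scalar—so in particular the image acts irreducibly on $\mathbf{C}^3$.

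Next, Zariski density. Let $H\le\mathrm{PGL}_3(\mathbf{C})$ be the Zariski closure of $\rho(G_0^{18,18,18})$. Since the image acts irreducibly, $H$ is reductive and its identity component $H^\circ$ still acts irreducibly on $\mathbf{C}^3$; by the classification of irreducible $3$-dimensional representations of connected reductive groups, $H^\circ$ is either trivial, or the image of $\mathrm{SO}_3(\mathbf{C})\cong\mathrm{PGL}_2(\mathbf{C})$ (the symmetric square of the standard $\mathrm{SL}_2$-module), or all of $\mathrm{PGL}_3$. I would rule out the middle case by the observation that $\langle A,B\rangle\cong H_3(\mathbf{F}_3)$ is a non-abelian group of \emph{odd} order $27$, whereas every finite subgroup of $\mathrm{PGL}_2(\mathbf{C})$ of odd order is cyclic; hence $H\not\subseteq\mathrm{SO}_3$. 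To exclude $H^\circ=\{1\}$, i.e. a finite image, I would exhibit an element of infinite order—concretely, compute the characteristic polynomial of a short word such as $AC$ and check that the ratios of its eigenvalues are not roots of unity; the congruence reductions produced by \textsc{Magma} give independent corroboration. Since $\mathrm{PGL}_3$ is connected, $H^\circ=\mathrm{PGL}_3$ forces $H=\mathrm{PGL}_3$, whence density.

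Finally, the extension to $\widetilde G_0^{18,18,18}$. By Section~\ref{sec:CyclicExt_trivalent} this group has the presentation $\langle t,a,b\mid R^{18},t^3,tat^{-1}b^{-1}\rangle$, where $R^{18}$ are precisely the relators of the vertex group $X=\langle a,b\rangle$; since $\rho$ already satisfies $R^{18}$, it remains only to check the two new relators. Following the recipe of that section—choosing $T$ in the coset $\tau\,C_{\mathrm{GL}_3}(A)$ of an element $\tau$ conjugating $A$ to $B$—the relation $TAT^{-1}=B$ (mod scalars) holds by construction, so the one substantive verification is that $T^3$ is a scalar matrix, which I would confirm by direct multiplication over $\mathbf{Z}[\zeta]$ (noting $\zeta^3=\omega$, so $\mathbf{Z}[\omega]\subset\mathbf{Z}[\zeta]$ and the extended representation indeed lands in $\mathrm{PGL}_3(\mathbf{Z}[\zeta])$). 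I expect the main obstacle to be the Zariski-density step, and within it the infiniteness of the image: the relator checks are finite arithmetic and the $\mathrm{SO}_3$ case dies cleanly, but ruling out that the image is one of the finite irreducible subgroups of $\mathrm{PGL}_3$ (e.g. a subgroup of the Hessian group of order $216$ normalizing the Heisenberg group) genuinely requires producing an element of infinite order.
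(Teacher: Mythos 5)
Your verification of the homomorphism itself is fine and is essentially the paper's: the paper likewise checks that $A^3=B^3=\omega\,\mathrm{Id}$ and $(BA)^3=(BA^{-1})^3=\mathrm{Id}$, and then gets the relations for the pairs $(b,c)$ and $(c,a)$ for free from $T^3$ being scalar and $T$ conjugating $A\mapsto B\mapsto C$, exactly as you indicate. Your treatment of the extension to $\widetilde G_0^{18,18,18}$ also matches. The gap is in the Zariski density argument.

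You assert that since the image acts irreducibly on $\mathbf{C}^3$, the identity component $H^\circ$ of the Zariski closure $H$ still acts irreducibly, and you then enumerate connected groups with an irreducible $3$-dimensional representation. That implication is false: the normalizer $N(T_0)$ of a maximal torus of $\mathrm{PGL}_3(\mathbf{C})$ acts irreducibly while $T_0$ does not. By Clifford theory the correct trichotomy must include the case that $H^\circ$ is a (possibly trivial) torus and $H$ lies in the normalizer of a maximal torus, i.e.\ the image is monomial. This is precisely the dangerous case here: $A$ and $B$ are themselves monomial matrices (indeed $AB=\mathrm{diag}(\omega,\omega,1)$), so $\langle A,B\rangle$ normalizes the diagonal torus, and the Heisenberg group $\langle A,B\rangle$ preserves exactly four monomial structures on $\mathbf{C}^3$ (one for each of its maximal abelian subgroups); you would have to check explicitly that $C$ destroys all four. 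Your odd-order argument disposes of the $\mathrm{SO}_3$ case cleanly, but your proposed infinite-order certificate (eigenvalue ratios of $AC$ not roots of unity) does not touch the monomial case, since elements of infinite order live happily in $N(T_0)$ --- and in any event that computation is deferred rather than carried out.

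The paper's route is different and closes all of these cases at once: it computes that $(ABAC)^3$ is a nontrivial unipotent matrix, so the Zariski closure of the cyclic group it generates is a one-dimensional unipotent subgroup of $\mathrm{SL}_3(\mathbf{C})$; this single element simultaneously certifies that the image is infinite, not contained in the normalizer of a torus, and not contained in $\mathrm{SO}_3$, and the paper then verifies directly that this unipotent subgroup together with its conjugates under $\langle A,B,C\rangle$ generates $\mathrm{SL}_3(\mathbf{C})$. To repair your argument you must either add and eliminate the monomial case explicitly, or exhibit a unipotent element as the paper does, which makes the classification step unnecessary.
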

\begin{proof}
We retain the notation from the discussion made before the proposition. 
We have $X = \langle a, b | a^3, b^3, (ba)^3, (ba^{-1})^3\rangle$. Computations show that $A^3$ and $B^3$ are both equal to the scalar matrix $\omega \mathrm{Id}$. Moreover we have $(BA)^3 = \mathrm{Id} = (BA^{-1})^3$. Thus $\rho$  defines a homomorphism $X \to \mathrm{PGL}_3(\mathbf Z[\omega])$. 

One further computes that $T^3$ is a scalar matrix, and that $T$  conjugates $A$ to $B$, and $B$ to $C$. Therefore $\rho$ indeed defines a homomorphism $\widetilde G_0^{18, 18, 18} \to \mathrm{PGL}_3(\mathbf Z[\zeta])$ whose restriction to $G_0^{18, 18, 18}$ takes values in the group $\mathrm{PGL}_3(\mathbf Z[\omega])$. 

To verify that the image of $\rho$ is Zariski dense, one computes that $(ABAC)^3 = \left(\begin{array}{ccc}
  1 & 0 & 3\omega^2\\
 0 & 1 & -3\omega^2\\
 0 & 0 & 1
\end{array}
\right)$. In particular the Zariski closure of the cyclic group $\langle (ABAC)^3 \rangle$ is a one-dimensional unipotent subgroup  of $\mathrm{SL}_3(\mathbf C)$. It is then straightforward to check that this subgroup, together with its conjugates under $\langle A, B, C \rangle$, generates the entire group $\mathrm{SL}_3(\mathbf C)$.  The required assertion follows.
\end{proof}

As in Corollary~\ref{cor:StrongApprox}, combining  Proposition~\ref{prop:Rep_E^18} with Strong Approximation, we deduce that the groups $G_0^{18, 18, 18}$ and $\widetilde G_0^{18, 18, 18}$ have quotients of the form $\mathrm{PGL}_3(\mathbf F_q)$ for infinitely many finite fields $\mathbf F_q$, each of which is of  degree~$\leq 6$ over its prime field.  This is confirmed by calling \texttt{L3Quotients} for the group $\widetilde G_0^{18, 18, 18}$ in \textsc{Magma}. 

We also remark that, in contrast with the representations studied in Section~\ref{sec:HyperbolicActions}, Proposition~\ref{prop:Rep_E^18} does not provide an unbounded action of the group $G_0^{18, 18, 18}$ on real or complex hyperbolic spaces, but it rather provides an action on a symmetric space of higher rank. In particular, it does not yield any conclusion on the possible failure of property (T) for that group. 

We conclude this section by underlining another feature of the cyclically extended groups. Except for the case $k=54$, the associated simplicial complex associated to the group  $G_0^{k, k, k}$ via Theorem~\ref{thm:NPC} satisfies the hypotheses of J.~Swiatkowski's main theorem in \cite{Swiat}. In view of the regularity properties of the vertex links that can be consulted in \cite{ConderMorton95}, we deduce that for $k =16, 18, 24, 26, 40, 48$, the full automorphism group of the simplicial complex associated with the group $G_0^{k, k, k}$ is discrete. On the other hand, the complex associated with $G_0^{14, 14, 14}$ is a $2$-adic Bruhat--Tits building, whose automorphism group is  non-discrete. 

\subsection{A representation in degree $6$}\label{sec:Deg6rep}

The following result was obtained using the representation variety approach described  in Section~\ref{sec:LinearReps}.

\begin{prop}\label{prop:deg6}
For $\Gamma \in \{G_4^{14, 14, 18}, G_4^{14, 14, 54}\}$, there is an irreducible representation $\Gamma \to \mathrm{U}(6)$ whose image is  infinite. 
\end{prop}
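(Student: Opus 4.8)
The plan is to apply the representation variety method of Section~\ref{sec:LinearReps}, together with a reduction that lets us dispose of the two groups at once. First I would observe that it suffices to construct the representation for $\Gamma = G_4^{14, 14, 18}$. The two groups in the statement share their first two vertex groups (each a Frobenius group of order~$21$, from the Heawood-graph data), and differ only in the third: for $G_4^{14, 14, 54}$ it is $C_3 \wr C_3$, the Sylow $3$-subgroup of $\mathrm{Sp}_4(\mathbf{F}_3)$ attached to the Gray graph, while for $G_4^{14, 14, 18}$ it is the Heisenberg group over $\mathbf{F}_3$ attached to the Pappus graph. As recorded in Section~\ref{sec:small_cubic_graphs}, adjoining the relation $(ab)^3$ collapses the Gray-graph group onto the Pappus-graph group; since this relation leaves the edge subgroups $\langle a \rangle$, $\langle b \rangle$ and their embeddings untouched, it preserves the gluing data indexed by $\ell = 4$, and yields a surjection $G_4^{14, 14, 54} \twoheadrightarrow G_4^{14, 14, 18}$. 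Any representation of $G_4^{14, 14, 18}$ therefore pulls back along this surjection to a representation of $G_4^{14, 14, 54}$ with the same image; in particular an irreducible representation into $\mathrm{U}(6)$ with infinite image pulls back to one of the same kind.

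To build the representation of $\Gamma = G_4^{14, 14, 18}$, I would fix a $6$-dimensional unitary representation of one vertex group, say $\langle a, b \rangle \cong C_7 \rtimes C_3$, realized concretely by taking $A$ and $B$ block-diagonal with two $3 \times 3$ unitary blocks, each an irreducible degree-$3$ representation of the Frobenius group (the choice of degree~$6$ is forced by the fact that every vertex group has irreducible representations of degree at most~$3$, so a $6$-dimensional irreducible of $\Gamma$ must restrict reducibly to each vertex). Writing $C$ as a $6 \times 6$ matrix of $36$ indeterminates, the relators of the remaining two vertex groups, together with $c^3 = 1$, translate into a polynomial system in those indeterminates; following Section~\ref{sec:LinearReps} I would compute the corresponding representation variety in \textsc{Magma}, and when it is $0$-dimensional extract the finitely many solutions $C$ over a number field $F \subseteq \mathbf{C}$. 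Among these I would select one for which $\langle A, B, C \rangle$ admits a common positive-definite invariant Hermitian form $H$ (found by solving the linear system $M^{\ast} H M = H$ for $M \in \{A, B, C\}$); conjugating by $H^{1/2}$ then places the image in $\mathrm{U}(6)$. Irreducibility is a finite linear-algebra check: one computes the commutant $\{M : MA = AM,\ MB = BM,\ MC = CM\}$ and verifies it is one-dimensional, hence scalar.

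All the steps so far, namely that the defining relations hold, that the image is unitary, and that the representation is irreducible, are finite verifications carried out exactly over $F$. The genuine difficulty, and the heart of the proof, is to show that the image is \emph{infinite}. For this I would exhibit a single word $w$ in $a, b, c$ whose image $W \in \mathrm{U}(6)$ has infinite order. Since $W$ is unitary it is diagonalizable with eigenvalues on the unit circle, so $W$ has infinite order unless all of its eigenvalues are roots of unity; as a root of unity has all its $\mathbf{Q}$-conjugates on the unit circle, it suffices to produce an eigenvalue of $W$ possessing a Galois conjugate of modulus $\neq 1$. This can be read off the minimal polynomial over $\mathbf{Q}$ of a suitable eigenvalue and certified by an exact computation, with no reliance on floating-point estimates. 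The main obstacle is thus not the existence of the representation, which the variety computation delivers, but the selection of a word $w$ together with the certification that one of its eigenvalues fails to be a root of unity; once this is in hand, the pullback argument of the first paragraph transfers the conclusion to $G_4^{14, 14, 54}$.
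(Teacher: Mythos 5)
Your reduction to $\Gamma = G_4^{14,14,18}$ is correct and is exactly the paper's first step: the quotient map $X^{54}\to X^{18}$ sends generators to generators, so it respects the gluing data and induces a surjection $G_4^{14,14,54}\twoheadrightarrow G_4^{14,14,18}$ (this arrow appears in the epimorphism diagrams of Appendix~C), and pulling back preserves image and irreducibility. Your overall strategy --- representation variety over a fixed $6$-dimensional representation of a vertex group, unitarization via an invariant Hermitian form, irreducibility via the commutant, infinitude certified by a word with an eigenvalue that is not a root of unity --- is also the strategy the paper follows (it even certifies infinitude exactly as you propose, by exhibiting $A^{-1}BCB$ with a non-root-of-unity eigenvalue).

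The genuine gap is that you never produce the representation. The statement is an existence claim proved by explicit construction, and every substantive assertion in your outline is contingent on a computation whose outcome you do not exhibit and which is not guaranteed a priori: that the variety of admissible $C$'s is nonempty (the paper notes that in general the dimension can be $-1$, i.e.\ the variety can be empty), that some solution admits a \emph{positive-definite} invariant Hermitian form rather than an indefinite one, that the commutant is scalar, and above all that some word has infinite order --- a $0$-dimensional variety could perfectly well consist only of representations with finite image. You correctly identify this last point as "the heart of the proof," but identifying the obstacle is not the same as overcoming it. For comparison, the paper resolves all of this by writing down the matrices: it first passes to the index-$2$ overgroup $\widetilde\Gamma$ coming from the automorphism of $\Gamma$ that fixes $b$ and swaps $a$ with $c^{-1}$, which lets it prescribe $A$ (block-diagonal, a $3\times 3$ permutation block and a diagonal block) and the swap matrix $T$, set $C=TA^{-1}T$, and solve only for the single unknown matrix $B$; it then records $B$ explicitly over $\mathbf{Q}(e^{2\pi i/12})$, verifies the relations, checks that the only nontrivial $\langle A,C\rangle$-invariant subspaces are the two coordinate $3$-planes and that neither is $B$-invariant (irreducibility), and checks that $A^{-1}BCB$ has an eigenvalue that is not a root of unity (infinitude). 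Without that data, or some other a priori argument ruling out the failure modes above, your write-up is a plausible search procedure rather than a proof.
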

\begin{proof}
There is a surjective homomorphism $G_4^{14, 14, 54} \to G_4^{14, 14, 18}$, so it suffices to prove the statement for $\Gamma = G_4^{14, 14, 18}$. We next observe that $\Gamma$ has an automorphism of order~$2$ fixing $b$ and swapping $a$ and $c^{-1}$. The corresponding extension, which is an overgroup of index~$2$ of $\Gamma$, has the following presentation:
$$\widetilde \Gamma = \langle t, a, b| t^2, a^3, b^3, t btb^{-1},
b  a  b^{-1}  a^{-1}  b a, 
(a  t  a^{-1}  t)^3,
(a  t)^6\rangle.$$
Set $\omega = e^{2\pi i/3}$ and $\zeta = e^{2\pi i/12}$. Let $A_1 =\left(\begin{array}{ccc}
0 & 0 & 1\\
1 & 0 & 0\\
0 & 1 & 0\end{array}\right)$, 
$A_2 = \left(\begin{array}{ccc}
1 & 0 & 0\\
0 & \omega^2 & 0\\
0 & 0 & \omega\end{array}\right)$ and
$I = \left(\begin{array}{ccc}
1 & 0 & 0\\
0 & 1 & 0\\
0 & 0 & 1
\end{array}\right)$. Define the $6\times 6$-matrices 
$A = \left(\begin{array}{cc}
A_1 & 0\\
0 & A_2
\end{array}\right)$
and 
$T = \left(\begin{array}{cc}
0 & I\\
I & 0
\end{array}\right)$, where each entry represents a $3\times 3$-block. Finally, define
$$B = \left(\begin{array}{cccccc}
\frac{\zeta^2 - 1} 3  & \frac{-4 \zeta^2 + 1} 6 & -\frac 1 6 &  -\frac  \zeta 3 & -\frac{\zeta^3} 6 & \frac{-3 \zeta^3 + 4 \zeta}{6} \\
\frac{-2 \zeta^2 + 3}{6} & \frac{-\zeta^2}{3} & \frac{-2 \zeta^2 + 2}{3} & \frac{-\zeta^3}{6} & \frac{\zeta^3 + 2 \zeta}{6} & \frac{-\zeta^3}{6} \\
\frac{2 \zeta^2 - 5}{6} &  \frac{\zeta^2 - 1}{3} & \frac{1}{3} & \frac{-\zeta^3}{6} & \frac{\zeta^3 + 2 \zeta}{6} & \frac{-\zeta^3}{6} \\
\frac{-\zeta}{3} & \frac{-\zeta^3}{6} & \frac{-3 \zeta^3 + 4 \zeta}{6} & \frac{\zeta^2 - 1}{3} & \frac{-4 \zeta^2 + 1}{6} & \frac{-1}{6} \\
\frac{-\zeta^3}{6} & \frac{\zeta^3 + 2 \zeta}{6} & \frac{-\zeta^3}{6} & \frac{-2 \zeta^2 + 3}{6} & \frac{-\zeta^2}{3} & \frac{-2 \zeta^2 + 2}{3} \\
\frac{-\zeta^3}{6} & \frac{\zeta^3 + 2 \zeta}{6} & \frac{-\zeta^3}{6} & \frac{2 \zeta^2 - 5}{6} & \frac{\zeta^2-1}{3} & \frac{1}{3} 
\end{array}\right).$$
One  verifies that the assignments $(t, a, b) \mapsto (T, A, B)$ defines a representation $\rho \colon \widetilde \Gamma \to \mathrm U(6)$. Setting $C = T A^{-1} T$, one also verifies that the matrix $A^{-1}BCB$ has eigenvalues that are not roots of unity, so that the group $\langle T, A, B\rangle$ is infinite. The only two non-trivial invariant subspaces of the subgroup $\langle A, C\rangle$ are the $3$-dimensional subspaces respectively spanned by the first and the last $3$ vectors of the canonical basis. Since none of them is $B$-invariant, it follows that $\langle A, B, C\rangle$ acts irreducibly. Therefore, the restriction of $\rho$ to $\Gamma$ defines an irreducible unitary representation, as required. 
\end{proof}

\begin{rmk}\label{rem:deg6}
As in Remark~\ref{rem:CongruenceImage}, we can compute the first few congruence quotients of the linear group $\langle T, A, B\rangle$,  and deduce that the groups $G_4^{14, 14, 18}$ and $ G_4^{14, 14, 54}$ both have  finite simple quotients isomorphic to $\mathrm{PSp}_6(5)$, $\mathrm{PSp}_6(7)$, $\mathrm{PSp}_6(11)$,  $\mathrm{PSp}_6(13)$, 
$\mathrm{PSp}_6(17)$, $\mathrm{PSp}_6(19)$. Using Theorem~\ref{thm:NPC}(iv), one can derive that $G_4^{14, 14, 18}$ is virtually torsion-free. Notice that the  systematic searches for small finite simple quotients, and alternating quotients of small degree, of those groups, reported on in Section~\ref{sec:Tables}, did not identify any non-abelian finite simple quotient for $G_4^{14, 14, 18}$ and $G_4^{14, 14, 54}$. 
\end{rmk}

\subsection{On representations of hyperbolic quotients of $\PSL_2(\mathbf Z)$}
 
 The relative success of the representation variety approach we followed in the previous sections suggests to consider the following.

\begin{qu}\label{qu:RepVariety}
Let $d \geq 1$ be an integer, let $\omega = e^{2\pi i/3}$ and  $\mathcal R_d = \mathbf C[X_1, \dots, X_{9d^2}]$ be the polynomial ring in $9d^2$ indeterminates over $\mathbf C$. Let also $\widetilde \Gamma = \langle a, x \mid a^3\rangle \cong C_3 *\mathbf Z$ and   $\rho \colon \widetilde \Gamma\to \mathrm{GL}_{3d}\big(\mathcal{R}_d\big)$ be the representation of $\widetilde \Gamma$ defined by $\rho(a) = \left(\begin{array}{ccc} 
I_d & 0 & 0\\
0 & \omega I_d & 0\\
0 & 0 & \omega^2 I_d
\end{array}\right)$ and $\rho(x) = \left(\begin{array}{ccc} 
X_1 & \cdots & X_{3d}\\
\vdots & \ddots & \vdots\\
X_{9d^2-3d+1} & \cdots & X_{9d^2}
\end{array}\right)$, where $I_d$ denotes the $d\times d$-identity matrix. 

Let now $r_1, \dots, r_m \in \widetilde \Gamma$ be such that the quotient group $\Gamma = \widetilde \Gamma/\langle\!\langle x^2, r_1, \dots, r_m\rangle\!\rangle \cong \langle a, x \mid a^3,  x^2, r_1, \dots, r_m\rangle$ is  non-trivial and hyperbolic (in particular, $\Gamma$ is a hyperbolic quotient of  $C_3 * C_2 \cong \mathrm{PSL}_2(\mathbf Z)$). 
Let $\mathcal I_d$ be the ideal in $\mathcal R_d$ generated by the $(m+1)9d^2$ polynomials corresponding to the entries of the $m+1$ matrices in the set $\{\rho(x^2)-I_{3d}\} \cup \big\{\rho(r_j)-I_{3d} \mid j=1, \dots, m\big\}$.

Does there exist $d \geq 1$ such that the quotient ring $\mathcal R_d/\mathcal I_d$ is non-zero? 
\end{qu}

\begin{rmk}
Question~\ref{qu:RepVariety} is formally equivalent to the question whether every hyperbolic group is residually finite. Indeed, if the answer to Question~\ref{qu:RepVariety} is positive, then every non-trivial hyperbolic quotient of $\mathrm{PSL}_2(\mathbf Z)$ has a non-trivial finite-dimensional linear representation over $\mathbf C$, and hence a non-trivial finite quotient. Since every non-elementary hyperbolic group $G$  has a non-elementary hyperbolic quotient in common with $\mathrm{PSL}_2(\mathbf Z)$ by Olshanskii's Common Quotient Theorem, it follows that $G$ has a non-trivial finite quotient. It then follows that all hyperbolic groups are residually finite, see \cite[Theorem~1.2]{KaWi} or \cite[Theorem~2]{Olsh_2000}.  Conversely, if every hyperbolic group is residually finite, then the group $\Gamma$ from Question~\ref{qu:RepVariety} has a non-trivial finite quotient $Q$ in which the cyclic group $\langle a \rangle$ injects. In particular the order of $Q$ is $3d$ for some integer $d\geq 1$, and the image of $a$ in the regular representation of $Q$  is conjugate to the matrix $\left(\begin{array}{ccc} 
I_d & 0 & 0\\
0 & \omega I_d & 0\\
0 & 0 & \omega^2 I_d
\end{array}\right)$. Therefore the representation variety of $\Gamma$ whose coordinate ring is $\mathcal R_d/\mathcal I_d$, is non-empty, and hence the ring $\mathcal R_d/\mathcal I_d$ is non-zero.
\end{rmk}

The potential asset of the reformulation provided by Question~\ref{qu:RepVariety} stems from the possibility to approach the problem by Gr\"obner bases computations. Investigating Question~\ref{qu:RepVariety} for random quotients of $\mathrm{PSL}_2(\mathbf Z)$, such as those considered in \cite{Oll_GAFA}, would be highly interesting.

\section{Five-fold hyperbolic triangle groups with property (T)}\label{sec:five-fold}

As mentioned in Section~\ref{sec:TriangleGroups}, the only trivalent triangle groups from our sample for which Theorem~\ref{thm:EJ} applies and guarantees property (T) are the four Ronan groups. For the majority of the other groups, we could find a finite index subgroup with infinite abelianization and/or an isometric action on a real or complex hyperbolic space, which show that property (T) fails.

Let us also remark that we cannot expect  Theorem~\ref{thm:EJ} to  apply and guarantee property (T) for non-positively curved trivalent triangle group with a vertex group of very large order, since by Corollary~\ref{cor:Bounds}, one of the representation angles is bounded above by $\arccos(\frac{\sqrt 8} 3-\epsilon) \approx 19.47^\circ$. We consider this as evidence that there in order for an infinite hyperbolic $k$-fold generalized triangle group to have property (T) it is necessary that $k \ge 4$. 

\begin{rmk}
A possible approach to confirm that a hyperbolic $k$-fold generalized triangle groups with $k$ small cannot have property (T)  would be to show that the conformal dimension of the boundary of those hyperbolic groups is at most~$2$. This is known to be an obstruction to property (T), see \cite{Bourdon2016} and \cite[Theorem~1.3(3)]{BFGM}.
\end{rmk}

In this section we will see that hyperbolic $k$-fold generalized triangle groups with property (T) do exist for $k = 5$. The following remark clarifies how we obtained experimental evidence that such examples could be constructed using Theorem~\ref{thm:EJ}. 

\begin{rmk}
To a finite group $X$ and two subgroups $A$ and $B$ we can associate two kinds of angles: one is $2 \pi/ g_X(A,B)$ where $g_X(A,B)$ is the girth of the bipartite coset graph $\Gamma_X(A, B)$. The other is $\arccos \varepsilon_X(A,B)$. In order for a generalized triangle group associated to $A_0,A_1,A_2,X_0,X_1,X_2$ to be non-positively curved according to Theorem~\ref{thm:NPC} the sum over the three angles of the first kind needs to be $\le \pi$. In order for Theorem~\ref{thm:EJ} to guarantee property~(T), the sum over the three angles of the second kind needs to be $>\pi$. So in order for both properties to be satisfied, for at least one triple $(X,A,B) = (X_i,A_{i-1},A_{i+1})$ one needs
\begin{equation}\label{eq:girth_repangle}
\arccos \varepsilon_{X}(A,B) > \frac{2\pi}{g_{X}(A,B)}\text{.}
\end{equation}
Among groups of small order this is rarely satisfied. Indeed, numerical evidence suggests that the only groups $X$ of order $\abs{X} \le 2000$ that admit subgroups $A$, $B$ of order $5$ satisfying \eqref{eq:girth_repangle} are $\mathscr U_3(5)$, $\SL_2(\Fbb_5)$, (both with girth $6$ and representation angle $> 60^\circ$), $\mathscr U_4(5)$, $\SL_2(\Fbb_9)$, and a polycyclic group of order $800$ (all three with girth $8$ and representation angle $> 45^\circ$). In fact, Proposition~\ref{prop:SL2(p)}  provides certified estimates for $\SL_2(\Fbb _5)$ and $\SL_2(\Fbb_9)$. The groups $\mathscr U_3(5)$ and $\mathscr U_4(5)$ will be introduced and studied in Section~\ref{sec:KMS} below where their exact representation angle is determined.

From those experiments, it follows that the only candidates for being $5$-fold generalized triangle groups with all vertex groups of order~$\leq 2000$ that would both be hyperbolic and have property (T) by an application of Theorem~\ref{thm:EJ} would have to be of half girth type $(3, 3, 3)$. Moreover, all of their vertex groups would be isomorphic to $\mathscr U_3(5)$ or $\SL_2(\Fbb_5)$. We shall see in Section~\ref{sec:KMS} below that, up to isomorphism, there is only one non-positively curved $5$-fold triangle group with all vertex groups isomorphism to $\mathscr U_3(5)$. That group has (T), but it  is \textit{not} hyperbolic (see Proposition~\ref{prop:ZxZ}). 
\end{rmk}

Nonetheless, it turns out that if we allow (much) larger vertex groups, then we can indeed construct hyperbolic $5$-fold generalized triangle groups with property (T). 

\begin{prop}\label{prop:5-fold}
Let $X$ be a finite group generated by two elements $a, b$ of order~$5$. Let $A = \langle a \rangle$ and $B = \langle b \rangle$. Assume that:
\begin{enumerate}[(i)]
    \item The girth of the coset graph $\Gamma = \Gamma_X(A, B)$ is at least~$14$. 
    \item $\varepsilon_X(A, B)< \frac{2\sqrt 5} 5$. 
\end{enumerate}
Then every generalized $5$-fold triangle group with vertex groups respectively isomorphic to $X$, $C_5 \times C_5$ and the Heisenberg group over $\mathbf F_5$, is infinite hyperbolic with Kazhdan's property (T). 
\end{prop}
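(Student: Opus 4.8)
The plan is to read off the combinatorial invariants (coset-graph girth and representation angle) of the three prescribed vertex groups, each equipped with its pair of cyclic order-$5$ edge subgroups, and then to feed these into Theorem~\ref{thm:NPC}(v) for hyperbolicity and into Theorem~\ref{thm:EJ} for property~(T). The key observation is that hypothesis~(ii) is calibrated exactly so that the Ershov--Jaikin-Zapirain inequality is satisfied.

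First I would record the three vertex-group invariants. For the vertex group $C_5 \times C_5$, any two distinct subgroups of order~$5$ intersect trivially and generate the group, so the coset graph is the complete bipartite graph $K_{5,5}$, of girth~$4$, and by Example~\ref{ex:0} its representation angle datum is $\varepsilon = 0$. For the Heisenberg group over $\mathbf F_5$, the coset graph has girth~$6$ (exactly as in the Pappus case $p=3$: the commutator relation $[a,b,a]=[a,b,b]=1$ rules out a $4$-cycle, while a $6$-cycle exists), and by Example~\ref{ex:2} we have $\varepsilon = 1/\sqrt 5$. For the third vertex group $X$ with edge groups $A=\langle a\rangle$ and $B=\langle b\rangle$, the girth of $\Gamma_X(A,B)$ is at least~$14$ and $\varepsilon_X(A,B)<\tfrac{2\sqrt5}{5}$, by hypotheses (i) and (ii).

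Next I would deduce infiniteness and hyperbolicity. The three half girths are $r=2$ (for $C_5\times C_5$), $r=3$ (for the Heisenberg group) and $r\geq 7$ (for $X$), so
$$\frac{1}{r_0}+\frac{1}{r_1}+\frac{1}{r_2}\leq \frac{1}{2}+\frac{1}{3}+\frac{1}{7}=\frac{41}{42}<1.$$
By Theorem~\ref{thm:NPC}(v) the group $G$ is non-elementary hyperbolic, and $Y(\mathcal T)$ carries a $G$-invariant $\mathrm{CAT}(-1)$ metric; in particular $G$ is infinite (this also follows from Theorem~\ref{thm:NPC}(iii), since each edge group is a proper subgroup of each adjacent vertex group).

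Finally I would establish property~(T) by applying Theorem~\ref{thm:EJ} to $G$ generated by the three images $A_0,A_1,A_2$ of the edge groups, with $X_i=\langle A_{i-1}\cup A_{i+1}\rangle$ the three vertex groups, which are finite and hence have relative property~(T) in $G$. Setting $\varepsilon_i=\varepsilon_{X_i}(A_{i-1},A_{i+1})$, the data above give $\{\varepsilon_0,\varepsilon_1,\varepsilon_2\}=\{\varepsilon_X(A,B),\,0,\,1/\sqrt5\}$. Since one of the $\varepsilon_i$ vanishes, the cross term $2\varepsilon_0\varepsilon_1\varepsilon_2$ is zero and the criterion reduces to
$$\varepsilon_X(A,B)^2+\frac{1}{5}<1,$$
which is equivalent to $\varepsilon_X(A,B)<\tfrac{2\sqrt5}{5}$, i.e.\ precisely hypothesis~(ii). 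Hence Theorem~\ref{thm:EJ} applies and $G$ has Kazhdan's property~(T). The proof is essentially forced once the vertex-group invariants are in hand, so the only mildly delicate points are confirming the Heisenberg coset graph has girth~$6$ and recalling the exact value $\varepsilon=1/\sqrt5$ from Example~\ref{ex:2}; the hypotheses are designed so that both the non-positive curvature condition and the Ershov--Jaikin-Zapirain inequality hold simultaneously.
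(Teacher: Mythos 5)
Your proposal is correct and follows essentially the same route as the paper's proof: record the girths and representation angles of the three vertex links ($K_{5,5}$ with $\varepsilon=0$, the Heisenberg graph of girth $6$ with $\varepsilon=1/\sqrt5$, and the hypotheses on $X$), apply Theorem~\ref{thm:NPC} to get infinite hyperbolicity from half-girth type $(2,3,r)$ with $r\geq 7$, and apply Theorem~\ref{thm:EJ}, where the vanishing of one $\varepsilon_i$ reduces the criterion exactly to hypothesis~(ii). The only cosmetic difference is that the paper sources the girth-$6$ claim for the Heisenberg group to Proposition~\ref{prop:Girth:Moufang} rather than arguing it directly.
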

\begin{proof}
Let $G(\mathcal T)$ be $5$-fold triangle group as in the statement. The coset graph of $C_5 \times C_5$ with respect to a generating pair of cyclic subgroups of order~$5$ is the complete bipartite graph $K_{5, 5}$. Its girth is $4$. Moreover, the corresponding representation angle is $\pi/2$, see Example~\ref{ex:0}. 

We shall see in Proposition~\ref{prop:Girth:Moufang} that the coset graph of the Heisenberg group over $\mathbf F_p$ with respect to a generating pair of cyclic subgroups of order~$p$ is of girth $6$. Moreover, the cosine of the corresponding representation angle is $\frac {\sqrt p} p$ by Example~\ref{ex:2}. 

Assume that (i) and (ii) hold. The half-girth type of $G(\mathcal T)$ is $(2, 3, r)$, where $r$ is half the girth of $\Gamma$. Therefore the fundamental group $\widehat{G(\mathcal T)}$ is infinite hyperbolic by Theorem~\ref{thm:NPC}, in view of (i).

Since $\varepsilon_X(A, B) <  \frac{2\sqrt 5} 5$ by hypothesis,  it follows from Theorem~\ref{thm:EJ} that $\widehat{G(\mathcal T)}$ has (T). 
\end{proof}

Analogously one can verify using Proposition~\ref{prop:AngleUnipotent} below:

\begin{prop}\label{prop:5-fold_bis}
Let $X$ be a finite group generated by two elements $a, b$ of order~$5$. Let $A = \langle a \rangle$ and $B = \langle b \rangle$. Assume that:
\begin{enumerate}[(i)]
    \item The girth of the coset graph $\Gamma = \Gamma_X(A, B)$ is at least~$10$.
    \item $\varepsilon_X(A, B)< \frac{\sqrt{15}}{5}$.
\end{enumerate}
Let $G(\mathcal T)$ be a generalized $5$-fold triangle group generated with vertex groups isomorphic to $X$, $C_5 \times C_5$ and the $5$-Sylow subgroup $\mathscr  U_4(5)$ of $\Sp_2(5)$, respectively. If the edge groups embed into $\mathscr  U_4(5)$ as groups that do not both commute with their commutator subgroup then $G$ is infinite hyperbolic with Kazhdan's property (T). 
\end{prop}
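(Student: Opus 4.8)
The plan is to follow the blueprint of the proof of Proposition~\ref{prop:5-fold} essentially verbatim, merely replacing the Heisenberg vertex group by $\mathscr U_4(5)$ and tracking how the relevant numerical data change. Concretely, I would determine, for each of the three vertex groups, the girth of the associated coset graph (which controls the half-girth type, hence hyperbolicity via Theorem~\ref{thm:NPC}) and the representation angle $\varepsilon_{X_i}(A_{i-1}, A_{i+1})$ (which controls property~(T) via Theorem~\ref{thm:EJ}). These three pieces of data then feed mechanically into the two criteria.

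First I would record the two easy vertex groups. For $C_5 \times C_5$, the coset graph with respect to a generating pair of cyclic subgroups of order~$5$ is the complete bipartite graph $K_{5,5}$, of girth~$4$, and the representation angle is $\pi/2$, i.e.\ $\varepsilon = 0$, by Example~\ref{ex:0}. For the group $X$, hypotheses (i) and (ii) directly supply that the coset graph $\Gamma$ has girth $\geq 10$ and that $\varepsilon_X(A,B) < \sqrt{15}/5$. The substantive input is Proposition~\ref{prop:AngleUnipotent} below, applied to the third vertex group $\mathscr U_4(5)$: under the assumption that the edge groups embed as two cyclic subgroups that do not both commute with their commutator subgroup, it yields that the coset graph of $\mathscr U_4(5)$ has girth~$8$ and that $\varepsilon_{\mathscr U_4(5)}(A,B) = \sqrt{2/5}$. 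The role of the non-degeneracy hypothesis is precisely to single out the embedding class realising this girth and angle, the other classes being excluded.

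With these data in hand the conclusion is a computation. The half-girth type of $G(\mathcal T)$ is $(2,4,r)$, where $r \geq 5$ is half the girth of $\Gamma$; since
$$\frac 1 2 + \frac 1 4 + \frac 1 r \leq \frac 1 2 + \frac 1 4 + \frac 1 5 = \frac{19}{20} < 1,$$
Theorem~\ref{thm:NPC}(v) shows that $\widehat{G(\mathcal T)}$ is non-elementary hyperbolic (hence infinite). For property~(T), I would apply Theorem~\ref{thm:EJ} with $\varepsilon_0 = 0$ (from $C_5 \times C_5$), $\varepsilon_1 = \sqrt{2/5}$ (from $\mathscr U_4(5)$) and $\varepsilon_2 = \varepsilon_X(A,B) < \sqrt{15}/5$ (from $X$). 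Then
$$\varepsilon_0^2 + \varepsilon_1^2 + \varepsilon_2^2 + 2\varepsilon_0\varepsilon_1\varepsilon_2 = \frac 2 5 + \varepsilon_2^2 < \frac 2 5 + \frac 3 5 = 1,$$
so Theorem~\ref{thm:EJ} yields property~(T).

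The only genuine obstacle is the determination of the girth ($8$) and of the exact representation angle ($\sqrt{2/5}$) of the coset graph of $\mathscr U_4(5)$, together with the verification that the non-degeneracy hypothesis on the edge groups is exactly what is needed to land in that case; this is delegated to Proposition~\ref{prop:AngleUnipotent}. Everything else is bookkeeping, and the thresholds are tuned so that the two complementary bounds $\varepsilon_1 = \sqrt{2/5}$ and $\varepsilon_2 < \sqrt{15}/5$ combine to the exact Ershov--Jaikin-Zapirain inequality, since $2/5 + 3/5 = 1$.
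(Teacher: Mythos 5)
Your proposal is correct and follows essentially the same route as the paper's own proof: read off the girth and representation-angle data for $C_5\times C_5$, for $\mathscr U_4(5)$ (girth $8$ via Proposition~\ref{prop:Girth:Moufang}, $\varepsilon=\sqrt{2/5}$ via Proposition~\ref{prop:AngleUnipotent}), and for $X$ from the hypotheses, then feed them into Theorem~\ref{thm:NPC} for hyperbolicity and Theorem~\ref{thm:EJ} for property~(T) using $2/5+3/5=1$. The only (cosmetic) discrepancy is that the girth computation for $\mathscr U_4(5)$ comes from Proposition~\ref{prop:Girth:Moufang} rather than Proposition~\ref{prop:AngleUnipotent}.
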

\begin{proof}
Again $C_5 \times C_5$ has a coset graph of girth $4$ and a representation angle of $\pi/2$. The group $\mathscr U_4(p)$ with respect to the described subgroups has as coset graph of girth $8$ by Proposition~\ref{prop:Girth:Moufang} and a representation angle of $\arccos(\sqrt{2/p})$ by Proposition~\ref{prop:PresentationsUnipotent}.

The half-girth type of $G(\mathcal T)$ is $(2, 4, r)$, where $r \ge 5$ by (i). Therefore the fundamental group $\widehat{G(\mathcal T)}$ is infinite hyperbolic by Theorem~\ref{thm:NPC}.

The cosines of the representation angles are $(0,\sqrt{2/5},\varepsilon)$ where $\varepsilon < \sqrt{3/5}$ by (ii). It follows from Theorem~\ref{thm:EJ} that $\widehat{G(\mathcal T)}$ has (T). 
\end{proof}

Two examples of triples $(X, A, B)$ satisfying the hypotheses of Proposition~\ref{prop:5-fold} are given by the cases $p = 109$ and $p = 131$ of Proposition~\ref{prop:PSL2(p)}. Two examples of triples $(X,A,B)$ satisfying the hypotheses of Proposition~\ref{prop:5-fold_bis} are given by the cases $p = 31$ and $p = 41$ the same proposition. This leads to several hyperbolic $5$-fold generalized triangle groups with property (T). Two of these feature in Theorem~\ref{thm:5-fold_with_(T)} from the introduction, whose proof can now be completed.

\begin{proof}[Proof of Theorem~\ref{thm:5-fold_with_(T)}]
The presentations of $\mathscr H_p$ make it  clear that the groups are $5$-fold generalized triangle group.

Let $L =\grp{a, b \mid a^5, b^5, R}$, where $R$ denotes the set consisting of the $7$ relators of $\mathscr H_{109}$ involving both $a$ and $b$. The following procedure allows one to verify with \textsc{Magma} that $L$  is isomorphic $X \cong \mathrm{PSL}_2(109)$. First, one computes that the assignments
\[
a \mapsto 
\left(\begin{array}{cc}
  0   & 1 \\
-1     & 11
\end{array}\right)
\quad \text{and}\quad
b \mapsto 
\left(\begin{array}{cc}
  57   & 2 \\
52     & 42
\end{array}\right)
\]
define a surjective homomorphism $L \to X \cong \mathrm{PSL}_2(109)$. On the other hand, the \textsc{Magma} command \texttt{\#L} confirms that $L$ is a finite group of order $647460 = \abs{\mathrm{PSL}_2(109)}$. The required assertion follows.

By Proposition~\ref{prop:PSL2(p)}, the girth of $\Gamma_X(A, B)$ is $14$ and $5\varepsilon_X(A, B) < 2\sqrt 5 \approx 4.47213595$. The conclusion for $\mathscr H_{109}$ follows from Proposition~\ref{prop:5-fold}.

Similarly if $a,b,X,A,B$ are as in the $p = 31$ case of Proposition~\ref{prop:PSL2(p)} one verifies that the group presented by the generators and relations of $\mathscr H_{31}$ involving only $a$ and $b$ is isomorphic to $X \cong \PSL_2(31)$ where the generators are represented by matrices with the same letter. We conclude that the coset graph has girth $14$ and that $5\varepsilon_X(A,B) < \sqrt{15} \approx 3.8729833462$. Let $M = \grp{b,c \mid S}$ where $S$ are the relations of $\mathscr H_{31}$ involving only $b$ and $c$. We will see in the next section that $M \cong \mathscr U_4(5)$ and that $[b,c]$ does not commute with $c$. Therefore we can conclude with Proposition~\ref{prop:5-fold_bis}.
\end{proof}

\begin{rmk}
Clearly, the method used above yields more examples of  hyperbolic $5$-fold generalized triangle groups with property (T) than those recorded in Theorem~\ref{thm:5-fold_with_(T)}. We may indeed use $\PSL_2(41)$ instead of $\PSL_2(31)$, and $\PSL_2(131)$ instead of $\PSL_2(109)$, as a consequence of Proposition~\ref{prop:PSL2(p)}. Moreover, for each triple of vertex groups, we can take advantage of the freedom we have in defining the homomorphisms identifying an edge group to a generator of the vertex group containing it. However, we have not been able to construct an \textit{infinite family} of hyperbolic $5$-fold generalized triangle groups with property (T).
\end{rmk}

\section{Kac--Moody--Steinberg groups of rank~$3$}\label{sec:KMS}

\subsection{Coset graphs from Moufang polygons}

The Pappus graph and the Gray graph are members of an infinite family of graphs, some of which were considered in \cite{LU93}. Those can be constructed as follows. 

Given two elements $x, y$ in a group $G$, we denote by $[x, y] = x^{-1}y^{-1}xy$ their commutator. Given subgroups $A, B\leq G$, we denote by $[A, B]$ the subgroup generated by all elements of the form $[a, b]$ with $a \in A$ and $b \in B$. 

Let $n \in \{3, 4, 6, 8\}$ and $\mathcal G$ be a Moufang $n$-gon. Let $(U, U_1, \dots, U_n)$ be the \textbf{root group sequence} associated with $\mathcal G$, as defined in \cite[(8.10)]{TW}. This means in particular that $U$ is a group, and that $U_i $ is a subgroup of $U$ for each $i$, that is called a \textbf{root group}. Moreover the product map $U_1 \times \dots \times U_n \to U$ is bijective, and for all $i<j$ we have $[U_i, U_j ] \leq U_{i+1} \dots U_{j-1}$. 

\begin{prop}\label{prop:Girth:Moufang}
The (possibly disconnected) bipartite coset graph $\Gamma_U(U_1, U_n)$ is the subgraph of the generalized $n$-gon $\mathcal G$ spanned by all the edges opposite the unique edge fixed by $U$. In particular the girth of $\Gamma_U(U_1, U_n)$ is~$2n$. 
\end{prop}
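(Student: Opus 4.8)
The plan is to realize the whole situation inside $\mathcal{G}$ and to reduce the girth computation to the defining property of a generalized $n$-gon. Following \cite{TW}, the root group sequence $(U, U_1, \dots, U_n)$ is realized inside $\mathrm{Aut}(\mathcal{G})$ as follows: one fixes an apartment $\Sigma$ (an ordinary $n$-gon, i.e.\ a circuit of length $2n$ in the incidence graph of $\mathcal{G}$) and the edge (chamber) $R$ of $\Sigma$ fixed by $U$; then $U$ acts simply transitively on the set of edges opposite $R$, the root group $U_i$ being the pointwise stabiliser of the $i$-th root on the positive side. First I would record the two facts I need from this description: (a) $U$ fixes a unique edge $R$ and permutes the edges opposite $R$ simply transitively; (b) writing $R' = \{x, y\}$ for the edge of $\Sigma$ opposite $R$, the two extreme root groups are, after relabelling, $U_1 = \mathrm{Stab}_U(x)$ and $U_n = \mathrm{Stab}_U(y)$. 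Note that (b) is consistent with the product decomposition $U_1 \times \dots \times U_n \to U$: since $\mathrm{Stab}_U(R') = \mathrm{Stab}_U(x) \cap \mathrm{Stab}_U(y) = U_1 \cap U_n$ and the product map forces $U_1 \cap U_n = \{e\}$, the stabiliser of $R'$ is trivial, exactly as it must be by the simple transitivity in (a).

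With (a) and (b) in hand, the isomorphism is immediate. The orbit map $u \mapsto u.x$ identifies $U/U_1$ with the $U$-orbit of $x$, and likewise $U/U_n$ with the orbit of $y$; these orbits are exactly the sets of vertices of each type occurring in an edge opposite $R$. Moreover $U/(U_1 \cap U_n) = U/\{e\}$ is identified, via $u \mapsto u.R'$, with the $U$-orbit of $R'$, which by (a) is precisely the set of all edges opposite $R$. Since $u.x$ and $u.y$ are the two endpoints of $u.R'$, the incidence relation (inclusion of a vertex in an edge) on the coset graph matches the incidence relation of $\mathcal{G}$. Hence $\Gamma_U(U_1, U_n)$ is isomorphic to the subgraph of $\mathcal{G}$ spanned by the edges opposite $R$, which is the first assertion. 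The graph may be disconnected precisely because $\langle U_1, U_n \rangle$ may be a proper subgroup of $U$, so that the $U$-orbit of $R'$ splits into several $\langle U_1, U_n \rangle$-orbits.

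It then remains to compute the girth. Since $\Gamma_U(U_1, U_n)$ has just been identified with a subgraph of the incidence graph of $\mathcal{G}$, and the incidence graph of a generalized $n$-gon has girth $2n$ by definition, every circuit of $\Gamma_U(U_1, U_n)$ has length at least $2n$. For the reverse inequality I must exhibit a circuit of length exactly $2n$, that is, an apartment of $\mathcal{G}$ all of whose $2n$ edges are opposite $R$. This is the heart of the matter and the step I expect to be the main obstacle: a single apartment through $R$ contains only one edge opposite $R$, so one cannot simply use $\Sigma$; instead one must produce an apartment lying entirely in general position with respect to $R$. I would obtain such an apartment from the thickness of $\mathcal{G}$ together with the Moufang condition, using root-group elements to push a standard apartment off the set of edges that fail to be opposite $R$. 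One sees such configurations explicitly in the two smallest cases, where the opposite-edge subgraph is the Pappus configuration (for $n=3$) and the Gray graph (for $n=4$), both of girth $2n$. Once an all-opposite apartment is produced, it is a $2n$-circuit contained in $\Gamma_U(U_1, U_n)$, giving girth $\le 2n$ and hence girth exactly $2n$, as required.
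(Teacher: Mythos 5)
Your identification of $\Gamma_U(U_1,U_n)$ with the subgraph of $\mathcal G$ spanned by the chambers opposite the chamber $R$ fixed by $U$ is correct, and it supplies strictly more detail than the paper, whose entire proof is the remark that the special case treated in \cite{LU93} applies in general. The two facts you isolate -- simple transitivity of $U$ on the chambers opposite $R$, and $U_1=\mathrm{Stab}_U(x)$, $U_n=\mathrm{Stab}_U(y)$ for the opposite chamber $R'=\{x,y\}$ of the base apartment -- are indeed the standard inputs from \cite{TW}, and the point where (b) is really used is the injectivity of the orbit maps $U/U_1\to U\cdot x$ and $U/U_n\to U\cdot y$; granting (a) and (b), your orbit-map argument and the lower bound $\mathrm{girth}\geq 2n$ are complete.

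The step you flag as "the heart of the matter" is, however, a genuine gap, and it is worse than an obstacle you simply have not yet overcome: no argument using only the Moufang condition can produce the required $2n$-circuit, because the assertion is false as stated. A $2n$-circuit in the opposition subgraph is exactly an apartment all of whose chambers are opposite $R$, and for the Fano plane ($n=3$, $U=\mathscr U_3(2)$ dihedral of order $8$) the opposition subgraph of the Heawood graph has $4+4$ vertices and $8$ edges, is $2$-regular and connected, hence is a single $8$-cycle; its girth is $8$, not $6$, and no triangle of the Fano plane lies entirely opposite a fixed flag. So your proposed strategy of pushing an apartment into general position cannot succeed in the stated generality; the existence of the $2n$-circuit must instead be extracted from the size of the root groups. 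Concretely, a $2n$-circuit amounts to a relation $a_1b_1\cdots a_nb_n=e$ with all $a_i\in U_1\setminus\{e\}$ and $b_i\in U_n\setminus\{e\}$; for the groups the paper actually uses, namely $\mathscr U_3(q)$ and $\mathscr U_4(q)$ with $q=p$ an odd prime, such a relation is produced by a short computation with the commutation relations of \cite[(16.2)]{TW} (for $n=3$ it reduces to $\sum_k i_k=\sum_k j_k=0$ plus one bilinear condition, solvable with all entries nonzero precisely when $q>2$). To make the proposition correct you must either add a hypothesis excluding the small cases or restrict the girth claim to the root group sequences for which such a relation exists.
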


\begin{proof}
The proof, formulated in a special case in  \cite[Theorem~3.1 and Proposition~3.2]{LU93}, applies in full generality. 
\end{proof}

We now focus on two specific examples. 

Let $q$ be a power of a prime $p$. We denote by $\mathscr U_3(q)$ a copy of the $p$-Sylow subgroup in $\mathrm{SL}_3(q)$ and by $\mathscr  U_4(q)$ a copy of the $p$-Sylow subgroup in $\mathrm{Sp}_4(q)$. In those groups, each of the root groups $U_1, \dots, U_n$ is isomorphic to the additive group of a field $k$ of order~$q$. Denoting by $x_i \colon k \to U_i$ an isomorphism, then the non-triv
ial commutation relations between the root subgroups of $\mathscr U_n(q)$ are as follows, for all $a, b \in k$ (see \cite[(16.2)]{TW}):
$$[x_1(a), x_3(b)] = x_2(ab)$$ 
if $n=3$, and 
\begin{align*}
[x_2(a), x_4(b)^{-1}]  &= x_3(2ab)\\
[x_1(a), x_4(b)^{-1}]  &= x_2(ab)x_3(ab^2) 
\end{align*}
if $n=4$ (the  root subgroups  not involved by those relations commute). It is easy to see from those commutation relations that the group $\mathscr U_3(q)$ is generated by $U_1$ and $U_3$, and similarly that if $q>2$, then $\mathscr U_4(q)$ is generated by $U_1$ and $U_4$. Hence the graphs $\Gamma_{\mathscr U_3(q)}(U_1, U_3)$ (resp.  $\Gamma_{\mathscr U_4(q)}(U_1, U_4)$
 with $q>2$) are connected. 
 The group $\mathscr U_3(q)$ is nothing but a Heisenberg group over $k$. 
 
 In the special case where $q$ is a prime, the groups $\mathscr U_3(q)$ and $\mathscr  U_4(q)$  admit the following presentations, where the symbol $[x_1, \dots, x_n]$ denotes the $n$-th commutator $[[[x_1, x_2], \dots], x_n]$. 

\begin{prop}\label{prop:PresentationsUnipotent}
Let $p$ be a prime. Then:
\begin{enumerate}[(i)]
\item $\mathscr U_3(p) \cong \langle a, b \mid a^p, b^p, [a, b, a], [a, b, b]\rangle.$ 

\item If $p>2$, then $\mathscr U_4(p) \cong \langle a, b \mid a^p, b^p, [a, b, a], [a, b, b, a], [a, b, b, b]\rangle.$
\end{enumerate}

\end{prop}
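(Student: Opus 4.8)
The plan is to prove each isomorphism by exhibiting a surjection from the abstractly presented group onto the corresponding unipotent group, and then checking that the presented group has order at most $|\mathscr U_n(p)| = p^n$; a comparison of orders then forces the surjection to be an isomorphism. Throughout, write $G$ for the group defined by the presentation on the right-hand side, set $c = [a, b]$, and in case (ii) set $d = [a, b, b] = [c, b]$.

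For the order bound I would first read off the consequences of the defining relators. In case (i), the relators $[a,b,a]$ and $[a,b,b]$ say precisely that $c$ commutes with both $a$ and $b$, hence is central; thus $G$ is nilpotent of class $\leq 2$ with $[G,G] = \langle c\rangle$. In case (ii), $[a,b,a]$ says $[c,a]=1$, while $[a,b,b,a]$ and $[a,b,b,b]$ say that $d = [c,b]$ commutes with $a$ and $b$, hence is central; so $[G,G] = \langle c, d\rangle$ is abelian, $[a,c]=1$, $[b,c]=d^{-1}$, and $G$ is nilpotent of class $\leq 3$. In both cases I would then collect an arbitrary word into the normal form $a^i b^j c^k$ (resp. $a^i b^j c^k d^l$) with exponents in $\{0, \dots, p-1\}$. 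The only points needing an argument are $c^p = 1$ and, in case (ii), $d^p = 1$. These follow from the elementary identity $[x^p, y] = [x,y]^p$, valid whenever $[x,y]$ commutes with $x$: taking $(x,y) = (a,b)$ and using $a^p = 1$ together with $[c,a]=1$ gives $c^p = [a^p, b] = 1$, and taking $(x,y) = (b, c)$ and using $b^p = 1$ together with the centrality of $d = [b,c]^{-1}$ gives $d^{-p} = [b^p, c] = 1$. Hence $|G| \leq p^3$ (resp. $p^4$).

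For the surjection I would map $a \mapsto x_1(1)$ and $b \mapsto x_n(1)$, where $x_i \colon \mathbf F_p \to U_i$ are the parametrizations of the root groups, and verify that the defining relators are killed using the commutation relations recalled before the statement. Since each $U_i$ is an elementary abelian $p$-group, $x_i(1)^p = x_i(p) = 1$, so $a^p, b^p \mapsto 1$. In case (i) the image of $c$ lies in the central root group $U_2$, so $[a,b,a]$ and $[a,b,b]$ map to $1$; in case (ii) the image of $c$ lies in $U_2 U_3$, its commutator with $x_1(1)$ is trivial (as $[U_2, U_1] = 1$ and $U_3$ is central), so $[a,b,a]\mapsto 1$, while the image of $d$ lies in the central root group $U_3$, whence $[a,b,b,a]$ and $[a,b,b,b]$ map to $1$. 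This produces homomorphisms $G \to \mathscr U_3(p)$ and $G \to \mathscr U_4(p)$, surjective because $x_1(1)$ and $x_n(1)$ generate $U_1$ and $U_n$, which generate the target. This is where the hypothesis $p > 2$ enters in case (ii), both for the generation statement recalled above and so that $[x_2(1), x_4(1)] = x_3(\pm 2) \neq 1$, keeping the nilpotency class equal to $3$. Comparing with the order bound, both maps are isomorphisms.

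I expect the only delicate point to be the rank-$4$ order bound: one must be sure that collection really terminates in the claimed normal form and that no further relations among $c$ and $d$ are forced, i.e. that $\langle c, d\rangle$ is genuinely of order $p^2$ rather than collapsing. The clean way to guarantee this is precisely the surjection onto $\mathscr U_4(p)$, in which $c \mapsto x_2(\pm 1) x_3(\ast)$ and $d \mapsto x_3(\pm 2)$ have orders exactly $p$ and generate a subgroup of order $p^2$. So in fact the two halves of the argument reinforce each other, and no lower bound on $|G|$ is needed beyond the surjectivity itself.
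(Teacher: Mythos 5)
Your proposal is correct and follows essentially the same strategy as the paper: bound the order of the presented group above by $p^{4}$ using the centrality of $[a,b,b]$ and the vanishing of its $p$-th power, then exhibit a surjection onto $\mathscr U_4(p)$ by sending $a, b$ to generators of the root groups $U_1, U_4$ (which generate since $p>2$), and compare orders. The only cosmetic difference is that for the upper bound the paper quotients by the central element $z=[a,b,b]$ and identifies the quotient with the Heisenberg group of order $p^3$ from part (i), whereas you carry out the collection into the normal form $a^i b^j c^k d^l$ directly; both are valid.
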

\begin{proof}
That $\langle a, b \mid a^p, b^p, [a, b, a], [a, b, b]\rangle$ is a presentation of the Heisenberg group over $\mathbf F_p$ is well known and easy to see. 

Let $U = \langle a, b \mid a^p, b^p, [a, b, a], [a, b, b, a], [a, b, b, b]\rangle.$ Observe that $z = [a, b, b]$ commutes with $a$ and $b$, and is thus a central element of $U$. The quotient $U /\langle z \rangle$ is isomorphic to $ \langle a, b \mid a^p, b^p, [a, b, a], [a, b, b]\rangle \cong \mathscr U_3(p)$. Moreover, since $z$ is central, it follows that $z^n = [a, b, b^n]$ for all $n$. In particular $z^p=1$. We infer that $|U| \leq p^4$. 

On the other hand, the assignment $a \mapsto x_1(1)$ and $b \mapsto x_4(-1)$ extends to a homomorphism $\varphi \colon U \to \mathscr U_4(p)$. Since $p>2$, it follows from the commutation relations described above that  $\mathscr U_4(p)$ is generated by $U_1$ and $U_4$, so that $\varphi$ is surjective. Since $|\mathscr U_4(p)| = p^4$, we deduce from the previous paragraph that $\varphi$ is an isomorphism. 
\end{proof}

We next compute the representation angles. 

\begin{prop}\label{prop:AngleUnipotent}
Let $p$ be a prime. Then:
\begin{enumerate}[(i)]
\item $\varepsilon_{\mathscr  U_3(p)}(U_1, U_3)=1/\sqrt p$. 

\item If $p>2$, then $\varepsilon_{\mathscr  U_4(p)}(U_1, U_4)=\sqrt{2/p}$. 
\end{enumerate}
In particular, the graphs $\Gamma_{\mathscr U_3(p)}(U_1, U_3)$ and 
$\Gamma_{\mathscr U_4(p)}(U_1, U_4)$ are Ramanujan graphs. 
\end{prop}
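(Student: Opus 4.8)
The plan is to dispatch (i) by citing the known Heisenberg computation, and to prove (ii) by an explicit analysis of the irreducible representations of $\mathscr U_4(p)$. For (i), the isomorphism of Proposition~\ref{prop:PresentationsUnipotent}(i) identifies $U_1$ and $U_3$ with the cyclic subgroups $\langle a\rangle$ and $\langle b\rangle$ in the presentation $\langle a,b\mid a^p,b^p,[a,b,a],[a,b,b]\rangle$ of the Heisenberg group over $\mathbf F_p$. Hence $\varepsilon_{\mathscr U_3(p)}(U_1,U_3)=\varepsilon_X(\langle a\rangle,\langle b\rangle)=1/\sqrt p$ is exactly the content of Example~\ref{ex:2}.

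For (ii), write $c=[a,b]$ and $d=[a,b,b]$, so that by Proposition~\ref{prop:PresentationsUnipotent}(ii) we have $[c,a]=1$, $[c,b]=d$, and $d$ is central, with $\langle a\rangle=U_1$ and $\langle b\rangle=U_4$. First I would observe that $N=\langle a,c,d\rangle$ is an abelian normal subgroup of index $p$ on which conjugation by $b$ acts by $a\mapsto ac$, $c\mapsto cd$, $d\mapsto d$; inductively this gives $b^{-j}ab^{j}=a\,c^{\,j}d^{\binom{j}{2}}$. Since $[\mathscr U_4(p):N]=p$ is prime, every non-linear irreducible representation is induced from a character $\lambda$ of $N$, say $\lambda(a)=\zeta^{A}$, $\lambda(c)=\zeta^{C}$, $\lambda(d)=\zeta^{D}$ with $\zeta=e^{2\pi i/p}$, and $\pi=\mathrm{Ind}_N^{\mathscr U_4(p)}\lambda$ is $p$-dimensional and irreducible precisely when $(C,D)\neq(0,0)$ (otherwise $\lambda$ is $\langle b\rangle$-invariant and $\pi$ splits into linear characters). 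In the coset basis $e_j=\pi(b)^j v$, $0\le j<p$, the operator $\pi(b)$ is the $p$-cycle $e_j\mapsto e_{j+1}$, so $V^{U_4}$ is the line spanned by $w=\sum_j e_j$; and the formula for $b^{-j}ab^{j}$ gives $\pi(a)e_j=\zeta^{A+Cj+D\binom{j}{2}}e_j$, so $V^{U_1}=\mathrm{span}\{e_j:A+Cj+D\binom{j}{2}\equiv 0\ (\mathrm{mod}\ p)\}$.

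With both fixed spaces in hand, a direct Cauchy--Schwarz estimate yields $\varepsilon_{\mathscr U_4(p)}(U_1,U_4;\pi)=\sqrt{|S|/p}$, where $S\subseteq\mathbf F_p$ is the zero set of $f(j)=A+Cj+D\binom{j}{2}$. Because $p$ is odd, $f$ is a genuine quadratic when $D\neq 0$ (whence $|S|\le 2$) and has degree $\le 1$ when $D=0$ (whence $|S|\le 1$); the non-trivial linear characters contribute $0$ since $U_1$ and $U_4$ generate the group. The value $|S|=2$ is attained, e.g.\ by $A=C=0$ and $D\neq 0$, where $f(j)=D\binom{j}{2}$ vanishes exactly at $j=0,1$, and this $\lambda$ does give an irreducible $\pi$. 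Taking the supremum over non-trivial irreducibles gives $\varepsilon_{\mathscr U_4(p)}(U_1,U_4)=\sqrt{2/p}$. The Ramanujan assertion then follows from Corollary~\ref{cor:Ramanujan} with $k=p$, since both $1/\sqrt p$ and $\sqrt{2/p}$ are at most $2\sqrt{p-1}/p$ for $p\ge 2$ (squaring reduces each inequality to $p\ge 2$).

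The main obstacle is the representation-theoretic bookkeeping in (ii): pinning down the conjugation action of $b$ on $N$ so that $\pi(a)$ comes out diagonal with the precise quadratic exponent $A+Cj+D\binom{j}{2}$, and verifying that the induced representations exhaust all non-linear irreducibles (a count against $\sum_i d_i^2=p^4$ with $p^2$ linear characters gives exactly $p^2-1$ representations of degree $p$). Once this is set up, the angle computation reduces to the elementary fact that a quadratic over $\mathbf F_p$ has at most two roots.
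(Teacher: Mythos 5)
Your proposal is correct and follows essentially the same route as the paper: part (i) is delegated to the Ershov--Jaikin-Zapirain computation for the Heisenberg group, and part (ii) realizes the non-linear irreducibles of $\mathscr U_4(p)$ as inductions from the abelian normal subgroup $\langle a,[a,b],[a,b,b]\rangle$ of index $p$, diagonalizes $\pi(a)$ with exponent $A+Cj+D\binom{j}{2}$, and bounds the dimension of $V^{U_1}$ by the number of roots of a quadratic over $\mathbf F_p$ before applying Cauchy--Schwarz. The only cosmetic difference is that you invoke Clifford theory for the prime-index normal subgroup where the paper uses Mackey's irreducibility criterion plus the same $p^4=p^2+(p^2-1)p^2$ count; both are sound.
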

\begin{proof}
For the first assertion, see \cite[\S 4]{EJ}. We focus on $\mathscr  U_4(p)$.

Let $a =  x_1(1)$ and $b = x_4(-1)$. Let $W$ be the subgroup generated $U_1 \cup U_2 \cup U_3$. Thus $W$ is abelian of order~$p^3$ and $\mathscr  U_4(p)$ is a semi-direct product $W \rtimes U_4$. We view $W$ as a vector space over~$\mathbf F_p$. In view of Proposition~\ref{prop:PresentationsUnipotent}, we see that the action of $b$ on $W$  is represented by the matrix
$$B =  \left(
\begin{array}{ccc}
1 & 1 & 0\\
0 & 1 & 1\\
0 & 0 & 1
\end{array}
\right)$$
with respect to the basis $([a, b, b], [a, b], a)$. 

We  claim that every irreducible representation of $U = \mathscr  U_4(p)$ is either of degree $1$, or is a representation of degree $p$ induced by a	 degree~$1$ representation of $W$. Indeed, 

Let $\chi$ be a character of $W$ and $\pi = \mathrm{Ind}_W^{U}(\chi)$ be the representation of $U$  induced by $\chi$. The representation $\pi$ can be realized as follows. Let $(e_0, e_1, \dots, e_{p-1})$ denote a basis of $\mathbf C^p$. Then $\pi(b)e_n = e_{n+1}$ (where the indices are taken modulo $p$), and $\pi(a) e_n = \chi(b^{-n}ab^n)e_n$. 

According to \cite[Corollary to Proposition~23]{Serre}, the representation $\pi = \mathrm{Ind}_W^{U}(\chi)$ is irreducible if and only if the caracters $\chi^{b^n} \colon W \to \mathbf C^* : w \mapsto \chi(b^{-n} w b^n)$ are pairwise distinct for distinct values of $n  \mod p$. In particular, if $\chi([a, b]) \neq 1$ or $\chi([a, b, b]) \neq 1$, then $\pi$ is irreducible. The number  characters of $W$ satisfying that condition is $p^3 - p$. Those characters from $p^2-1$ orbit under the action of $\langle b \rangle$ by automorphisms. Thus we obtain $p^2-1$ pairwise non-isomorphic irreducible representations of $U$ in this way. Since the abelianization of  $U$ has order~$p^2$, it follows that $U$ has also $p^2$ representations of degree $1$. Since $p^4 = p^2+ (p^2-1)p^2$, it follows that every irreducible representation of $U$ is either of degree~$1$, or is of the form $\pi = \mathrm{Ind}_W^{U}(\chi)$ for some character $\chi$ of $W$ such that $\chi([a, b]) \neq 1$ or $\chi([a, b, b]) \neq 1$. Therefore, in order to finish the proof, we may fix such a representation $\pi$ and show that $\varepsilon_U(U_1, U_4; \pi) \leq \sqrt{2/p}$. 

The fixed-point space of $U_4 = \langle b \rangle $ under the representation $\pi$ is the one-dimensional subspace spanned by $\sum_{n=0}^{p_1} e_n$. Since the action of $U_1 = \langle a \rangle$ under $\pi$ is diagonal in the basis $(e_0, \dots, e_{p-1})$, the fixed-point space of $U_1$ is spanned by those  $e_n$ which are fixed by $a$. We have $\pi(a)e_n = \chi(b^{-n} a b^n) e_n$. Moreover $\chi(b^{-n} a b^n)= \chi(a [a, b]^n [a, b, b]^{n(n-1)/2}) = \chi(a) \chi([a, b])^n \chi([a, b, b])^{n(n-1)/2}$. We know that $\chi(a)$, $\chi([a, b])$ and $\chi([a, b, b])$ are three $p^\mathrm{th}$ roots of unity, and moreover $\chi([a, b])$ and $\chi([a, b, b])$ are not both equal to~$1$. The number of values of $n \mod p$ such that $\chi(b^{-n} a b^n)=1$ is thus the number of solutions of a quadratic equation in the prime field of order~$p$. Hence the fixed point space of $a$ has dimension~$\leq 2$. If its dimension is~$0$ we have $\varepsilon_U(U_1, U_4; \pi)  =0$.  If its dimension is $1$, it is spanned by some $e_n$ and we obtain
$\varepsilon_U(U_1, U_4; \pi)  = \frac{|\langle e_n, \sum_k e_k\rangle |}{\| e_n\| \|\sum_k e_k\|} = 1/\sqrt p$. If its dimension is $2$, it is spanned by a pair $e_m, e_n$ and we obtain
\begin{align*}
\varepsilon_U(U_1, U_4; \pi)   = &  \sup\{\frac{|\langle \lambda e_m +\mu e_n, \sum_k e_k\rangle |}{\|  \lambda e_m +\mu e_n\| \|\sum_k e_k\|}  \mid \lambda, \mu \in \mathbf C, \ (\lambda, \mu) \neq (0, 0)\}\\
= & \sup\{\frac{| \lambda +\mu  |}{ \sqrt{|\lambda|^2 + |\mu^2|}\sqrt p}  \mid \lambda, \mu \in \mathbf C , \ (\lambda, \mu) \neq (0, 0)\}\\
= & \sqrt{2/ p} 
\end{align*}
by the Cauchy--Schwarz inequality. 
Therefore $\varepsilon_U(U_1, U_4) = \sqrt{2/ p}$.

The last assertion now follows from Corollary~\ref{cor:Ramanujan}. 
\end{proof}

\begin{rmk}
For $p$ odd and $q$ any power of $p$, the estimate  
$$\varepsilon_{\mathscr  U_4(q)}(U_1, U_4) \leq \sqrt{\frac{ 1 +\sqrt p } p }$$ 
can be extracted from the work of Ershov--Rall \cite{ER18}. 
\end{rmk}

\subsection{Kac--Moody--Steinberg groups of rank~$3$}\label{sec:KMS-presentations}

We now show that all the trivalent triangle groups considered in Section~\ref{sec:TriangleGroups}, all of whose vertex groups are $3$-groups, belong to a broad infinite family that contains numerous examples of infinite hyperbolic Kazhdan groups.

Let  $G(\mathcal T)$ be a triangle of groups with all edge groups cyclic of order~$p$, and the vertex group $X_i$ isomorphic to   $\mathscr U_{r_i}(p)$  for $r_i \in \{2, 3,4\}$, where $i = 0, 1, 2$ and where $\mathscr U_{2}(p)$ is defined as the direct product $C_p \times C_p$. Let $G = \widehat{G(\mathcal T)}$ be its fundamental group. Thus $G$ is a generalized $p$-fold triangle group. In view of   Theorem~\ref{thm:NPC} and Proposition~\ref{prop:Girth:Moufang}, the half girth type of  $G(\mathcal T)$ is $(r_0, r_1, r_2)$. We denote by $\mathcal C$ the collection of all non-positively curved generalized triangle groups obtained in this way.

Corollary~\ref{cor:isomorphism} allows us to classify the groups in $\mathcal C$ up to isomorphism. In order to facilitate the statement, we introduce additional notation. Let us first list $10$ Dynkin diagrams of rank~$3$, see Figure~\ref{fig:Dynkin}. The notation for the diagrams of affine type, namely $ \widetilde{A_2}$, $\widetilde{B_2}$, $\widetilde{C_2}$ and $\widetilde{BC_2}$, is standard. The notation for the $6$ other diagrams is inspired by Kac--Moody theory;  the Weyl group corresponding to each of those $6$ diagrams is a hyperbolic triangle Fuchsian group. 

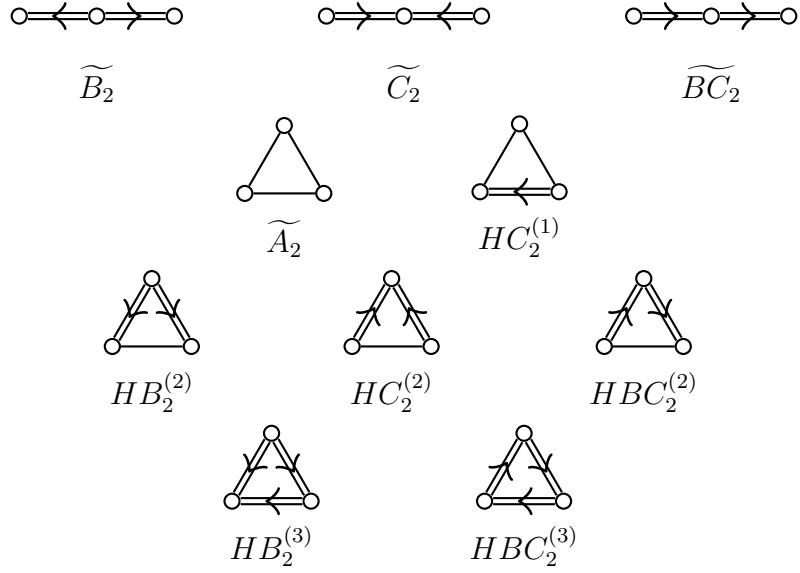
\begin{figure}[h]
\begin{tikzpicture}[scale=.6]
\node[circ,thick] (l) at (-1.7, 0) {};
\node[circ,thick] (m) at (0, 0) {};
\node[circ,thick] (r) at (1.7, 0) {};
\path (l) edge[double distance=1.5pt,thick] (m);
\path (l) edge[-<-,color=white] (m);
\path (m) edge[double distance=1.5pt,thick] (r);
\path (m) edge[->-,color=white] (r);
\node at (0,-1.5) {$\widetilde{B_2}$};
\end{tikzpicture}
\hspace{1.5cm}
\begin{tikzpicture}[scale=.6]
\node[circ,thick] (l) at (-1.7, 0) {};
\node[circ,thick] (m) at (0, 0) {};
\node[circ,thick] (r) at (1.7, 0) {};
\path (l) edge[double distance=1.5pt,thick] (m);
\path (l) edge[->-,color=white] (m);
\path (m) edge[double distance=1.5pt,thick] (r);
\path (m) edge[-<-,color=white] (r);
\node at (0,-1.5) {$\widetilde{C_2}$};
\end{tikzpicture}
\hspace{1.5cm}
\begin{tikzpicture}[scale=.6]
\node[circ,thick] (l) at (-1.7, 0) {};
\node[circ,thick] (m) at (0, 0) {};
\node[circ,thick] (r) at (1.7, 0) {};
\path (l) edge[double distance=1.5pt,thick] (m);
\path (l) edge[->-,color=white] (m);
\path (m) edge[double distance=1.5pt,thick] (r);
\path (m) edge[->-,color=white] (r);
\node at (0,-1.5) {$\widetilde{BC_2}$};
\end{tikzpicture}

\begin{tikzpicture}[scale=.6]
\node[circ,thick] (t) at (90:1) {};
\node[circ,thick] (l) at (210:1) {};
\node[circ,thick] (r) at (330:1) {};
\draw (t) edge[thick] (l);
\draw (t) edge[thick] (r);
\draw (l) edge[thick] (r);
\node at (0,-1.5) {$ \widetilde{A_2}$};
\end{tikzpicture}
\hspace{1.5cm}
\begin{tikzpicture}[scale=.6]
\node[circ,thick] (t) at (90:1) {};
\node[circ,thick] (l) at (210:1) {};
\node[circ,thick] (r) at (330:1) {};
\draw (t) edge[thick] (l);
\draw (t) edge[thick] (r);
\path (l) edge[double distance=1.5pt,thick] (r);
\path (l) edge[-<-,color=white] (r);
\node at (0,-1.5) {$HC_2^{(1)}$};
\end{tikzpicture}

\begin{tikzpicture}[scale=.6]
\node[circ,thick] (t) at (90:1) {};
\node[circ,thick] (l) at (210:1) {};
\node[circ,thick] (r) at (330:1) {};
\draw (t) edge[double distance=1.5pt,thick] (l);
\draw (t) edge[->-,color=white] (l);
\draw (t) edge[double distance=1.5pt,thick] (r);
\draw (t) edge[->-,color=white] (r);
\path (l) edge[thick] (r);
\node at (0,-1.5) {$HB_2^{(2)}$};
\end{tikzpicture}
\hspace{1.5cm}
\begin{tikzpicture}[scale=.6]
\node[circ,thick] (t) at (90:1) {};
\node[circ,thick] (l) at (210:1) {};
\node[circ,thick] (r) at (330:1) {};
\draw (t) edge[double distance=1.5pt,thick] (l);
\draw (t) edge[-<-,color=white] (l);
\draw (t) edge[double distance=1.5pt,thick] (r);
\draw (t) edge[-<-,color=white] (r);
\path (l) edge[thick] (r);
\node at (0,-1.5) {$HC_2^{(2)}$};
\end{tikzpicture}
\hspace{1.5cm}
\begin{tikzpicture}[scale=.6]
\node[circ,thick] (t) at (90:1) {};
\node[circ,thick] (l) at (210:1) {};
\node[circ,thick] (r) at (330:1) {};
\draw (t) edge[double distance=1.5pt,thick] (l);
\draw (t) edge[double distance=1.5pt,thick] (r);
\draw (t) edge[->-,color=white] (r);
\draw (t) edge[-<-,color=white] (l);
\path (l) edge[thick] (r);
\node at (0,-1.5) {$HBC_2^{(2)}$};
\end{tikzpicture}

\begin{tikzpicture}[scale=.6]
\node[circ,thick] (t) at (90:1) {};
\node[circ,thick] (l) at (210:1) {};
\node[circ,thick] (r) at (330:1) {};
\draw (t) edge[double distance=1.5pt,thick] (l);
\draw (t) edge[->-,color=white] (l);
\draw (t) edge[double distance=1.5pt,thick] (r);
\draw (t) edge[->-,color=white] (r);
\path (l) edge[double distance=1.5pt,thick] (r);
\path (l) edge[-<-,color=white] (r);
\node at (0,-1.5) {$HB_2^{(3)}$};
\end{tikzpicture}
\hspace{1.5cm}
\begin{tikzpicture}[scale=.6]
\node[circ,thick] (t) at (90:1) {};
\node[circ,thick] (l) at (210:1) {};
\node[circ,thick] (r) at (330:1) {};
\draw (t) edge[double distance=1.5pt,thick] (l);
\draw (t) edge[-<-,color=white] (l);
\draw (t) edge[double distance=1.5pt,thick] (r);
\draw (t) edge[->-,color=white] (r);
\path (l) edge[double distance=1.5pt,thick] (r);
\path (l) edge[-<-,color=white] (r);
\node at (0,-1.5) {$HBC_2^{(3)}$};
\end{tikzpicture}
\caption{Ten Dynkin diagrams of rank~$3$}
\label{fig:Dynkin}
\end{figure}

To each of those Dynkin diagrams and to every odd prime  $p$, we associate a finitely presented group, as follows.

\begin{align*}
\mathscr G_{\widetilde{B_2}}(p) = \langle a, b, c    \mid & a^p, b^p, c^p, [a, b], \\
 & [c, b, c], [c, b, b, c], [c, b, b, b ], [c, a, c], [c, a, a, c], [c, a, a, a]\rangle.
\end{align*}
\begin{align*}
\mathscr G_{\widetilde{C_2}}(p) = \langle a, b, c    \mid & a^p, b^p, c^p, [a, b],  \\
 & [b, c, b], [b, c, c, b], [b, c, c, c ], [a, c, a], [a, c, c, a], [a, c, c, c]\rangle.
\end{align*}
\begin{align*}
\mathscr G_{\widetilde{BC_2}}(p) = \langle a, b, c    \mid & a^p, b^p, c^p, [a, b], \\
 & [b, c, b], [b, c, c, b], [b, c, c, c ], [c, a, c], [c, a, a, c], [c, a, a, a]\rangle.
\end{align*}
\begin{align*}
\mathscr G_{ \widetilde{A_2}}(p) = \langle a, b, c  \mid  & a^p, b^p, c^p, [a, b, a], [a, b, b], \\
& [b, c, b], [b, c, c], [a, c, a], [a, c, c]\rangle.
\end{align*}
\begin{align*}
\mathscr G_{HC_2^{(1)}}(p) = \langle a, b, c  \mid  & a^p, b^p, c^p, [a, b, a], [a, b, b], \\
& [b, c, b], [b, c, c],  [a, c, a], [a, c, c, a], [a, c, c, c]\rangle.
\end{align*}
\begin{align*}
\mathscr G_{HB_2^{(2)}}(p) = \langle a, b, c    \mid & a^p, b^p, c^p, [a, b, a], [a, b, b], \\
 & [c, b, c], [c, b, b, c], [c, b, b, b ], [c, a, c], [c, a, a, c], [c, a, a, a]\rangle.
\end{align*}
\begin{align*}
\mathscr G_{HC_2^{(2)}}(p) = \langle a, b, c    \mid & a^p, b^p, c^p, [a, b, a], [a, b, b], \\
 & [b, c, b], [b, c, c, b], [b, c, c, c ], [a, c, a], [a, c, c, a], [a, c, c, c]\rangle.
\end{align*}
\begin{align*}
\mathscr G_{HBC_2^{(2)}}(p) = \langle a, b, c    \mid & a^p, b^p, c^p, [a, b, a], [a, b, b], \\
 & [b, c, b], [b, c, c, b], [b, c, c, c ], [c, a, c], [c, a, a, c], [c, a, a, a]\rangle.
\end{align*}
\begin{align*}
\mathscr G_{HB_2^{(3)}}(p) = \langle a, b, c    \mid & a^p, b^p, c^p, [a, b, a], [a, b, b, a], [a, b, b, b ], \\
 & [b, c, b], [b, c, c, b], [b, c, c, c ], [a, c, a], [a, c, c, a], [a, c, c, c]\rangle.
\end{align*}
\begin{align*}
\mathscr G_{HBC_2^{(3)}}(p) = \langle a, b, c    \mid & a^p, b^p, c^p,  [a, b, a], [a, b, b, a], [a, b, b, b ], \\
  & [b, c, b], [b, c, c, b], [b, c, c, c ], [c, a, c], [c, a, a, c], [c, a, a, a]\rangle.
\end{align*}

\begin{rmk}\label{rem:KMS-over-GF3}
Setting $p = 3$, we obtain $10$ trivalent triangle groups that belong to the sample from Section~\ref{sec:TriangleGroups} (see the presentations in Section~\ref{sec:PresentationList}). The isomorphisms are as follows:
$$\mathscr G_{\widetilde{B_2}}(3) \cong G^{6, 54, 54}_2,
\hspace{.75cm}
\mathscr G_{\widetilde{C_2}}(3) \cong G^{6, 54, 54}_8,
\hspace{.75cm}
\mathscr G_{\widetilde{BC_2}}(3)\cong G^{6, 54, 54}_0,$$
$$\mathscr G_{ \widetilde{A_2}}(3) \cong G^{18, 18, 18}_0,
\hspace{.75cm}
\mathscr G_{HC_2^{(1)}}(3) \cong G^{18, 18, 54}_0,$$
$$
\mathscr G_{HB_2^{(2)}}(3) \cong G^{18, 54, 54}_2,
\hspace{.75cm}
\mathscr G_{HC_2^{(2)}}(3) \cong G^{18, 54, 54}_8,
\hspace{.75cm}
\mathscr G_{HBC_2^{(2)}}(3) \cong G^{18, 54, 54}_0,$$
$$\mathscr G_{HB_2^{(3)}}(3) \cong G^{54, 54, 54}_2 
\hspace{.75cm} \text{and} \hspace{.75cm}
\mathscr G_{HBC_2^{(3)}}(3) \cong G^{54, 54, 54}_0.$$
\end{rmk}

The following result implies in particular that for a fixed $p$, the $10$ groups above are pairwise non-isomorphic. 

\begin{prop}\label{prop:Classif_KMS}
Let $p, p'$ be odd primes. Let   $G, G' \in \mathcal C$ be $p$- and $p'$-fold triangle groups of type $(r_0, r_1, r_2)$ and $(r'_0, r'_1, r'_2)$ respectively,  with $ r_0 \leq r_1 \leq r_2 $ and  $r'_0 \leq r'_1 \leq r'_2$. 

If $G \cong G'$, then $p=p'$ and $(r_0, r_1, r_2) = (r'_0, r'_1, r'_2)$. Moreover, exactly one of the following assertions holds:
\begin{itemize}
\item  $(r_0, r_1, r_2) = (2, 4, 4)$,  and 
$G \cong \mathscr G_{\widetilde{B_2}}(p)$ or $G \cong  \mathscr G_{\widetilde{C_2}}(p)$ or $G \cong \mathscr G_{\widetilde{BC_2}}(p)$.

\item  $(r_0, r_1, r_2) = (3, 3, 3)$ and 
$G \cong \mathscr G_{\widetilde{A_2}}(p)$.

\item  $(r_0, r_1, r_2) = (3, 3, 4)$ and 
$G \cong \mathscr G_{HC_2^{(1)}}(p)$.

\item  $(r_0, r_1, r_2) = (3, 4, 4)$,  and 
$G \cong \mathscr G_{HC_2^{(2)}}(p)$ or $G \cong  \mathscr G_{HB_2^{(2)}}(p)$ or $G \cong \mathscr G_{HBC_2^{(2)}}(p)$.

\item  $(r_0, r_1, r_2) = (4, 4, 4)$, and 
$G \cong \mathscr G_{HB_2^{(3)}}(p)$ or $G \cong \mathscr G_{HBC_2^{(3)}}(p)$.
\end{itemize}

\end{prop}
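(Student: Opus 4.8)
The plan is to classify the members of $\mathcal C$ up to \emph{equivalence} in the sense of Section~\ref{sec:Isom-triangle-groups} and then appeal to Corollary~\ref{cor:isomorphism}. Each $\mathscr U_r(p)$ with $r\in\{2,3,4\}$ is non-cyclic (it has order $p^r$, and is non-abelian for $r\ge 3$), so Corollary~\ref{cor:isomorphism} applies verbatim: an isomorphism $G\cong G'$ forces $p=p'$ and, after reordering by some $\sigma\in\Sym(\{0,1,2\})$, conjugates the edge groups of $G$ onto those of $G'$, hence carries each vertex group $\mathscr U_{r_i}(p)$ onto an isomorphic one. Since $\mathscr U_2(p),\mathscr U_3(p),\mathscr U_4(p)$ have pairwise distinct orders $p^2,p^3,p^4$, this gives $(r_0,r_1,r_2)=(r'_0,r'_1,r'_2)$ once both triples are sorted increasingly. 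A direct arithmetic check shows that the only triples with $r_i\in\{2,3,4\}$ obeying the non-positive curvature inequality $1/r_0+1/r_1+1/r_2\le 1$ are the five listed in the statement. It therefore remains, for each such type, to count the equivalence classes of triangles in $\mathcal C$ and to identify each with one of the named groups.

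I would run the enumeration exactly as in the proof of Proposition~\ref{prop:enumeration}, the only new input being the relevant automorphisms of the vertex groups. Here the edge subgroups of $\mathscr U_r(p)$ are the root groups $U_1,U_r$ (the two factors when $r=2$), whose coset graph has girth $2r$ by Proposition~\ref{prop:Girth:Moufang}; this is what ties the half-girth $r_i$ to the vertex-group type. Two features govern $\Aut(\mathscr U_r(p))_{\{U_1,U_r\}}$. First, for every $r$ the independent rescaling $a\mapsto a^s,\,b\mapsto b^t$ extends to an automorphism (a short commutator computation in the class-$\le 3$ group, or the action of the diagonal torus), realizing the \emph{full} group $\mathbf F_p^*$ on each slot independently; consequently every gluing constant in $\mathbf F_p^*$ can be absorbed, so the equivalence class of a triangle is pinned down by purely discrete data. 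Second — and this is the crucial asymmetry — for $r\in\{2,3\}$ the flip $U_1\leftrightarrow U_r$ is realized by an automorphism (explicitly $a\leftrightarrow b$, using Proposition~\ref{prop:PresentationsUnipotent}(i) for the Heisenberg case), whereas for $r=4$ it is \emph{not}: one computes $|\cent_{\mathscr U_4(p)}(a)|=p^3$ while $|\cent_{\mathscr U_4(p)}(b)|=p^2$, so no automorphism can interchange $U_1$ and $U_4$. Thus the two slots at a $\mathscr U_2$- or $\mathscr U_3$-vertex are interchangeable, while at a $\mathscr U_4$-vertex the short root group $U_1$ and the long root group $U_4$ are distinguishable.

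It follows that an equivalence class is specified by choosing, at each $\mathscr U_4$-vertex, which of its two incident edges is attached to the short slot $U_1$, all remaining choices being absorbed by vertex automorphisms; one then quotients by the permutations of $\{0,1,2\}$ preserving the multiset of vertex-group types. Carrying out this finite count type by type gives: for $(3,3,3)$ (three $\mathscr U_3$-vertices) no discrete choice, hence a single class; for $(3,3,4)$ the single binary choice at the $\mathscr U_4$-vertex is undone by the transposition of the two $\mathscr U_3$-vertices, again one class; for $(3,4,4)$ and $(2,4,4)$ the four choices at the two $\mathscr U_4$-vertices fall into three orbits under the transposition swapping them (both "outer" edges short; the shared edge short at both ends; or short at exactly one end); and for $(4,4,4)$ the eight choices fall into two $\Sym(\{0,1,2\})$-orbits, distinguished by whether the short edges chosen at the three vertices form a cyclically coherent triple or not. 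These counts $1,1,3,3,2$ match the numbers of groups listed for the five types.

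Finally I would match each orbit with the corresponding presentation of Section~\ref{sec:KMS-presentations}, reading off from a relation $[x,y,x]$ (respectively $[x,y,y,x],[x,y,y,y]$) that the generator $x$ occupies the short slot (respectively $y$ the long slot) at the vertex $\langle x,y\rangle$. For instance, in $\mathscr G_{HBC_2^{(3)}}(p)$ the short edges are $a,b,c$, forming a coherent cycle, whereas in $\mathscr G_{HB_2^{(3)}}(p)$ edge $a$ is short at both its endpoints and edge $c$ at neither, exhibiting the two $(4,4,4)$-orbits; the analogous reading distinguishes $\mathscr G_{\widetilde{B_2}}, \mathscr G_{\widetilde{C_2}}, \mathscr G_{\widetilde{BC_2}}$ in type $(2,4,4)$ and $\mathscr G_{HB_2^{(2)}}, \mathscr G_{HC_2^{(2)}}, \mathscr G_{HBC_2^{(2)}}$ in type $(3,4,4)$. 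The main obstacle is precisely the rigidity of the $\mathscr U_4$-vertices: everything hinges on the fact that $U_1$ and $U_4$ cannot be interchanged — for which the centralizer-order invariant $p^3\neq p^2$ is the clean argument — together with the verification that no other automorphisms create unexpected identifications, which is exactly what the independent-rescaling computation rules out.
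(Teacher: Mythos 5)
Your proof is correct and follows essentially the same route as the paper: reduce to equivalence of triangles of groups via Corollary~\ref{cor:isomorphism}, observe that the independent rescalings of the two root groups are automorphisms of each vertex group while the swap $U_1 \leftrightarrow U_4$ of $\mathscr U_4(p)$ is not, and count the resulting orbits of discrete gluing data. Your centralizer-order computation ($|\cent_{\mathscr U_4(p)}(a)| = p^3 \neq p^2 = |\cent_{\mathscr U_4(p)}(b)|$) gives a clean justification of the non-swappability that the paper merely asserts, and your explicit orbit enumeration spells out what the paper compresses into its final sentence.
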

\begin{proof}
The first assertion is a straightforward consequence of Corollary~\ref{cor:isomorphism}. For the second, we use the following properties of $\mathscr U_3(p)$ and $\mathscr U_4(p)$:
\begin{itemize}
\item The subgroup of $\Aut(\mathscr U_3(p))$ which stabilizes the pair $\{U_1, U_3\}$ is isomorphic to the wreath product $C_{p-1} \wr C_2$. 
\item The subgroup of $\Aut(\mathscr U_4(p))$ which stabilizes the pair $\{U_1, U_4\}$ is isomorphic to the direct product $C_{p-1} \times C_{p-1}$. In particular no automorphism of $\mathscr U_4(p)$ swaps $U_1$ and $U_4$.
\end{itemize}
Indeed, it is easily verified    using Proposition~\ref{prop:PresentationsUnipotent} that for  any $r \in \{1, \dots, p-1\}$, the assignments $(a, b) \mapsto (a^r, b)$ and  $(a, b) \mapsto (a, b^r)$  both   extend to automorphisms of $\mathscr U_3(p)$ (resp. $\mathscr U_4(p)$). In particular, the subgroup of $\Aut(\mathscr U_3(p))$ (resp. $\mathscr U_4(p)$) which stabilizes the pair $\{U_1, U_3\}$ (resp. $\{U_1, U_4\}$) is contains $\Aut(U_1) \times \Aut(U_3)$  (resp. $\Aut(U_1) \times \Aut(U_4)$), which is isomorphic to $C_{p-1}^2$. Moreover, the assignment $(a, b) \mapsto (b, a)$ extends to an automorphism of $ \mathscr U_3(p)$, whereas no automorphism of $\mathscr U_4(p)$ swaps $\langle a \rangle = U_1$ and $\langle b \rangle = U_4$. The two properties listed above follow. 

In view of those assertions, the required conclusion is a consequence of  Corollary~\ref{cor:isomorphism}. 
\end{proof}

Following the  terminology in \cite{EJ}, we say that a $p$-fold triangle group  $G \in \mathcal C$ is a \textbf{Kac--Moody--Steinberg group}  or \textbf{KMS group} for short) over the field $\mathbf F_p$ of order $p$.

\begin{thm}\label{label:thm:KMS}
Let $G$ be a KMS group of half girth type $(r_0, r_1, r_2)$ over $\mathbf F_p$. Then $G$ is acylindircally hyperbolic, and if  $1/r_0+1/r_1+1/r_2<1$, then $G$ is hyperbolic. Moreover:
\begin{enumerate}[(i)]
\item If $(r_0, r_1, r_2) = (2, 4, 4)$, then $G$ has property (T) if and only if $p\geq 5$.

\item If $(r_0, r_1, r_2) = (3, 3, 3)$, then $G$ has property (T) if and only if $p\geq 5$.

\item If $(r_0, r_1, r_2) = (3, 3, 4)$, then $G$ has property (T) for all  $p\geq 7$.

\item If $(r_0, r_1, r_2) = (3, 4, 4)$, then $G$ has property (T) for all  $p\geq 7$.

\item If $(r_0, r_1, r_2) = (4, 4, 4)$, then $G$ has property (T) for all  $p\geq 11$.
\end{enumerate}

\end{thm}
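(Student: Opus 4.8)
The plan is to split the statement into its geometric assertions (hyperbolicity, acylindrical hyperbolicity) and its property (T) assertions, since both reduce to the two computational inputs already established: the girth of the vertex links (Proposition~\ref{prop:Girth:Moufang}) and their representation angles (Proposition~\ref{prop:AngleUnipotent}). First I would record that, by the definition of the class $\mathcal C$ together with Proposition~\ref{prop:Girth:Moufang}, each vertex link of $G$ is the coset graph $\Gamma_{\mathscr U_{r_i}(p)}(U_1, U_{r_i})$, of girth $2r_i$, so the half girth type is indeed $(r_0, r_1, r_2)$. Since the edge groups are cyclic of order $p$ while the vertex groups $\mathscr U_{r_i}(p)$ have order $p^{r_i}\geq p^2$, none of the defining monomorphisms is surjective, and Theorem~\ref{thm:NPC}(iii) gives that $G$ is infinite. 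In the three cases $(3,3,4)$, $(3,4,4)$, $(4,4,4)$ one has $1/r_0+1/r_1+1/r_2<1$, so Theorem~\ref{thm:NPC}(v) immediately yields that $G$ is non-elementary hyperbolic, which also disposes of acylindrical hyperbolicity in those cases.

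For acylindrical hyperbolicity in the two remaining Euclidean cases $(2,4,4)$ and $(3,3,3)$, where $1/r_0+1/r_1+1/r_2=1$, I would invoke Theorem~\ref{thm:Acyl}, which applies provided the three links are not all generalized polygons. The key point is that $\Gamma_{\mathscr U_r(p)}(U_1, U_r)$ for $r\in\{3,4\}$ is never a generalized polygon: it is $p$-regular (as $U_1\cap U_r=\{1\}$) on $2p^{\,r-1}$ vertices, whereas a $p$-regular generalized $r$-gon would be a thick projective plane or generalized quadrangle of order $p-1$, having $2(p^2-p+1)$ respectively $2p(p^2-2p+2)$ vertices; neither equals $2p^{\,r-1}$ for a prime $p$ (and it is not thin, being $p$-regular with $p\geq 3$). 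Thus in each Euclidean case at least one link fails to be a generalized polygon, so Theorem~\ref{thm:Acyl} applies and $G$ is acylindrically hyperbolic, hence SQ-universal.

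For the sufficiency half of property (T), the main tool is the Ershov--Jaikin-Zapirain criterion, Theorem~\ref{thm:EJ}. Setting $\varepsilon_i=\varepsilon_{\mathscr U_{r_i}(p)}(U_1,U_{r_i})$, Example~\ref{ex:0} and Proposition~\ref{prop:AngleUnipotent} give $\varepsilon_i=0,\ 1/\sqrt p,\ \sqrt{2/p}$ according as $r_i=2,3,4$. Substituting these into $\varepsilon_0^2+\varepsilon_1^2+\varepsilon_2^2+2\varepsilon_0\varepsilon_1\varepsilon_2<1$ reduces each case to an elementary inequality in $p$:
\begin{align*}
(2,4,4)&:\ 4/p<1, & (3,3,3)&:\ 3/p+2p^{-3/2}<1,\\
(3,3,4)&:\ 4/p+2\sqrt{2}\,p^{-3/2}<1, & (3,4,4)&:\ 5/p+4p^{-3/2}<1,\\
(4,4,4)&:\ 6/p+4\sqrt{2}\,p^{-3/2}<1.
\end{align*}
One checks directly that these are satisfied exactly when $p\geq 5$ in the first two cases, when $p\geq 7$ in the next two, and when $p\geq 11$ in the last, which establishes the asserted sufficient conditions for (T).

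The hard part is the necessity (``only if'') in cases (i) and (ii): one must show that for the excluded odd prime $p=3$ the group $G$ does \emph{not} have property (T), and here the mere failure of the EJ criterion carries no information. By Remark~\ref{rem:KMS-over-GF3} the groups in question are precisely $G^{6,54,54}_0$, $G^{6,54,54}_2$, $G^{6,54,54}_8$ (type $(2,4,4)$) and $G^{18,18,18}_0$ (type $(3,3,3)$), all of which belong to the sample of trivalent triangle groups analysed in Section~\ref{sec:TriTri}. For these I would appeal to the data recorded in Section~\ref{sec:Tables}, which exhibits for each of them a finite-index subgroup with infinite abelianization; such a subgroup surjects onto $\mathbf Z$ and therefore lacks (T), and since (T) passes to finite-index subgroups, $G$ cannot have (T). This computational input is the only non-self-contained step, and the real difficulty lies in replacing the machine search for low-index subgroups by a conceptual obstruction to (T) for these Euclidean-type KMS groups over $\mathbf F_3$ — an obstruction that, as noted in Section~\ref{sec:CyclicExt_trivalent}, is \emph{not} provided by their higher-rank linear representations.
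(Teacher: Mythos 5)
Your proposal is correct and follows essentially the same route as the paper: Theorem~\ref{thm:NPC} for infiniteness and hyperbolicity, Theorem~\ref{thm:Acyl} for acylindrical hyperbolicity, the Ershov--Jaikin-Zapirain criterion combined with Example~\ref{ex:0} and Proposition~\ref{prop:AngleUnipotent} for the sufficiency of (T) (your inequalities are equivalent to the paper's polynomial conditions $p(p-3)^2>4$, etc.), and Remark~\ref{rem:KMS-over-GF3} together with the tables for the failure of (T) at $p=3$. The only added value is your explicit vertex-count verification that the links $\Gamma_{\mathscr U_r(p)}(U_1,U_r)$, $r\in\{3,4\}$, are not generalized polygons, a hypothesis of Theorem~\ref{thm:Acyl} that the paper's proof invokes without comment.
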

\begin{proof}
In view of Theorem~\ref{thm:NPC} and Proposition~\ref{prop:Girth:Moufang}, we see that $G$ is infinite, and $G$ is hyperbolic if  $1/r_0 + 1/r_1 + 1/r_2 < 1$.  In all cases, $G$ is acylindrically hyperbolic by Theorem~\ref{thm:Acyl}.

For the other assertions, we invoke Theorem~\ref{thm:EJ} together with Proposition~\ref{prop:AngleUnipotent}.  If $(r_0, r_1, r_2)=(2, 4, 4)$, we find that $G$ has property (T) for all $p$ such that $p>4$. Moreover, for $p=3$, it follows from Remark~\ref{rem:KMS-over-GF3} and the results from Section~\ref{sec:tab_244} that $G$ has a finite index subgroup with infinite abelianization. Hence $G$ does not have (T). 

If $(r_0, r_1, r_2)=(3, 3, 3)$, we find that $G$ has property (T) for all $p$ such that $p^3 - 6p^2 + 9p -4 > 0$. In particular $G$ has (T) for $p \geq 5$. Moreover, for $p=3$, it follows from Remark~\ref{rem:KMS-over-GF3} and the results from Section~\ref{sec:tab_333} that $G$ has a finite index subgroup with infinite abelianization. Hence $G$ does not have (T).

If $(r_0, r_1, r_2)=(3, 3, 4)$, we find that $G$ has property (T) for all $p$ such that $p^3 - 8p^2 + 16p -8 > 0$. If $(r_0, r_1, r_2)=(3, 4, 4)$, we find that $G$ has property (T) for all $p$ such that $p^3 - 10p^2 + 25p -16 > 0$. If $(r_0, r_1, r_2)=(4, 4, 4)$, we find that $G$ has property (T) for all $p$ such that $p^3 - 12p^2 + 36p -32 > 0$.
\end{proof}

We shall see in Proposition~\ref{prop:ZxZ} below that if $(r_0, r_1, r_2) = (3, 3, 3)$, then $G$ is not hyperbolic.

The KMS groups $\mathscr G_{ \widetilde{A_2}}(p)$ and $\mathscr G_{HBC_2^{(3)}}(p)$ admit an automorphism of order~$3$ that permutes cyclically the generators. In the same vein as in Section~\ref{sec:CyclicExt_trivalent}, we therefore obtain overgroups of index~$3$ admitting the following presentations:
\begin{align*}
\widetilde{\mathscr G}_{ \widetilde{A_2}}(p) = \langle t, a, b  \mid  & t^3, a^p,  tat^{-1}b^{-1}, [a, b, a], [a, b, b]\rangle\\
\widetilde{\mathscr G}_{HBC_2^{(3)}}(p) = \langle t, a, b    \mid & t^3, a^p, tat^{-1}b^{-1},  [a, b, a], [a, b, b, a], [a, b, b, b ]\rangle.
\end{align*}
Clearly, the generator $b$ is redundant and the groups $\widetilde{\mathscr G}_{ \widetilde{A_2}}(p)$ and $\widetilde{\mathscr G}_{HBC_2^{(3)}}(p)$ are $2$-generated.

Combining Theorem~\ref{label:thm:KMS} with Proposition~\ref{prop:PresentationsUnipotent}, we obtain infinite families of infinite hyperbolic groups with property (T), each given by an explicit presentation.

\begin{cor}\label{cor:ExplicitKMS}
For every prime $p \geq 7$, the groups $\mathscr G_{HC_2^{(1)}}(p)$,  $\mathscr G_{HC_2^{(2)}}(p)$,  $\mathscr G_{HB_2^{(2)}}(p)$ and  $\mathscr G_{HBC_2^{(2)}}(p)$ are infinite hyperbolic groups with property (T). 

For every prime $p \geq 11$, the groups $\mathscr G_{HB_2^{(3)}}(p)$,  $\mathscr G_{HBC_2^{(3)}}(p)$ and $\widetilde{\mathscr G}_{HBC_2^{(3)}}(p)$ are infinite hyperbolic groups with property (T). 
\end{cor}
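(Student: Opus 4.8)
The plan is to deduce everything from Theorem~\ref{label:thm:KMS}, whose hypotheses can be verified by purely syntactic inspection of the presentations. First I would recognize each of the six groups $\mathscr G_{HC_2^{(1)}}(p),\ \mathscr G_{HC_2^{(2)}}(p),\ \mathscr G_{HB_2^{(2)}}(p),\ \mathscr G_{HBC_2^{(2)}}(p),\ \mathscr G_{HB_2^{(3)}}(p),\ \mathscr G_{HBC_2^{(3)}}(p)$ as a member of the class $\mathcal C$ of Kac--Moody--Steinberg groups. To this end I would split the relators other than $a^p,b^p,c^p$ into three blocks, each involving only one of the pairs $\{a,b\}$, $\{b,c\}$, $\{a,c\}$. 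By Proposition~\ref{prop:PresentationsUnipotent}, a block of the shape $[x,y,x],\,[x,y,y]$ presents the vertex group $\mathscr U_3(p)$ on that pair, while a block of the shape $[x,y,x],\,[x,y,y,x],\,[x,y,y,y]$ presents $\mathscr U_4(p)$; in each case the two generators become the generating root subgroups $U_1,U_n$. This exhibits each group as the fundamental group of a non-positively curved triangle of groups with cyclic edge groups of order $p$ and vertex groups in $\{\mathscr U_3(p),\mathscr U_4(p)\}$.

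Next I would read off the half girth type. By Proposition~\ref{prop:Girth:Moufang} the relevant coset graph has girth $6$ for a $\mathscr U_3(p)$-vertex and girth $8$ for a $\mathscr U_4(p)$-vertex, so the half girths are $3$ and $4$ respectively. Tallying the vertex groups gives half girth type $(3,3,4)$ for $\mathscr G_{HC_2^{(1)}}(p)$; type $(3,4,4)$ for $\mathscr G_{HC_2^{(2)}}(p)$, $\mathscr G_{HB_2^{(2)}}(p)$, $\mathscr G_{HBC_2^{(2)}}(p)$; and type $(4,4,4)$ for $\mathscr G_{HB_2^{(3)}}(p)$, $\mathscr G_{HBC_2^{(3)}}(p)$, in agreement with Proposition~\ref{prop:Classif_KMS}. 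In all three cases the curvature inequality is strict: $\tfrac13+\tfrac13+\tfrac14=\tfrac{11}{12}<1$, $\tfrac13+\tfrac14+\tfrac14=\tfrac56<1$, and $\tfrac34<1$. Theorem~\ref{label:thm:KMS} therefore gives that each of the six groups is infinite and hyperbolic, and furnishes property (T): parts (iii) and (iv) cover the types $(3,3,4)$ and $(3,4,4)$ for $p\geq 7$, and part (v) covers type $(4,4,4)$ for $p\geq 11$.

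It remains to treat the $2$-generated overgroup $\widetilde{\mathscr G}_{HBC_2^{(3)}}(p)$. It contains $\mathscr G_{HBC_2^{(3)}}(p)$ as a normal subgroup of index $3$. Both word-hyperbolicity (a quasi-isometry invariant) and Kazhdan's property (T) pass between a group and its finite-index subgroups in either direction, so from the fact that $\mathscr G_{HBC_2^{(3)}}(p)$ is infinite, hyperbolic and Kazhdan for $p\geq 11$ I would conclude the same for $\widetilde{\mathscr G}_{HBC_2^{(3)}}(p)$.

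The argument carries essentially no analytic difficulty, since the decisive spectral estimate $\varepsilon_{\mathscr U_n(p)}(U_1,U_n)$ has already been absorbed into Theorem~\ref{label:thm:KMS} via Proposition~\ref{prop:AngleUnipotent}. The only points requiring genuine care are the bookkeeping of the first step --- matching each nested commutator block to the correct one of $\mathscr U_3(p)$, $\mathscr U_4(p)$ while respecting the order in which the two generators enter the commutators --- and the clean invocation of commensurability invariance of (T) and of hyperbolicity in the final step.
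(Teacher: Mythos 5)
Your proposal is correct and follows essentially the same route as the paper, which proves the corollary by combining Theorem~\ref{label:thm:KMS} with Proposition~\ref{prop:PresentationsUnipotent}; you have merely made explicit the identification of the vertex groups, the resulting half girth types $(3,3,4)$, $(3,4,4)$, $(4,4,4)$, and the standard fact that hyperbolicity and property (T) pass to the finite-index overgroup $\widetilde{\mathscr G}_{HBC_2^{(3)}}(p)$.
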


The number of relators in those presentations is uniformly bounded. Alternative presentations whose total length is a logarithmic function of $p$ can be obtained using the results from \cite[Section~8]{GHS}.

\subsection{Epimorphisms of KMS groups}\label{sec:KMS-epim}

The quotient of the group $\mathscr U_4(p)$ by its center is isomorphic to $\mathscr U_3(p)$. This can be used to construct various epimorphisms between the KMS groups over a fixed prime field $\mathbf F_p$,  sending each of the generator of the source KMS group to a generator of the target KMS group. The complete set of  epimorphisms constructed in this way is depicted in Figure~\ref{fig:Epimor-KMS}. 

\begin{figure}[h]
\begin{tikzcd}
& &  \mathscr G_{HC_2^{(1)}}(p) \arrow[r] & \mathscr G_{ \widetilde{A_2}}(p)\\
\mathscr G_{HB_2^{(3)}}(p) \arrow[r] \arrow[rd] \arrow[rdd] 
& \mathscr G_{HB_2^{(2)}}(p) \arrow[ru] \arrow[rr]  &  & \mathscr G_{ \widetilde{B_2}}(p)\\
& \mathscr G_{HC_2^{(2)}}(p) \arrow[rr] \arrow[ruu] & & \mathscr G_{ \widetilde{C_2}}(p)\\
\mathscr G_{HBC_2^{(3)}}(p) \arrow[r] & \mathscr G_{HBC_2^{(2)}}(p) \arrow[ruuu] \arrow[rr]
& & \mathscr G_{ \widetilde{BC_2}}(p)
\end{tikzcd}
\caption{Epimorphisms between KMS groups over $\mathbf F_p$}\label{fig:Epimor-KMS}
\end{figure}
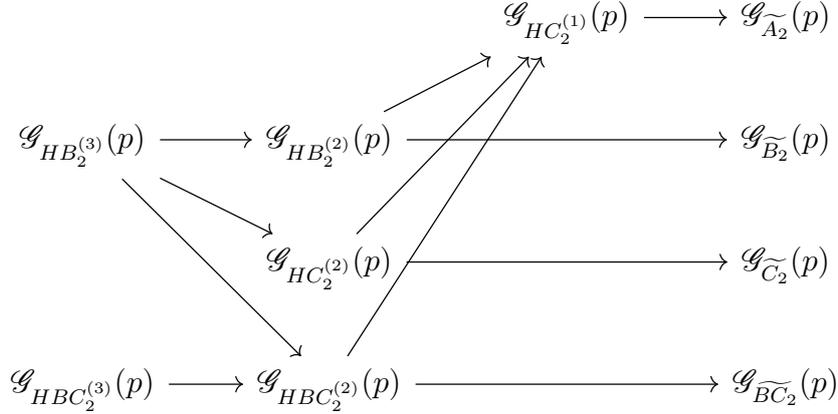

In particular, the group $\mathscr G_{ \widetilde{A_2}}(p)$ is a quotient of each hyperbolic KMS group over $\mathbf F_p$. This motivates further investigations of the former. 

\subsection{Further properties of $\mathscr G_{ \widetilde{A_2}}(p)$}\label{sec:KMS-A2t}

We start by observing that $\mathscr G_{ \widetilde{A_2}}(p)$ has a linear representation with infinite image. 

\begin{prop}\label{prop:Morphism-A2t}
The assignments
$$
a \mapsto \left(\begin{array}{ccc}
1 & 1 & 0\\
0 & 1 & 0\\
0 & 0 & 1
\end{array}\right),
\
b \mapsto \left(\begin{array}{ccc}
1 & 0 & 0\\
0 & 1 & 1\\
0 & 0 & 1
\end{array}\right),
\
c \mapsto \left(\begin{array}{ccc}
1 & 0 & 0\\
0 & 1 & 0\\
T & 0 & 1
\end{array}\right)
$$
extend to a homomorphism 
$$\rho \colon \mathscr G_{ \widetilde{A_2}}(p) \to \SL_3(\mathbf F_p[T]).$$
The image of $\rho$ is a residually-$p$ group  of index~$(p^2-1)(p^3-1)$ in $\SL_3(\mathbf F_p[T])$.
\end{prop}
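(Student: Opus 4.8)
The plan is to check first that $\rho$ respects the defining relations of $\mathscr G_{\widetilde{A_2}}(p)$, then to prove residual-$p$-ness by a congruence-filtration argument, and finally to compute the index by identifying the image with the preimage of the unipotent radical of a Borel under reduction modulo $T$. To begin, I would record that the three images are the root elements $A=x_{12}(1)$, $B=x_{23}(1)$ and $C=x_{31}(T)$, where $x_{ij}(s)=I+sE_{ij}$ and $E_{ij}$ is a matrix unit. Since $p=0$ in $\mathbf F_p[T]$ and $s\mapsto x_{ij}(s)$ is additive, each generator has order $p$. For the remaining relators I would invoke the Chevalley commutator relations in $\SL_3$: one has $[x_{ij}(s),x_{kl}(t)]=1$ whenever $j\neq k$ and $i\neq l$, while the only nontrivial cases are $[x_{ij}(s),x_{jk}(t)]=x_{ik}(\pm st)$ and $[x_{ij}(s),x_{ki}(t)]=x_{kj}(\mp st)$ for distinct $i,j,k$. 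A one-line computation then gives $[A,B]\in x_{13}(\cdot)$, $[B,C]\in x_{21}(\cdot)$ and $[A,C]\in x_{32}(\cdot)$, and each of the iterated commutators $[A,B,A],[A,B,B],[B,C,B],[B,C,C],[A,C,A],[A,C,C]$ pairs two root subgroups at non-incident positions and hence vanishes. Thus $\rho$ is a well-defined homomorphism, and I set $\Lambda=\rho\bigl(\mathscr G_{\widetilde{A_2}}(p)\bigr)$.

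Next I would show $\Lambda$ is residually-$p$. Let $\Gamma(T^n)$ be the kernel of the reduction $\SL_3(\mathbf F_p[T])\to\SL_3\bigl(\mathbf F_p[T]/(T^n)\bigr)$. Reduction modulo $T$ sends $A,B,C$ to $x_{12}(1),x_{23}(1),I$, which generate the upper unitriangular group $U\cong\mathscr U_3(p)$, a $p$-group. Since the kernel of $\SL_3\bigl(\mathbf F_p[T]/(T^n)\bigr)\to\SL_3(\mathbf F_p)$ is a finite $p$-group, the image of $\Lambda$ in $\SL_3\bigl(\mathbf F_p[T]/(T^n)\bigr)$ is an extension of a subgroup of $U$ by a $p$-group, hence a finite $p$-group; that is, $\Lambda/(\Lambda\cap\Gamma(T^n))$ is a finite $p$-group for every $n$. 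As $\bigcap_n\Gamma(T^n)=\{I\}$, the normal subgroups $\Lambda\cap\Gamma(T^n)$ form a separating chain of $p$-power index, so $\Lambda$ is residually-$p$.

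For the index I would prove that $\Lambda=\pi^{-1}(U)$, where $\pi$ denotes reduction modulo $T$; granting this, $[\SL_3(\mathbf F_p[T]):\Lambda]=[\SL_3(\mathbf F_p):U]=(p^2-1)(p^3-1)$. Since $\pi(\Lambda)=U$ one has $\Lambda\subseteq\pi^{-1}(U)$, so it suffices to establish $\Gamma(T)=\ker\pi\subseteq\Lambda$. The concrete input is that $\Lambda$ contains many elementary matrices. From powers and the commutators above one obtains all upper elementaries with constant entries (e.g. $x_{13}(1)=[A,B]$); an induction on the $T$-degree, starting from $x_{21}(\pm T)=[B,C]$, $x_{32}(\pm T)=[A,C]$ and $x_{31}(\pm T^2)=[x_{32}(T),x_{21}(T)]$ and using the same identities, shows that $\Lambda$ contains every lower elementary with entry in the ideal $(T)$; and then identities such as $x_{12}(Tf)=[x_{13}(1),x_{32}(Tf)]$ produce all upper elementaries of level $(T)$. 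Since $\mathbf F_p+(T)=\mathbf F_p[T]$, this yields the full upper unitriangular group $U(\mathbf F_p[T])\subseteq\Lambda$ together with all lower elementaries of level $(T)$.

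Finally I would deduce $\Gamma(T)\subseteq\Lambda$ from these generators by Gaussian elimination: given $g\equiv I$ modulo $T$, one clears its columns using the available operations, namely arbitrary upper row and column operations (all of $U(\mathbf F_p[T])$ being present) and lower operations with coefficients in $(T)$. I expect this to be the main obstacle. The difficulty is exactly that the lower operations are restricted to level $(T)$ while the upper ones are unrestricted, so one must run an \emph{asymmetric} Euclidean algorithm and verify that the unrestricted upper operations always compensate, reducing every $g\in\Gamma(T)$ to the identity; this is the step where the Euclidean structure of $\mathbf F_p[T]$ and the $3\times 3$ (affine rank $\ge 2$) hypothesis are indispensable, the analogous two-dimensional statement for the subgroup of $\SL_2(\mathbf F_p[T])$ generated by $x_{12}(1)$ and $x_{21}(T)$ being false. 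Once $\Gamma(T)\subseteq\Lambda$ is secured, the equality $\Lambda=\pi^{-1}(U)$ and the index formula follow at once.
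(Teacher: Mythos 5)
Your verification that $\rho$ respects the relators is correct and complete (this is the part the paper dismisses as ``straightforward to verify''), and your congruence-filtration argument for residual-$p$-ness is also sound: the kernel of $\SL_3\bigl(\mathbf F_p[T]/(T^n)\bigr)\to\SL_3(\mathbf F_p)$ consists of matrices $I+N$ with $N$ nilpotent over a ring of characteristic $p$, hence is a finite $p$-group, and $\bigcap_n\Gamma(T^n)=\{I\}$. Likewise your bootstrapping of elementary matrices is fine: an induction on $T$-degree using the Chevalley relations does put every upper elementary $x_{ij}(f)$, $f\in\mathbf F_p[T]$, and every lower elementary of level $(T)$ into $\Lambda$. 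For comparison, the paper does none of this by hand: it declares the relator check routine and derives both the residual-$p$ property and the index from a single citation, \cite[Theorem~M]{AM97}, so your route is genuinely different and more self-contained up to the point where it stops.

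The problem is that it does stop, exactly at the crux. The entire content of the index assertion is the inclusion $\Gamma(T)\subseteq\Lambda$, and you do not prove it; you name it as ``the main obstacle'' and express the expectation that an asymmetric Gaussian elimination will work. As sketched, it will not go through in the naive way. After you clear the third entry of the first column you are left with a pair $(u,v)$ with $u\equiv 1$ and $v\equiv 0\bmod T$, and the only operation available to kill $v$ is $v\mapsto v+au$ with $a\in(T)$; there is no reason for $v$ to lie in the ideal $(T)\cdot(u)$, so the elimination stalls — this is precisely the failure mode of the $\SL_2$ example you yourself cite, and adjoining unrestricted \emph{upper} operations does not by itself repair a blocked \emph{lower} reduction on a single column. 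What actually makes the statement true is the rank-$\geq 2$ relative elementary machinery: one needs either (a) the identity $\Gamma(T)=E_3\bigl(\mathbf F_p[T],(T)\bigr)$ (which rests on the vanishing of the relative $SK_1$, e.g.\ via $K_1(\mathbf F_p[T],(T))=0$, not on the Euclidean algorithm alone) together with commutator identities of the shape $e_{ji}(r)\,e_{ij}(a)\,e_{ji}(-r)=\bigl[\,{}^{e_{ji}(r)}e_{ik}(1),\,{}^{e_{ji}(r)}e_{kj}(a)\,\bigr]$, where the level-$(T)$ parameter must be placed on the correct factor so that all conjugates land back in your generating set; or (b) a building-theoretic argument in the spirit of Soul\'e and Abramenko--M\"uhlherr, which is what the paper invokes. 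Either way, a specific additional idea is required, and until it is supplied the index computation is unproven.
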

\begin{proof}
The fact that $\rho$ is a well defined homomorphism $ \mathscr G_{ \widetilde{A_2}}(p) \to \SL_3(\mathbf F_p[T])$ is straightforward to verify from the presentation of  $ \mathscr G_{ \widetilde{A_2}}(p)$. The fact that the subgroup of $\SL_3(\mathbf F_p[T])$ generated by the matrices $\rho(a), \rho(b), \rho(c)$ have the asserted properties  can be deduced, for example, from \cite[Theorem~M]{AM97}.
\end{proof}
	
\begin{cor}\label{cor:SL(3,q)-quotients}
Let $p$ be an odd prime and $q = p^e$ for some $e \geq 3$. Let also $G$ be $\mathscr G_{ \widetilde{A_2}}(p)$, or any of the six hyperbolic KMS groups over $\mathbf F_p$. Then $G$ has an infinite pro-$p$ completion, and $G$ has a quotient isomorphic to $\SL_3(\mathbf F_q)$.
\end{cor}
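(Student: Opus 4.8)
The plan is to reduce everything to the single group $\mathscr G_{\widetilde{A_2}}(p)$ and then exploit Proposition~\ref{prop:Morphism-A2t}. Indeed, by the discussion preceding this corollary (see Figure~\ref{fig:Epimor-KMS}), each of the six hyperbolic KMS groups over $\mathbf F_p$ admits $\mathscr G_{\widetilde{A_2}}(p)$ as a quotient. Both properties we must establish pass upward along epimorphisms: having $\SL_3(\mathbf F_q)$ as a quotient is obviously inherited by any group mapping onto $\mathscr G_{\widetilde{A_2}}(p)$, and having an infinite pro-$p$ completion is inherited too, since pro-$p$ completion is a functor carrying surjections to continuous surjections of pro-$p$ groups. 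Hence it suffices to treat $G = \mathscr G_{\widetilde{A_2}}(p)$ itself.

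For the $\SL_3(\mathbf F_q)$-quotient, I would start from the homomorphism $\rho\colon \mathscr G_{\widetilde{A_2}}(p) \to \SL_3(\mathbf F_p[T])$ of Proposition~\ref{prop:Morphism-A2t}, whose image $\rho(G)$ has index $m := (p^2-1)(p^3-1)$. Fix an irreducible polynomial $f \in \mathbf F_p[T]$ of degree $e$, so that $\mathbf F_p[T]/(f) \cong \mathbf F_q$, and let $\pi_f\colon \SL_3(\mathbf F_p[T]) \to \SL_3(\mathbf F_q)$ be the induced reduction map. Because $\mathbf F_p[T]$ is Euclidean, $\SL_3(\mathbf F_p[T])$ is generated by elementary matrices, which reduce onto all the elementary matrices of $\SL_3(\mathbf F_q)$; hence $\pi_f$ is surjective, and consequently the image $\pi_f(\rho(G))$ has index at most $m$ in $\SL_3(\mathbf F_q)$. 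The key numerical input is that, for $e \geq 3$, one has $m = (p^2-1)(p^3-1) < p^5 < p^{2e} = q^2 < q^2+q+1$. Since every proper subgroup of $\SL_3(\mathbf F_q)$ has index at least $q^2+q+1$ (the minimal degree of a nontrivial transitive permutation action of $\SL_3(\mathbf F_q)$, a classical fact for groups of Lie type), the subgroup $\pi_f(\rho(G))$ cannot be proper. Therefore $\pi_f\circ\rho\colon G \to \SL_3(\mathbf F_q)$ is the desired surjection.

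For the pro-$p$ completion, I would use that $\rho(G)$ is residually-$p$ by Proposition~\ref{prop:Morphism-A2t} and infinite, being a finite-index subgroup of the infinite group $\SL_3(\mathbf F_p[T])$. Residual $p$-finiteness means that the canonical map from $\rho(G)$ to its pro-$p$ completion is injective, so the latter is infinite; and since $G$ surjects onto $\rho(G)$, the pro-$p$ completion of $G$ surjects onto that of $\rho(G)$ and is therefore infinite as well. Combined with the first paragraph, this settles both assertions for all the groups in the statement.

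The main obstacle, and the only place where the hypothesis $e \geq 3$ is genuinely used, is the surjectivity of $\pi_f\circ\rho$: one must rule out that the finite-index subgroup $\rho(G)$ reduces to a \emph{proper} subgroup of $\SL_3(\mathbf F_q)$. The clean way around this is the index comparison above, resting on the classical lower bound $q^2+q+1$ for the index of a proper subgroup of $\SL_3(\mathbf F_q)$; this is precisely what fails when $e \leq 2$, where $m$ exceeds $q^2+q+1$. Should one prefer to avoid invoking minimal-degree results, an alternative would be to extract from \cite[Theorem~M]{AM97} an explicit description of $\rho(G)$ as a congruence-type subgroup containing all elementary matrices $e_{ij}(Tg(T))$, and then to observe directly that their images exhaust the elementary matrices of $\SL_3(\mathbf F_q)$ (choosing $f$ with $f(0)\neq 0$ so that the image of $T$ is invertible), which again generate the whole group.
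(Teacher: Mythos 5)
Your proof is correct and follows essentially the same route as the paper's: reduce to $\mathscr G_{\widetilde{A_2}}(p)$ via the epimorphisms of Figure~\ref{fig:Epimor-KMS}, use Proposition~\ref{prop:Morphism-A2t} together with a reduction $\mathbf F_p[T]\to\mathbf F_q$, and conclude surjectivity onto $\SL_3(\mathbf F_q)$ by comparing the index $(p^2-1)(p^3-1)$ with the minimal index of a proper subgroup. Your write-up merely makes explicit two points the paper leaves implicit (the surjectivity of the reduction map via elementary matrices, and the bound $q^2+q+1$ on the index of proper subgroups), so there is nothing substantive to change.
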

\begin{proof}
In view of the epimorphisms from Figure~\ref{fig:Epimor-KMS}, it suffices to prove those assertions for $G = \mathscr G_{ \widetilde{A_2}}(p)$. To that end, consider the representation $\rho$ afforded by Proposition~\ref{prop:Morphism-A2t}. Since the image of $\rho$ is an infinite 
residually-$p$ group, it follows that $G$ has an infinite pro-$p$ completion. Observe moreover that the field $\mathbf F_q$ is a quotient of the ring $\mathbf F_p[T]$, since every finite dimensional separable field extension contains a primitive element (see \cite[\S4.14]{BAI}). It follows that $\SL_3(\mathbf F_q)$ is a quotient group of $\SL_3(\mathbf F_p[T])$. Since $q > p$, it follows that the group $\SL_3(\mathbf F_q)$ does not admit any proper subgroup of index $\leq (p^2-1)(p^3-1)$ since $q\geq p^3$. Therefore, the composite map 
\begin{tikzcd}
G \arrow[r, "\rho"] &\SL_3(\mathbf F_p[T]) \arrow[r] & \SL_3(\mathbf F_q)
\end{tikzcd}
must be surjective.
\end{proof}
	
Our next goal is to show that 		$\mathscr G_{ \widetilde{A_2}}(p)$ is not hyperbolic. In the following statement, the elements $a, b, c$ are the generators of $\mathscr G_{ \widetilde{A_2}}(p)$ as they appear in the presentation from Section~\ref{sec:KMS-presentations}.

\begin{prop}\label{prop:ZxZ}
Let $p$ be an odd prime and let $G = \mathscr G_{ \widetilde{A_2}}(p)$. Then the elements
$$x = aba^{\frac{p-1} 2} c \hspace{1cm} \text{and} \hspace{1cm} y = aca^{\frac{p-1} 2} b$$
generate a subgroup of $G$ isomorphic to   $\mathbf Z \times \mathbf Z$. 

In particular $G$ is an acylindrically hyperbolic group which is not hyperbolic.
\end{prop}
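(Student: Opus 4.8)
The plan is to realize $x$ and $y$ as hyperbolic isometries of the $\mathrm{CAT}(0)$ complex $Y(\mathcal T)$ associated with $G$ by Theorem~\ref{thm:NPC}, to compute their translation lengths with Lemma~\ref{lem:translation_length}, and then to invoke Corollary~\ref{cor:z_squared}. Since $G$ is a $p$-fold triangle group of half girth type $(3,3,3)$, the hypotheses $r_0=r_1=r_2=3$ of Lemma~\ref{lem:translation_length} hold, with $a,b,c$ lying in the three distinct edge groups. First I would bring $x$ and $y$ into the shape covered by that lemma: conjugating by $a^{-1}$ gives $a^{-1}xa = b\,a^{\frac{p-1}2}\,c\,a$ and $a^{-1}ya = c\,a^{\frac{p-1}2}\,b\,a$. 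Each of these is a word of the form $abcb'$ from Lemma~\ref{lem:translation_length}(1) (the first, second, third and fourth syllables lying respectively in the edge groups of $b$, $a$, $c$, $a$, so that the repeated group sits in positions two and four), all four syllables being non-trivial because $0<\frac{p-1}2<p$. Hence both $x$ and $y$ are hyperbolic with translation length $\sqrt3$; in particular $|x|/|y|=1$.

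With the translation lengths in hand, Corollary~\ref{cor:z_squared} reduces the statement $\grp{x,y}\cong\Zbb\times\Zbb$ to two points: that $x$ and $y$ commute, and that they are not powers of a common element. For the latter, since $|x|=|y|$ the reduced ratio is $k/\ell=1/1$, so by the argument in the proof of Corollary~\ref{cor:z_squared} it suffices to verify $x\neq y^{\pm1}$. I would settle this with the representation $\rho\colon G\to\SL_3(\mathbf F_p[T])$ of Proposition~\ref{prop:Morphism-A2t}: evaluating it gives $\rho(x)$ and $\rho(y)$ explicitly, their $(1,3)$-entries being $1$ and $\tfrac{p+1}2$, which differ mod $p$, so $x\neq y$; and the $(1,1)$-entry of $\rho(xy)=\rho(x)\rho(y)$ equals $1+2T\neq1$, so $xy\neq1$ and thus $x\neq y^{-1}$.

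The main obstacle is proving that $x$ and $y$ commute in $G$ itself, and not merely in some quotient. I would establish this by a direct computation with the defining relations. Using only the relations internal to the Heisenberg vertex groups $\grp{a,b}\cong\mathscr U_3(p)$ and $\grp{a,c}\cong\mathscr U_3(p)$, one first simplifies to the normal forms $x=[a,b]^{-\frac{p-1}2}a^{\frac{p+1}2}bc$ and $y=[a,c]^{-\frac{p-1}2}a^{\frac{p+1}2}cb$, after which $[x,y]=1$ is checked by pushing the central commutators $[a,b]$ and $[a,c]$ past the remaining generators. The delicate point is that the relevant cross-terms cancel only through the congruences $2\cdot\frac{p-1}2\equiv-1$ and $2\cdot\frac{p+1}2\equiv1\pmod p$, which is exactly why the exponent $\frac{p-1}2$ is chosen; the same cancellations can be read off transparently from the matrix identity $\rho(x)\rho(y)=\rho(y)\rho(x)$ in $\SL_3(\mathbf F_p[T])$, which serves as a guide and consistency check for the group-level argument.

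Finally, for the concluding sentence: acylindrical hyperbolicity of $G$ has already been recorded in Theorem~\ref{label:thm:KMS} (it also follows from Theorem~\ref{thm:Acyl}, since the links $\Gamma_{\mathscr U_3(p)}(U_1,U_3)$ are $p$-regular of girth $6$ and hence not generalized polygons, so that $Y(\mathcal T)$ is not a Euclidean building). Non-hyperbolicity is then immediate: a hyperbolic group cannot contain a subgroup isomorphic to $\Zbb\times\Zbb$, whereas $\grp{x,y}$ is such a subgroup by the above.
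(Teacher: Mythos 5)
Your architecture is sound, and the second half of your argument takes a genuinely different (and valid) route from the paper's. Where the paper shows that $\grp{x}$ and $\grp{y}$ are not commensurate geometrically --- the axes of $x$ and $y$ both pass through the centre of the base triangle but leave it through different vertices, hence are not parallel --- you instead note that $x$ and $y$ both have translation length $\sqrt3$ (your conjugation by $a^{-1}$ to land in the literal form $abcb'$ of Lemma~\ref{lem:translation_length} is a nice touch of precision), so that Corollary~\ref{cor:z_squared} reduces everything to $x\neq y^{\pm1}$, which you verify in the quotient $\SL_3(\mathbf F_p[T])$ of Proposition~\ref{prop:Morphism-A2t}. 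I checked the matrices: the $(1,3)$-entries of $\rho(x)$ and $\rho(y)$ are $1$ and $\tfrac{p+1}{2}\not\equiv 1 \pmod p$, and the $(1,1)$-entry of $\rho(xy)$ is $1+2T\neq 1$, so this step is correct. (Note that the printed statement of Corollary~\ref{cor:z_squared} has a misprint --- the exceptional case should read $x^{\ell}=y^{\pm k}$ rather than $x^{\ell}\neq y^{\pm k}$ --- but you are using it in the form its proof actually establishes.) The concluding sentence is handled the same way in both arguments.

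The gap is in the commutativity of $x$ and $y$, which is the computational heart of the proposition and which you leave as a sketch whose literal description does not work. The element $[a,b]$ is central in the vertex group $\grp{a,b}\cong\mathscr U_3(p)$, but no relation of $\mathscr G_{\widetilde{A_2}}(p)$ makes it commute with $c$; likewise $[a,c]$ need not commute with $b$. So ``pushing the central commutators $[a,b]$ and $[a,c]$ past the remaining generators'' is not a legal sequence of moves starting from your normal forms $x=[a,b]^{-n}a^{n+1}bc$ and $y=[a,c]^{-n}a^{n+1}cb$ (with $n=\tfrac{p-1}{2}$): in the product $xy$ the factor $[a,c]^{-n}$ is trapped between letters it does not commute with. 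The computation that actually works --- and is what the paper does --- keeps each commutator inside the vertex group where it was created and lets the exponents accumulate: one gets $xy=a^{2n+2}\,b\,[b,a]^{2n+1}\,c^{2}\,[c,a]^{2n+1}\,b$, and since $2n+1=p$ and $2n+2=p+1$, the relations $a^{p}=1$ and $[b,a]^{p}=[b,a^{p}]=1$, $[c,a]^{p}=1$ collapse this to $abc^{2}b$; symmetrically $yx=acb^{2}c$, and both reduce to $a\,b^{2}c^{2}[c,b]^{2}$ using the commutator $[c,b]$ from the third vertex group, which your sketch never mentions. You correctly identified the congruences $2n+1\equiv 0$ and $2n+2\equiv 1\pmod p$ as the reason the construction works, but the derivation itself is missing, and the matrix identity $\rho(x)\rho(y)=\rho(y)\rho(x)$ cannot substitute for it, since $\rho$ is far from injective.
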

\begin{figure}
\begin{tikzpicture}[scale=1.25,rotate=240]
\draw (90:1) -- (210:1) -- (330:1) -- cycle;
\node at (0,0) {$1$};
\node[dot,fill=blue] at (210:1) {};
\begin{scope}[shift={($(30:1)$)}]
\draw (30:1) -- (150:1) -- (270:1) -- cycle;
\node at (0,0) {$a$};
\end{scope}
\begin{scope}[shift={($(30:1) + (330:1)$)}]
\draw (90:1) -- (210:1) -- (330:1) -- cycle;
\node at (0,0) {$ab$};
\end{scope}
\begin{scope}[shift={($(30:1) + (330:1) + (270:1)$)}]
\draw (30:1) -- (150:1) -- (270:1) -- cycle;
\node at (0,0) {$aba^{\frac{p-1}{2}}$};
\node[dot,fill=yellow] at (150:1) {};
\end{scope}
\begin{scope}[shift={($(30:1) + (330:1) + (270:1) + (330:1)$)}]
\draw (90:1) -- (210:1) -- (330:1) -- cycle;
\node at (0,0) {$aba^{\frac{p-1}{2}}c$};
\node[dot,fill=red] at (90:1) {};
\node[dot,fill=blue] at (210:1) {};
\node[dot,fill=yellow] at (330:1) {};
\end{scope}
\begin{scope}[shift={($(30:1) + (90:1)$)}]
\draw (90:1) -- (210:1) -- (330:1) -- cycle;
\node at (0,0) {$ac$};
\node[dot,fill=blue] at (330:1) {};
\end{scope}
\begin{scope}[shift={($(30:1) + (90:1) + (150:1)$)}]
\draw (30:1) -- (150:1) -- (270:1) -- cycle;
\node at (0,0) {$aca^{\frac{p-1}{2}}$};
\node[dot,fill=red] at (270:1) {};
\end{scope}
\begin{scope}[shift={($(30:1) + (90:1) + (150:1) + (90:1)$)}]
\draw (90:1) -- (210:1) -- (330:1) -- cycle;
\node at (0,0) {$aca^{\frac{p-1}{2}}b$};
\node[dot,fill=red] at (90:1) {};
\node[dot,fill=blue] at (210:1) {};
\node[dot,fill=yellow] at (330:1) {};
\end{scope}
\draw[dashed,gray] (0,0) -- ($(30:1) + (90:1) + (150:1) + (90:1)$);
\draw[dashed,gray] (0,0) -- ($(30:1) + (330:1) + (270:1) + (330:1)$);
\end{tikzpicture}
\caption{The elements in \protect{Proposition~\ref{prop:ZxZ}}.}
\label{fig:ZxZ}
\end{figure}
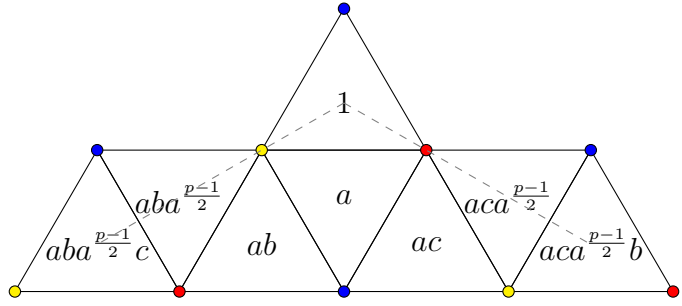
\begin{proof}
From the presentation of $G$, it follows that the commutator $[b, a]$ commutes with $a$ and $b$. Moreover, we have $[b, a^m] = [b, a]^m$ for all $m \geq 0$, and $[b, a]^p = 1$. Similar assertions hold for the pairs $\{a, c\}$ and $\{b, c\}$. 

Set $n = \frac{p-1} 2$. We have
\begin{align*}
xy & = aba^n c a c a^n b \\
& =  a^{n+1} b [b, a^n] a c [c, a] a^n c [c, a^n] b\\ 
&=  a^{n+1} ba [b, a]^n  ca^n c[c, a]^{n+1} b\\ 
& =  a^{n+2} b [b, a]^{n+1} a^n c [c, a]^n c [c, a]^{n+1} b\\ 
& =  a^{n+2} b  a^n  [b, a]^{n+1} c^2  [c, a]^{2n+1} b\\ 
& =  a^{2n+2}  b [b, a]^n  [b, a]^{n+1} c^2  [c, a]^{2n+1} b\\ 
& =  a^{p+1}   b  [b, a]^{p} c^2  [c, a]^{p} b\\ 
& =  a b c^2 b.
\end{align*}
Similar computations show that $yx = a cb^2 c$. Since $bc^2 b = b^2 c^2 [c^2, b] = 		 b^2 c^2 [c, b]^2$ and  $cb^2 c = b^2 c [c, b^2] c = b^2 c^2 [c, b]^2$, we infer that $xy = yx$. 

In order to show that $\langle x, y \rangle \cong  \mathbf Z \times \mathbf Z$, it remains to show that $x$ and $y$ are both of infinite order, and that the cyclic subgroups they generate are not commensurate in $G$. We establish this using the geometric action of $G$ on the $2$-dimensional CAT($0$) complex $Y$ afforded by Theorem~\ref{thm:NPC}.

Let $(v_0, v_1, v_2)$ be $2$-simplex of $Y$ such that $v_0$ is fixed by $\langle a, b\rangle$, $v_1$ is fixed by $
\langle b, c\rangle$ and $v_2$ is fixed by $\langle c, a \rangle$. Let $p$ be the center of $(v_0,v_1,v_2)$. From Lemma~\ref{lem:translation_length} we know that $x$ and $y$ are hyperbolic isometries and that $p$ lies on an axis of each. As in the proof of the lemma we see that $[p,x.p]$ contains $v_0$ while the geodesic segment $[p,y.p]$ contains $v_2$, see Figure~\ref{fig:ZxZ}. That is, the axes of $x$ and $y$ are not parallel and hence the cyclic groups $\langle x \rangle$ and $\langle y \rangle$ are not commensurate.

Clearly, this implies that $G$ is not hyperbolic. That $G$ is acylindrically hyperbolic follows from Theorem~\ref{thm:Acyl}.
\end{proof}
	 	
\begin{rmk}
Similar arguments show that in the group $\mathscr G_{\widetilde{C_2}}(p)$, the elements $x=acb^{-1}c^{-1}$ and $y=bca^{-1}c^{-1}$ generate a subgroup isomorphic to $\mathbf Z \times \mathbf Z$. In particular $\mathscr G_{\widetilde{C_2}}(p)$ is not hyperbolic. We omit the details.
\end{rmk}

We finish this section by reporting on a supplementary construction of finite quotients of the group $\mathscr G_{ \widetilde{A_2}}(p)$. In view of the epimorphisms from Figure~\ref{fig:Epimor-KMS}, it follows that the answer to Question~\ref{qu:KMS} can only be negative if $\mathscr G_{ \widetilde{A_2}}(p)$ fails to have quotients in $\mathscr S_d$ for all $d$. The following variation on Proposition~\ref{prop:Morphism-A2t} can be used to challenge this problem. It is inspired by the seminal work of M.~Kassabov \cite[\S4.1]{Kassabov2007}. 

\begin{prop}\label{prop:Morphism-A2t-block}
Let $q$ be a positive power of the odd prime $p$, and let $k \geq 1$ be an integer. Let also $M_a, M_b, M_c \in \mathrm{Mat}_{k \times k}(\mathbf F_q)$ be any three $k\times k$-matrices with coefficients in $\mathbf F_q$. Then the assignments
$$
a \mapsto U_a := \left(\begin{array}{ccc}
1 & M_a & 0\\
0 & 1 & 0\\
0 & 0 & 1
\end{array}\right),
\
b \mapsto U_b:=\left(\begin{array}{ccc}
1 & 0 & 0\\
0 & 1 & M_b\\
0 & 0 & 1
\end{array}\right),
\
c \mapsto U_c:=\left(\begin{array}{ccc}
1 & 0 & 0\\
0 & 1 & 0\\
M_c & 0 & 1
\end{array}\right), 
$$
where each entry represents a $k\times k$-block, 
extend to a homomorphism 
$$\rho_{M_a, M_b, M_c} \colon \mathscr G_{ \widetilde{A_2}}(p) \to \SL_{3k}(\mathbf F_q).$$
\end{prop}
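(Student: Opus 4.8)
The plan is to invoke the universal property of the presentation (von Dyck's theorem): since $\mathscr G_{\widetilde{A_2}}(p)$ is defined by the generators $a,b,c$ and the relators $a^p,b^p,c^p,[a,b,a],[a,b,b],[b,c,b],[b,c,c],[a,c,a],[a,c,c]$, it suffices to check that the matrices $U_a,U_b,U_c$, which visibly lie in $\SL_{3k}(\mathbf F_q)$ since they are unipotent, satisfy each of these relations. I would first write $U_a=I+N_a$, $U_b=I+N_b$, $U_c=I+N_c$, where $N_a,N_b,N_c$ are the nilpotent $3k\times 3k$ matrices whose only nonzero $k\times k$ block is $M_a$ in position $(1,2)$, $M_b$ in position $(2,3)$, and $M_c$ in position $(3,1)$, respectively. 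Each $N_x$ squares to zero, so $U_x^{-1}=I-N_x$ and, by the binomial expansion, $U_x^p=I+pN_x=I$ in characteristic $p$; this disposes of the order relations $a^p,b^p,c^p$.

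For the commutator relations I would record the pairwise products of the $N_x$. Because of the block positions, exactly one product in each pair is nonzero: $N_aN_b$ equals $M_aM_b$ in block $(1,3)$ while $N_bN_a=0$; $N_bN_c$ equals $M_bM_c$ in block $(2,1)$ while $N_cN_b=0$; and $N_cN_a$ equals $M_cM_a$ in block $(3,2)$ while $N_aN_c=0$. Writing $P=N_aN_b$, a direct expansion of $[U_a,U_b]=(I-N_a)(I-N_b)(I+N_a)(I+N_b)$, using $N_a^2=N_b^2=0$ and $N_bN_a=0$, collapses to $[U_a,U_b]=I+P$. The block $P$ satisfies $PN_a=N_aP=PN_b=N_bP=0$, so $I+P$ commutes with both $U_a$ and $U_b$; hence $[U_a,U_b,U_a]=[U_a,U_b,U_b]=I$. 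The two remaining pairs $\{b,c\}$ and $\{a,c\}$ are handled by the identical computation, with $P$ replaced by $N_bN_c$ and by $N_cN_a$ respectively, yielding $[b,c,b],[b,c,c],[a,c,a],[a,c,c]$.

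The calculation is entirely routine, so there is no serious obstacle; the one point that must be observed is that every defining relator involves at most \emph{two} of the three generators. This matters because the triple products do not vanish in general: for instance $N_aN_bN_c=M_aM_bM_c$ sits in block $(1,1)$, so a naive claim that all products of the $N_x$ of length greater than two are zero would be false. Restricting attention to one pair at a time keeps us in the regime where the relevant higher products genuinely vanish, and then the verification goes through for arbitrary $M_a,M_b,M_c$ with no constraint imposed, which is exactly the flexibility the statement asserts.
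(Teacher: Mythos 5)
Your computation is correct and is precisely the "straightforward computation in view of the presentation" that the paper's proof consists of: one checks the order relations via $U_x^p=(I+N_x)^p=I+pN_x=I$ and the commutator relations via the block-position analysis of the products $N_xN_y$. The only cosmetic imprecision is that for the pair $(a,c)$ the surviving product is $N_cN_a$ rather than $N_aN_c$, so the commutator comes out as $[U_a,U_c]=I-N_cN_a$ rather than $I+N_aN_c$; since this block still annihilates and is annihilated by $N_a$ and $N_c$, the conclusion is unaffected.
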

\begin{proof}
Straightforward computation in view of  the presentation of  $ \mathscr G_{ \widetilde{A_2}}(p)$ from Section~\ref{sec:KMS-presentations}. 
\end{proof}

Thus, every subgroup of $\SL_{3k}(\mathbf F_q)$ generated by three matrices of the form $U_a, U_b, U_c$ is a finite quotient of $\mathscr G_{ \widetilde{A_2}}(p)$. It turns out that this construction does \textit{not} produce  finite simple quotients of large rank, in view of the following observation. 

\begin{prop}\label{prop:UaUbUc}
Retain the notation of Proposition~\ref{prop:Morphism-A2t-block} and let $\Gamma = \langle U_a, U_b, U_c \rangle$. Then there is a finite extension $F$ of $\mathbf F_q$ and a subnormal series of $\Gamma$ such that every subquotient from that series is either a $p$-group, or isomorphic to a subgroup of $\mathrm{GL}_3(F)$. 
\end{prop}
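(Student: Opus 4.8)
The plan is to realize $\Gamma$ inside the unit group of a finite-dimensional $\mathbf{F}_q$-algebra and to read off the subnormal series from the Wedderburn structure of that algebra. Write $V = \mathbf{F}_q^{3k} = V_1 \oplus V_2 \oplus V_3$ for the three coordinate blocks, let $e_1, e_2, e_3 \in \mathrm{Mat}_{3k}(\mathbf{F}_q)$ be the corresponding block-diagonal idempotents, and set $N_a = U_a - I$, $N_b = U_b - I$, $N_c = U_c - I$, so that $N_a, N_b, N_c$ are supported on the off-diagonal blocks $(1,2)$, $(2,3)$, $(3,1)$ respectively. Let $\mathcal{B} \subseteq \mathrm{Mat}_{3k}(\mathbf{F}_q)$ be the unital $\mathbf{F}_q$-subalgebra generated by $e_1, e_2, e_3, N_a, N_b, N_c$. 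Since $N_a^2 = N_b^2 = N_c^2 = 0$, the inverses $U_a^{-1} = I - N_a$, etc., lie in $\mathcal{B}$, so $\Gamma \leq \mathcal{B}^\times$. First I would record the Peirce decomposition $\mathcal{B} = \bigoplus_{i,j} e_i \mathcal{B} e_j$: the corner $e_i \mathcal{B} e_i$ is spanned by the closed block-paths based at vertex $i$, hence equals the commutative algebra $\mathbf{F}_q[T_i]$ with $T_1 = M_aM_bM_c$, $T_2 = M_bM_cM_a$, $T_3 = M_cM_aM_b$.

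Let $J = J(\mathcal{B})$ be the Jacobson radical; it is nilpotent, say $J^m = 0$. The groups $K_i = (1 + J^i) \cap \mathcal{B}^\times$ form a descending chain $\mathcal{B}^\times \trianglerighteq K_1 \trianglerighteq \cdots \trianglerighteq K_m = \{1\}$ of normal subgroups with $K_i/K_{i+1} \cong (J^i/J^{i+1}, +)$ elementary abelian of exponent $p$, and $\mathcal{B}^\times / K_1 \cong (\mathcal{B}/J)^\times$. Intersecting with $\Gamma$ produces subnormal subgroups $\Gamma \cap K_i \trianglelefteq \Gamma$ whose successive quotients embed into the $p$-groups $K_i/K_{i+1}$, while $\Gamma/(\Gamma \cap K_1)$ embeds into $(\mathcal{B}/J)^\times$. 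By Wedderburn's theorem $\mathcal{B}/J \cong \prod_s \mathrm{Mat}_{n_s}(F_s)$ with $F_s$ finite fields containing $\mathbf{F}_q$, so $(\mathcal{B}/J)^\times \cong \prod_s \mathrm{GL}_{n_s}(F_s)$.

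The heart of the argument, and the step I expect to be the main obstacle, is the bound $n_s \leq 3$, equivalently the claim that every simple $\mathcal{B}$-module $W$ satisfies $\dim_F W \leq 3$, where $F = \mathrm{End}_{\mathcal{B}}(W)$. I would prove this through the block structure. Writing $W = e_1W \oplus e_2W \oplus e_3W$ as an $F$-space, it suffices to show that each $d_i = \dim_F(e_iW)$ is at most $1$. The element $z = N_aN_bN_c + N_bN_cN_a + N_cN_aN_b = \mathrm{diag}(T_1, T_2, T_3)$ is central in $\mathcal{B}$ — a direct block computation gives $zN_a = T_1M_a = M_aT_2 = N_az$, and similarly for $N_b, N_c$ and the $e_i$ — so by Schur's lemma $z$ acts on $W$ as a scalar in $F$; in particular $\phi := T_1|_{e_1W}$ is multiplication by an $F$-scalar. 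On the other hand, for any $\phi$-invariant $F$-subspace $U_1 \subseteq e_1W$, the subspace $U_1 \oplus N_cU_1 \oplus N_bN_cU_1$ is a $\mathcal{B}$-submodule of $W$ (the only nontrivial closure to check is $N_a(N_bN_cU_1) = \phi U_1 \subseteq U_1$); by simplicity it is $0$ or $W$, forcing $e_1W$ to have no proper nonzero $\phi$-invariant subspace. Since $\phi$ is scalar this gives $d_1 \leq 1$, and symmetrically $d_2, d_3 \leq 1$, whence $n_s \leq 3$.

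Finally I would assemble everything. Refining the top quotient $\Gamma/(\Gamma\cap K_1) \hookrightarrow \prod_s \mathrm{GL}_{n_s}(F_s)$ by the kernels of the successive coordinate projections yields a subnormal chain whose factors embed into the individual groups $\mathrm{GL}_{n_s}(F_s)$; taking $F$ to be the compositum of $\mathbf{F}_q$ and all the finite fields $F_s$, and using $n_s \leq 3$, each such factor embeds into $\mathrm{GL}_3(F)$ via the block inclusion $\mathrm{GL}_{n_s}(F_s) \hookrightarrow \mathrm{GL}_3(F)$. Splicing this refined chain above $\Gamma \cap K_1$ onto the congruence chain below it gives a single subnormal series of $\Gamma$ all of whose subquotients are either $p$-groups or subgroups of $\mathrm{GL}_3(F)$, as required. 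The delicate points to get right are the centrality of $z$ and the verification that the simple-module dimension is governed by the three-vertex block pattern rather than by the possibly large algebra $\mathbf{F}_q\langle M_a, M_b, M_c\rangle$ generated by the individual blocks.
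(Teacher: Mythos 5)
Your proof is correct, but it follows a genuinely different route from the one in the paper. The paper proceeds by induction on $k$ with a case distinction on how many of $M_a, M_b, M_c$ are invertible: when at least two are, an explicit block-diagonal conjugation places $\Gamma$ inside $\mathrm{SL}_3(\mathcal A)$ for the commutative algebra $\mathcal A = \mathbf F_q[M_bM_cM_a]$ and a congruence quotient finishes; when at most one is, a conjugation places $\Gamma$ inside $\mathrm{GL}_3$ of a block-triangular subalgebra of $\mathrm{Mat}_{k\times k}(\mathbf F_q)$ surjecting onto $\mathrm{Mat}_{(k-1)\times(k-1)}(\mathbf F_q)$, and one descends to $k-1$. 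You replace this induction by a single structural analysis of the algebra $\mathcal B$ generated by the block idempotents and the nilpotents $N_a, N_b, N_c$: the radical filtration $1+J^i$ supplies the $p$-group factors, and the bound $n_s\le 3$ on the Wedderburn blocks of $\mathcal B/J$ follows from the centrality of $z=N_aN_bN_c+N_bN_cN_a+N_cN_aN_b$ (the identity $T_1M_a=M_aT_2$, and its two rotations, is exactly what is needed) combined with the explicit submodule $U_1\oplus N_cU_1\oplus N_bN_cU_1$, which forces each Peirce component of a simple $\mathcal B$-module to be a line over the endomorphism field. I checked the load-bearing steps --- the centrality of $z$, the closure of the candidate submodule under the generators (the only nontrivial verification being $N_a(N_bN_cU_1)=\lambda U_1\subseteq U_1$ with $\lambda$ the Schur scalar of $z$), and the assembly of the series via the coordinate projections of $(\mathcal B/J)^\times\cong\prod_s\mathrm{GL}_{n_s}(F_s)$ followed by passage to the compositum $F$ --- and they all hold. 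Your approach buys the absence of any case analysis or induction, a conceptual explanation of the constant $3$ (the quiver underlying $\mathcal B$ is a $3$-cycle, so simple modules have dimension vector at most $(1,1,1)$), the fact that the series is actually normal in $\Gamma$, and an immediate generalization to the analogous $n$-cycle configurations in $\mathrm{SL}_{nk}$; the paper's approach is more elementary, using no Jacobson radical or Wedderburn theory, and exhibits the relevant subgroups and quotient maps explicitly.
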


\begin{proof}
We work by induction on $k$. In the base case $k=1$, there is nothing to prove. Let now $k>1$. We distinguish two cases. 

Assume first that at least two of the matrices $M_a, M_b, M_c \in \mathrm{Mat}_{k \times k}(\mathbf F_q)$ have a non-zero determinant. Without loss of generality, we may assume that $\det(M_a) \neq 0 \neq \det(M_b)$. Let $x$ be the block-diagonal matrix in $\mathrm{GL}_{3k}(\mathbf F_q)$ defined by 
$$x=\left(\begin{array}{ccc}
M_a^{-1} & 0 & 0\\
0 & 1 & 0\\
0 & 0 & M_b
\end{array}\right), 
$$
and set $U'_a = xU_a x^{-1}$,  $U'_b = xU_b x^{-1}$ and  $U'_c = xU_c x^{-1}$. Then $U'_a = \left(\begin{array}{ccc}
1 & 1 & 0\\
0 & 1 & 0\\
0 & 0 & 1
\end{array}\right)$, 
$U'_b = \left(\begin{array}{ccc}
1 & 0 & 0\\
0 & 1 & 1\\
0 & 0 & 1
\end{array}\right)$, and 
$U'_c = \left(\begin{array}{ccc}
1 & 0 & 0\\
1 & 1 & 0\\
M_b M_c M_a & 0 & 1
\end{array}\right)$. In particular, the group $x \Gamma x^{-1}$ is a subgroup of  $\mathrm{SL}_3(\mathcal A)$, where $\mathcal A \leq  \mathrm{Mat}_{k \times k}(\mathbf F_q)$ is the $\mathbf F_q$-subalgebra generated by the single element $M_b M_c M_a$. Thus $\mathcal A$ is commutative, and is in fact a quotient of the polynomial ring $\mathbf F_q[T]$. Denoting by $\mathcal I$ a maximal ideal in $\mathcal A$, we obtain a congruence quotient $\mathrm{SL}_3(\mathcal A) \to \mathrm{SL}_3(\mathcal A/ \mathcal I)$ whose kernel is well-known to be a $p$-group (see \cite[\S 26.4]{DrutuKapovich}). Since $\mathcal A/ \mathcal I$ is a finite field extension of $\mathbf F_q$,  the required conclusion holds in this case. 

We now assume that at most one of the matrices $M_a, M_b, M_c \in \mathrm{Mat}_{k \times k}(\mathbf F_q)$ has a non-zero determinant. Then at least two of them have zero determinant. Without loss of generality, we may assume that $\det(M_b) =\det(M_c) =  0$. Then there exists $L_1, L_3 \in \mathrm{GL}_k(\mathbf F_q)$ such that the first column of $M_c L_1^{-1}$ and the first column of $M_b L_3^{-1}$ are both zero. If $\det(M_a) = 0$, we choose similarly a matrix $L_2 \in \mathrm{GL}_k(\mathbf F_q)$ such that the first column of $M_a L_2^{-1}$ is zero. If $\det(M_a) \neq 0$, then we set $L_2 = L_1M_a \in  \mathrm{GL}_k(\mathbf F_q)$. In either case, we see that the matrices $M'_a = L_1 M_a L_2^{-1}$, $M'_b = L_2 M_b L_3^{-1}$ and $M'_c = L_3 M_c L_1^{-1}$ all belong to 
$$\mathcal B = \{
\left(\begin{array}{ccc}
* & *  \\
0 & X
\end{array}\right)  \mid X \in \mathrm{Mat}_{(k-1) \times (k-1)}(\mathbf F_q)\}.
$$
Let $y$ be the block-diagonal matrix  in $\mathrm{GL}_{3k}(\mathbf F_q)$ defined by 
$y=\left(\begin{array}{ccc}
L_1 & 0 & 0\\
0 & L_2 & 0\\
0 & 0 & L_3
\end{array}\right).
$
Then the group $y\Gamma y^{-1}$ is generated by $U'_a = yU_a y^{-1} =  \left(\begin{array}{ccc}
1 & M'_a & 0\\
0 & 1 & 0\\
0 & 0 & 1
\end{array}\right)$,
 $U'_b = yU_b y^{-1} = \left(\begin{array}{ccc}
1 & 0 & 0\\
0 & 1 & M'_b\\
0 & 0 & 1
\end{array}\right)$,
and 
 $U'_c = yU_c y^{-1} =\left(\begin{array}{ccc}
1 & 0 & 0\\
0 & 1 & 0\\
M'_c & 0 & 1
\end{array}\right)$.
In particular $y\Gamma y^{-1}$ is a subgroup of   $\mathrm{GL}_3(\mathcal B)$. Clearly, the set $\mathcal B$ is a $\mathbf F_q$-subalgebra of $\mathrm{Mat}_{k \times k}(\mathbf F_q)$ that maps onto $\mathrm{Mat}_{(k-1) \times (k-1)}(\mathbf F_q)$. This yields a homomorphism $\mathrm{GL}_3(\mathcal B) \to \mathrm{GL}_{3(k-1)}(\mathbf F_q)$. It is easy to see that its kernel has a normal $p$-subgroup such that the quotient is isomorphic to $\mathrm{GL}_{3}(\mathbf F_q)$. Restricting to  $y\Gamma y^{-1}$, we obtain a homomorphism taking values in  $\mathrm{SL}_{3(k-1)}(\mathbf F_q)$, whose kernel decomposes as an extension of a $p$-group by a subgroup of $\mathrm{GL}_{3}(\mathbf F_q)$. Moreover, the image of the generators  $U'_a$, $U'_b$ and $U'_c$ under that homomorphism generate a subgroup of  $\mathrm{SL}_{3(k-1)}(\mathbf F_q)$ to which the induction hypothesis applies. The required conclusion follows. 
\end{proof}

\subsection{Further properties of  $\mathscr G_{HB_2^{(2)}}(p)$ and $\mathscr G_{\widetilde{C_2}}(p)$}\label{sec:KMS-HC2}

In view of Section~\ref{sec:KMS-epim}, the  quotients of 
 $\mathscr G_{ \widetilde{A_2}}(p)$  described in the previous section are also quotients of each hyperbolic KMS group over $\mathbf F_p$.  We now focus more specifically on $\mathscr G_{HC_2^{(2)}}(p)$ and $\mathscr G_{HB_2^{(2)}}(p)$, first establishing an analogue of Proposition~\ref{prop:Morphism-A2t-block}.

\begin{prop}\label{prop:Morphism-HC2-block}
Let $q$ be a positive power of the odd prime $p$, and let $k \geq 1$ be an integer. Let also $M_a, M_b, M_c \in \mathrm{Mat}_{k \times k}(\mathbf F_q)$ be any three $k\times k$-matrices with coefficients in $\mathbf F_q$. Set 
$$
 V_a = \left(\begin{array}{cccc}
1 & 0 & 0 & 0\\
0 & 1 & 0 & 0\\
M_a & 0 & 1 & 0\\
0 & 0 & 0 & 1
\end{array}\right),
\
V_b= \left(\begin{array}{cccc}
1 & 0 & 0 & 0\\
0 & 1 & 0 & 0\\
0 & 0 & 1 & 0\\
0 & M_b & 0 & 1
\end{array}\right)
\text{ and }
V_c= \left(\begin{array}{cccc}
1 & 0 & 0 & M_c\\
0 & 1 & M_c & 0\\
0 & 0 & 1 & 0\\
0 & 0 & 0 & 1
\end{array}\right),$$
where each entry represents a $k\times k$-block. Then  the assignments $(a, b, c) \mapsto (V_a, V_b, V_c)$ 
extend to a homomorphism 
$\sigma_{M_a, M_b, M_c} \colon 
\mathscr G_{\widetilde{C_2}}(p) \to \SL_{4k}(\mathbf F_q)$.

Similarly, the assignments $(a, b, c) \mapsto (V'_a, V'_b, V'_c)$, where 
$$
V'_a = \left(\begin{array}{cccc}
1 & 0 & 0 & M_a\\
0 & 1 & M_a & 0\\
0 & 0 & 1 & 0\\
0 & 0 & 0 & 1
\end{array}\right),
\
V'_b= \left(\begin{array}{cccc}
1 & 0 & 0 & 0\\
M_b & 1 & 0 & 0\\
0 & 0 & 1 & -M_b\\
0 & 0 & 0 & 1
\end{array}\right)
\text{ and }
V'_c=\left(\begin{array}{cccc}
1 & 0 & 0 & 0\\
0 & 1 & 0 & 0\\
0 & 0 & 1 & 0\\
0 & M_c & 0 & 1
\end{array}\right)
$$
extend to a homomorphism 
$\sigma'_{M_a, M_b, M_c} \colon \mathscr G_{HB_2^{(2)}}(p) \to \SL_{4k}(\mathbf F_q)$.
\end{prop}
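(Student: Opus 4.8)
The plan is to verify directly that the two assignments $(a,b,c)\mapsto(V_a,V_b,V_c)$ and $(a,b,c)\mapsto(V'_a,V'_b,V'_c)$ respect every defining relation of $\mathscr G_{\widetilde{C_2}}(p)$ and $\mathscr G_{HB_2^{(2)}}(p)$ as listed in Section~\ref{sec:KMS-presentations}. In both presentations each relation is of one of three shapes: a $p$-th power relation $a^p,b^p,c^p$; a ``degenerate'' relation among the pair $\{a,b\}$ (namely $[a,b]$ for $\widetilde{C_2}$, and $[a,b,a],[a,b,b]$ for $HB_2^{(2)}$); or one of two families of $\mathscr U_4(p)$-type iterated commutators ($[b,c,b],[b,c,c,b],[b,c,c,c]$ together with $[a,c,a],[a,c,c,a],[a,c,c,c]$ in the first case, and $[c,b,c],[c,b,b,c],[c,b,b,b]$ together with $[c,a,c],[c,a,a,c],[c,a,a,a]$ in the second). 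So it suffices to confirm a finite list of matrix identities, each asserting that a prescribed $p$-th power or iterated commutator equals the identity matrix.

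The computational engine I would use is block-elementary-matrix arithmetic. Writing $E_{ij}$ for the $4\times4$ block matrix whose $(i,j)$-block is $I_k$ and whose other blocks vanish, one has $(ME_{ij})(M'E_{kl})=\delta_{jk}(MM')E_{il}$ for coefficient matrices $M,M'\in\mathrm{Mat}_{k\times k}(\mathbf F_q)$, the coefficients being multiplied in the order in which they occur. Each image is of the form $I+N$ with $N$ a sum of such terms, and a one-line inspection shows $N^2=0$ for all six generators (for instance $V_c=I+M_c(E_{14}+E_{23})$ satisfies this since none of $E_{14}E_{14}$, $E_{14}E_{23}$, $E_{23}E_{14}$, $E_{23}E_{23}$ survives). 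Since $N^2=0$ and $pN=0$ in characteristic $p$, we obtain $(I+N)^p=I$, which disposes of the three $p$-th power relations at once.

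For the commutator families I would invoke the exact identity, valid whenever $P^2=Q^2=0$,
\[
[I+P,\,I+Q]=I+(PQ-QP)+(PQP-QPQ)+(PQ)^2,
\]
obtained by expanding $(I-P)(I-Q)(I+P)(I+Q)$. Feeding in the relevant $P,Q$ produces an explicit expression for each first commutator (e.g. $[V_b,V_c]$ is supported in two or three block positions); bracketing again against the next generator and iterating, I would check case by case that the accumulated term sits in a block position that fails to compose with the next bracket, so that every iterated commutator demanded by the presentation collapses to $I$. The ``degenerate'' relations and the $HB_2^{(2)}$ families are handled by the same scheme. Conceptually this is a thickening, with matrix coefficients, of the Chevalley–Steinberg commutator relations among the root groups of $\Sp_4$ that underlie Proposition~\ref{prop:PresentationsUnipotent}.

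The main obstacle is exactly that the coefficients $M_a,M_b,M_c$ are arbitrary and need not commute, so one cannot simply quote the commutative commutator relations of $\Sp_4$: the intermediate brackets carry genuine noncommutative words, such as $M_cM_bM_c$ or $M_aM_b+M_bM_a$, in definite block positions. I must therefore establish that each relator-commutand vanishes as a block matrix, not merely after reordering the $M$'s. This succeeds because, in every required relation, the term surviving the first bracket lands in a block position annihilated by the subsequent bracket \emph{regardless of its coefficient} — precisely the root-string combinatorics of $\Sp_4$ — so the noncommutativity, while forcing careful bookkeeping, never obstructs the collapse to the identity.
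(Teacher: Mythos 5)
Your proposal is correct and matches the paper's approach: the paper's entire proof is the single sentence ``Straightforward computation in view of the presentation of the KMS groups from Section~\ref{sec:KMS-presentations}'', and your block-elementary-matrix scheme (each generator is $I+N$ with $N^2=0$, so $(I+N)^p=I$ in characteristic $p$, and the exact identity $[I+P,I+Q]=I+(PQ-QP)+(PQP-QPQ)+(PQ)^2$ reduces every relator to a check that the surviving block positions fail to compose) is precisely that computation carried out explicitly and correctly. Spot-checking the key brackets (e.g. $[V_b,V_c]=I+M_bM_cE_{43}-M_cM_bE_{12}-M_cM_bM_cE_{13}$ and $[V'_a,V'_b]=I-(M_aM_b+M_bM_a)E_{24}$) confirms that all relators of both presentations do collapse to the identity regardless of the noncommutativity of $M_a,M_b,M_c$, exactly as you argue.
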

\begin{proof}
Straightforward computation in view of  the presentation of the KMS groups  from Section~\ref{sec:KMS-presentations}. 
\end{proof}

It turns out that the outcome of the constructions from  Proposition~\ref{prop:Morphism-HC2-block} is very different for 
the groups $\mathscr G_{\widetilde{C_2}}(p)$ and 
 $\mathscr G_{HB_2^{(2)}}(p)$.  Indeed, for $\mathscr G_{\widetilde{C_2}}(p)$, the quotients from Proposition~\ref{prop:Morphism-HC2-block} are subjected to analogous restrictions as in Proposition~\ref{prop:UaUbUc}. This can easily be established by similar arguments. We omit the details. Instead, we focus on $\mathscr G_{HB_2^{(2)}}(p)$, for which the situation is  strikingly different, as illustrated by the following.

\begin{prop}\label{prop:Quotient-in-S_d}
Let $p$ be an odd prime. For any prime $k \neq p$,  there exist matrices $M_a, M_b, M_c \in \mathrm{Mat}_{k \times k}(\mathbf F_p)$ such that the subgroup of $\SL_{4k}(\mathbf F_p)$ generated by the three elements $V'_a, V'_b, V'_c$, defined as in Proposition~\ref{prop:Morphism-HC2-block}, has a quotient in $\mathscr S_{k-1}$.
\end{prop}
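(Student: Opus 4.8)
The plan is to exploit the freedom built into Proposition~\ref{prop:Morphism-HC2-block}: for \emph{every} choice of the matrices $M_a, M_b, M_c \in \mathrm{Mat}_{k\times k}(\mathbf F_p)$, the subgroup $\Gamma' = \langle V'_a, V'_b, V'_c\rangle \leq \SL_{4k}(\mathbf F_p)$ is, by construction, a quotient of $\mathscr G_{HB_2^{(2)}}(p)$. It therefore suffices to produce a single triple $(M_a, M_b, M_c)$ for which $\Gamma'$ admits a simple quotient lying in $\mathscr S_{k-1}$. Following Kassabov~\cite{Kassabov2007}, the idea is to arrange that $\Gamma'$ be a classical group of rank growing linearly in $k$; the natural monomial (Weyl-type) subgroup of such a group already contains an alternating group of degree $\asymp k$, in particular $\Alt(k-1)$, so that its central quotient lies in $\mathscr S_{k-1}$.

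First I would analyse the $\mathbf F_p$-subalgebra $\mathcal A \leq \mathrm{Mat}_{k\times k}(\mathbf F_p)$ generated by $M_a, M_b, M_c$. The elements $V'_a, V'_b, V'_c$ are root-type elements for the symplectic datum underlying $\mathscr G_{HB_2^{(2)}}(p)$: each is a product of at most two elementary blocks $e_{ij}(M)$, and the standard commutator identities $[e_{ij}(X), e_{jl}(Y)] = e_{il}(XY)$ (for distinct $i,j,l$) show that $\Gamma'$ contains $e_{il}(w)$ for every word $w$ in $M_a, M_b, M_c$. Hence $\Gamma'$ contains the elementary subgroup attached to $\mathcal A$ inside the ambient classical group over $\mathcal A$. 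I would then choose $M_a, M_b, M_c$ from the regular representation of $\mathbf Z/k$ on $\mathbf F_p^k$ (a $k$-cycle together with a suitable rank-one matrix); since $k \neq p$, the algebra $\mathbf F_p[\mathbf Z/k]$ is separable, and one arranges that $\mathcal A$ is the \emph{full} matrix algebra $\mathrm{Mat}_{k\times k}(\mathbf F_p)$, so that $\Gamma'$ acts absolutely irreducibly on $\mathbf F_p^{4k}$.

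With irreducibility secured, I would invoke Shangzhi Li's results~\cite{Li} on subgroups of special linear groups generated by root elements: an irreducible such subgroup must be (a central extension of) a classical simple group of type $\mathrm{PSL}$, $\mathrm{PSp}$ or $\mathrm{PSU}$ of the ambient dimension, the primitivity afforded by irreducibility excluding both the imprimitive cases and the small exceptional configurations. As the ambient dimension is $4k$, the resulting simple quotient $S = \Gamma'/Z(\Gamma')$ acts in dimension $4k$; its monomial subgroup contains $\Alt(2k) \supseteq \Alt(k-1)$, so $S \in \mathscr S_{k-1}$, which is exactly what is required.

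The main obstacle, and the step demanding real care, is the middle one: exhibiting an \emph{explicit} triple $(M_a, M_b, M_c)$ of the prescribed shape whose iterated commutators fill out all of $\mathrm{Mat}_{k\times k}(\mathbf F_p)$ while simultaneously guaranteeing that $\Gamma'$ acts irreducibly, so that Li's dichotomy lands in the classical case of unbounded rank rather than in an imprimitive decomposition or a defective-characteristic degeneracy. It is precisely here that the hypotheses $k$ prime and $k \neq p$ intervene, ensuring the semisimplicity of the underlying cyclic module and the primitivity needed to rule out the bounded-rank alternatives.
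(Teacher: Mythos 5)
There is a genuine gap at the central step of your argument. You claim that the standard commutator identities $[e_{ij}(X),e_{jl}(Y)]=e_{il}(XY)$ show that $\Gamma'=\langle V'_a,V'_b,V'_c\rangle$ contains $e_{il}(w)$ for \emph{every} word $w$ in $M_a,M_b,M_c$, hence the whole elementary subgroup over the subalgebra $\mathcal A$ they generate. This is unjustified, for two reasons. First, $V'_a$ and $V'_b$ are not single elementary block matrices: $V'_a=I+M_aE_{14}+M_aE_{23}$ and $V'_b=I+M_bE_{21}-M_bE_{34}$, so the commutators one actually gets are constrained combinations; e.g.\ $[V'_a,V'_b]$ has block $-(M_aM_b+M_bM_a)$, a \emph{symmetrized} product, not an arbitrary word. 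Second, even when the generators are honest elementary matrices, the cyclic pattern of the block positions only lets you form products in a fixed cyclic order, not arbitrary noncommutative words; this is exactly why the closely parallel construction for $\mathscr G_{\widetilde{A_2}}(p)$ in Proposition~\ref{prop:Morphism-A2t-block} does \emph{not} produce large-rank quotients (Proposition~\ref{prop:UaUbUc} shows those groups decompose into $p$-groups and subgroups of $\mathrm{GL}_3$ over a field). If your "fill out the full matrix algebra, hence act irreducibly, hence classical of rank $\asymp k$" argument were valid as stated, it would apply equally to the $\widetilde{A_2}$ construction and contradict Proposition~\ref{prop:UaUbUc}. Your appeal to Li's results is also to the wrong statement: what is available (Theorem~\ref{thm:Li}) classifies overgroups of a \emph{field extension subgroup} $\mathrm{SL}_2(\mathbf F_{p^k})\leq\mathrm{SL}_{2k}(\mathbf F_p)$, not irreducible subgroups of $\mathrm{SL}_{4k}$ generated by "root elements" (and $V'_a,V'_b,V'_c$ are not transvections anyway).

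The paper's proof repairs exactly this. It chooses $M_a,M_b$ to be the sub- and super-diagonal shifts, so that $M_1=M_aM_b+M_bM_a$ is an explicit invertible diagonal matrix, and sets $M_c=M_1^{-1}C$ for a Singer element $C$ with prescribed last column. Then $\langle V'_c,[V'_a,V'_b]\rangle$ is a field extension subgroup $\mathrm{SL}_2(\mathbf F_{p^k})$ of a copy of $\mathrm{SL}_{2k}(\mathbf F_p)$ sitting in blocks $2$ and $4$ (Proposition~\ref{prop:SL2-block}(i), which uses $k$ prime); a further iterated commutator $[V'_c,V'_b,V'_b,V'_a,V'_a]$ produces a third unipotent block that fails to normalize this field extension subgroup, so Theorem~\ref{thm:Li} (via Proposition~\ref{prop:SL2-block}(ii), where $k\neq p$ is used) forces a normal $\mathrm{Sp}_{2k}(\mathbf F_p)$ or all of $\mathrm{SL}_{2k}(\mathbf F_p)$. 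One still has to show separately that $H=\langle V'_a,V'_b,V'_c\rangle$ is perfect (using a torus element $d$ of the field extension subgroup with $[d,(V'_a)^{-1}]=V'_a$, etc.) and that $V'_c$ survives in a minimal simple quotient (else $H$ would be a quotient of the $p$-group $\langle V'_a,V'_b\rangle$) before concluding that some simple quotient contains $\Alt(k-1)$. None of these steps is present in your outline, and the first one cannot be bypassed by the algebra-generation argument you propose.
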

 
The proof of that proposition  requires some preparation. 
We start with a special case of a general result due to Shangzhi Li. 

\begin{thm}[{See \cite[Theorem~1]{Li}}]\label{thm:Li}
Let $F$ be a finite field of order $q$ and $K$ be a finite extension of $F$ of degree $k$. We assume that $k$ is prime and that $(q, k) \neq (2, 2)$.  Embed the group $N = \SL_2(K)$ as a subgroup of $G = \mathrm{SL}_{2k}(F)$ by identifying the natural $N$-module $K^2$ with the $G$-module $F^{2k}$. Let also $X$ be a subgroup of $G$ containing $N$. Then one of the following holds:
\begin{enumerate}[(i)]
\item  $X$ normalizes $N$.

\item $X$ contains a normal subgroup isomorphic to $\mathrm{Sp}_{2k}(F)$.

\item $X = G$. 

\end{enumerate}
	
\end{thm}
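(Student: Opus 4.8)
The plan is to prove the statement by viewing the embedding $N = \SL_2(K) \hookrightarrow G = \SL_{2k}(F)$ as the restriction-of-scalars (field-extension) embedding and then applying Aschbacher's classification of subgroups of classical groups to the overgroup $X$. First I would record the basic module-theoretic properties of $N$. Since $K^2$ is an absolutely irreducible $K[N]$-module, its restriction to $F$ is irreducible, with $\mathrm{End}_{F[N]}(F^{2k}) = K$; in particular $C_G(N) = K^\ast \cap G$, and the normalizer $N_G(N)$ is exactly the $\mathcal C_3$-subgroup $\Sigma L_2(K) \cap \SL_{2k}(F)$, which is conclusion (i). I would also observe that the alternating $K$-form preserved by $\Sp_2(K) = \SL_2(K)$, composed with the trace $\mathrm{Tr}_{K/F}$, yields a nondegenerate alternating $F$-form on $F^{2k}$ preserved by $N$; thus $N \leq \Sp_{2k}(F)$, and $\Sp_{2k}(F)$ is the natural overgroup producing conclusion (ii). The hypothesis $q^k \geq 5$ (guaranteed by $k \geq 2$, $q \geq 2$, apart from the excluded pair $(q,k)=(2,2)$) ensures that $N$ is quasisimple, with $\bar N = \PSL_2(q^k)$ simple.

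Next I would run through the geometric Aschbacher classes $\mathcal C_1,\dots,\mathcal C_8$ and eliminate all but the two just identified. Class $\mathcal C_1$ is impossible since $N$, hence $X$, is irreducible. For $\mathcal C_2$ (imprimitivity), a block system of size $t \mid 2k$ would give a transitive action of $\bar N = \PSL_2(q^k)$ of degree $t \leq 2k$; but the minimal faithful permutation degree of $\PSL_2(q^k)$ is $q^k + 1 > 2k$, forcing $t = 1$, a contradiction. Classes $\mathcal C_4$ and $\mathcal C_7$ are excluded because the restriction-of-scalars module is tensor-indecomposable over $F$. Class $\mathcal C_6$ requires $2k$ to be a prime power: for $k$ an odd prime $2k$ has two distinct prime divisors and this fails, while for $k = 2$ the $\mathcal C_6$-subgroups have order too small to contain $N$. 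Class $\mathcal C_5$ (subfield subgroups) is ruled out because the field generated by the matrix entries of $N$ is all of $F$, so $N$ cannot be conjugated into $\SL_{2k}(q_0)$ for a proper subfield $\mathbb F_{q_0} \subsetneq F$. This leaves $\mathcal C_3$, where $X$ lies in the $K$-semilinear normalizer $N_G(N)$, giving (i); the symplectic case inside $\mathcal C_8$, giving (ii); or the almost simple class $\mathcal S$.

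The main obstacle is the class $\mathcal S$ analysis. Here $S = F^\ast(X/Z)$ is a nonabelian simple group with $\bar N = \PSL_2(q^k) \leq S$ acting absolutely irreducibly in dimension $2k$ over $F$. I would argue that the defining-characteristic bounds together with the Landazuri--Seitz--Zalesskii cross-characteristic lower bounds for representation dimensions, applied in dimension $2k$ --- which is very small relative to the field $q^k$ --- leave only a short list of candidates for $S$: either $S = \PSL_2(q^k)$ itself (returning to conclusion (i)), or $S$ is one of the classical groups on the natural $2k$-dimensional module, namely $\mathrm{PSp}_{2k}(q)$ (conclusion (ii), since then $X$ contains the normal subgroup $\Sp_{2k}(F)$) or $\PSL_{2k}(q)$ (conclusion (iii), $X = G$). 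Ruling out every other simple group $S$, together with the small-field exceptions excluded by $(q,k) \neq (2,2)$, is the delicate part and would consume the bulk of the work; this is precisely where Li's original argument instead uses a direct analysis of the root (transvection) subgroups of $N$ and their $X$-conjugates, showing that the normal closure generates either a symplectic or the full linear group, and I would present this root-subgroup generation method as an alternative route bypassing the representation-theoretic enumeration. Assembling the cases yields exactly the trichotomy: $X$ normalizes $N$, or $X \trianglerighteq \Sp_{2k}(F)$, or $X = G$.
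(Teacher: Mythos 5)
You should first note that the paper contains no proof of this statement to compare against: Theorem~\ref{thm:Li} is imported verbatim from Li's work (\cite[Theorem~1]{Li}) and is used as a black box in the proof of Proposition~\ref{prop:SL2-block}. The relevant comparison is therefore with Li's original argument, which proceeds by a direct analysis of transvection (root) subgroups of $N$ and the subgroup generated by their $X$-conjugates, rather than through Aschbacher's subgroup structure theorem. Your Aschbacher-based strategy is a legitimate alternative in principle, and your preliminary reductions are sound: the restriction-of-scalars module $F^{2k}$ is irreducible with endomorphism algebra $K$, the normalizer of $N$ is the $\mathcal{C}_3$-subgroup accounting for conclusion (i), and the form $\mathrm{Tr}_{K/F}\circ\langle\cdot,\cdot\rangle_K$ exhibits $N \leq \Sp_{2k}(F)$, accounting for conclusion (ii).

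However, the proposal has a genuine gap at exactly the point where the theorem's content lies: the class $\mathcal{S}$ analysis, which you yourself call ``the bulk of the work,'' is asserted rather than performed. Claiming that Landazuri--Seitz--Zalesskii bounds ``leave only a short list'' is a plan, not a proof; for $k=2$ the dimension is $4$ and there are many simple groups with $4$-dimensional projective representations in the defining characteristic (symplectic, unitary, spin-type groups), and deciding which of them contain $\PSL_2(q^2)$ and whether the containment forces (ii) or (iii) is precisely the delicate case analysis you defer --- and then you fall back on citing Li's root-subgroup method, which is circular in a blind proof. There are also secondary soft spots: the $\mathcal{C}_5$ elimination via ``matrix entries'' is basis-dependent, and should instead use the trace field, which equals $F$ because $\mathrm{Tr}_{K/F}$ is surjective; in $\mathcal{C}_8$ you treat only the symplectic stabilizer, leaving orthogonal and unitary forms (and, in characteristic $2$, quadratic forms) unexcluded, where the correct argument is that every $N$-invariant bilinear form on $F^{2k}$ is $\lambda\circ\langle\cdot,\cdot\rangle_K$ for an $F$-linear functional $\lambda$ on $K$, hence alternating; the $\mathcal{C}_6$ order comparison for $k=2$ is not actually carried out; and your description of class $\mathcal{S}$ misstates that $\bar N$ acts absolutely irreducibly --- $N$ has endomorphism algebra $K \neq F$, and only the ambient almost simple group is required to be absolutely irreducible. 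As written, the proposal establishes the routine reductions but not the theorem.
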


The group $N$ embedded as a subgroup of $G$ as in Theorem~\ref{thm:Li} is called a \textbf{field extension subgroup}. Specializing Theorem~\ref{thm:Li}, we obtain the following result describing subgroups of $  \SL_{2k}(\mathbf F_p)$ generated by three unipotent block matrices. 

\begin{prop}\label{prop:SL2-block}
Let $p, k$ be primes with $p >2$. Let also $M_1, M_2, M_3  \in \mathrm{Mat}_{k \times k}(\mathbf F_p)$ and define the elements $V_1, V_2, V_3 \in G = \SL_{2k}(\mathbf F_p)$ by 
$V_1 = \left(\begin{array}{cc}
1 &  M_1\\
0 & 1 
\end{array}\right)$,
$V_2 = \left(\begin{array}{cc}
1 &  0\\
M_2 & 1 
\end{array}\right)$ and 
$V_3 = \left(\begin{array}{cc}
1 &  M_3\\
0 & 1 
\end{array}\right)$,
where each entry represents a $k\times k$-block. We assume that $M_1 M_2$ is invertible, of multiplicative order~$p^k-1$. Then the following assertions hold. 
\begin{enumerate}[(i)]
\item 
$\langle V_1, V_2 \rangle$ is isomorphic to a field extension subgroup $\SL_2(\mathbf F_{p^k}) \leq G$. 

\item 
If in addition, we have $M_1 M_2 M_3 \neq M_3 M_2 M_1$ and $p \neq k$, then $\langle V_1, V_2, V_3 \rangle $ either contains a normal subgroup isomorphic to $\mathrm{Sp}_{2k}(\mathbf F_p)$, or is the whole group $G = \SL_{2k}(\mathbf F_p)$. 
\end{enumerate}
\end{prop}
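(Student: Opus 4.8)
The plan is to prove (i) first and then feed it into Theorem~\ref{thm:Li} to obtain (ii). Throughout set $M=M_1M_2$ and, for a $k\times k$ block $S$, abbreviate $e_{12}(S)=\left(\begin{smallmatrix} I & S \\ 0 & I\end{smallmatrix}\right)$ and $e_{21}(S)=\left(\begin{smallmatrix} I & 0 \\ S & I\end{smallmatrix}\right)$. Since $M$ is invertible of order $p^k-1$, an order coprime to $p$, the matrix $M$ is semisimple. A short divisibility argument shows its minimal polynomial is irreducible of degree $k$: if the characteristic polynomial of $M$ split off a factor of degree $d<k$, then $\mathrm{ord}(M)$ would divide $\operatorname{lcm}_i(p^{d_i}-1)$ with all $d_i<k$, which cannot be a multiple of $p^k-1$ once $p^k-1$ has a primitive prime divisor (available for $k$ prime and $p$ odd by Zsygmondy, the case $k=2$ being checked by hand). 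Hence $K:=\mathbf F_p[M]\subseteq\mathrm{Mat}_{k\times k}(\mathbf F_p)$ is a field isomorphic to $\mathbf F_{p^k}$, the matrix $M$ is non-derogatory, its centralizer in $\mathrm{Mat}_{k\times k}(\mathbf F_p)$ equals $K$, and $\mathbf F_p^k$ is a one-dimensional $K$-space.

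Next I would conjugate by the block-diagonal matrix $g=\mathrm{diag}(I,M_1)\in\mathrm{GL}_{2k}(\mathbf F_p)$ (legitimate since $M_1$ is invertible, and harmless because conjugation in $\mathrm{GL}_{2k}(\mathbf F_p)$ preserves every assertion in the statement). A direct computation replaces $(V_1,V_2,V_3)$ by $(e_{12}(I),\,e_{21}(M),\,e_{12}(N_3))$ with $N_3=M_3M_1^{-1}$. As $I,M\in K$, the elements $e_{12}(I),e_{21}(M)$ lie in the field-extension subgroup $N:=\SL_2(K)\leq\SL_{2k}(\mathbf F_p)$, i.e. the matrices all of whose four blocks lie in $K$. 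It then remains to show $\langle e_{12}(I),e_{21}(M)\rangle=N$. These are two transvections with distinct centers, so the group lies in no Borel subgroup; it lies in no proper subfield subgroup $\SL_2(\mathbf F)$ because $M$ generates $K$ over $\mathbf F_p$; and it is none of the exceptional subgroups $2.A_4,2.S_4,2.A_5$, since in each of those the transvections share a single center whereas ours do not (and the relevant element orders fail in the remaining small-characteristic cases). By Dickson's classification of subgroups of $\SL_2$ this forces the group to be $N\cong\SL_2(\mathbf F_{p^k})$, proving (i).

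For (ii) I would set $X=\langle V_1,V_2,V_3\rangle\supseteq N$ and invoke Theorem~\ref{thm:Li} with $F=\mathbf F_p$ (so $q=p$) and the field-extension subgroup $N\cong\SL_2(\mathbf F_{p^k})$; its hypotheses hold, as $k$ is prime, $(q,k)=(p,k)\neq(2,2)$ since $p$ is odd, and $p\neq k$. The theorem yields one of three cases: $X$ normalizes $N$; $X$ contains a normal copy of $\mathrm{Sp}_{2k}(\mathbf F_p)$; or $X=G$. The last two are precisely the desired conclusion, so it suffices to exclude the first. In the conjugated coordinates one computes $e_{12}(N_3)\,e_{21}(M)\,e_{12}(N_3)^{-1}=\left(\begin{smallmatrix} I+N_3M & -N_3MN_3 \\ M & I-MN_3\end{smallmatrix}\right)$, and this matrix lies in $N$ (all blocks in $K$) if and only if $N_3\in K$. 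Were $X$ to normalize $N$, this element would lie in $N$, forcing $N_3\in K$; but $K$ is exactly the centralizer of the regular element $M$, so $N_3\in K$ is equivalent to $N_3M=MN_3$, which rearranges to $M_3M_2M_1=M_1M_2M_3$, contradicting the hypothesis. Thus $X$ does not normalize $N$, the first case is excluded, and (ii) follows.

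The genuinely substantive inputs are the two external classification theorems; the rest is bookkeeping. I expect the main obstacle to be the generation statement in (i): proving that two opposite transvections whose product-parameter generates $\mathbf F_{p^k}$ generate the \emph{entire} field-extension $\SL_2$ and not a proper subgroup requires carefully eliminating all subfield and exceptional subgroups, and the small cases $p\in\{3,5\}$ (where $2.A_4,2.S_4,2.A_5$ do contain elements of order $p$) must be handled with some care. Once (i) is secured, part (ii) is a clean application of Theorem~\ref{thm:Li} together with the elementary normalizer computation, whose only real content is that the centralizer of the regular matrix $M$ is precisely the field $K$.
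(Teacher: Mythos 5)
Your reduction is the same as the paper's: conjugating by $\mathrm{diag}(I,M_1)$ to reach the pair $\bigl(e_{12}(I),e_{21}(M)\bigr)$ with $M=M_1M_2$, identifying $K=\mathbf F_p[M]\cong\mathbf F_{p^k}$ and its coincidence with the centralizer of $M$ (the paper gets irreducibility of $\langle M\rangle$ on $\mathbf F_p^k$ from the order hypothesis via Schur--Wedderburn rather than Zsygmondy, but both work), and then feeding the field-extension subgroup into Theorem~\ref{thm:Li} for (ii). Your treatment of (ii) is in fact a small improvement: the paper excludes the normalizing case by analysing the $p$-Sylow subgroups of $N_{\mathrm{GL}_{2k}(\mathbf F_p)}(H)\cong \mathrm{GL}_2(\mathbf F_{p^k})\rtimes\Aut(\mathbf F_{p^k})$, which is where the hypothesis $p\neq k$ enters, whereas your direct computation of $e_{12}(N_3)e_{21}(M)e_{12}(N_3)^{-1}$ reads off $N_3\in K$ from a single block and does not need $p\neq k$ at all.

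The genuine gap is in your proof of the generation statement in (i), which the paper simply quotes from Dickson via \cite{King}. Your stated reason for excluding the exceptional subgroups --- ``in each of $2.\Alt(4)$, $2.\Sym(4)$, $2.\Alt(5)$ the transvections share a single center'' --- is false. Take $p=3$, $k=2$: the subgroup $2.\Alt(5)\cong\SL_2(5)$ of $\SL_2(9)$ has ten Sylow $3$-subgroups, which (being TI) lie in ten distinct Sylow $3$-subgroups of $\SL_2(9)$, i.e.\ its order-$3$ elements are transvections realizing \emph{all ten} centers in $\mathbf P^1(\mathbf F_9)$; moreover two of them with distinct centers do generate it (lift $(123)$ and $(345)$, which generate $\Alt(5)$, to elements of order~$3$). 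Concretely $\bigl\langle\bigl(\begin{smallmatrix}1&1\\0&1\end{smallmatrix}\bigr),\bigl(\begin{smallmatrix}1&0\\\gamma&1\end{smallmatrix}\bigr)\bigr\rangle\cong\SL_2(5)$ when $\gamma$ has order~$4$ in $\mathbf F_9^*$ --- this is exactly the classical exception in Dickson's theorem. So your argument as written would fail for $(p,k)=(3,2)$. What rescues the proposition is the hypothesis that $M$ has order $p^k-1=8$: the exceptional configuration occurs only for $\gamma$ of order~$4$, and one can see this directly by a trace computation (the group contains $e_{12}(I)e_{21}(\pm M)$, whose traces are $2\pm M$, and for $M$ of order $8$ one of these equals $\pm i\notin\{\text{traces of }2.\Alt(5)\}$). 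You should either carry out this trace analysis for the pair $\{\pm M\}$, or do as the paper does and invoke Dickson's theorem in the precise form that identifies its exceptional cases; the vague parenthetical about ``element orders failing in small characteristic'' does not substitute for this, since $2.\Alt(5)$ does contain elements of order $3$ when $p=3$.
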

	
\begin{proof}
An element $C$ of the group $\mathrm{GL}_k(\mathbf F_p)$ of  order $p^k$ is called a \textbf{Singer element}.  If $C$ is a Singer element, the cyclic group $\langle C \rangle$ acts irreducibly on $\mathbf F_p^k$. Therefore, it follows from Schur's lemma and Wedderburn's theorem that the subalgebra of $\mathrm{Mat}_{k \times k}(\mathbf F_p)$ generated by $C$ is a subfield, isomorphic to $\mathbf F_{p^k}$. It then follows from the work of Dickson (see \cite[\S2.1]{King}) that the subgroup of $G$ generated by $ \left(\begin{array}{cc}
1 &  1\\
0 & 1 
\end{array}\right)$
and 
$\left(\begin{array}{cc}
1 &  0\\
C & 1 
\end{array}\right)$ is isomorphic  to a field extension subgroup $\SL_2(\mathbf F_{p^k}) \leq G$. 

By hypothesis, the  matrix $C = M_1 M_2$ is  a Singer element of $\mathrm{GL}_k(\mathbf F_p)$. Moreover, the matrix $d= \left(\begin{array}{cc}
1 &  0\\
0 & M_1 
\end{array}\right)$
conjugates $V_1$ and $V_2$ respectively to  $ \left(\begin{array}{cc}
1 &  1\\
0 & 1 
\end{array}\right)$
and 
$\left(\begin{array}{cc}
1 &  0\\
C & 1 
\end{array}\right)$. The first assertion follows. 

Let now $M_3 \in \mathrm{Mat}_{k \times k}(\mathbf F_p)$ be such that $M_1 M_2 M_3 \neq M_3 M_2 M_1$, and assume that $p \neq k$. We claim that $V_3$ does not normalize the field extension subgroup $\langle V_1, V_2 \rangle$. In view of Theorem~\ref{thm:Li}, the required assertion follows from that claim. 

In order to prove the claim, it suffices to show that $d V_3 d^{-1}=\left(\begin{array}{cc}
1 &  M_3 M_1^{-1}\\
0 & 1 
\end{array}\right)$ 
does not normalize 
$H = \langle  \left(\begin{array}{cc}
1 &  1\\
0 & 1 
\end{array}\right),
\left(\begin{array}{cc}
1 &  0\\
C & 1 
\end{array}\right)
\rangle$. By \cite[Theorem~1]{Li}, the full normalizer of the field extension subgroup $H$ in $\mathrm{GL}_{2k}(\mathbf F_p)$ is isomorphic to  $\mathrm{GL}_2(\mathbf F_{p^k}) \rtimes \Aut(\mathbf F_{p^k})$. In particular, since $p \neq k$, the $p$-Sylow subgroups of that normalizer are all contained in $H$, and are conjugate to $\{\left(\begin{array}{cc}
1 &  X\\
0 & 1 
\end{array}\right) \mid X \in \mathcal C\}$, where $\mathcal C$ denotes the  subalgebra of $\mathrm{Mat}_{k \times k}(\mathbf F_p)$ generated by $C$. Since $d V_3 d^{-1}$ is of order~$p$ and centralizes the latter subgroup, we deduce that $d V_3 d^{-1}$ normalizes $H$ if and only if $M_3 M_1^{-1}$ belongs to $\mathcal C$. As mentionned above, the subalgebra $\mathcal C$ is isomorphic to $\mathbf F_{p^k}$, and is thus commutative. Since $M_1 M_2 M_3 \neq M_3 M_2 M_1$, it follows that $(M_1 M_2) (M_3 M_1^{-1}) \neq M_3  M_2 = (M_3 M_1^{-1})(M_1 M_2)$, so that $C = M_1 M_2$ and $M_3M_1^{-1}$ do not commute. This confirms that $d V_3 d^{-1}$ does not normalize $H$. The claim follows. 
\end{proof}

\begin{proof}[Proof of Proposition~\ref{prop:Quotient-in-S_d}]
Let $M_a$ be the matrix with $(M_a)_{i+1, i} = 1$ for all $i=1, \dots, k-1$ and with all other entries equal to $0$. Let $M_b$ be the matrix with $(M_b)_{i, i+1} = 1$ for all $i=1, \dots, k-1$ and with all other entries equal to $0$. Set $M_1 = M_a M_b + M_b M_a = \mathrm{diag}(1, 2, \dots, 2, 1)$. Choose a Singer element $C \in \mathrm{Mat}_{k \times k}(\mathbf F_p)$ whose last column is $(1, 0, \dots, 0)^\top$; clearly, such a Singer element exists since $\mathrm{GL}_k(\mathbf F_p)$ acts transitively on ordered pairs of linearly independent vectors in $\mathbf F_p^k$. Finally, set $M_c = M_1^{-1} C$. We claim that these matrices satisfy the required conditions. 

In order to verify this, we set $M_2 = M_c$ and $M_3 = M_a M_b M_c M_b M_a$. Observe that $M_1 M_2 M_3 M_1^{-1} = C  \mathrm{diag}(0, 2^{-1}, \dots, 2^{-1}, 1) C \mathrm{diag}(1, 2^{-1}, \dots, 2^{-1}, 0)$. In particular, the last column of  $M_1 M_2 M_3 M_1^{-1}$ is zero. On the other hand, we have $M_3 M_2 =   \mathrm{diag}(0, 2^{-1}, \dots, 2^{-1}, 1) C \mathrm{diag}(1, 2^{-1}, \dots, 2^{-1}, 0) C$. Applying that element to the $n$-th vector $e_k$ of the canonical basis of $\mathbf F_p^k$, we obtain  $M_3 M_2(e_k) = 
\mathrm{diag}(0, 2^{-1}, \dots, 2^{-1}, 1) C(e_1)$ since $C(e_k) = e_1$ by the definition of $C$. Since $C$ is a Singer element, the vector $C(e_1)$ is not collinear with $e_1$. Since the kernel of the linear map represented by the diagonal matrix $\mathrm{diag}(0, 2^{-1}, \dots, 2^{-1}, 1)$ is spanned by 
$e_1$, we deduce that $M_3 M_2(e_k) \neq 0$. It follows that $M_1 M_2 M_3 \neq M_3 M_2 M_1$.

Computations show that
$$[V'_a, V'_b] = 
\left(\begin{array}{cccc}
1 & 0 & 0 & 0\\
0 & 1 & 0 & -M_aM_b - M_bM_a\\
0 & 0 & 1 & 0\\
0 & 0 & 0 & 1
\end{array}\right) 
=
\left(\begin{array}{cccc}
1 & 0 & 0 & 0\\
0 & 1 & 0 & -M_1\\
0 & 0 & 1 & 0\\
0 & 0 & 0 & 1
\end{array}\right) 
$$
and 
$$[V'_c, V'_b, V'_b, V'_a, V'_a] = 
\left(\begin{array}{cccc}
1 & 0 & 0 & 0\\
0 & 1 & 0 & -4M_aM_bM_cM_bM_a\\
0 & 0 & 1 & 0\\
0 & 0 & 0 & 1
\end{array}\right)
=
\left(\begin{array}{cccc}
1 & 0 & 0 & 0\\
0 & 1 & 0 & -4M_3\\
0 & 0 & 1 & 0\\
0 & 0 & 0 & 1
\end{array}\right).
$$
Since $M_1 M_2 =C$ is a Singer element, it follows from Proposition~\ref{prop:SL2-block}(i) that $\langle V'_c, [V'_a, V'_b] \rangle$ is isomorphic to $\SL_2(\mathbf F_{p^k})$. Since $\SL_2(\mathbf F_{p^k})$ is perfect, this implies in particular that $V'_c$ belongs to the derived subgroup of $H = \langle V'_a, V'_b, V'_c \rangle$. 

Moreover, Proposition~\ref{prop:SL2-block}(i) implies that $\langle V'_c, [V'_a, V'_b] \rangle$ is a field extension subgroup of $\SL_{2k}(\mathbf F_p)$, which is itself embedded in $\SL_{4k}(\mathbf F_p)$ as the subgroup consisting of block matrices of the form $\left(\begin{array}{cccc}
1 & 0 & 0 & 0\\
0 & * & 0 & *\\
0 & 0 & 1 & 0\\
0 & * & 0 & *
\end{array}\right)$.
This implies that $\langle V'_c, [V'_a, V'_b] \rangle$ contains the matrix $d = \left(\begin{array}{cccc}
1 & 0 & 0 & 0\\
0 & 2^{-1} & 0 & 0\\
0 & 0 & 1 & 0\\
0 & 0 & 0 & 2
\end{array}\right)$ where, as before, each entry represents a $k \times k$-block.
Now, we compute that the commutator $[d, (V'_a)^{-1}]$ coincides with $V'_a$. Similarly, we have $[d, (V'_b)^{-1}] = V'_b$. It follows that the group $H = \langle V'_a, V'_b, V'_c \rangle$ is perfect.

On the other hand, we have seen above that $M_1 M_2 M_3 \neq M_3 M_2 M_1$. Therefore,  Proposition~\ref{prop:SL2-block}(ii) implies that $\langle V'_c, [V'_a, V'_b], [V'_c, V'_b, V'_b, V'_a, V'_a]\rangle$ either contains a normal subgroup isomorphic to $\mathrm{Sp}_{2k}(\mathbf F_p)$, or it is isomorphic to $\mathrm{SL}_{2k}(\mathbf F_p)$. 

Let finally $S$ be a smallest non-trivial quotient of $H$. Since $H$ is perfect, the group $S$ is a non-abelian finite simple group. The  image of $V'_c$ in $S$ is non-trivial, since otherwise $H$ would be a quotient of $\langle V'_a, V'_b\rangle$, which is a $p$-group. Therefore, the image of the subgroup $\langle V'_c, [V'_a, V'_b], [V'_c, V'_b, V'_b, V'_a, V'_a]\rangle$ must contain a normal subgroup isomorphic to $(P)\mathrm{Sp}_{2k}(\mathbf F_p)$, or   to $(P)\mathrm{SL}_{2k}(\mathbf F_p)$. Any of these groups contain a copy of $\Alt(k-1)$. The required conclusion follows. 
\end{proof}
	
\begin{cor}\label{cor:LargeRankQuotients}
Let $p$ be an odd prime. The groups  $\mathscr G_{HB_2^{(2)}}(p)$ and  $\mathscr G_{HB_2^{(3)}}(p)$ both admit a quotient in $\mathscr S_d$ for all $d$. 
\end{cor}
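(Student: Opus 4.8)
The plan is to assemble the statement directly from Proposition~\ref{prop:Morphism-HC2-block}, Proposition~\ref{prop:Quotient-in-S_d}, and the epimorphisms recorded in Figure~\ref{fig:Epimor-KMS}; no new estimates are needed, since the substantive work has already been carried out in those results. First I would treat $\mathscr G_{HB_2^{(2)}}(p)$. Fix an arbitrary integer $d \geq 1$. Because there are infinitely many primes, I can choose a prime $k \neq p$ with $k-1 \geq d$. By Proposition~\ref{prop:Morphism-HC2-block}, for every choice of $M_a, M_b, M_c \in \mathrm{Mat}_{k \times k}(\mathbf F_p)$ the assignment $(a, b, c) \mapsto (V'_a, V'_b, V'_c)$ extends to a homomorphism $\sigma'_{M_a, M_b, M_c} \colon \mathscr G_{HB_2^{(2)}}(p) \to \SL_{4k}(\mathbf F_p)$, so that the image $H = \langle V'_a, V'_b, V'_c\rangle$ is a quotient of $\mathscr G_{HB_2^{(2)}}(p)$.

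Next I would invoke Proposition~\ref{prop:Quotient-in-S_d}, which, applied to this prime $k$, produces matrices $M_a, M_b, M_c$ for which the corresponding group $H$ admits a quotient $S$ lying in $\mathscr S_{k-1}$. Composing the surjection $\mathscr G_{HB_2^{(2)}}(p) \twoheadrightarrow H$ with the surjection $H \twoheadrightarrow S$ exhibits $S$ as a quotient of $\mathscr G_{HB_2^{(2)}}(p)$. The only remaining point is the monotonicity of the family $\mathscr S_d$: if $S \in \mathscr S_{k-1}$ then $\Alt(k-1) \leq S$, and since $k-1 \geq d$ we have $\Alt(d) \leq \Alt(k-1) \leq S$, whence $S \in \mathscr S_d$. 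As $d$ was arbitrary, $\mathscr G_{HB_2^{(2)}}(p)$ admits a quotient in $\mathscr S_d$ for every $d$.

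Finally I would deduce the statement for $\mathscr G_{HB_2^{(3)}}(p)$ from the epimorphism $\mathscr G_{HB_2^{(3)}}(p) \twoheadrightarrow \mathscr G_{HB_2^{(2)}}(p)$ displayed in Figure~\ref{fig:Epimor-KMS}: precomposing each quotient map onto an element of $\mathscr S_d$ with this epimorphism shows that every finite simple quotient of $\mathscr G_{HB_2^{(2)}}(p)$ is also a quotient of $\mathscr G_{HB_2^{(3)}}(p)$, which yields the desired conclusion for the latter group as well.

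Since the heavy lifting---the construction of the unipotent block matrices, the Singer-element argument, and the appeal to Shangzhi Li's theorem---has already been absorbed into Proposition~\ref{prop:Quotient-in-S_d}, there is no genuine obstacle at this stage. The only things to verify with care are that $k-1$ can be taken arbitrarily large while keeping $k$ prime and distinct from $p$, together with the elementary containment $\mathscr S_{k-1} \subseteq \mathscr S_d$ for $k-1 \geq d$; both are immediate.
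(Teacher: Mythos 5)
Your proposal is correct and follows exactly the paper's own route: combine Proposition~\ref{prop:Morphism-HC2-block} with Proposition~\ref{prop:Quotient-in-S_d} (choosing a prime $k \neq p$ with $k-1 \geq d$ and using the containment $\mathscr S_{k-1} \subseteq \mathscr S_d$), then transfer the result to $\mathscr G_{HB_2^{(3)}}(p)$ via the epimorphism onto $\mathscr G_{HB_2^{(2)}}(p)$ from Section~\ref{sec:KMS-epim}. The paper states this more tersely, but the argument is the same.
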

\begin{proof}
The statement for $\mathscr G_{HB_2^{(2)}}(p)$ is follows by combining Propositions~\ref{prop:Morphism-HC2-block} and~\ref{prop:Quotient-in-S_d}. In view of Section~\ref{sec:KMS-epim}, the group $\mathscr G_{HB_2^{(2)}}(p)$ is a quotient of   $\mathscr G_{HB_2^{(3)}}(p)$. The conclusion follows. 
\end{proof}

\begin{rmk}\label{rem:girth-type}
Propositions~\ref{prop:Morphism-A2t-block}  and~\ref{prop:Morphism-HC2-block} afford simple quotients of the groups $\mathscr G_{ \widetilde{A_2}}(p)$ and $\mathscr G_{HB_2^{(2)}}(p)$ respectively, based on closely related constructions. However,  the outcome of those constructions are in sharp contrast, as one can see by comparing Proposition~\ref{prop:UaUbUc} with Proposition~\ref{prop:Quotient-in-S_d}. Recall that the triangle group $\mathscr G_{ \widetilde{A_2}}(p)$ is of half-girth type $(3, 3, 3)$ whereas $\mathscr G_{HB_2^{(2)}}(p)$  is of half-girth type $(3, 4, 4)$. This somehow corroborates our experimental results on trivalent triangle groups, for which the most stringent restrictions on the existence of small finite simple quotients were observed for the groups of half-girth type $(3, 3, 3)$ as well. In some sense, the results of this study seem to indicate that for generalized triangle groups, the small size of the girth of the vertex links is more tightly related to the scarcity of finite simple quotients than the validity of Kazhdan's property (T).
\end{rmk}

Let $R = \mathbf F_q\langle x, y, z\rangle$ denote the free associative non-commutative ring in $3$~indeterminates over $\mathbf F_q$. 
The homomorphisms highlighted by Propositions~\ref{prop:Morphism-A2t-block}  and~\ref{prop:Morphism-HC2-block} can be factored as the composition of  representations $G \to \mathrm{GL}_d(R)$, where $G \in \big\{\mathscr G_{ \widetilde{A_2}}(p), \mathscr G_{ \widetilde{C_2}}(p), \mathscr G_{HB_2^{(2)}}(p)\big\}$ and $d \in \{3, 4\}$,  followed by the homomorphisms 
$$\mathrm{GL}_d(R) \to \mathrm{GL}_d\big(\mathrm{Mat}_{k\times k}(\mathbf F_q)\big)$$
induced by the  homomorphism of $\mathbf F_q$-algebras $R\to\mathrm{Mat}_{k\times k}(\mathbf F_q)$ defined by the assignments $(x, y, z) \mapsto (M_a, M_b, M_c)$. The following result provides analogous representations for the KMS groups $\mathscr G_{HC_2^{(2)}}(p)$ and $\mathscr G_{HBC_2^{(2)}}(p)$.

\begin{prop}\label{prop:FreeRing}
Let $R = \mathbf F_q\langle x, y, z\rangle$ denote the free associative non-commutative ring in $3$~indeterminates over $\mathbf F_q$, where $q$ is a power of the odd prime $p$. 
Then the assignments 
$$
a \mapsto  \left(\begin{array}{cccc}
1 & x & 0 & 0\\
0 & 1 & 0 & 0\\
0 & 0 & 1 & 0\\
0 & 0 & 0 & 1
\end{array}\right),
\
b\mapsto \left(\begin{array}{cccc}
1 & 0 & 0 & 0\\
0 & 1 & 0 & 0\\
0 & 0 & 1 & 0\\
y & 0 & 0 & 1
\end{array}\right),
\
c\mapsto \left(\begin{array}{cccc}
1 & 0 & 0 & 0\\
0 & 1 & z & 0\\
0 & 0 & 1 & z\\
0 & 0 & 0 & 1
\end{array}\right),$$
extend to a homomorphism 
$
\mathscr G_{HC_2^{(2)}}(p) \to \GL_4(R)$.

Similarly, the assignments 
$$
a \mapsto \left(\begin{array}{ccccc}
1 & 0 & 0 & 0 & 0\\
0 & 1 & x & \frac{x^2} 2 & 0\\
0 & 0 & 1 & x & 0\\
0 & 0 & 0 & 1 & 0\\
0 & 0 & 0 & 0 & 1
\end{array}\right),
\
b \mapsto  \left(\begin{array}{ccccc}
1 & 0 & 0 & 0 & 0\\
0 & 1 & 0 & 0 & 0\\
0 & 0 & 1 & 0 & 0\\
0 & 0 & 0 & 1 & 0\\
y & 0 & 0 & 0 & 1
\end{array}\right),
\
c \mapsto \left(\begin{array}{ccccc}
1 & z & 0 & 0 & 0\\
0 & 1 & 0 & 0 & 0\\
0 & 0 & 1 & 0 & 0\\
0 & 0 & 0 & 1 & z\\
0 & 0 & 0 & 0 & 1
\end{array}\right),
$$
extend to a homomorphism 
$\mathscr G_{HBC_2^{(2)}}(p) \to \GL_{5}(R)$.
\end{prop}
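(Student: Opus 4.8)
The plan is to check directly that each of the two assignments sends every defining relator of the corresponding group (as listed in Section~\ref{sec:KMS-presentations}) to the identity matrix; since all the target matrices are unipotent they are invertible, so such a verification shows that the assignments extend to homomorphisms into $\GL_4(R)$, respectively $\GL_5(R)$. Throughout I write $E_{ij}$ for the matrix units, which satisfy $E_{ij}E_{kl}=\delta_{jk}E_{il}$, and I observe that in both assignments each generator is sent to a matrix of the form $I+M$ in which $M$ is strictly (block-)triangular with entries drawn from a \emph{single} one of the commutative subrings $\mathbf F_q[x]$, $\mathbf F_q[y]$, $\mathbf F_q[z]$ of $R$.

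First I would dispose of the order relations $a^p=b^p=c^p=1$. A short computation with matrix units shows that every one of the six generator images is of the form $I+M$ with $M^3=0$: indeed $M^2=0$ for all generators except the image of $c$ in the first assignment (where $M=z(E_{23}+E_{34})$, so $M^2=z^2E_{24}$) and the image of $a$ in the second assignment (where $M=x(E_{23}+E_{34})+\tfrac12 x^2E_{24}$, so $M^2=x^2E_{24}$). The entry $\tfrac12 x^2$ is well-defined precisely because $p$ is odd, and in each of these computations only one indeterminate occurs, so no noncommutativity of $R$ intervenes. Since $\binom{p}{k}\equiv 0 \pmod p$ for $0<k<p$ and $M^3=0$, the binomial expansion yields $(I+M)^p=I+pM+\binom{p}{2}M^2=I$, as required.

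It then remains to verify the commutator relators, and here the key observation is that they split into three families, one attached to each edge of the triangle, and that each family is \emph{exactly} the set of commutator relators of one of the vertex presentations furnished by Proposition~\ref{prop:PresentationsUnipotent}. Thus for $\mathscr G_{HC_2^{(2)}}(p)$ it suffices to check that the image pair $(A,B)$ satisfies the Heisenberg ($\mathscr U_3(p)$) relations $[A,B,A]=[A,B,B]=I$, and that each of the pairs $(B,C)$ and $(A,C)$ satisfies the $\mathscr U_4(p)$ relations in the appropriate orientation; likewise for $\mathscr G_{HBC_2^{(2)}}(p)$ with the pairs $(A,B)$, $(B,C)$ and $(C,A)$. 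In every case the first commutator of the pair is again of the form $I+\mu\,E_{ij}$, supported on a single off-diagonal position by a monomial $\mu$, and the required iterated commutators vanish because $E_{ij}$ multiplies to zero against the matrix units occurring in the two generators. For example, with $A=I+xE_{12}$ and $B=I+yE_{41}$ one computes $[A,B]=I-xy\,E_{42}$, and since $E_{42}$ annihilates and is annihilated by both $E_{12}$ and $E_{41}$, the element $[A,B]$ is centralized by $A$ and $B$, so $[A,B,A]=[A,B,B]=I$.

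The only point requiring genuine care is the treatment of the two $\mathscr U_4(p)$ edges, where the entries of the two generators lie in different, and hence noncommuting, subrings of $R$: there one must carry the noncommutative monomials produced by the successive commutators rather than treat $x$, $y$, $z$ as scalars, and I expect this bookkeeping to be the main (though still routine) obstacle. It is controlled by the fact that the matrices were chosen as concrete realizations of the root subgroups $U_1,U_4$ of the relevant Moufang quadrangle, so that the iterated commutators simply reproduce the root-group commutation relations recorded just before Proposition~\ref{prop:PresentationsUnipotent} and terminate after the expected number of steps; for the $HBC_2^{(2)}$ realization, the degree-two block in the image of $a$ is exactly what is needed to produce the quadratic term in those relations.
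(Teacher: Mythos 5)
Your proposal is correct and takes essentially the same route as the paper, whose entire proof is the assertion that the verification is a straightforward computation from the presentations in Section~\ref{sec:KMS-presentations}; your outline (nilpotency of $M$ together with $p\mid\binom{p}{2}$ for the relators $a^p,b^p,c^p$, and matrix-unit calculus for the iterated commutators) is exactly that computation made explicit. The only blemish is cosmetic: in your sample calculation one gets $[A,B]=I-yx\,E_{42}$ rather than $I-xy\,E_{42}$, but since $E_{42}$ multiplies to zero against $E_{12}$ and $E_{41}$ on both sides, the conclusion is unaffected.
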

\begin{proof}
Straightforward computation in view of  the presentation of the KMS groups  from Section~\ref{sec:KMS-presentations}. 
\end{proof}

As before, numerous quotients of those KMS groups are obtained by postcomposing the representations from Proposition~\ref{prop:FreeRing} with congruence homomorphisms.

\appendix
\begin{landscape}
\section{Presentations}\label{sec:PresentationList}

Below we list presentations of the trivalent triangle groups that arise as fundamental groups of $\mathcal{Y} \in \mathbf{Y}$, see Section~\ref{sec:TriTriSetup}. The lists are sorted by half-girth type and within each half-girth type by link type.

\allowdisplaybreaks

{\small 
\subsection{Half-girth type $(2,4,4)$}
\begin{align*}
G^{6,40,40}_{0} = \langle a, b, c\mid{}& a^{3}, b^{3}, c^{3}, bab^{-1}a^{-1},\ (cb^{-1}cb)^{2}, (c^{-1}b^{-1}cb^{-1})^{2}, (ac^{-1}ac)^{2}, (a^{-1}c^{-1}ac^{-1})^{2}\rangle\\ 
G^{6,40,48}_{0} = \langle a, b, c\mid{}& a^{3}, b^{3}, c^{3}, bab^{-1}a^{-1},\ (cb^{-1}cb)^{2}, (c^{-1}b^{-1}cb^{-1})^{2}, (ac)^{2}(a^{-1}c^{-1})^{2}\rangle\\ 
G^{6,40,54}_{0} = \langle a, b, c\mid{}& a^{3}, b^{3}, c^{3}, bab^{-1}a^{-1},\ (cb^{-1}cb)^{2}, (c^{-1}b^{-1}cb^{-1})^{2}, aca^{-1}c^{-1}a^{-1}cac^{-1}, (aca^{-1}c)^{3}\rangle\\ 
G^{6,40,54}_{2} = \langle a, b, c\mid{}& a^{3}, b^{3}, c^{3}, bab^{-1}a^{-1},\ (cb^{-1}cb)^{2}, (c^{-1}b^{-1}cb^{-1})^{2}, cac^{-1}a^{-1}c^{-1}aca^{-1}, (cac^{-1}a)^{3}\rangle\\ 
G^{6,48,48}_{0} = \langle a, b, c\mid{}& a^{3}, b^{3}, c^{3}, bab^{-1}a^{-1},\ (cb)^{2}(c^{-1}b^{-1})^{2}, (ac)^{2}(a^{-1}c^{-1})^{2}\rangle\\ 
G^{6,48,54}_{0} = \langle a, b, c\mid{}& a^{3}, b^{3}, c^{3}, bab^{-1}a^{-1},\ (cb)^{2}(c^{-1}b^{-1})^{2}, aca^{-1}c^{-1}a^{-1}cac^{-1}, (aca^{-1}c)^{3}\rangle\\ 
G^{6,48,54}_{2} = \langle a, b, c\mid{}& a^{3}, b^{3}, c^{3}, bab^{-1}a^{-1},\ (cb)^{2}(c^{-1}b^{-1})^{2}, cac^{-1}a^{-1}c^{-1}aca^{-1}, (cac^{-1}a)^{3}\rangle\\ 
G^{6,54,54}_{0} = \langle a, b, c\mid{}& a^{3}, b^{3}, c^{3}, bab^{-1}a^{-1},\ cbc^{-1}b^{-1}c^{-1}bcb^{-1}, (cbc^{-1}b)^{3}, aca^{-1}c^{-1}a^{-1}cac^{-1}, (aca^{-1}c)^{3}\rangle\\ 
G^{6,54,54}_{2} = \langle a, b, c\mid{}& a^{3}, b^{3}, c^{3}, bab^{-1}a^{-1},\ cbc^{-1}b^{-1}c^{-1}bcb^{-1}, (cbc^{-1}b)^{3}, cac^{-1}a^{-1}c^{-1}aca^{-1}, (cac^{-1}a)^{3}\rangle\\ 
G^{6,54,54}_{8} = \langle a, b, c\mid{}& a^{3}, b^{3}, c^{3}, bab^{-1}a^{-1},\ bcb^{-1}c^{-1}b^{-1}cbc^{-1}, (bcb^{-1}c)^{3}, aca^{-1}c^{-1}a^{-1}cac^{-1}, (aca^{-1}c)^{3}\rangle\\ 
G^{8,40,40}_{0} = \langle a, b, c\mid{}& a^{3}, b^{3}, c^{3}, baba,\ (cb^{-1}cb)^{2}, (c^{-1}b^{-1}cb^{-1})^{2}, (ac^{-1}ac)^{2}, (a^{-1}c^{-1}ac^{-1})^{2}\rangle\\ 
G^{8,40,48}_{0} = \langle a, b, c\mid{}& a^{3}, b^{3}, c^{3}, baba,\ (cb^{-1}cb)^{2}, (c^{-1}b^{-1}cb^{-1})^{2}, (ac)^{2}(a^{-1}c^{-1})^{2}\rangle\\ 
G^{8,40,54}_{0} = \langle a, b, c\mid{}& a^{3}, b^{3}, c^{3}, baba,\ (cb^{-1}cb)^{2}, (c^{-1}b^{-1}cb^{-1})^{2}, aca^{-1}c^{-1}a^{-1}cac^{-1}, (aca^{-1}c)^{3}\rangle\\ 
G^{8,40,54}_{2} = \langle a, b, c\mid{}& a^{3}, b^{3}, c^{3}, baba,\ (cb^{-1}cb)^{2}, (c^{-1}b^{-1}cb^{-1})^{2}, cac^{-1}a^{-1}c^{-1}aca^{-1}, (cac^{-1}a)^{3}\rangle\\ 
G^{8,48,48}_{0} = \langle a, b, c\mid{}& a^{3}, b^{3}, c^{3}, baba,\ (cb)^{2}(c^{-1}b^{-1})^{2}, (ac)^{2}(a^{-1}c^{-1})^{2}\rangle\\ 
G^{8,48,48}_{1} = \langle a, b, c\mid{}& a^{3}, b^{3}, c^{3}, baba,\ (cb)^{2}(c^{-1}b^{-1})^{2}, (ac^{-1})^{2}(a^{-1}c)^{2}\rangle\\ 
G^{8,48,54}_{0} = \langle a, b, c\mid{}& a^{3}, b^{3}, c^{3}, baba,\ (cb)^{2}(c^{-1}b^{-1})^{2}, aca^{-1}c^{-1}a^{-1}cac^{-1}, (aca^{-1}c)^{3}\rangle\\ 
G^{8,48,54}_{2} = \langle a, b, c\mid{}& a^{3}, b^{3}, c^{3}, baba,\ (cb)^{2}(c^{-1}b^{-1})^{2}, cac^{-1}a^{-1}c^{-1}aca^{-1}, (cac^{-1}a)^{3}\rangle\\ 
G^{8,54,54}_{0} = \langle a, b, c\mid{}& a^{3}, b^{3}, c^{3}, baba,\ cbc^{-1}b^{-1}c^{-1}bcb^{-1}, (cbc^{-1}b)^{3}, aca^{-1}c^{-1}a^{-1}cac^{-1}, (aca^{-1}c)^{3}\rangle\\ 
G^{8,54,54}_{2} = \langle a, b, c\mid{}& a^{3}, b^{3}, c^{3}, baba,\ cbc^{-1}b^{-1}c^{-1}bcb^{-1}, (cbc^{-1}b)^{3}, cac^{-1}a^{-1}c^{-1}aca^{-1}, (cac^{-1}a)^{3}\rangle\\ 
G^{8,54,54}_{8} = \langle a, b, c\mid{}& a^{3}, b^{3}, c^{3}, baba,\ bcb^{-1}c^{-1}b^{-1}cbc^{-1}, (bcb^{-1}c)^{3}, aca^{-1}c^{-1}a^{-1}cac^{-1}, (aca^{-1}c)^{3}\rangle\\ 
\end{align*}

\subsection{Half-girth type $(3,3,3)$}
\begin{align*}
G^{14,14,14}_{0} = \langle a, b, c\mid{}& a^{3}, b^{3}, c^{3}, bab^{-1}a^{-1}ba, cbc^{-1}b^{-1}cb, aca^{-1}c^{-1}ac\rangle\\ 
G^{14,14,14}_{1} = \langle a, b, c\mid{}& a^{3}, b^{3}, c^{3}, bab^{-1}a^{-1}ba, cbc^{-1}b^{-1}cb, ac^{-1}a^{-1}cac^{-1}\rangle\\ 
G^{14,14,14}_{2} = \langle a, b, c\mid{}& a^{3}, b^{3}, c^{3}, bab^{-1}a^{-1}ba, cbc^{-1}b^{-1}cb, cac^{-1}a^{-1}ca\rangle\\ 
G^{14,14,14}_{6} = \langle a, b, c\mid{}& a^{3}, b^{3}, c^{3}, bab^{-1}a^{-1}ba, cb^{-1}c^{-1}bcb^{-1}, cac^{-1}a^{-1}ca\rangle\\ 
G^{14,14,16}_{0} = \langle a, b, c\mid{}& a^{3}, b^{3}, c^{3}, bab^{-1}a^{-1}ba, cbc^{-1}b^{-1}cb, acac^{-1}a^{-1}c^{-1}\rangle\\ 
G^{14,14,16}_{1} = \langle a, b, c\mid{}& a^{3}, b^{3}, c^{3}, bab^{-1}a^{-1}ba, cbc^{-1}b^{-1}cb, ac^{-1}aca^{-1}c\rangle\\ 
G^{14,14,16}_{4} = \langle a, b, c\mid{}& a^{3}, b^{3}, c^{3}, bab^{-1}a^{-1}ba, cb^{-1}c^{-1}bcb^{-1}, acac^{-1}a^{-1}c^{-1}\rangle\\ 
G^{14,14,16}_{5} = \langle a, b, c\mid{}& a^{3}, b^{3}, c^{3}, bab^{-1}a^{-1}ba, cb^{-1}c^{-1}bcb^{-1}, ac^{-1}aca^{-1}c\rangle\\ 
G^{14,14,18}_{0} = \langle a, b, c\mid{}& a^{3}, b^{3}, c^{3}, bab^{-1}a^{-1}ba, cbc^{-1}b^{-1}cb, (ac)^{3}, (ac^{-1})^{3}\rangle\\ 
G^{14,14,18}_{4} = \langle a, b, c\mid{}& a^{3}, b^{3}, c^{3}, bab^{-1}a^{-1}ba, cb^{-1}c^{-1}bcb^{-1}, (ac)^{3}, (ac^{-1})^{3}\rangle\\ 
G^{14,14,24}_{0} = \langle a, b, c\mid{}& a^{3}, b^{3}, c^{3}, bab^{-1}a^{-1}ba, cbc^{-1}b^{-1}cb, (ac)^{3}, aca^{-1}ca^{-1}c^{-1}ac^{-1}\rangle\\ 
G^{14,14,24}_{1} = \langle a, b, c\mid{}& a^{3}, b^{3}, c^{3}, bab^{-1}a^{-1}ba, cbc^{-1}b^{-1}cb, (ac^{-1})^{3}, ac^{-1}a^{-1}c^{-1}a^{-1}cac\rangle\\ 
G^{14,14,24}_{4} = \langle a, b, c\mid{}& a^{3}, b^{3}, c^{3}, bab^{-1}a^{-1}ba, cb^{-1}c^{-1}bcb^{-1}, (ac)^{3}, aca^{-1}ca^{-1}c^{-1}ac^{-1}\rangle\\ 
G^{14,14,24}_{5} = \langle a, b, c\mid{}& a^{3}, b^{3}, c^{3}, bab^{-1}a^{-1}ba, cb^{-1}c^{-1}bcb^{-1}, (ac^{-1})^{3}, ac^{-1}a^{-1}c^{-1}a^{-1}cac\rangle\\ 
G^{14,14,26}_{0} = \langle a, b, c\mid{}& a^{3}, b^{3}, c^{3}, bab^{-1}a^{-1}ba, cbc^{-1}b^{-1}cb, (ac)^{3}, aca^{-1}ca^{-1}ca^{-1}c^{-1}\rangle\\ 
G^{14,14,26}_{1} = \langle a, b, c\mid{}& a^{3}, b^{3}, c^{3}, bab^{-1}a^{-1}ba, cbc^{-1}b^{-1}cb, (ac^{-1})^{3}, ac^{-1}a^{-1}c^{-1}a^{-1}c^{-1}a^{-1}c\rangle\\ 
G^{14,14,26}_{3} = \langle a, b, c\mid{}& a^{3}, b^{3}, c^{3}, bab^{-1}a^{-1}ba, cbc^{-1}b^{-1}cb, (ca^{-1})^{3}, ca^{-1}c^{-1}a^{-1}c^{-1}a^{-1}c^{-1}a\rangle\\ 
G^{14,14,26}_{4} = \langle a, b, c\mid{}& a^{3}, b^{3}, c^{3}, bab^{-1}a^{-1}ba, cb^{-1}c^{-1}bcb^{-1}, (ac)^{3}, aca^{-1}ca^{-1}ca^{-1}c^{-1}\rangle\\ 
G^{14,14,26}_{5} = \langle a, b, c\mid{}& a^{3}, b^{3}, c^{3}, bab^{-1}a^{-1}ba, cb^{-1}c^{-1}bcb^{-1}, (ac^{-1})^{3}, ac^{-1}a^{-1}c^{-1}a^{-1}c^{-1}a^{-1}c\rangle\\ 
G^{14,14,26}_{7} = \langle a, b, c\mid{}& a^{3}, b^{3}, c^{3}, bab^{-1}a^{-1}ba, cb^{-1}c^{-1}bcb^{-1}, (ca^{-1})^{3}, ca^{-1}c^{-1}a^{-1}c^{-1}a^{-1}c^{-1}a\rangle\\ 
G^{14,16,16}_{0} = \langle a, b, c\mid{}& a^{3}, b^{3}, c^{3}, bab^{-1}a^{-1}ba, cbcb^{-1}c^{-1}b^{-1}, acac^{-1}a^{-1}c^{-1}\rangle\\ 
G^{14,16,16}_{1} = \langle a, b, c\mid{}& a^{3}, b^{3}, c^{3}, bab^{-1}a^{-1}ba, cbcb^{-1}c^{-1}b^{-1}, ac^{-1}aca^{-1}c\rangle\\ 
G^{14,16,18}_{0} = \langle a, b, c\mid{}& a^{3}, b^{3}, c^{3}, bab^{-1}a^{-1}ba, cbcb^{-1}c^{-1}b^{-1}, (ac)^{3}, (ac^{-1})^{3}\rangle\\ 
G^{14,16,24}_{0} = \langle a, b, c\mid{}& a^{3}, b^{3}, c^{3}, bab^{-1}a^{-1}ba, cbcb^{-1}c^{-1}b^{-1}, (ac)^{3}, aca^{-1}ca^{-1}c^{-1}ac^{-1}\rangle\\ 
G^{14,16,24}_{1} = \langle a, b, c\mid{}& a^{3}, b^{3}, c^{3}, bab^{-1}a^{-1}ba, cbcb^{-1}c^{-1}b^{-1}, (ac^{-1})^{3}, ac^{-1}a^{-1}c^{-1}a^{-1}cac\rangle\\ 
G^{14,16,26}_{0} = \langle a, b, c\mid{}& a^{3}, b^{3}, c^{3}, bab^{-1}a^{-1}ba, cbcb^{-1}c^{-1}b^{-1}, (ac)^{3}, aca^{-1}ca^{-1}ca^{-1}c^{-1}\rangle\\ 
G^{14,16,26}_{1} = \langle a, b, c\mid{}& a^{3}, b^{3}, c^{3}, bab^{-1}a^{-1}ba, cbcb^{-1}c^{-1}b^{-1}, (ac^{-1})^{3}, ac^{-1}a^{-1}c^{-1}a^{-1}c^{-1}a^{-1}c\rangle\\ 
G^{14,16,26}_{3} = \langle a, b, c\mid{}& a^{3}, b^{3}, c^{3}, bab^{-1}a^{-1}ba, cbcb^{-1}c^{-1}b^{-1}, (ca^{-1})^{3}, ca^{-1}c^{-1}a^{-1}c^{-1}a^{-1}c^{-1}a\rangle\\ 
G^{14,16,26}_{7} = \langle a, b, c\mid{}& a^{3}, b^{3}, c^{3}, bab^{-1}a^{-1}ba, cb^{-1}cbc^{-1}b, (ca^{-1})^{3}, ca^{-1}c^{-1}a^{-1}c^{-1}a^{-1}c^{-1}a\rangle\\ 
G^{14,18,18}_{0} = \langle a, b, c\mid{}& a^{3}, b^{3}, c^{3}, bab^{-1}a^{-1}ba, (cb)^{3}, (cb^{-1})^{3}, (ac)^{3}, (ac^{-1})^{3}\rangle\\ 
G^{14,18,24}_{0} = \langle a, b, c\mid{}& a^{3}, b^{3}, c^{3}, bab^{-1}a^{-1}ba, (cb)^{3}, (cb^{-1})^{3}, (ac)^{3}, aca^{-1}ca^{-1}c^{-1}ac^{-1}\rangle\\ 
G^{14,18,26}_{0} = \langle a, b, c\mid{}& a^{3}, b^{3}, c^{3}, bab^{-1}a^{-1}ba, (cb)^{3}, (cb^{-1})^{3}, (ac)^{3}, aca^{-1}ca^{-1}ca^{-1}c^{-1}\rangle\\ 
G^{14,18,26}_{3} = \langle a, b, c\mid{}& a^{3}, b^{3}, c^{3}, bab^{-1}a^{-1}ba, (cb)^{3}, (cb^{-1})^{3}, (ca^{-1})^{3}, ca^{-1}c^{-1}a^{-1}c^{-1}a^{-1}c^{-1}a\rangle\\ 
G^{14,24,24}_{0} = \langle a, b, c\mid{}& a^{3}, b^{3}, c^{3}, bab^{-1}a^{-1}ba, (cb)^{3}, cbc^{-1}bc^{-1}b^{-1}cb^{-1}, (ac)^{3}, aca^{-1}ca^{-1}c^{-1}ac^{-1}\rangle\\ 
G^{14,24,24}_{1} = \langle a, b, c\mid{}& a^{3}, b^{3}, c^{3}, bab^{-1}a^{-1}ba, (cb)^{3}, cbc^{-1}bc^{-1}b^{-1}cb^{-1}, (ac^{-1})^{3}, ac^{-1}a^{-1}c^{-1}a^{-1}cac\rangle\\ 
G^{14,24,26}_{0} = \langle a, b, c\mid{}& a^{3}, b^{3}, c^{3}, bab^{-1}a^{-1}ba, (cb)^{3}, cbc^{-1}bc^{-1}b^{-1}cb^{-1}, (ac)^{3}, aca^{-1}ca^{-1}ca^{-1}c^{-1}\rangle\\ 
G^{14,24,26}_{1} = \langle a, b, c\mid{}& a^{3}, b^{3}, c^{3}, bab^{-1}a^{-1}ba, (cb)^{3}, cbc^{-1}bc^{-1}b^{-1}cb^{-1}, (ac^{-1})^{3}, ac^{-1}a^{-1}c^{-1}a^{-1}c^{-1}a^{-1}c\rangle\\ 
G^{14,24,26}_{3} = \langle a, b, c\mid{}& a^{3}, b^{3}, c^{3}, bab^{-1}a^{-1}ba, (cb)^{3}, cbc^{-1}bc^{-1}b^{-1}cb^{-1}, (ca^{-1})^{3}, ca^{-1}c^{-1}a^{-1}c^{-1}a^{-1}c^{-1}a\rangle\\ 
G^{14,24,26}_{7} = \langle a, b, c\mid{}& a^{3}, b^{3}, c^{3}, bab^{-1}a^{-1}ba, (cb^{-1})^{3}, cb^{-1}c^{-1}b^{-1}c^{-1}bcb, (ca^{-1})^{3}, ca^{-1}c^{-1}a^{-1}c^{-1}a^{-1}c^{-1}a\rangle\\ 
G^{14,26,26}_{0} = \langle a, b, c\mid{}& a^{3}, b^{3}, c^{3}, bab^{-1}a^{-1}ba, (cb)^{3}, cbc^{-1}bc^{-1}bc^{-1}b^{-1}, (ac)^{3}, aca^{-1}ca^{-1}ca^{-1}c^{-1}\rangle\\ 
G^{14,26,26}_{1} = \langle a, b, c\mid{}& a^{3}, b^{3}, c^{3}, bab^{-1}a^{-1}ba, (cb)^{3}, cbc^{-1}bc^{-1}bc^{-1}b^{-1}, (ac^{-1})^{3}, ac^{-1}a^{-1}c^{-1}a^{-1}c^{-1}a^{-1}c\rangle\\ 
G^{14,26,26}_{3} = \langle a, b, c\mid{}& a^{3}, b^{3}, c^{3}, bab^{-1}a^{-1}ba, (cb)^{3}, cbc^{-1}bc^{-1}bc^{-1}b^{-1}, (ca^{-1})^{3}, ca^{-1}c^{-1}a^{-1}c^{-1}a^{-1}c^{-1}a\rangle\\ 
G^{14,26,26}_{4} = \langle a, b, c\mid{}& a^{3}, b^{3}, c^{3}, bab^{-1}a^{-1}ba, (cb^{-1})^{3}, cb^{-1}c^{-1}b^{-1}c^{-1}b^{-1}c^{-1}b, (ac)^{3}, aca^{-1}ca^{-1}ca^{-1}c^{-1}\rangle\\ 
G^{14,26,26}_{5} = \langle a, b, c\mid{}& a^{3}, b^{3}, c^{3}, bab^{-1}a^{-1}ba, (cb^{-1})^{3}, cb^{-1}c^{-1}b^{-1}c^{-1}b^{-1}c^{-1}b, (ac^{-1})^{3}, ac^{-1}a^{-1}c^{-1}a^{-1}c^{-1}a^{-1}c\rangle\\ 
G^{14,26,26}_{15} = \langle a, b, c\mid{}& a^{3}, b^{3}, c^{3}, bab^{-1}a^{-1}ba, (bc^{-1})^{3}, bc^{-1}b^{-1}c^{-1}b^{-1}c^{-1}b^{-1}c, (ca^{-1})^{3}, ca^{-1}c^{-1}a^{-1}c^{-1}a^{-1}c^{-1}a\rangle\\ 
G^{16,16,16}_{0} = \langle a, b, c\mid{}& a^{3}, b^{3}, c^{3}, baba^{-1}b^{-1}a^{-1}, cbcb^{-1}c^{-1}b^{-1}, acac^{-1}a^{-1}c^{-1}\rangle\\ 
G^{16,16,16}_{1} = \langle a, b, c\mid{}& a^{3}, b^{3}, c^{3}, baba^{-1}b^{-1}a^{-1}, cbcb^{-1}c^{-1}b^{-1}, ac^{-1}aca^{-1}c\rangle\\ 
G^{16,16,18}_{0} = \langle a, b, c\mid{}& a^{3}, b^{3}, c^{3}, baba^{-1}b^{-1}a^{-1}, cbcb^{-1}c^{-1}b^{-1}, (ac)^{3}, (ac^{-1})^{3}\rangle\\ 
G^{16,16,24}_{0} = \langle a, b, c\mid{}& a^{3}, b^{3}, c^{3}, baba^{-1}b^{-1}a^{-1}, cbcb^{-1}c^{-1}b^{-1}, (ac)^{3}, aca^{-1}ca^{-1}c^{-1}ac^{-1}\rangle\\ 
G^{16,16,24}_{1} = \langle a, b, c\mid{}& a^{3}, b^{3}, c^{3}, baba^{-1}b^{-1}a^{-1}, cbcb^{-1}c^{-1}b^{-1}, (ac^{-1})^{3}, ac^{-1}a^{-1}c^{-1}a^{-1}cac\rangle\\ 
G^{16,16,26}_{0} = \langle a, b, c\mid{}& a^{3}, b^{3}, c^{3}, baba^{-1}b^{-1}a^{-1}, cbcb^{-1}c^{-1}b^{-1}, (ac)^{3}, aca^{-1}ca^{-1}ca^{-1}c^{-1}\rangle\\ 
G^{16,16,26}_{1} = \langle a, b, c\mid{}& a^{3}, b^{3}, c^{3}, baba^{-1}b^{-1}a^{-1}, cbcb^{-1}c^{-1}b^{-1}, (ac^{-1})^{3}, ac^{-1}a^{-1}c^{-1}a^{-1}c^{-1}a^{-1}c\rangle\\ 
G^{16,18,18}_{0} = \langle a, b, c\mid{}& a^{3}, b^{3}, c^{3}, baba^{-1}b^{-1}a^{-1}, (cb)^{3}, (cb^{-1})^{3}, (ac)^{3}, (ac^{-1})^{3}\rangle\\ 
G^{16,18,24}_{0} = \langle a, b, c\mid{}& a^{3}, b^{3}, c^{3}, baba^{-1}b^{-1}a^{-1}, (cb)^{3}, (cb^{-1})^{3}, (ac)^{3}, aca^{-1}ca^{-1}c^{-1}ac^{-1}\rangle\\ 
G^{16,18,26}_{0} = \langle a, b, c\mid{}& a^{3}, b^{3}, c^{3}, baba^{-1}b^{-1}a^{-1}, (cb)^{3}, (cb^{-1})^{3}, (ac)^{3}, aca^{-1}ca^{-1}ca^{-1}c^{-1}\rangle\\ 
G^{16,24,24}_{0} = \langle a, b, c\mid{}& a^{3}, b^{3}, c^{3}, baba^{-1}b^{-1}a^{-1}, (cb)^{3}, cbc^{-1}bc^{-1}b^{-1}cb^{-1}, (ac)^{3}, aca^{-1}ca^{-1}c^{-1}ac^{-1}\rangle\\ 
G^{16,24,24}_{1} = \langle a, b, c\mid{}& a^{3}, b^{3}, c^{3}, baba^{-1}b^{-1}a^{-1}, (cb)^{3}, cbc^{-1}bc^{-1}b^{-1}cb^{-1}, (ac^{-1})^{3}, ac^{-1}a^{-1}c^{-1}a^{-1}cac\rangle\\ 
G^{16,24,26}_{0} = \langle a, b, c\mid{}& a^{3}, b^{3}, c^{3}, baba^{-1}b^{-1}a^{-1}, (cb)^{3}, cbc^{-1}bc^{-1}b^{-1}cb^{-1}, (ac)^{3}, aca^{-1}ca^{-1}ca^{-1}c^{-1}\rangle\\ 
G^{16,24,26}_{1} = \langle a, b, c\mid{}& a^{3}, b^{3}, c^{3}, baba^{-1}b^{-1}a^{-1}, (cb)^{3}, cbc^{-1}bc^{-1}b^{-1}cb^{-1}, (ac^{-1})^{3}, ac^{-1}a^{-1}c^{-1}a^{-1}c^{-1}a^{-1}c\rangle\\ 
G^{16,26,26}_{0} = \langle a, b, c\mid{}& a^{3}, b^{3}, c^{3}, baba^{-1}b^{-1}a^{-1}, (cb)^{3}, cbc^{-1}bc^{-1}bc^{-1}b^{-1}, (ac)^{3}, aca^{-1}ca^{-1}ca^{-1}c^{-1}\rangle\\ 
G^{16,26,26}_{1} = \langle a, b, c\mid{}& a^{3}, b^{3}, c^{3}, baba^{-1}b^{-1}a^{-1}, (cb)^{3}, cbc^{-1}bc^{-1}bc^{-1}b^{-1}, (ac^{-1})^{3}, ac^{-1}a^{-1}c^{-1}a^{-1}c^{-1}a^{-1}c\rangle\\ 
G^{16,26,26}_{3} = \langle a, b, c\mid{}& a^{3}, b^{3}, c^{3}, baba^{-1}b^{-1}a^{-1}, (cb)^{3}, cbc^{-1}bc^{-1}bc^{-1}b^{-1}, (ca^{-1})^{3}, ca^{-1}c^{-1}a^{-1}c^{-1}a^{-1}c^{-1}a\rangle\\ 
G^{16,26,26}_{5} = \langle a, b, c\mid{}& a^{3}, b^{3}, c^{3}, baba^{-1}b^{-1}a^{-1}, (cb^{-1})^{3}, cb^{-1}c^{-1}b^{-1}c^{-1}b^{-1}c^{-1}b, (ac^{-1})^{3}, ac^{-1}a^{-1}c^{-1}a^{-1}c^{-1}a^{-1}c\rangle\\ 
G^{18,18,18}_{0} = \langle a, b, c\mid{}& a^{3}, b^{3}, c^{3}, (ba)^{3}, (ba^{-1})^{3}, (cb)^{3}, (cb^{-1})^{3}, (ac)^{3}, (ac^{-1})^{3}\rangle\\ 
G^{18,18,24}_{0} = \langle a, b, c\mid{}& a^{3}, b^{3}, c^{3}, (ba)^{3}, (ba^{-1})^{3}, (cb)^{3}, (cb^{-1})^{3}, (ac)^{3}, aca^{-1}ca^{-1}c^{-1}ac^{-1}\rangle\\ 
G^{18,18,26}_{0} = \langle a, b, c\mid{}& a^{3}, b^{3}, c^{3}, (ba)^{3}, (ba^{-1})^{3}, (cb)^{3}, (cb^{-1})^{3}, (ac)^{3}, aca^{-1}ca^{-1}ca^{-1}c^{-1}\rangle\\ 
G^{18,24,24}_{0} = \langle a, b, c\mid{}& a^{3}, b^{3}, c^{3}, (ba)^{3}, (ba^{-1})^{3}, (cb)^{3}, cbc^{-1}bc^{-1}b^{-1}cb^{-1}, (ac)^{3}, aca^{-1}ca^{-1}c^{-1}ac^{-1}\rangle\\ 
G^{18,24,26}_{0} = \langle a, b, c\mid{}& a^{3}, b^{3}, c^{3}, (ba)^{3}, (ba^{-1})^{3}, (cb)^{3}, cbc^{-1}bc^{-1}b^{-1}cb^{-1}, (ac)^{3}, aca^{-1}ca^{-1}ca^{-1}c^{-1}\rangle\\ 
G^{18,26,26}_{0} = \langle a, b, c\mid{}& a^{3}, b^{3}, c^{3}, (ba)^{3}, (ba^{-1})^{3}, (cb)^{3}, cbc^{-1}bc^{-1}bc^{-1}b^{-1}, (ac)^{3}, aca^{-1}ca^{-1}ca^{-1}c^{-1}\rangle\\ 
G^{18,26,26}_{1} = \langle a, b, c\mid{}& a^{3}, b^{3}, c^{3}, (ba)^{3}, (ba^{-1})^{3}, (cb)^{3}, cbc^{-1}bc^{-1}bc^{-1}b^{-1}, (ac^{-1})^{3}, ac^{-1}a^{-1}c^{-1}a^{-1}c^{-1}a^{-1}c\rangle\\ 
G^{24,24,24}_{0} = \langle a, b, c\mid{}& a^{3}, b^{3}, c^{3}, (ba)^{3}, bab^{-1}ab^{-1}a^{-1}ba^{-1}, (cb)^{3}, cbc^{-1}bc^{-1}b^{-1}cb^{-1}, (ac)^{3}, aca^{-1}ca^{-1}c^{-1}ac^{-1}\rangle\\ 
G^{24,24,24}_{1} = \langle a, b, c\mid{}& a^{3}, b^{3}, c^{3}, (ba)^{3}, bab^{-1}ab^{-1}a^{-1}ba^{-1}, (cb)^{3}, cbc^{-1}bc^{-1}b^{-1}cb^{-1}, (ac^{-1})^{3}, ac^{-1}a^{-1}c^{-1}a^{-1}cac\rangle\\ 
G^{24,24,26}_{0} = \langle a, b, c\mid{}& a^{3}, b^{3}, c^{3}, (ba)^{3}, bab^{-1}ab^{-1}a^{-1}ba^{-1}, (cb)^{3}, cbc^{-1}bc^{-1}b^{-1}cb^{-1}, (ac)^{3}, aca^{-1}ca^{-1}ca^{-1}c^{-1}\rangle\\ 
G^{24,24,26}_{1} = \langle a, b, c\mid{}& a^{3}, b^{3}, c^{3}, (ba)^{3}, bab^{-1}ab^{-1}a^{-1}ba^{-1}, (cb)^{3}, cbc^{-1}bc^{-1}b^{-1}cb^{-1}, (ac^{-1})^{3}, ac^{-1}a^{-1}c^{-1}a^{-1}c^{-1}a^{-1}c\rangle\\ 
G^{24,26,26}_{0} = \langle a, b, c\mid{}& a^{3}, b^{3}, c^{3}, (ba)^{3}, bab^{-1}ab^{-1}a^{-1}ba^{-1}, (cb)^{3}, cbc^{-1}bc^{-1}bc^{-1}b^{-1}, (ac)^{3}, aca^{-1}ca^{-1}ca^{-1}c^{-1}\rangle\\ 
G^{24,26,26}_{1} = \langle a, b, c\mid{}& a^{3}, b^{3}, c^{3}, (ba)^{3}, bab^{-1}ab^{-1}a^{-1}ba^{-1}, (cb)^{3}, cbc^{-1}bc^{-1}bc^{-1}b^{-1}, (ac^{-1})^{3}, ac^{-1}a^{-1}c^{-1}a^{-1}c^{-1}a^{-1}c\rangle\\ 
G^{24,26,26}_{3} = \langle a, b, c\mid{}& a^{3}, b^{3}, c^{3}, (ba)^{3}, bab^{-1}ab^{-1}a^{-1}ba^{-1}, (cb)^{3}, cbc^{-1}bc^{-1}bc^{-1}b^{-1}, (ca^{-1})^{3}, ca^{-1}c^{-1}a^{-1}c^{-1}a^{-1}c^{-1}a\rangle\\ 
G^{24,26,26}_{5} = \langle a, b, c\mid{}& a^{3}, b^{3}, c^{3}, (ba)^{3}, bab^{-1}ab^{-1}a^{-1}ba^{-1}, (cb^{-1})^{3}, cb^{-1}c^{-1}b^{-1}c^{-1}b^{-1}c^{-1}b, (ac^{-1})^{3}, ac^{-1}a^{-1}c^{-1}a^{-1}c^{-1}a^{-1}c\rangle\\ 
G^{26,26,26}_{0} = \langle a, b, c\mid{}& a^{3}, b^{3}, c^{3}, (ba)^{3}, bab^{-1}ab^{-1}ab^{-1}a^{-1}, (cb)^{3}, cbc^{-1}bc^{-1}bc^{-1}b^{-1}, (ac)^{3}, aca^{-1}ca^{-1}ca^{-1}c^{-1}\rangle\\ 
G^{26,26,26}_{1} = \langle a, b, c\mid{}& a^{3}, b^{3}, c^{3}, (ba)^{3}, bab^{-1}ab^{-1}ab^{-1}a^{-1}, (cb)^{3}, cbc^{-1}bc^{-1}bc^{-1}b^{-1}, (ac^{-1})^{3}, ac^{-1}a^{-1}c^{-1}a^{-1}c^{-1}a^{-1}c\rangle\\ 
G^{26,26,26}_{5} = \langle a, b, c\mid{}& a^{3}, b^{3}, c^{3}, (ba)^{3}, bab^{-1}ab^{-1}ab^{-1}a^{-1}, (cb^{-1})^{3}, cb^{-1}c^{-1}b^{-1}c^{-1}b^{-1}c^{-1}b, (ac^{-1})^{3}, ac^{-1}a^{-1}c^{-1}a^{-1}c^{-1}a^{-1}c\rangle\\ 
G^{26,26,26}_{21} = \langle a, b, c\mid{}& a^{3}, b^{3}, c^{3}, (ba^{-1})^{3}, ba^{-1}b^{-1}a^{-1}b^{-1}a^{-1}b^{-1}a, (cb^{-1})^{3}, cb^{-1}c^{-1}b^{-1}c^{-1}b^{-1}c^{-1}b, (ac^{-1})^{3}, ac^{-1}a^{-1}c^{-1}a^{-1}c^{-1}a^{-1}c\rangle\\ 
\end{align*}

\subsection{Half-girth type $(3,3,4)$}
\begin{align*}
G^{14,14,40}_{0} = \langle a, b, c\mid{}& a^{3}, b^{3}, c^{3}, bab^{-1}a^{-1}ba, cbc^{-1}b^{-1}cb, (ac^{-1}ac)^{2}, (a^{-1}c^{-1}ac^{-1})^{2}\rangle\\ 
G^{14,14,40}_{4} = \langle a, b, c\mid{}& a^{3}, b^{3}, c^{3}, bab^{-1}a^{-1}ba, cb^{-1}c^{-1}bcb^{-1}, (ac^{-1}ac)^{2}, (a^{-1}c^{-1}ac^{-1})^{2}\rangle\\ 
G^{14,14,48}_{0} = \langle a, b, c\mid{}& a^{3}, b^{3}, c^{3}, bab^{-1}a^{-1}ba, cbc^{-1}b^{-1}cb, (ac)^{2}(a^{-1}c^{-1})^{2}\rangle\\ 
G^{14,14,48}_{1} = \langle a, b, c\mid{}& a^{3}, b^{3}, c^{3}, bab^{-1}a^{-1}ba, cbc^{-1}b^{-1}cb, (ac^{-1})^{2}(a^{-1}c)^{2}\rangle\\ 
G^{14,14,48}_{4} = \langle a, b, c\mid{}& a^{3}, b^{3}, c^{3}, bab^{-1}a^{-1}ba, cb^{-1}c^{-1}bcb^{-1}, (ac)^{2}(a^{-1}c^{-1})^{2}\rangle\\ 
G^{14,14,48}_{5} = \langle a, b, c\mid{}& a^{3}, b^{3}, c^{3}, bab^{-1}a^{-1}ba, cb^{-1}c^{-1}bcb^{-1}, (ac^{-1})^{2}(a^{-1}c)^{2}\rangle\\ 
G^{14,14,54}_{0} = \langle a, b, c\mid{}& a^{3}, b^{3}, c^{3}, bab^{-1}a^{-1}ba, cbc^{-1}b^{-1}cb, aca^{-1}c^{-1}a^{-1}cac^{-1}, (aca^{-1}c)^{3}\rangle\\ 
G^{14,14,54}_{4} = \langle a, b, c\mid{}& a^{3}, b^{3}, c^{3}, bab^{-1}a^{-1}ba, cb^{-1}c^{-1}bcb^{-1}, aca^{-1}c^{-1}a^{-1}cac^{-1}, (aca^{-1}c)^{3}\rangle\\ 
G^{14,16,40}_{0} = \langle a, b, c\mid{}& a^{3}, b^{3}, c^{3}, bab^{-1}a^{-1}ba, cbcb^{-1}c^{-1}b^{-1}, (ac^{-1}ac)^{2}, (a^{-1}c^{-1}ac^{-1})^{2}\rangle\\ 
G^{14,16,48}_{0} = \langle a, b, c\mid{}& a^{3}, b^{3}, c^{3}, bab^{-1}a^{-1}ba, cbcb^{-1}c^{-1}b^{-1}, (ac)^{2}(a^{-1}c^{-1})^{2}\rangle\\ 
G^{14,16,48}_{1} = \langle a, b, c\mid{}& a^{3}, b^{3}, c^{3}, bab^{-1}a^{-1}ba, cbcb^{-1}c^{-1}b^{-1}, (ac^{-1})^{2}(a^{-1}c)^{2}\rangle\\ 
G^{14,16,54}_{0} = \langle a, b, c\mid{}& a^{3}, b^{3}, c^{3}, bab^{-1}a^{-1}ba, cbcb^{-1}c^{-1}b^{-1}, aca^{-1}c^{-1}a^{-1}cac^{-1}, (aca^{-1}c)^{3}\rangle\\ 
G^{14,16,54}_{2} = \langle a, b, c\mid{}& a^{3}, b^{3}, c^{3}, bab^{-1}a^{-1}ba, cbcb^{-1}c^{-1}b^{-1}, cac^{-1}a^{-1}c^{-1}aca^{-1}, (cac^{-1}a)^{3}\rangle\\ 
G^{14,18,40}_{0} = \langle a, b, c\mid{}& a^{3}, b^{3}, c^{3}, bab^{-1}a^{-1}ba, (cb)^{3}, (cb^{-1})^{3}, (ac^{-1}ac)^{2}, (a^{-1}c^{-1}ac^{-1})^{2}\rangle\\ 
G^{14,18,48}_{0} = \langle a, b, c\mid{}& a^{3}, b^{3}, c^{3}, bab^{-1}a^{-1}ba, (cb)^{3}, (cb^{-1})^{3}, (ac)^{2}(a^{-1}c^{-1})^{2}\rangle\\ 
G^{14,18,54}_{0} = \langle a, b, c\mid{}& a^{3}, b^{3}, c^{3}, bab^{-1}a^{-1}ba, (cb)^{3}, (cb^{-1})^{3}, aca^{-1}c^{-1}a^{-1}cac^{-1}, (aca^{-1}c)^{3}\rangle\\ 
G^{14,18,54}_{2} = \langle a, b, c\mid{}& a^{3}, b^{3}, c^{3}, bab^{-1}a^{-1}ba, (cb)^{3}, (cb^{-1})^{3}, cac^{-1}a^{-1}c^{-1}aca^{-1}, (cac^{-1}a)^{3}\rangle\\ 
G^{14,24,40}_{0} = \langle a, b, c\mid{}& a^{3}, b^{3}, c^{3}, bab^{-1}a^{-1}ba, (cb)^{3}, cbc^{-1}bc^{-1}b^{-1}cb^{-1}, (ac^{-1}ac)^{2}, (a^{-1}c^{-1}ac^{-1})^{2}\rangle\\ 
G^{14,24,48}_{0} = \langle a, b, c\mid{}& a^{3}, b^{3}, c^{3}, bab^{-1}a^{-1}ba, (cb)^{3}, cbc^{-1}bc^{-1}b^{-1}cb^{-1}, (ac)^{2}(a^{-1}c^{-1})^{2}\rangle\\ 
G^{14,24,48}_{1} = \langle a, b, c\mid{}& a^{3}, b^{3}, c^{3}, bab^{-1}a^{-1}ba, (cb)^{3}, cbc^{-1}bc^{-1}b^{-1}cb^{-1}, (ac^{-1})^{2}(a^{-1}c)^{2}\rangle\\ 
G^{14,24,54}_{0} = \langle a, b, c\mid{}& a^{3}, b^{3}, c^{3}, bab^{-1}a^{-1}ba, (cb)^{3}, cbc^{-1}bc^{-1}b^{-1}cb^{-1}, aca^{-1}c^{-1}a^{-1}cac^{-1}, (aca^{-1}c)^{3}\rangle\\ 
G^{14,24,54}_{2} = \langle a, b, c\mid{}& a^{3}, b^{3}, c^{3}, bab^{-1}a^{-1}ba, (cb)^{3}, cbc^{-1}bc^{-1}b^{-1}cb^{-1}, cac^{-1}a^{-1}c^{-1}aca^{-1}, (cac^{-1}a)^{3}\rangle\\ 
G^{14,26,40}_{0} = \langle a, b, c\mid{}& a^{3}, b^{3}, c^{3}, bab^{-1}a^{-1}ba, (cb)^{3}, cbc^{-1}bc^{-1}bc^{-1}b^{-1}, (ac^{-1}ac)^{2}, (a^{-1}c^{-1}ac^{-1})^{2}\rangle\\ 
G^{14,26,40}_{4} = \langle a, b, c\mid{}& a^{3}, b^{3}, c^{3}, bab^{-1}a^{-1}ba, (cb^{-1})^{3}, cb^{-1}c^{-1}b^{-1}c^{-1}b^{-1}c^{-1}b, (ac^{-1}ac)^{2}, (a^{-1}c^{-1}ac^{-1})^{2}\rangle\\ 
G^{14,26,48}_{0} = \langle a, b, c\mid{}& a^{3}, b^{3}, c^{3}, bab^{-1}a^{-1}ba, (cb)^{3}, cbc^{-1}bc^{-1}bc^{-1}b^{-1}, (ac)^{2}(a^{-1}c^{-1})^{2}\rangle\\ 
G^{14,26,48}_{1} = \langle a, b, c\mid{}& a^{3}, b^{3}, c^{3}, bab^{-1}a^{-1}ba, (cb)^{3}, cbc^{-1}bc^{-1}bc^{-1}b^{-1}, (ac^{-1})^{2}(a^{-1}c)^{2}\rangle\\ 
G^{14,26,48}_{4} = \langle a, b, c\mid{}& a^{3}, b^{3}, c^{3}, bab^{-1}a^{-1}ba, (cb^{-1})^{3}, cb^{-1}c^{-1}b^{-1}c^{-1}b^{-1}c^{-1}b, (ac)^{2}(a^{-1}c^{-1})^{2}\rangle\\ 
G^{14,26,48}_{5} = \langle a, b, c\mid{}& a^{3}, b^{3}, c^{3}, bab^{-1}a^{-1}ba, (cb^{-1})^{3}, cb^{-1}c^{-1}b^{-1}c^{-1}b^{-1}c^{-1}b, (ac^{-1})^{2}(a^{-1}c)^{2}\rangle\\ 
G^{14,26,54}_{0} = \langle a, b, c\mid{}& a^{3}, b^{3}, c^{3}, bab^{-1}a^{-1}ba, (cb)^{3}, cbc^{-1}bc^{-1}bc^{-1}b^{-1}, aca^{-1}c^{-1}a^{-1}cac^{-1}, (aca^{-1}c)^{3}\rangle\\ 
G^{14,26,54}_{2} = \langle a, b, c\mid{}& a^{3}, b^{3}, c^{3}, bab^{-1}a^{-1}ba, (cb)^{3}, cbc^{-1}bc^{-1}bc^{-1}b^{-1}, cac^{-1}a^{-1}c^{-1}aca^{-1}, (cac^{-1}a)^{3}\rangle\\ 
G^{14,26,54}_{4} = \langle a, b, c\mid{}& a^{3}, b^{3}, c^{3}, bab^{-1}a^{-1}ba, (cb^{-1})^{3}, cb^{-1}c^{-1}b^{-1}c^{-1}b^{-1}c^{-1}b, aca^{-1}c^{-1}a^{-1}cac^{-1}, (aca^{-1}c)^{3}\rangle\\ 
G^{14,26,54}_{6} = \langle a, b, c\mid{}& a^{3}, b^{3}, c^{3}, bab^{-1}a^{-1}ba, (cb^{-1})^{3}, cb^{-1}c^{-1}b^{-1}c^{-1}b^{-1}c^{-1}b, cac^{-1}a^{-1}c^{-1}aca^{-1}, (cac^{-1}a)^{3}\rangle\\ 
G^{16,16,40}_{0} = \langle a, b, c\mid{}& a^{3}, b^{3}, c^{3}, baba^{-1}b^{-1}a^{-1}, cbcb^{-1}c^{-1}b^{-1}, (ac^{-1}ac)^{2}, (a^{-1}c^{-1}ac^{-1})^{2}\rangle\\ 
G^{16,16,48}_{0} = \langle a, b, c\mid{}& a^{3}, b^{3}, c^{3}, baba^{-1}b^{-1}a^{-1}, cbcb^{-1}c^{-1}b^{-1}, (ac)^{2}(a^{-1}c^{-1})^{2}\rangle\\ 
G^{16,16,48}_{1} = \langle a, b, c\mid{}& a^{3}, b^{3}, c^{3}, baba^{-1}b^{-1}a^{-1}, cbcb^{-1}c^{-1}b^{-1}, (ac^{-1})^{2}(a^{-1}c)^{2}\rangle\\ 
G^{16,16,54}_{0} = \langle a, b, c\mid{}& a^{3}, b^{3}, c^{3}, baba^{-1}b^{-1}a^{-1}, cbcb^{-1}c^{-1}b^{-1}, aca^{-1}c^{-1}a^{-1}cac^{-1}, (aca^{-1}c)^{3}\rangle\\ 
G^{16,18,40}_{0} = \langle a, b, c\mid{}& a^{3}, b^{3}, c^{3}, baba^{-1}b^{-1}a^{-1}, (cb)^{3}, (cb^{-1})^{3}, (ac^{-1}ac)^{2}, (a^{-1}c^{-1}ac^{-1})^{2}\rangle\\ 
G^{16,18,48}_{0} = \langle a, b, c\mid{}& a^{3}, b^{3}, c^{3}, baba^{-1}b^{-1}a^{-1}, (cb)^{3}, (cb^{-1})^{3}, (ac)^{2}(a^{-1}c^{-1})^{2}\rangle\\ 
G^{16,18,54}_{0} = \langle a, b, c\mid{}& a^{3}, b^{3}, c^{3}, baba^{-1}b^{-1}a^{-1}, (cb)^{3}, (cb^{-1})^{3}, aca^{-1}c^{-1}a^{-1}cac^{-1}, (aca^{-1}c)^{3}\rangle\\ 
G^{16,18,54}_{2} = \langle a, b, c\mid{}& a^{3}, b^{3}, c^{3}, baba^{-1}b^{-1}a^{-1}, (cb)^{3}, (cb^{-1})^{3}, cac^{-1}a^{-1}c^{-1}aca^{-1}, (cac^{-1}a)^{3}\rangle\\ 
G^{16,24,40}_{0} = \langle a, b, c\mid{}& a^{3}, b^{3}, c^{3}, baba^{-1}b^{-1}a^{-1}, (cb)^{3}, cbc^{-1}bc^{-1}b^{-1}cb^{-1}, (ac^{-1}ac)^{2}, (a^{-1}c^{-1}ac^{-1})^{2}\rangle\\ 
G^{16,24,48}_{0} = \langle a, b, c\mid{}& a^{3}, b^{3}, c^{3}, baba^{-1}b^{-1}a^{-1}, (cb)^{3}, cbc^{-1}bc^{-1}b^{-1}cb^{-1}, (ac)^{2}(a^{-1}c^{-1})^{2}\rangle\\ 
G^{16,24,48}_{1} = \langle a, b, c\mid{}& a^{3}, b^{3}, c^{3}, baba^{-1}b^{-1}a^{-1}, (cb)^{3}, cbc^{-1}bc^{-1}b^{-1}cb^{-1}, (ac^{-1})^{2}(a^{-1}c)^{2}\rangle\\ 
G^{16,24,54}_{0} = \langle a, b, c\mid{}& a^{3}, b^{3}, c^{3}, baba^{-1}b^{-1}a^{-1}, (cb)^{3}, cbc^{-1}bc^{-1}b^{-1}cb^{-1}, aca^{-1}c^{-1}a^{-1}cac^{-1}, (aca^{-1}c)^{3}\rangle\\ 
G^{16,24,54}_{2} = \langle a, b, c\mid{}& a^{3}, b^{3}, c^{3}, baba^{-1}b^{-1}a^{-1}, (cb)^{3}, cbc^{-1}bc^{-1}b^{-1}cb^{-1}, cac^{-1}a^{-1}c^{-1}aca^{-1}, (cac^{-1}a)^{3}\rangle\\ 
G^{16,26,40}_{0} = \langle a, b, c\mid{}& a^{3}, b^{3}, c^{3}, baba^{-1}b^{-1}a^{-1}, (cb)^{3}, cbc^{-1}bc^{-1}bc^{-1}b^{-1}, (ac^{-1}ac)^{2}, (a^{-1}c^{-1}ac^{-1})^{2}\rangle\\ 
G^{16,26,48}_{0} = \langle a, b, c\mid{}& a^{3}, b^{3}, c^{3}, baba^{-1}b^{-1}a^{-1}, (cb)^{3}, cbc^{-1}bc^{-1}bc^{-1}b^{-1}, (ac)^{2}(a^{-1}c^{-1})^{2}\rangle\\ 
G^{16,26,48}_{1} = \langle a, b, c\mid{}& a^{3}, b^{3}, c^{3}, baba^{-1}b^{-1}a^{-1}, (cb)^{3}, cbc^{-1}bc^{-1}bc^{-1}b^{-1}, (ac^{-1})^{2}(a^{-1}c)^{2}\rangle\\ 
G^{16,26,54}_{0} = \langle a, b, c\mid{}& a^{3}, b^{3}, c^{3}, baba^{-1}b^{-1}a^{-1}, (cb)^{3}, cbc^{-1}bc^{-1}bc^{-1}b^{-1}, aca^{-1}c^{-1}a^{-1}cac^{-1}, (aca^{-1}c)^{3}\rangle\\ 
G^{16,26,54}_{2} = \langle a, b, c\mid{}& a^{3}, b^{3}, c^{3}, baba^{-1}b^{-1}a^{-1}, (cb)^{3}, cbc^{-1}bc^{-1}bc^{-1}b^{-1}, cac^{-1}a^{-1}c^{-1}aca^{-1}, (cac^{-1}a)^{3}\rangle\\ 
G^{18,18,40}_{0} = \langle a, b, c\mid{}& a^{3}, b^{3}, c^{3}, (ba)^{3}, (ba^{-1})^{3}, (cb)^{3}, (cb^{-1})^{3}, (ac^{-1}ac)^{2}, (a^{-1}c^{-1}ac^{-1})^{2}\rangle\\ 
G^{18,18,48}_{0} = \langle a, b, c\mid{}& a^{3}, b^{3}, c^{3}, (ba)^{3}, (ba^{-1})^{3}, (cb)^{3}, (cb^{-1})^{3}, (ac)^{2}(a^{-1}c^{-1})^{2}\rangle\\ 
G^{18,18,54}_{0} = \langle a, b, c\mid{}& a^{3}, b^{3}, c^{3}, (ba)^{3}, (ba^{-1})^{3}, (cb)^{3}, (cb^{-1})^{3}, aca^{-1}c^{-1}a^{-1}cac^{-1}, (aca^{-1}c)^{3}\rangle\\ 
G^{18,24,40}_{0} = \langle a, b, c\mid{}& a^{3}, b^{3}, c^{3}, (ba)^{3}, (ba^{-1})^{3}, (cb)^{3}, cbc^{-1}bc^{-1}b^{-1}cb^{-1}, (ac^{-1}ac)^{2}, (a^{-1}c^{-1}ac^{-1})^{2}\rangle\\ 
G^{18,24,48}_{0} = \langle a, b, c\mid{}& a^{3}, b^{3}, c^{3}, (ba)^{3}, (ba^{-1})^{3}, (cb)^{3}, cbc^{-1}bc^{-1}b^{-1}cb^{-1}, (ac)^{2}(a^{-1}c^{-1})^{2}\rangle\\ 
G^{18,24,54}_{0} = \langle a, b, c\mid{}& a^{3}, b^{3}, c^{3}, (ba)^{3}, (ba^{-1})^{3}, (cb)^{3}, cbc^{-1}bc^{-1}b^{-1}cb^{-1}, aca^{-1}c^{-1}a^{-1}cac^{-1}, (aca^{-1}c)^{3}\rangle\\ 
G^{18,24,54}_{2} = \langle a, b, c\mid{}& a^{3}, b^{3}, c^{3}, (ba)^{3}, (ba^{-1})^{3}, (cb)^{3}, cbc^{-1}bc^{-1}b^{-1}cb^{-1}, cac^{-1}a^{-1}c^{-1}aca^{-1}, (cac^{-1}a)^{3}\rangle\\ 
G^{18,26,40}_{0} = \langle a, b, c\mid{}& a^{3}, b^{3}, c^{3}, (ba)^{3}, (ba^{-1})^{3}, (cb)^{3}, cbc^{-1}bc^{-1}bc^{-1}b^{-1}, (ac^{-1}ac)^{2}, (a^{-1}c^{-1}ac^{-1})^{2}\rangle\\ 
G^{18,26,48}_{0} = \langle a, b, c\mid{}& a^{3}, b^{3}, c^{3}, (ba)^{3}, (ba^{-1})^{3}, (cb)^{3}, cbc^{-1}bc^{-1}bc^{-1}b^{-1}, (ac)^{2}(a^{-1}c^{-1})^{2}\rangle\\ 
G^{18,26,54}_{0} = \langle a, b, c\mid{}& a^{3}, b^{3}, c^{3}, (ba)^{3}, (ba^{-1})^{3}, (cb)^{3}, cbc^{-1}bc^{-1}bc^{-1}b^{-1}, aca^{-1}c^{-1}a^{-1}cac^{-1}, (aca^{-1}c)^{3}\rangle\\ 
G^{18,26,54}_{2} = \langle a, b, c\mid{}& a^{3}, b^{3}, c^{3}, (ba)^{3}, (ba^{-1})^{3}, (cb)^{3}, cbc^{-1}bc^{-1}bc^{-1}b^{-1}, cac^{-1}a^{-1}c^{-1}aca^{-1}, (cac^{-1}a)^{3}\rangle\\ 
G^{24,24,40}_{0} = \langle a, b, c\mid{}& a^{3}, b^{3}, c^{3}, (ba)^{3}, bab^{-1}ab^{-1}a^{-1}ba^{-1}, (cb)^{3}, cbc^{-1}bc^{-1}b^{-1}cb^{-1}, (ac^{-1}ac)^{2}, (a^{-1}c^{-1}ac^{-1})^{2}\rangle\\ 
G^{24,24,48}_{0} = \langle a, b, c\mid{}& a^{3}, b^{3}, c^{3}, (ba)^{3}, bab^{-1}ab^{-1}a^{-1}ba^{-1}, (cb)^{3}, cbc^{-1}bc^{-1}b^{-1}cb^{-1}, (ac)^{2}(a^{-1}c^{-1})^{2}\rangle\\ 
G^{24,24,48}_{1} = \langle a, b, c\mid{}& a^{3}, b^{3}, c^{3}, (ba)^{3}, bab^{-1}ab^{-1}a^{-1}ba^{-1}, (cb)^{3}, cbc^{-1}bc^{-1}b^{-1}cb^{-1}, (ac^{-1})^{2}(a^{-1}c)^{2}\rangle\\ 
G^{24,24,54}_{0} = \langle a, b, c\mid{}& a^{3}, b^{3}, c^{3}, (ba)^{3}, bab^{-1}ab^{-1}a^{-1}ba^{-1}, (cb)^{3}, cbc^{-1}bc^{-1}b^{-1}cb^{-1}, aca^{-1}c^{-1}a^{-1}cac^{-1}, (aca^{-1}c)^{3}\rangle\\ 
G^{24,26,40}_{0} = \langle a, b, c\mid{}& a^{3}, b^{3}, c^{3}, (ba)^{3}, bab^{-1}ab^{-1}a^{-1}ba^{-1}, (cb)^{3}, cbc^{-1}bc^{-1}bc^{-1}b^{-1}, (ac^{-1}ac)^{2}, (a^{-1}c^{-1}ac^{-1})^{2}\rangle\\ 
G^{24,26,48}_{0} = \langle a, b, c\mid{}& a^{3}, b^{3}, c^{3}, (ba)^{3}, bab^{-1}ab^{-1}a^{-1}ba^{-1}, (cb)^{3}, cbc^{-1}bc^{-1}bc^{-1}b^{-1}, (ac)^{2}(a^{-1}c^{-1})^{2}\rangle\\ 
G^{24,26,48}_{1} = \langle a, b, c\mid{}& a^{3}, b^{3}, c^{3}, (ba)^{3}, bab^{-1}ab^{-1}a^{-1}ba^{-1}, (cb)^{3}, cbc^{-1}bc^{-1}bc^{-1}b^{-1}, (ac^{-1})^{2}(a^{-1}c)^{2}\rangle\\ 
G^{24,26,54}_{0} = \langle a, b, c\mid{}& a^{3}, b^{3}, c^{3}, (ba)^{3}, bab^{-1}ab^{-1}a^{-1}ba^{-1}, (cb)^{3}, cbc^{-1}bc^{-1}bc^{-1}b^{-1}, aca^{-1}c^{-1}a^{-1}cac^{-1}, (aca^{-1}c)^{3}\rangle\\ 
G^{24,26,54}_{2} = \langle a, b, c\mid{}& a^{3}, b^{3}, c^{3}, (ba)^{3}, bab^{-1}ab^{-1}a^{-1}ba^{-1}, (cb)^{3}, cbc^{-1}bc^{-1}bc^{-1}b^{-1}, cac^{-1}a^{-1}c^{-1}aca^{-1}, (cac^{-1}a)^{3}\rangle\\ 
G^{26,26,40}_{0} = \langle a, b, c\mid{}& a^{3}, b^{3}, c^{3}, (ba)^{3}, bab^{-1}ab^{-1}ab^{-1}a^{-1}, (cb)^{3}, cbc^{-1}bc^{-1}bc^{-1}b^{-1}, (ac^{-1}ac)^{2}, (a^{-1}c^{-1}ac^{-1})^{2}\rangle\\ 
G^{26,26,40}_{4} = \langle a, b, c\mid{}& a^{3}, b^{3}, c^{3}, (ba)^{3}, bab^{-1}ab^{-1}ab^{-1}a^{-1}, (cb^{-1})^{3}, cb^{-1}c^{-1}b^{-1}c^{-1}b^{-1}c^{-1}b, (ac^{-1}ac)^{2}, (a^{-1}c^{-1}ac^{-1})^{2}\rangle\\ 
G^{26,26,48}_{0} = \langle a, b, c\mid{}& a^{3}, b^{3}, c^{3}, (ba)^{3}, bab^{-1}ab^{-1}ab^{-1}a^{-1}, (cb)^{3}, cbc^{-1}bc^{-1}bc^{-1}b^{-1}, (ac)^{2}(a^{-1}c^{-1})^{2}\rangle\\ 
G^{26,26,48}_{1} = \langle a, b, c\mid{}& a^{3}, b^{3}, c^{3}, (ba)^{3}, bab^{-1}ab^{-1}ab^{-1}a^{-1}, (cb)^{3}, cbc^{-1}bc^{-1}bc^{-1}b^{-1}, (ac^{-1})^{2}(a^{-1}c)^{2}\rangle\\ 
G^{26,26,48}_{4} = \langle a, b, c\mid{}& a^{3}, b^{3}, c^{3}, (ba)^{3}, bab^{-1}ab^{-1}ab^{-1}a^{-1}, (cb^{-1})^{3}, cb^{-1}c^{-1}b^{-1}c^{-1}b^{-1}c^{-1}b, (ac)^{2}(a^{-1}c^{-1})^{2}\rangle\\ 
G^{26,26,48}_{5} = \langle a, b, c\mid{}& a^{3}, b^{3}, c^{3}, (ba)^{3}, bab^{-1}ab^{-1}ab^{-1}a^{-1}, (cb^{-1})^{3}, cb^{-1}c^{-1}b^{-1}c^{-1}b^{-1}c^{-1}b, (ac^{-1})^{2}(a^{-1}c)^{2}\rangle\\ 
G^{26,26,54}_{0} = \langle a, b, c\mid{}& a^{3}, b^{3}, c^{3}, (ba)^{3}, bab^{-1}ab^{-1}ab^{-1}a^{-1}, (cb)^{3}, cbc^{-1}bc^{-1}bc^{-1}b^{-1}, aca^{-1}c^{-1}a^{-1}cac^{-1}, (aca^{-1}c)^{3}\rangle\\ 
G^{26,26,54}_{4} = \langle a, b, c\mid{}& a^{3}, b^{3}, c^{3}, (ba)^{3}, bab^{-1}ab^{-1}ab^{-1}a^{-1}, (cb^{-1})^{3}, cb^{-1}c^{-1}b^{-1}c^{-1}b^{-1}c^{-1}b, aca^{-1}c^{-1}a^{-1}cac^{-1}, (aca^{-1}c)^{3}\rangle\\ 
\end{align*}

\subsection{Half-girth type $(3,4,4)$}
\begin{align*}
G^{14,40,40}_{0} = \langle a, b, c\mid{}& a^{3}, b^{3}, c^{3}, bab^{-1}a^{-1}ba, (cb^{-1}cb)^{2}, (c^{-1}b^{-1}cb^{-1})^{2}, (ac^{-1}ac)^{2}, (a^{-1}c^{-1}ac^{-1})^{2}\rangle\\ 
G^{14,40,48}_{0} = \langle a, b, c\mid{}& a^{3}, b^{3}, c^{3}, bab^{-1}a^{-1}ba, (cb^{-1}cb)^{2}, (c^{-1}b^{-1}cb^{-1})^{2}, (ac)^{2}(a^{-1}c^{-1})^{2}\rangle\\ 
G^{14,40,54}_{0} = \langle a, b, c\mid{}& a^{3}, b^{3}, c^{3}, bab^{-1}a^{-1}ba, (cb^{-1}cb)^{2}, (c^{-1}b^{-1}cb^{-1})^{2}, aca^{-1}c^{-1}a^{-1}cac^{-1}, (aca^{-1}c)^{3}\rangle\\ 
G^{14,40,54}_{2} = \langle a, b, c\mid{}& a^{3}, b^{3}, c^{3}, bab^{-1}a^{-1}ba, (cb^{-1}cb)^{2}, (c^{-1}b^{-1}cb^{-1})^{2}, cac^{-1}a^{-1}c^{-1}aca^{-1}, (cac^{-1}a)^{3}\rangle\\ 
G^{14,48,48}_{0} = \langle a, b, c\mid{}& a^{3}, b^{3}, c^{3}, bab^{-1}a^{-1}ba, (cb)^{2}(c^{-1}b^{-1})^{2}, (ac)^{2}(a^{-1}c^{-1})^{2}\rangle\\ 
G^{14,48,48}_{1} = \langle a, b, c\mid{}& a^{3}, b^{3}, c^{3}, bab^{-1}a^{-1}ba, (cb)^{2}(c^{-1}b^{-1})^{2}, (ac^{-1})^{2}(a^{-1}c)^{2}\rangle\\ 
G^{14,48,54}_{0} = \langle a, b, c\mid{}& a^{3}, b^{3}, c^{3}, bab^{-1}a^{-1}ba, (cb)^{2}(c^{-1}b^{-1})^{2}, aca^{-1}c^{-1}a^{-1}cac^{-1}, (aca^{-1}c)^{3}\rangle\\ 
G^{14,48,54}_{2} = \langle a, b, c\mid{}& a^{3}, b^{3}, c^{3}, bab^{-1}a^{-1}ba, (cb)^{2}(c^{-1}b^{-1})^{2}, cac^{-1}a^{-1}c^{-1}aca^{-1}, (cac^{-1}a)^{3}\rangle\\ 
G^{14,54,54}_{0} = \langle a, b, c\mid{}& a^{3}, b^{3}, c^{3}, bab^{-1}a^{-1}ba, cbc^{-1}b^{-1}c^{-1}bcb^{-1}, (cbc^{-1}b)^{3}, aca^{-1}c^{-1}a^{-1}cac^{-1}, (aca^{-1}c)^{3}\rangle\\ 
G^{14,54,54}_{2} = \langle a, b, c\mid{}& a^{3}, b^{3}, c^{3}, bab^{-1}a^{-1}ba, cbc^{-1}b^{-1}c^{-1}bcb^{-1}, (cbc^{-1}b)^{3}, cac^{-1}a^{-1}c^{-1}aca^{-1}, (cac^{-1}a)^{3}\rangle\\ 
G^{14,54,54}_{8} = \langle a, b, c\mid{}& a^{3}, b^{3}, c^{3}, bab^{-1}a^{-1}ba, bcb^{-1}c^{-1}b^{-1}cbc^{-1}, (bcb^{-1}c)^{3}, aca^{-1}c^{-1}a^{-1}cac^{-1}, (aca^{-1}c)^{3}\rangle\\ 
G^{16,40,40}_{0} = \langle a, b, c\mid{}& a^{3}, b^{3}, c^{3}, baba^{-1}b^{-1}a^{-1}, (cb^{-1}cb)^{2}, (c^{-1}b^{-1}cb^{-1})^{2}, (ac^{-1}ac)^{2}, (a^{-1}c^{-1}ac^{-1})^{2}\rangle\\ 
G^{16,40,48}_{0} = \langle a, b, c\mid{}& a^{3}, b^{3}, c^{3}, baba^{-1}b^{-1}a^{-1}, (cb^{-1}cb)^{2}, (c^{-1}b^{-1}cb^{-1})^{2}, (ac)^{2}(a^{-1}c^{-1})^{2}\rangle\\ 
G^{16,40,54}_{0} = \langle a, b, c\mid{}& a^{3}, b^{3}, c^{3}, baba^{-1}b^{-1}a^{-1}, (cb^{-1}cb)^{2}, (c^{-1}b^{-1}cb^{-1})^{2}, aca^{-1}c^{-1}a^{-1}cac^{-1}, (aca^{-1}c)^{3}\rangle\\ 
G^{16,40,54}_{2} = \langle a, b, c\mid{}& a^{3}, b^{3}, c^{3}, baba^{-1}b^{-1}a^{-1}, (cb^{-1}cb)^{2}, (c^{-1}b^{-1}cb^{-1})^{2}, cac^{-1}a^{-1}c^{-1}aca^{-1}, (cac^{-1}a)^{3}\rangle\\ 
G^{16,48,48}_{0} = \langle a, b, c\mid{}& a^{3}, b^{3}, c^{3}, baba^{-1}b^{-1}a^{-1}, (cb)^{2}(c^{-1}b^{-1})^{2}, (ac)^{2}(a^{-1}c^{-1})^{2}\rangle\\ 
G^{16,48,48}_{1} = \langle a, b, c\mid{}& a^{3}, b^{3}, c^{3}, baba^{-1}b^{-1}a^{-1}, (cb)^{2}(c^{-1}b^{-1})^{2}, (ac^{-1})^{2}(a^{-1}c)^{2}\rangle\\ 
G^{16,48,54}_{0} = \langle a, b, c\mid{}& a^{3}, b^{3}, c^{3}, baba^{-1}b^{-1}a^{-1}, (cb)^{2}(c^{-1}b^{-1})^{2}, aca^{-1}c^{-1}a^{-1}cac^{-1}, (aca^{-1}c)^{3}\rangle\\ 
G^{16,48,54}_{2} = \langle a, b, c\mid{}& a^{3}, b^{3}, c^{3}, baba^{-1}b^{-1}a^{-1}, (cb)^{2}(c^{-1}b^{-1})^{2}, cac^{-1}a^{-1}c^{-1}aca^{-1}, (cac^{-1}a)^{3}\rangle\\ 
G^{16,54,54}_{0} = \langle a, b, c\mid{}& a^{3}, b^{3}, c^{3}, baba^{-1}b^{-1}a^{-1}, cbc^{-1}b^{-1}c^{-1}bcb^{-1}, (cbc^{-1}b)^{3}, aca^{-1}c^{-1}a^{-1}cac^{-1}, (aca^{-1}c)^{3}\rangle\\ 
G^{16,54,54}_{2} = \langle a, b, c\mid{}& a^{3}, b^{3}, c^{3}, baba^{-1}b^{-1}a^{-1}, cbc^{-1}b^{-1}c^{-1}bcb^{-1}, (cbc^{-1}b)^{3}, cac^{-1}a^{-1}c^{-1}aca^{-1}, (cac^{-1}a)^{3}\rangle\\ 
G^{16,54,54}_{8} = \langle a, b, c\mid{}& a^{3}, b^{3}, c^{3}, baba^{-1}b^{-1}a^{-1}, bcb^{-1}c^{-1}b^{-1}cbc^{-1}, (bcb^{-1}c)^{3}, aca^{-1}c^{-1}a^{-1}cac^{-1}, (aca^{-1}c)^{3}\rangle\\ 
G^{18,40,40}_{0} = \langle a, b, c\mid{}& a^{3}, b^{3}, c^{3}, (ba)^{3}, (ba^{-1})^{3}, (cb^{-1}cb)^{2}, (c^{-1}b^{-1}cb^{-1})^{2}, (ac^{-1}ac)^{2}, (a^{-1}c^{-1}ac^{-1})^{2}\rangle\\ 
G^{18,40,48}_{0} = \langle a, b, c\mid{}& a^{3}, b^{3}, c^{3}, (ba)^{3}, (ba^{-1})^{3}, (cb^{-1}cb)^{2}, (c^{-1}b^{-1}cb^{-1})^{2}, (ac)^{2}(a^{-1}c^{-1})^{2}\rangle\\ 
G^{18,40,54}_{0} = \langle a, b, c\mid{}& a^{3}, b^{3}, c^{3}, (ba)^{3}, (ba^{-1})^{3}, (cb^{-1}cb)^{2}, (c^{-1}b^{-1}cb^{-1})^{2}, aca^{-1}c^{-1}a^{-1}cac^{-1}, (aca^{-1}c)^{3}\rangle\\ 
G^{18,40,54}_{2} = \langle a, b, c\mid{}& a^{3}, b^{3}, c^{3}, (ba)^{3}, (ba^{-1})^{3}, (cb^{-1}cb)^{2}, (c^{-1}b^{-1}cb^{-1})^{2}, cac^{-1}a^{-1}c^{-1}aca^{-1}, (cac^{-1}a)^{3}\rangle\\ 
G^{18,48,48}_{0} = \langle a, b, c\mid{}& a^{3}, b^{3}, c^{3}, (ba)^{3}, (ba^{-1})^{3}, (cb)^{2}(c^{-1}b^{-1})^{2}, (ac)^{2}(a^{-1}c^{-1})^{2}\rangle\\ 
G^{18,48,54}_{0} = \langle a, b, c\mid{}& a^{3}, b^{3}, c^{3}, (ba)^{3}, (ba^{-1})^{3}, (cb)^{2}(c^{-1}b^{-1})^{2}, aca^{-1}c^{-1}a^{-1}cac^{-1}, (aca^{-1}c)^{3}\rangle\\ 
G^{18,48,54}_{2} = \langle a, b, c\mid{}& a^{3}, b^{3}, c^{3}, (ba)^{3}, (ba^{-1})^{3}, (cb)^{2}(c^{-1}b^{-1})^{2}, cac^{-1}a^{-1}c^{-1}aca^{-1}, (cac^{-1}a)^{3}\rangle\\ 
G^{18,54,54}_{0} = \langle a, b, c\mid{}& a^{3}, b^{3}, c^{3}, (ba)^{3}, (ba^{-1})^{3}, cbc^{-1}b^{-1}c^{-1}bcb^{-1}, (cbc^{-1}b)^{3}, aca^{-1}c^{-1}a^{-1}cac^{-1}, (aca^{-1}c)^{3}\rangle\\ 
G^{18,54,54}_{2} = \langle a, b, c\mid{}& a^{3}, b^{3}, c^{3}, (ba)^{3}, (ba^{-1})^{3}, cbc^{-1}b^{-1}c^{-1}bcb^{-1}, (cbc^{-1}b)^{3}, cac^{-1}a^{-1}c^{-1}aca^{-1}, (cac^{-1}a)^{3}\rangle\\ 
G^{18,54,54}_{8} = \langle a, b, c\mid{}& a^{3}, b^{3}, c^{3}, (ba)^{3}, (ba^{-1})^{3}, bcb^{-1}c^{-1}b^{-1}cbc^{-1}, (bcb^{-1}c)^{3}, aca^{-1}c^{-1}a^{-1}cac^{-1}, (aca^{-1}c)^{3}\rangle\\ 
G^{24,40,40}_{0} = \langle a, b, c\mid{}& a^{3}, b^{3}, c^{3}, (ba)^{3}, bab^{-1}ab^{-1}a^{-1}ba^{-1}, (cb^{-1}cb)^{2}, (c^{-1}b^{-1}cb^{-1})^{2}, (ac^{-1}ac)^{2}, (a^{-1}c^{-1}ac^{-1})^{2}\rangle\\ 
G^{24,40,48}_{0} = \langle a, b, c\mid{}& a^{3}, b^{3}, c^{3}, (ba)^{3}, bab^{-1}ab^{-1}a^{-1}ba^{-1}, (cb^{-1}cb)^{2}, (c^{-1}b^{-1}cb^{-1})^{2}, (ac)^{2}(a^{-1}c^{-1})^{2}\rangle\\ 
G^{24,40,54}_{0} = \langle a, b, c\mid{}& a^{3}, b^{3}, c^{3}, (ba)^{3}, bab^{-1}ab^{-1}a^{-1}ba^{-1}, (cb^{-1}cb)^{2}, (c^{-1}b^{-1}cb^{-1})^{2}, aca^{-1}c^{-1}a^{-1}cac^{-1}, (aca^{-1}c)^{3}\rangle\\ 
G^{24,40,54}_{2} = \langle a, b, c\mid{}& a^{3}, b^{3}, c^{3}, (ba)^{3}, bab^{-1}ab^{-1}a^{-1}ba^{-1}, (cb^{-1}cb)^{2}, (c^{-1}b^{-1}cb^{-1})^{2}, cac^{-1}a^{-1}c^{-1}aca^{-1}, (cac^{-1}a)^{3}\rangle\\ 
G^{24,48,48}_{0} = \langle a, b, c\mid{}& a^{3}, b^{3}, c^{3}, (ba)^{3}, bab^{-1}ab^{-1}a^{-1}ba^{-1}, (cb)^{2}(c^{-1}b^{-1})^{2}, (ac)^{2}(a^{-1}c^{-1})^{2}\rangle\\ 
G^{24,48,48}_{1} = \langle a, b, c\mid{}& a^{3}, b^{3}, c^{3}, (ba)^{3}, bab^{-1}ab^{-1}a^{-1}ba^{-1}, (cb)^{2}(c^{-1}b^{-1})^{2}, (ac^{-1})^{2}(a^{-1}c)^{2}\rangle\\ 
G^{24,48,54}_{0} = \langle a, b, c\mid{}& a^{3}, b^{3}, c^{3}, (ba)^{3}, bab^{-1}ab^{-1}a^{-1}ba^{-1}, (cb)^{2}(c^{-1}b^{-1})^{2}, aca^{-1}c^{-1}a^{-1}cac^{-1}, (aca^{-1}c)^{3}\rangle\\ 
G^{24,48,54}_{2} = \langle a, b, c\mid{}& a^{3}, b^{3}, c^{3}, (ba)^{3}, bab^{-1}ab^{-1}a^{-1}ba^{-1}, (cb)^{2}(c^{-1}b^{-1})^{2}, cac^{-1}a^{-1}c^{-1}aca^{-1}, (cac^{-1}a)^{3}\rangle\\ 
G^{24,54,54}_{0} = \langle a, b, c\mid{}& a^{3}, b^{3}, c^{3}, (ba)^{3}, bab^{-1}ab^{-1}a^{-1}ba^{-1}, cbc^{-1}b^{-1}c^{-1}bcb^{-1}, (cbc^{-1}b)^{3}, aca^{-1}c^{-1}a^{-1}cac^{-1}, (aca^{-1}c)^{3}\rangle\\ 
G^{24,54,54}_{2} = \langle a, b, c\mid{}& a^{3}, b^{3}, c^{3}, (ba)^{3}, bab^{-1}ab^{-1}a^{-1}ba^{-1}, cbc^{-1}b^{-1}c^{-1}bcb^{-1}, (cbc^{-1}b)^{3}, cac^{-1}a^{-1}c^{-1}aca^{-1}, (cac^{-1}a)^{3}\rangle\\ 
G^{24,54,54}_{8} = \langle a, b, c\mid{}& a^{3}, b^{3}, c^{3}, (ba)^{3}, bab^{-1}ab^{-1}a^{-1}ba^{-1}, bcb^{-1}c^{-1}b^{-1}cbc^{-1}, (bcb^{-1}c)^{3}, aca^{-1}c^{-1}a^{-1}cac^{-1}, (aca^{-1}c)^{3}\rangle\\ 
G^{26,40,40}_{0} = \langle a, b, c\mid{}& a^{3}, b^{3}, c^{3}, (ba)^{3}, bab^{-1}ab^{-1}ab^{-1}a^{-1}, (cb^{-1}cb)^{2}, (c^{-1}b^{-1}cb^{-1})^{2}, (ac^{-1}ac)^{2}, (a^{-1}c^{-1}ac^{-1})^{2}\rangle\\ 
G^{26,40,48}_{0} = \langle a, b, c\mid{}& a^{3}, b^{3}, c^{3}, (ba)^{3}, bab^{-1}ab^{-1}ab^{-1}a^{-1}, (cb^{-1}cb)^{2}, (c^{-1}b^{-1}cb^{-1})^{2}, (ac)^{2}(a^{-1}c^{-1})^{2}\rangle\\ 
G^{26,40,54}_{0} = \langle a, b, c\mid{}& a^{3}, b^{3}, c^{3}, (ba)^{3}, bab^{-1}ab^{-1}ab^{-1}a^{-1}, (cb^{-1}cb)^{2}, (c^{-1}b^{-1}cb^{-1})^{2}, aca^{-1}c^{-1}a^{-1}cac^{-1}, (aca^{-1}c)^{3}\rangle\\ 
G^{26,40,54}_{2} = \langle a, b, c\mid{}& a^{3}, b^{3}, c^{3}, (ba)^{3}, bab^{-1}ab^{-1}ab^{-1}a^{-1}, (cb^{-1}cb)^{2}, (c^{-1}b^{-1}cb^{-1})^{2}, cac^{-1}a^{-1}c^{-1}aca^{-1}, (cac^{-1}a)^{3}\rangle\\ 
G^{26,48,48}_{0} = \langle a, b, c\mid{}& a^{3}, b^{3}, c^{3}, (ba)^{3}, bab^{-1}ab^{-1}ab^{-1}a^{-1}, (cb)^{2}(c^{-1}b^{-1})^{2}, (ac)^{2}(a^{-1}c^{-1})^{2}\rangle\\ 
G^{26,48,48}_{1} = \langle a, b, c\mid{}& a^{3}, b^{3}, c^{3}, (ba)^{3}, bab^{-1}ab^{-1}ab^{-1}a^{-1}, (cb)^{2}(c^{-1}b^{-1})^{2}, (ac^{-1})^{2}(a^{-1}c)^{2}\rangle\\ 
G^{26,48,54}_{0} = \langle a, b, c\mid{}& a^{3}, b^{3}, c^{3}, (ba)^{3}, bab^{-1}ab^{-1}ab^{-1}a^{-1}, (cb)^{2}(c^{-1}b^{-1})^{2}, aca^{-1}c^{-1}a^{-1}cac^{-1}, (aca^{-1}c)^{3}\rangle\\ 
G^{26,48,54}_{2} = \langle a, b, c\mid{}& a^{3}, b^{3}, c^{3}, (ba)^{3}, bab^{-1}ab^{-1}ab^{-1}a^{-1}, (cb)^{2}(c^{-1}b^{-1})^{2}, cac^{-1}a^{-1}c^{-1}aca^{-1}, (cac^{-1}a)^{3}\rangle\\ 
G^{26,54,54}_{0} = \langle a, b, c\mid{}& a^{3}, b^{3}, c^{3}, (ba)^{3}, bab^{-1}ab^{-1}ab^{-1}a^{-1}, cbc^{-1}b^{-1}c^{-1}bcb^{-1}, (cbc^{-1}b)^{3}, aca^{-1}c^{-1}a^{-1}cac^{-1}, (aca^{-1}c)^{3}\rangle\\ 
G^{26,54,54}_{2} = \langle a, b, c\mid{}& a^{3}, b^{3}, c^{3}, (ba)^{3}, bab^{-1}ab^{-1}ab^{-1}a^{-1}, cbc^{-1}b^{-1}c^{-1}bcb^{-1}, (cbc^{-1}b)^{3}, cac^{-1}a^{-1}c^{-1}aca^{-1}, (cac^{-1}a)^{3}\rangle\\ 
G^{26,54,54}_{8} = \langle a, b, c\mid{}& a^{3}, b^{3}, c^{3}, (ba)^{3}, bab^{-1}ab^{-1}ab^{-1}a^{-1}, bcb^{-1}c^{-1}b^{-1}cbc^{-1}, (bcb^{-1}c)^{3}, aca^{-1}c^{-1}a^{-1}cac^{-1}, (aca^{-1}c)^{3}\rangle\\ 
\end{align*}

\subsection{Half-girth type $(4,4,4)$}
\begin{align*}
G^{40,40,40}_{0} = \langle a, b, c\mid{}& a^{3}, b^{3}, c^{3}, (ba^{-1}ba)^{2}, (b^{-1}a^{-1}ba^{-1})^{2}, (cb^{-1}cb)^{2}, (c^{-1}b^{-1}cb^{-1})^{2}, (ac^{-1}ac)^{2}, (a^{-1}c^{-1}ac^{-1})^{2}\rangle\\ 
G^{40,40,48}_{0} = \langle a, b, c\mid{}& a^{3}, b^{3}, c^{3}, (ba^{-1}ba)^{2}, (b^{-1}a^{-1}ba^{-1})^{2}, (cb^{-1}cb)^{2}, (c^{-1}b^{-1}cb^{-1})^{2}, (ac)^{2}(a^{-1}c^{-1})^{2}\rangle\\ 
G^{40,40,54}_{0} = \langle a, b, c\mid{}& a^{3}, b^{3}, c^{3}, (ba^{-1}ba)^{2}, (b^{-1}a^{-1}ba^{-1})^{2}, (cb^{-1}cb)^{2}, (c^{-1}b^{-1}cb^{-1})^{2}, aca^{-1}c^{-1}a^{-1}cac^{-1}, (aca^{-1}c)^{3}\rangle\\ 
G^{40,48,48}_{0} = \langle a, b, c\mid{}& a^{3}, b^{3}, c^{3}, (ba^{-1}ba)^{2}, (b^{-1}a^{-1}ba^{-1})^{2}, (cb)^{2}(c^{-1}b^{-1})^{2}, (ac)^{2}(a^{-1}c^{-1})^{2}\rangle\\ 
G^{40,48,54}_{0} = \langle a, b, c\mid{}& a^{3}, b^{3}, c^{3}, (ba^{-1}ba)^{2}, (b^{-1}a^{-1}ba^{-1})^{2}, (cb)^{2}(c^{-1}b^{-1})^{2}, aca^{-1}c^{-1}a^{-1}cac^{-1}, (aca^{-1}c)^{3}\rangle\\ 
G^{40,48,54}_{2} = \langle a, b, c\mid{}& a^{3}, b^{3}, c^{3}, (ba^{-1}ba)^{2}, (b^{-1}a^{-1}ba^{-1})^{2}, (cb)^{2}(c^{-1}b^{-1})^{2}, cac^{-1}a^{-1}c^{-1}aca^{-1}, (cac^{-1}a)^{3}\rangle\\ 
G^{40,54,54}_{0} = \langle a, b, c\mid{}& a^{3}, b^{3}, c^{3}, (ba^{-1}ba)^{2}, (b^{-1}a^{-1}ba^{-1})^{2}, cbc^{-1}b^{-1}c^{-1}bcb^{-1}, (cbc^{-1}b)^{3}, aca^{-1}c^{-1}a^{-1}cac^{-1}, (aca^{-1}c)^{3}\rangle\\ 
G^{40,54,54}_{2} = \langle a, b, c\mid{}& a^{3}, b^{3}, c^{3}, (ba^{-1}ba)^{2}, (b^{-1}a^{-1}ba^{-1})^{2}, cbc^{-1}b^{-1}c^{-1}bcb^{-1}, (cbc^{-1}b)^{3}, cac^{-1}a^{-1}c^{-1}aca^{-1}, (cac^{-1}a)^{3}\rangle\\ 
G^{40,54,54}_{8} = \langle a, b, c\mid{}& a^{3}, b^{3}, c^{3}, (ba^{-1}ba)^{2}, (b^{-1}a^{-1}ba^{-1})^{2}, bcb^{-1}c^{-1}b^{-1}cbc^{-1}, (bcb^{-1}c)^{3}, aca^{-1}c^{-1}a^{-1}cac^{-1}, (aca^{-1}c)^{3}\rangle\\ 
G^{48,48,48}_{0} = \langle a, b, c\mid{}& a^{3}, b^{3}, c^{3}, (ba)^{2}(b^{-1}a^{-1})^{2}, (cb)^{2}(c^{-1}b^{-1})^{2}, (ac)^{2}(a^{-1}c^{-1})^{2}\rangle\\ 
G^{48,48,48}_{1} = \langle a, b, c\mid{}& a^{3}, b^{3}, c^{3}, (ba)^{2}(b^{-1}a^{-1})^{2}, (cb)^{2}(c^{-1}b^{-1})^{2}, (ac^{-1})^{2}(a^{-1}c)^{2}\rangle\\ 
G^{48,48,54}_{0} = \langle a, b, c\mid{}& a^{3}, b^{3}, c^{3}, (ba)^{2}(b^{-1}a^{-1})^{2}, (cb)^{2}(c^{-1}b^{-1})^{2}, aca^{-1}c^{-1}a^{-1}cac^{-1}, (aca^{-1}c)^{3}\rangle\\ 
G^{48,54,54}_{0} = \langle a, b, c\mid{}& a^{3}, b^{3}, c^{3}, (ba)^{2}(b^{-1}a^{-1})^{2}, cbc^{-1}b^{-1}c^{-1}bcb^{-1}, (cbc^{-1}b)^{3}, aca^{-1}c^{-1}a^{-1}cac^{-1}, (aca^{-1}c)^{3}\rangle\\ 
G^{48,54,54}_{2} = \langle a, b, c\mid{}& a^{3}, b^{3}, c^{3}, (ba)^{2}(b^{-1}a^{-1})^{2}, cbc^{-1}b^{-1}c^{-1}bcb^{-1}, (cbc^{-1}b)^{3}, cac^{-1}a^{-1}c^{-1}aca^{-1}, (cac^{-1}a)^{3}\rangle\\ 
G^{48,54,54}_{8} = \langle a, b, c\mid{}& a^{3}, b^{3}, c^{3}, (ba)^{2}(b^{-1}a^{-1})^{2}, bcb^{-1}c^{-1}b^{-1}cbc^{-1}, (bcb^{-1}c)^{3}, aca^{-1}c^{-1}a^{-1}cac^{-1}, (aca^{-1}c)^{3}\rangle\\ 
G^{54,54,54}_{0} = \langle a, b, c\mid{}& a^{3}, b^{3}, c^{3}, bab^{-1}a^{-1}b^{-1}aba^{-1}, (bab^{-1}a)^{3}, cbc^{-1}b^{-1}c^{-1}bcb^{-1}, (cbc^{-1}b)^{3}, aca^{-1}c^{-1}a^{-1}cac^{-1}, (aca^{-1}c)^{3}\rangle\\ 
G^{54,54,54}_{2} = \langle a, b, c\mid{}& a^{3}, b^{3}, c^{3}, bab^{-1}a^{-1}b^{-1}aba^{-1}, (bab^{-1}a)^{3}, cbc^{-1}b^{-1}c^{-1}bcb^{-1}, (cbc^{-1}b)^{3}, cac^{-1}a^{-1}c^{-1}aca^{-1}, (cac^{-1}a)^{3}\rangle\\ 
\end{align*}

\subsection{Cyclic extensions}\label{sec:Presentation_extended}
\begin{align*}
\widetilde G_0^{14, 14, 14}= \langle a, t \mid{}&
a^3, t^3, a^{-1}  t^{-1}  a^{-1}  t  a  t^{-1}  a  t  a 
 t^{-1}  a  t \rangle\\
\widetilde G_0^{16, 16, 16}=\langle a, t \mid{}& 
 a^3, t^3, a^{-1}  t  a  t^{-1}  a  t  a  t^{-1}  a^{-1}  t  a^{-1}  t^{-1}
 \rangle\\
\widetilde G_0^{18, 18, 18}=\langle a, t \mid{}& 
 a^3, t^3, (a  t^{-1}  a  t)^3, [a^{-1}, t]^3 \rangle\\
\widetilde G_0^{24, 24, 24}=\langle a, t \mid{}& 
 a^3, t^3, (a  t^{-1}  a  t)^3, a  t^{-1}  a  t  a^{-1} 
 t^{-1}  a  t  a^{-1}  t^{-1}  a^{-1}  t  a  t^{-1}  a^{-1}  
t \rangle\\
\widetilde G_0^{26, 26, 26}=\langle a, t \mid{}& 
 a^3, t^3, (a  t^{-1}  a  t)^3, a  t^{-1}  a  t  a^{-1} 
 t^{-1}  a  t  a^{-1}  t^{-1}  a  t  a^{-1}  t^{-1}  a^{-1}  
t \rangle\\
\widetilde G_0^{40, 40, 40}=\langle a, t \mid{}& 
 a^3, t^3, (a  t^{-1}  a  t  a  t^{-1}  a^{-1}  t)^2, (a
 t^{-1}  a^{-1}  t  a^{-1}  t^{-1}  a^{-1}  t)^2 \rangle\\
\widetilde G_0^{48, 48, 48}=\langle a, t \mid{}& 
 a^3, t^3, a  t^{-1}  a  t  a  t^{-1}  a  t  a^{-1}  
t^{-1}  a^{-1}  t  a^{-1}  t^{-1}  a^{-1}  t \rangle\\
\widetilde G_0^{54, 54, 54}=\langle a, t \mid{}& 
 a^3, t^3, a  t^{-1}  a  t  a^{-1}  t^{-1}  a^{-1}  t  
a^{-1}  t^{-1}  a  t  a  t^{-1}  a^{-1}  t, (a  t^{-1}  a 
t  a^{-1}  t^{-1}  a  t)^3 \rangle
\end{align*}

}

\section{Tables}\label{sec:Tables}
\noindent
The following tables contain information about the trivalent triangle groups that arise as fundamental groups of $\mathcal{Y} \in \mathbf{Y}$, see Section~\ref{sec:TriTriSetup}. Each table contains the following entries: the name of the group, as presented in Section~\ref{sec:PresentationList} above; the presentation length of the presentation, which can be read off \textsc{Magma}'s \texttt{PresentationLength($G$)}; whether the group has Kazhdan's property (T) if this is known, the only cases known to have (T) are Ronan's groups, in all the cases where it is known not to have (T) this was shown by finding a subgroup of finite index that maps onto $\Zbb$; the dimension of its abelianization as an $\Fbb_3$-vector space, which \textsc{Magma} returns using \texttt{AbelianQuotient($G$)}; the $L_2$-quotients using \textsc{Magma}'s notation for infinite occurrences, obtained via \texttt{L2Quotients($G$)}; the finite simple quotients that are not $L_2$-quotients up to order $5 \cdot 10^7$, including multiplicities, obtained via \texttt{SimpleQuotients($G$,$5 \cdot 10^7$ :\ Family:="notPSL2", Limit:=$0$)}; and the degrees of alternating quotients up to degree $30$ (up to degree $36$ for half-girth type $(3,3,3)$, up to degree $40$ for half-girth type $(4,4,4)$).

\subsection{Half-girth type  $(2,4,4)$}\label{sec:tab_244}

The table below also shows which of the groups are hyperbolic. For the non-hyperbolic groups, two elements that generate $\Zbb \times \Zbb$ are shown. That they generate $\Zbb \times \Zbb$ follows from Corollary~\ref{cor:z_squared} in each case.  The following words are used in the table 
$g_{26}=a^{-1}c^{-1}bc$, $g_{27}=bcac$, $g_{28}=b^{-1}c^{-1}ac$, $g_{29}=bc^{-1}a^{-1}c$, $g_{30}=a^{-1}c^{-1}b^{-1}c$, $g_{31}=a^{-1}cbc$, $g_{32}=a^{-1}cbc^{-1}$, $g_{33}=bca^{-1}c$, $g_{34}=bca^{-1}c^{-1}$, $g_{35}=ac^{-1}b^{-1}c^{-1}$, $g_{36}=acbc$, $g_{37}=b^{-1}ca^{-1}c^{-1}$, $g_{38}=bc^{-1}ac$, $g_{39}=bcac^{-1}$, $g_{40}=ac^{-1}b^{-1}c$, $g_{41}=a^{-1}cb^{-1}c$, $g_{42}=a^{-1}c^{-1}b^{-1}c^{-1}$, $g_{43}=b^{-1}c^{-1}a^{-1}c$, $g_{44}=ac^{-1}bc$, $g_{45}=b^{-1}ca^{-1}c$, $g_{46}=a^{-1}cb^{-1}c^{-1}$, $g_{47}=acb^{-1}c^{-1}$, $g_{48}=b^{-1}cac^{-1}$, $g_{49}=bc^{-1}a^{-1}c^{-1}$, $g_{50}=acbc^{-1}$.

\leavevmode
\begin{longtable}{lclcccp{2cm}p{5.5cm}p{5cm}} $G$ & p.len. & hyp. & VTF & (T) & $\dim(H_1)$ & $L_2$-quot's & Small quotients & Degrees of small alternating quotients\\
\hline
$G_0^{6,40,40}$ & $45$  & No $g_{31}g_{32}, g_{33}g_{34}$ & Yes & No & 0 &  & $\textrm{Alt}_{7}$ ($\times 2$), $B_{2}(3)$  &$ 5$, $7 $ ($\le 28$)  \\
\hline
$G_0^{6,40,48}$ & $37$  & No $g_{39}g_{49}, g_{31}g_{50}$ & Yes & No & 1 & $L_2(3^2)$ & $B_{2}(3)$ ($\times 3$), $A_{3}(3)$  &$ 3$, $ 5$, $6 $ ($\le 28$)  \\
\hline
$G_0^{6,40,54}$ & $49$  & No $g_{35}g_{36}, g_{37}g_{38}$ & Yes & No & 1 &  & $B_{2}(3)$ ($\times 2$), $\textrm{Alt}_{10}$ ($\times 4$), ${}^2A_{4}(4)$  &$ 3$, $ 5$, $ 10$, $ 15$, $ 20$, $ 25$ ($\le 28$)  \\
$G_{2}^{6,40,54}$ & $49$  & No $g_{39}g_{33}, g_{40}g_{41}$ & Yes & No & 1 &  & $\textrm{Alt}_{9}$ ($\times 2$), ${}^2A_{3}(9)$, $A_{3}(3)$  &$ 3$, $ 5$, $ 9$ ($\le 28$)  \\
\hline
$G_0^{6,48,48}$ & $29$  & No $g_{26}, g_{27}$ & Yes & No & 3 &  & $B_{2}(3)$, ${}^2A_{3}(9)$, $A_{3}(3)$  &$ 3$, $ 4$ ($\le 28$)  \\
\hline
$G_0^{6,48,54}$ & $41$  & No $g_{39}g_{39}, g_{31}g_{42}$ & Yes & No & 3 &  & $B_{2}(3)$ ($\times 2$), $\textrm{Alt}_{10}$, $\textrm{Alt}_{11}$ ($\times 2$)  &$ 3$, $ 4$, $ 10$, $ 11$, $ 14$, $ 15$, $ 19$, $ 20$, $ 21$, $ 22$, $ 23$, $ 24$, $ 25$, $ 26$, $ 27$, $ 28$ ($\le 28$)  \\
$G_{2}^{6,48,54}$ & $41$  & No $g_{39}g_{33}, g_{35}g_{30}$ & Yes & No & 3 &  & ${}^2A_{3}(9)$, $A_{3}(3)$, ${}^2A_{4}(4)$  &$ 3$, $ 4$ ($\le 28$)  \\
\hline
$G_0^{6,54,54}$ & $53$  & No $g_{35}g_{36}, g_{43}g_{38}$ & Yes & No & 3 &  & $\textrm{Alt}_{9}$ ($\times 2$), ${}^2A_{4}(4)$ ($\times 2$)  &$ 3$, $ 9$, $ 27$ ($\le 28$)  \\
$G_{2}^{6,54,54}$ & $53$  & No $g_{41}g_{44}, g_{45}g_{38}$ & Yes & No & 3 &  & $\textrm{Alt}_{9}$ ($\times 2$), ${}^2A_{3}(9)$, $A_{3}(3)$, ${}^2A_{4}(4)$  &$ 3$, $ 9$, $ 12$, $ 15$, $ 18$, $ 21$, $ 24$, $ 27$ ($\le 28$)  \\
$G_{8}^{6,54,54}$ & $53$  & No $g_{26}, g_{28}$ & Yes & No & 3 &  & $B_{2}(3)$ ($\times 2$), $\textrm{Alt}_{9}$ ($\times 4$)  &$ 3$, $ 9$, $ 12$, $ 18$, $ 21$, $ 24$, $ 27$ ($\le 28$)  \\
\hline
$G_0^{8,40,40}$ & $45$  & No $g_{26}, g_{29}$ & Yes & No & 0 & $L_2(\infty^4)$ & $B_{2}(3)$, $C_{2}(4)$ ($\times 2$), $\textrm{Alt}_{10}$ ($\times 2$), $B_{2}(5)$ ($\times 5$), $\textrm{Alt}_{11}$ ($\times 2$)  &$ 5$, $ 6$, $ 10$, $ 11$, $ 15$, $ 20$, $ 21$, $ 25$, $ 26$ ($\le 28$)  \\
\hline
$G_0^{8,40,48}$ & $37$  & Yes & ? & No & 0 & $L_2(3^2)$ & $B_{2}(5)$ ($\times 4$)  &$ 5$, $ 6$ ($\le 28$)  \\
\hline
$G_0^{8,40,54}$ & $49$  & Yes & Yes & No & 0 & $L_2(3^2)$ & $B_{2}(3)$ ($\times 2$), $\textrm{M}_{12}$ ($\times 4$)  &$ 6$ ($\le 28$)  \\
$G_{2}^{8,40,54}$ & $49$  & No $g_{39}g_{38}, g_{41}g_{41}$ & Yes & No & 0 & $L_2(3^2)$ & $B_{2}(3)$ ($\times 2$), $\textrm{M}_{12}$ ($\times 4$), $\textrm{Alt}_{10}$ ($\times 3$), $A_{3}(3)$ ($\times 2$), ${}^2A_{4}(4)$  &$ 6$, $ 10$, $ 12$, $ 15$, $ 16$, $ 21$, $ 22$, $ 27$, $ 28$ ($\le 28$)  \\
\hline
$G_0^{8,48,48}$ & $29$  & Yes & Yes & No & 2 &  & $B_{2}(3)$ ($\times 3$), $C_{3}(2)$ ($\times 4$), $\textrm{Alt}_{11}$  &$ 3$, $ 4$, $ 5$, $ 11$, $ 19$, $ 25$, $ 28$ ($\le 28$)  \\
$G_{1}^{8,48,48}$ & $29$  & No $g_{43}g_{39}, g_{44}g_{46}$ & Yes & No & 2 &  & $\textrm{Alt}_{7}$, $B_{2}(3)$ ($\times 2$), $C_{3}(2)$, $B_{2}(5)$ ($\times 3$), $\textrm{Alt}_{11}$  &$ 3$, $ 4$, $ 7$, $ 11$, $ 15$, $ 19$, $ 22$, $ 23$, $ 24$, $ 25$, $ 26$, $ 27$, $ 28$ ($\le 28$)  \\
\hline
$G_0^{8,48,54}$ & $41$  & Yes & Yes & No & 2 &  & $B_{2}(3)$ ($\times 2$), $\textrm{Alt}_{9}$  &$ 3$, $ 4$, $ 9$ ($\le 28$)  \\
$G_{2}^{8,48,54}$ & $41$  & Yes & Yes & No & 2 &  & $B_{2}(3)$ ($\times 2$), $C_{3}(2)$, $\textrm{Alt}_{10}$ ($\times 2$), ${}^2A_{4}(4)$  &$ 3$, $ 4$, $ 10$, $ 13$, $ 20$, $ 26$, $ 28$ ($\le 28$)  \\
\hline
$G_0^{8,54,54}$ & $53$  & Yes & ? & No & 2 &  &   &$ 3$, $ 4$ ($\le 28$)  \\
$G_{2}^{8,54,54}$ & $53$  & No $g_{30}, g_{27}$ & Yes & No & 2 &  & $B_{2}(3)$ ($\times 2$), $\textrm{Alt}_{9}$ ($\times 2$), $C_{3}(2)$ ($\times 4$), $\textrm{Alt}_{10}$ ($\times 12$), ${}^2A_{3}(9)$, $A_{3}(3)$ ($\times 5$), ${}^2A_{4}(4)$, $\textrm{Alt}_{11}$ ($\times 4$)  &$ 3$, $ 4$, $ 9$, $ 10$, $ 11$, $ 12$, $ 13$, $ 14$, $ 15$, $ 16$, $ 18$, $ 19$, $ 20$, $ 21$, $ 22$, $ 23$, $ 24$, $ 25$, $ 26$, $ 27$, $ 28$ ($\le 28$)  \\
$G_{8}^{8,54,54}$ & $53$  & No $g_{47}g_{32}, g_{48}g_{34}$ & Yes & No & 2 &  & $B_{2}(3)$ ($\times 2$), $\textrm{Alt}_{9}$ ($\times 2$)  &$ 3$, $ 4$, $ 9$, $ 18$, $ 27$, $ 28$ ($\le 28$)  \\
\end{longtable}

\subsection{Half-girth type  $(3,3,3)$}\label{sec:tab_333}

The table below also shows which of the groups are hyperbolic. For the non-hyperbolic groups, two elements that generate $\Zbb \times \Zbb$ are shown. That they generate $\Zbb \times \Zbb$ follows from Corollary~\ref{cor:z_squared} in each case. The following words are used in the table $g_1=a^{-1}bc^{-1}b$, $g_2=a^{-1}bcb$, $g_3=c^{-1}ab^{-1}a$, $g_4=cab^{-1}a$, $g_5=caba$, $g_6=baca$, $g_7=bac^{-1}a$, $g_8=a^{-1}b^{-1}c^{-1}b^{-1}$, $g_9=a^{-1}b^{-1}c^{-1}b$, $g_{10}=c^{-1}aba$, $g_{11}=b^{-1}ac^{-1}a$, $g_{12}=b^{-1}aca^{-1}$, $g_{13} = c^{-1}a^{-1}b^{-1}a^{-1}$, $g_{14}=c^{-1}a^{-1}b^{-1}a$, $g_{15}=b^{-1}a^{-1}c^{-1}a^{-1}$, $g_{16}=c^{-1}aba^{-1}$, $g_{17}=ca^{-1}b^{-1}a$, $g_{18}=ba^{-1}ca^{-1}$, $g_{19}=bac^{-1}a^{-1}$, $g_{20}=ba^{-1}c^{-1}a$, $g_{21}=b^{-1}a^{-1}ca$, $g_{22}=a^{-1}bc^{-1}b^{-1}$, $g_{23}=abcb^{-1}$, $g_{24}=ab^{-1}c^{-1}b^{-1}$, $g_{25}=ca^{-1}ba$, $h_1=c^{-1}bac^{-1}ba^{-1}$, $h_{2}=c^{-1}a^{-1}b^{-1}c^{-1}ab$, $h_{3}=b^{-1}c^{-1}abca^{-1}$, $h_{4}=b^{-1}cab^{-1}c^{-1}a$, $h_{5}=b^{-1}acba^{-1}c^{-1}$, $h_{6}=b^{-1}ac^{-1}bac^{-1}$ $h_{7}=acb^{-1}a^{-1}cb$, $h_{8}=ac^{-1}b^{-1}ac^{-1}b$.

\leavevmode
\begin{longtable}{lclcccp{2cm}p{5.5cm}p{5cm}} $G$ & p.len. & hyp. & VTF & (T) & $\dim(H_1)$ & $L_2$-quot's & Small quotients & Degrees of small alternating quotients\\
\hline
$G_0^{14,14,14}$ & $27$  & No $g_3,g_1$ & Yes & Yes & 0 & $L_2(7)$ & ${}^2A_{2}(9)$, ${}^2A_{2}(25)$  &$ $ ($\le 36$)  \\
$G_{1}^{14,14,14}$ & $27$  & No $g_3g_4, g_2g_1$ & ? & Yes & 1 &  &   &$ 3 $ ($\le 36$)  \\
$G_{2}^{14,14,14}$ & $27$  & No $g_3, g_1$ & Yes & Yes & 0 &  & $\textrm{Alt}_{7}$  &$ 7 $ ($\le 36$)  \\
$G_{6}^{14,14,14}$ & $27$  & No $g_5, g_{21}$ & Yes & Yes & 1 &  & $A_{2}(8)$ ($\times 2$)  &$ 3 $ ($\le 36$)  \\
\hline
$G_0^{14,14,16}$ & $27$  & No $g_5, g_9$ & Yes & No & 1 & $L_2(7)$ & $\textrm{Alt}_{8}$ or $A_{2}(4)$  &$ 3$, $8 $ ($\le 36$)  \\
$G_{1}^{14,14,16}$ & $27$  & No $g_5g_{13}, g_{12}g_{12}$ & ? & ? & 0 & $L_2(7)$ &   &$ $ ($\le 36$)  \\
$G_{4}^{14,14,16}$ & $27$  & No $g_5g_{14}, g_{22}g_{23}$ & ? & ? & 0 &  &   &$ $ ($\le 36$)  \\
$G_{5}^{14,14,16}$ & $27$  & No $g_5h_1, g_{19}h_{3}$ & ? & ? & 1 &  &   &$ 3 $ ($\le 36$)  \\
\hline
$G_0^{14,14,18}$ & $33$  & No $g_3, g_1$ & Yes & ? & 1 &  & ${}^2A_{2}(9)$  &$ 3 $ ($\le 36$)  \\
$G_{4}^{14,14,18}$ & $33$  & No $g_2, g_4$ & ? & ? & 1 &  &   &$ 3 $ ($\le 36$)  \\
\hline
$G_0^{14,14,24}$ & $35$  & Yes & ? & ? & 1 & $L_2(7)$ &   &$ 3 $ ($\le 36$)  \\
$G_{1}^{14,14,24}$ & $35$  & No $g_3, g_1$ & Yes & No & 1 & $L_2(7)$ & $\textrm{Alt}_{7}$, ${}^2A_{2}(25)$  &$ 3$, $7 $ ($\le 36$)  \\
$G_{4}^{14,14,24}$ & $35$  & No $g_2, g_4$ & Yes & No & 1 &  & $\textrm{Alt}_{8}$ or $A_{2}(4)$, $\textrm{M}_{22}$  &$ 3$, $8 $ ($\le 36$)  \\
$G_{5}^{14,14,24}$ & $35$  & No $g_{18}g_{11}, g_{16}g_{17}$ & Yes & ? & 1 &  & $\textrm{Alt}_{7}$  &$ 3$, $7 $ ($\le 36$)  \\
\hline
$G_0^{14,14,26}$ & $35$  & Yes & ? & ? & 1 &  &   &$ 3 $ ($\le 36$)  \\
$G_{1}^{14,14,26}$ & $35$  & No $g_3, g_1$ & Yes & ? & 0 &  & $A_{2}(9)$  &$ 14 $ ($\le 36$)  \\
$G_{3}^{14,14,26}$ & $35$  & No $g_3, g_1$ & ? & ? & 0 &  &   &$ $ ($\le 36$)  \\
$G_{4}^{14,14,26}$ & $35$  & No $g_2, g_4$ & ? & ? & 0 &  &   &$ $ ($\le 36$)  \\
$G_{5}^{14,14,26}$ & $35$  & No $g_{18}g_{11}, g_{16}g_{17}$ & ? & ? & 1 &  &   &$ 3 $ ($\le 36$)  \\
$G_{7}^{14,14,26}$ & $35$  & No $g_{18}g_{11}, g_{16}g_{17}$ & ? & ? & 1 &  &   &$ 3 $ ($\le 36$)  \\
\hline
$G_0^{14,16,16}$ & $27$  & No $h_{5}, h_{4}$ & ? & No & 0 & $L_2(7)$ &   &$ $ ($\le 36$)  \\
$G_{1}^{14,16,16}$ & $27$  & No $g_2g_9g_{24}, g_{25}g_{17}g_{14}$ & ? & ? & 1 &  &   &$ 3$, $4 $ ($\le 36$)  \\
\hline
$G_0^{14,16,18}$ & $33$  & No $h_{7}, h_{2}$ & ? & ? & 1 &  &   &$ 3 $ ($\le 36$)  \\
\hline
$G_0^{14,16,24}$ & $35$  & Yes & ? & No & 1 & $L_2(7)$ &   &$ 3 $ ($\le 36$)  \\
$G_{1}^{14,16,24}$ & $35$  & Yes & ? & ? & 1 &  &   &$ 3$, $4 $ ($\le 36$)  \\
\hline
$G_0^{14,16,26}$ & $35$  & Yes & ? & ? & 0 &  &   &$ $ ($\le 36$)  \\
$G_{1}^{14,16,26}$ & $35$  & Yes & ? & No & 1 &  &   &$ 3 $ ($\le 36$)  \\
$G_{3}^{14,16,26}$ & $35$  & Yes & ? & ? & 1 &  &   &$ 3 $ ($\le 36$)  \\
$G_{7}^{14,16,26}$ & $35$  & Yes & ? & ? & 0 &  &   &$ $ ($\le 36$)  \\
\hline
$G_0^{14,18,18}$ & $39$  & No $g_2, g_4$ & ? & No & 2 &  &   &$ 3 $ ($\le 36$)  \\
\hline
$G_0^{14,18,24}$ & $41$  & No $g_2, g_4$ & ? & ? & 2 &  &   &$ 3 $ ($\le 36$)  \\
\hline
$G_0^{14,18,26}$ & $41$  & No $g_2, g_4$ & ? & ? & 1 &  &   &$ 3 $ ($\le 36$)  \\
$G_{3}^{14,18,26}$ & $41$  & No $g_3, g_1$ & ? & ? & 1 &  &   &$ 3 $ ($\le 36$)  \\
\hline
$G_0^{14,24,24}$ & $43$  & No $g_2, g_4$ & Yes & No & 2 & $L_2(7)$ & $\textrm{Alt}_{7}$, $\textrm{Alt}_{8}$ or $A_{2}(4)$, $\textrm{J}_{2}$, ${}^2A_{3}(9)$  &$ 3$, $7$, $8$, $22$, $28$, $29$, $31$, $35$, $36 $ ($\le 36$)  \\
$G_{1}^{14,24,24}$ & $43$  & Yes & ? & No & 2 &  &   &$ 3$, $4 $ ($\le 36$)  \\
\hline
$G_0^{14,24,26}$ & $43$  & No $g_2, g_4$ & ? & ? & 1 &  &   &$ 3 $ ($\le 36$)  \\
$G_{1}^{14,24,26}$ & $43$  & Yes & ? & ? & 1 &  &   &$ 3 $ ($\le 36$)  \\
$G_{3}^{14,24,26}$ & $43$  & Yes & ? & ? & 1 &  &   &$ 3 $ ($\le 36$)  \\
$G_{7}^{14,24,26}$ & $43$  & No $g_3, g_1$ & ? & ? & 1 &  &   &$ 3 $ ($\le 36$)  \\
\hline
$G_0^{14,26,26}$ & $43$  & No $g_2, g_4$ & ? & ? & 0 &  &   &$ $ ($\le 36$)  \\
$G_{1}^{14,26,26}$ & $43$  & Yes & ? & ? & 1 &  &   &$ 3 $ ($\le 36$)  \\
$G_{3}^{14,26,26}$ & $43$  & Yes & ? & ? & 1 &  &   &$ 3 $ ($\le 36$)  \\
$G_{4}^{14,26,26}$ & $43$  & Yes & ? & ? & 1 &  &   &$ 3 $ ($\le 36$)  \\
$G_{5}^{14,26,26}$ & $43$  & No $g_3, g_1$ & ? & ? & 0 &  &   &$ 14 $ ($\le 36$)  \\
$G_{15}^{14,26,26}$ & $43$  & No $g_3, g_1$ & ? & ? & 0 &  &   &$ 13 $ ($\le 36$)  \\
\hline
$G_0^{16,16,16}$ & $27$  & No $g_5, g_9$ & Yes & No & 1 &  & ${}^2A_{2}(9)$, $\textrm{J}_{2}$, ${}^2A_{2}(64)$ ($\times 2$), $A_{2}(9)$, ${}^2A_{2}(81)$ ($\times 2$)  &$ 3$, $4 $ ($\le 36$)  \\
$G_{1}^{16,16,16}$ & $27$  & No $g_5g_{13}, g_{19}g_{12}$ & Yes & No & 0 &  & $A_{2}(3)$, ${}^2A_{2}(9)$ ($\times 2$), ${}^2A_{2}(81)$ ($\times 2$)  &$ 5$, $29 $ ($\le 36$)  \\
\hline
$G_0^{16,16,18}$ & $33$  & No $h_{6}, h_{8}$ & Yes & No & 1 &  & $A_{2}(3)$ ($\times 2$), $A_{2}(9)$ ($\times 3$)  &$ 3$, $4 $ ($\le 36$)  \\
\hline
$G_0^{16,16,24}$ & $35$  & Yes & Yes & No & 1 &  & $\textrm{Alt}_{10}$, $A_{4}(2)$  &$ 3$, $4$, $10$, $34$, $36 $ ($\le 36$)  \\
$G_{1}^{16,16,24}$ & $35$  & No $h_{6}, h_{8}$ & Yes & No & 1 &  & $\textrm{Alt}_{9}$, $\textrm{HS}_{}$  &$ 3$, $4$, $5$, $9$, $21$, $29$, $33$, $34 $ ($\le 36$)  \\
\hline
$G_0^{16,16,26}$ & $35$  & Yes & ? & ? & 1 &  &   &$ 3$, $4 $ ($\le 36$)  \\
$G_{1}^{16,16,26}$ & $35$  & No $h_{6}, h_{8}$ & ? & No & 0 & $L_2(13)$ &   &$ 16$, $30 $ ($\le 36$)  \\
\hline
$G_0^{16,18,18}$ & $39$  & No $g_{15}g_7, g_{10}$ & Yes & No & 2 &  & $A_{2}(3)$ ($\times 2$), ${}^2A_{2}(64)$ ($\times 2$), $A_{2}(9)$ ($\times 3$)  &$ 3$, $4 $ ($\le 36$)  \\
\hline
$G_0^{16,18,24}$ & $41$  & Yes & Yes & No & 2 &  & $\textrm{Alt}_{10}$  &$ 3$, $4$, $10$, $19$, $34 $ ($\le 36$)  \\
\hline
$G_0^{16,18,26}$ & $41$  & Yes & ? & ? & 1 &  &   &$ 3 $ ($\le 36$)  \\
\hline
$G_0^{16,24,24}$ & $43$  & Yes & Yes & No & 2 &  & $\textrm{Alt}_{7}$, $\textrm{Alt}_{8}$ or $A_{2}(4)$ ($\times 2$), ${}^2A_{2}(25)$, $\textrm{J}_{2}$, $C_{3}(2)$, ${}^2A_{3}(9)$, $B_{2}(5)$, $\textrm{HS}_{}$  &$ 3$, $4$, $7$, $8$, $15$, $18$, $19$, $20$, $22$, $23$, $24$, $25$, $27$, $28$, $30$, $31$, $32$, $33$, $34$, $35$, $36 $ ($\le 36$)  \\
$G_{1}^{16,24,24}$ & $43$  & Yes & Yes & No & 2 &  & $C_{3}(2)$ ($\times 2$)  &$ 3$, $4$, $5$, $17$, $18$, $19$, $21$, $22$, $27$, $29$, $30$, $31$, $32$, $33$, $34$, $35$, $36 $ ($\le 36$)  \\
\hline
$G_0^{16,24,26}$ & $43$  & Yes & ? & ? & 1 &  &   &$ 3$, $4 $ ($\le 36$)  \\
$G_{1}^{16,24,26}$ & $43$  & Yes & ? & ? & 1 & $L_2(13)$ &   &$ 3 $ ($\le 36$)  \\
\hline
$G_0^{16,26,26}$ & $43$  & Yes & ? & No & 1 &  &   &$ 3$, $26 $ ($\le 36$)  \\
$G_{1}^{16,26,26}$ & $43$  & Yes & ? & ? & 0 & $L_2(13)$ & $A_{2}(3)$  &$ $ ($\le 36$)  \\
$G_{3}^{16,26,26}$ & $43$  & Yes & Yes & No & 0 &  & $A_{2}(3)$ ($\times 2$), $G_{2}(3)$  &$ 26 $ ($\le 36$)  \\
$G_{5}^{16,26,26}$ & $43$  & Yes & Yes & ? & 1 & $L_2(13)$ & $A_{2}(3)$, $G_{2}(3)$, $A_{3}(3)$  &$ 3$, $14$, $26$, $28$, $29 $ ($\le 36$)  \\
\hline
$G_0^{18,18,18}$ & $45$  & No $g_5, g_6$ & Yes & No & 3 &  & $A_{2}(3)$ ($\times 2$), $A_{2}(9)$ ($\times 3$)  &$ 3$, $27$, $36 $ ($\le 36$)  \\
\hline
$G_0^{18,18,24}$ & $47$  & No $g_5, g_6$ & Yes & No & 3 &  & $\textrm{Alt}_{10}$, $\textrm{Alt}_{11}$  &$ 3$, $4$, $10$, $11$, $12$, $15$, $19$, $20$, $21$, $22$, $23$, $24$, $25$, $26$, $27$, $28$, $30$, $31$, $32$, $33$, $34$, $35$, $36 $ ($\le 36$)  \\
\hline
$G_0^{18,18,26}$ & $47$  & No $g_5, g_6$ & Yes & No & 2 &  & $A_{2}(3)$ ($\times 2$), $A_{2}(9)$ ($\times 3$)  &$ 3$, $13 $ ($\le 36$)  \\
\hline
$G_0^{18,24,24}$ & $49$  & No $g_2, g_4$ & Yes & No & 3 &  & $\textrm{Alt}_{10}$, ${}^2A_{3}(9)$, $\textrm{Alt}_{11}$  &$ 3$, $4$, $10$, $11$, $15$, $19$, $20$, $21$, $22$, $23$, $24$, $25$, $26$, $27$, $28$, $30$, $31$, $32$, $33$, $34$, $35$, $36 $ ($\le 36$)  \\
\hline
$G_0^{18,24,26}$ & $49$  & No $g_2, g_4$ & ? & ? & 2 &  &   &$ 3$, $27 $ ($\le 36$)  \\
\hline
$G_0^{18,26,26}$ & $49$  & No $g_2, g_4$ & Yes & No & 1 &  & $G_{2}(3)$ ($\times 2$)  &$ 3$, $13 $ ($\le 36$)  \\
$G_{1}^{18,26,26}$ & $49$  & No $g_8, g_7$ & Yes & No & 1 &  & $A_{2}(3)$ ($\times 2$), $G_{2}(3)$  &$ 3$, $13$, $27 $ ($\le 36$)  \\
\hline
$G_0^{24,24,24}$ & $51$  & Yes & Yes & No & 3 &  & $\textrm{Alt}_{7}$ ($\times 3$), $\textrm{M}_{12}$, $A_{2}(7)$, $B_{2}(5)$ ($\times 3$), $A_{4}(2)$  &$ 3$, $4$, $7$, $13$, $15$, $18$, $19$, $20$, $22$, $23$, $24$, $25$, $27$, $28$, $29$, $30$, $31$, $32$, $33$, $34$, $35$, $36 $ ($\le 36$)  \\
$G_{1}^{24,24,24}$ & $51$  & No $g_8, g_7$ & Yes & No & 3 &  & $\textrm{M}_{22}$, ${}^2A_{3}(9)$ ($\times 3$)  &$ 3$, $4$, $5$, $12$, $13$, $14$, $15$, $17$, $18$, $19$, $20$, $21$, $22$, $23$, $24$, $25$, $26$, $27$, $28$, $29$, $30$, $31$, $32$, $33$, $34$, $35$, $36 $ ($\le 36$)  \\
\hline
$G_0^{24,24,26}$ & $51$  & Yes & ? & No & 2 &  &   &$ 3$, $4 $ ($\le 36$)  \\
$G_{1}^{24,24,26}$ & $51$  & No $g_8, g_7$ & Yes & No & 2 & $L_2(13)$ & $A_{3}(3)$  &$ 3$, $13$, $14$, $15$, $16$, $26$, $27$, $28 $ ($\le 36$)  \\
\hline
$G_0^{24,26,26}$ & $51$  & Yes & ? & No & 1 &  &   &$ 3$, $26$, $28 $ ($\le 36$)  \\
$G_{1}^{24,26,26}$ & $51$  & No $g_8, g_7$ & Yes & No & 1 & $L_2(13)$ & $A_{2}(3)$ ($\times 2$), $A_{3}(3)$  &$ 3$, $13$, $14$, $27 $ ($\le 36$)  \\
$G_{3}^{24,26,26}$ & $51$  & No $g_8, g_7$ & ? & No & 1 &  & $A_{2}(3)$ ($\times 2$)  &$ 3$, $13$, $14$, $16$, $26 $ ($\le 36$)  \\
$G_{5}^{24,26,26}$ & $51$  & Yes & ? & No & 1 & $L_2(13)$ & $A_{2}(3)$ ($\times 2$), $A_{3}(3)$  &$ 3$, $13$, $14$, $26$, $27$, $28 $ ($\le 36$)  \\
\hline
$G_0^{26,26,26}$ & $51$  & Yes & Yes & No & 1 &  & $A_{2}(3)$, $A_{2}(9)$ ($\times 3$)  &$ 3$, $26 $ ($\le 36$)  \\
$G_{1}^{26,26,26}$ & $51$  & No $g_8, g_7$ & Yes & No & 0 &  & $A_{2}(3)$ ($\times 2$), ${}^2A_{2}(16)$ ($\times 2$), $G_{2}(3)$ ($\times 6$)  &$ 13$, $26 $ ($\le 36$)  \\
$G_{5}^{26,26,26}$ & $51$  & Yes & Yes & No & 1 &  & $A_{2}(3)$ ($\times 2$), ${}^2A_{2}(16)$  &$ 3 $ ($\le 36$)  \\
$G_{21}^{26,26,26}$ & $51$  & No $g_{11}, g_1$ & Yes & No & 0 & $L_2(13)$ & $A_{2}(3)$ ($\times 5$), ${}^2A_{2}(16)$ ($\times 3$), $G_{2}(3)$, ${}^2F_4(2)'$  &$ 13$, $30 $ ($\le 36$)  \\
\end{longtable}

\subsection{Half-girth type $(3,3,4)$}

\leavevmode
\begin{longtable}{lccccp{2cm}p{8cm}p{7cm}} $G$ & p.len. & VTF & (T) & $\dim(H_1)$ & $L_2$-quot's & Small quotients & Degrees of small alternating quotients\\
\hline
$G_0^{14,14,40}$ & $37$  & Yes & No & 0 & $L_2(7^2)$ & $\textrm{Alt}_{7}$, $\textrm{J}_{1}$ ($\times 2$), ${}^2A_{3}(9)$  &$ 7 $ ($\le 30$)  \\
$G_{4}^{14,14,40}$ & $37$  & Yes & ? & 0 &  & $\textrm{Alt}_{7}$ ($\times 2$), $\textrm{M}_{22}$  &$ 7$, $28 $ ($\le 30$)  \\
\hline
$G_0^{14,14,48}$ & $29$  & ? & No & 1 & $L_2(7)$ & $\textrm{Alt}_{7}$, ${}^2A_{2}(25)$  &$ 3$, $7 $ ($\le 30$)  \\
$G_{1}^{14,14,48}$ & $29$  & ? & No & 1 & $L_2(7)$ & $\textrm{Alt}_{8}$ or $A_{2}(4)$  &$ 3$, $8 $ ($\le 30$)  \\
$G_{4}^{14,14,48}$ & $29$  & ? & ? & 1 &  & $\textrm{Alt}_{7}$  &$ 3$, $7 $ ($\le 30$)  \\
$G_{5}^{14,14,48}$ & $29$  & ? & No & 1 &  & $\textrm{Alt}_{8}$ or $A_{2}(4)$, $\textrm{M}_{22}$  &$ 3$, $8$, $21 $ ($\le 30$)  \\
\hline
$G_0^{14,14,54}$ & $41$  & ? & ? & 1 &  & ${}^2A_{2}(9)$  &$ 3 $ ($\le 30$)  \\
$G_{4}^{14,14,54}$ & $41$  & ? & ? & 1 &  &   &$ 3 $ ($\le 30$)  \\
\hline
$G_0^{14,16,40}$ & $37$  & ? & ? & 0 & $L_2(7^2)$ &   &$ $ ($\le 30$)  \\
\hline
$G_0^{14,16,48}$ & $29$  & ? & ? & 1 &  &   &$ 3$, $4 $ ($\le 30$)  \\
$G_{1}^{14,16,48}$ & $29$  & ? & No & 1 & $L_2(7)$ &   &$ 3 $ ($\le 30$)  \\
\hline
$G_0^{14,16,54}$ & $41$  & ? & ? & 1 &  &   &$ 3 $ ($\le 30$)  \\
$G_{2}^{14,16,54}$ & $41$  & ? & ? & 1 &  &   &$ 3 $ ($\le 30$)  \\
\hline
$G_0^{14,18,40}$ & $43$  & Yes & ? & 0 &  & $\textrm{J}_{2}$  &$ 21$, $25 $ ($\le 30$)  \\
\hline
$G_0^{14,18,48}$ & $35$  & Yes & ? & 2 &  & $G_{2}(3)$  &$ 3 $ ($\le 30$)  \\
\hline
$G_0^{14,18,54}$ & $47$  & ? & No & 2 &  &   &$ 3 $ ($\le 30$)  \\
$G_{2}^{14,18,54}$ & $47$  & ? & No & 2 &  &   &$ 3$, $21$, $28$, $29 $ ($\le 30$)  \\
\hline
$G_0^{14,24,40}$ & $45$  & Yes & ? & 0 & $L_2(7^2)$ & $\textrm{Alt}_{7}$, $\textrm{Alt}_{10}$, $A_{4}(2)$  &$ 7$, $10 $ ($\le 30$)  \\
\hline
$G_0^{14,24,48}$ & $37$  & ? & No & 2 &  &   &$ 3$, $4 $ ($\le 30$)  \\
$G_{1}^{14,24,48}$ & $37$  & Yes & No & 2 & $L_2(7)$ & $\textrm{Alt}_{7}$, $\textrm{Alt}_{8}$ or $A_{2}(4)$, $\textrm{J}_{2}$, $C_{3}(2)$, ${}^2A_{3}(9)$  &$ 3$, $7$, $8$, $15$, $22$, $28$, $29 $ ($\le 30$)  \\
\hline
$G_0^{14,24,54}$ & $49$  & ? & ? & 2 &  &   &$ 3$, $18 $ ($\le 30$)  \\
$G_{2}^{14,24,54}$ & $49$  & Yes & No & 2 &  & $C_{3}(2)$, ${}^2A_{3}(9)$  &$ 3$, $14$, $21$, $28 $ ($\le 30$)  \\
\hline
$G_0^{14,26,40}$ & $45$  & ? & ? & 0 &  &   &$ $ ($\le 30$)  \\
$G_{4}^{14,26,40}$ & $45$  & ? & ? & 0 &  &   &$ $ ($\le 30$)  \\
\hline
$G_0^{14,26,48}$ & $37$  & ? & ? & 1 &  &   &$ 3 $ ($\le 30$)  \\
$G_{1}^{14,26,48}$ & $37$  & ? & ? & 1 &  &   &$ 3 $ ($\le 30$)  \\
$G_{4}^{14,26,48}$ & $37$  & ? & ? & 1 &  &   &$ 3 $ ($\le 30$)  \\
$G_{5}^{14,26,48}$ & $37$  & ? & ? & 1 &  &   &$ 3 $ ($\le 30$)  \\
\hline
$G_0^{14,26,54}$ & $49$  & ? & ? & 1 &  &   &$ 3 $ ($\le 30$)  \\
$G_{2}^{14,26,54}$ & $49$  & ? & ? & 1 &  &   &$ 3 $ ($\le 30$)  \\
$G_{4}^{14,26,54}$ & $49$  & ? & ? & 1 &  &   &$ 3 $ ($\le 30$)  \\
$G_{6}^{14,26,54}$ & $49$  & ? & ? & 1 &  &   &$ 3 $ ($\le 30$)  \\
\hline
$G_0^{16,16,40}$ & $37$  & Yes & No & 0 &  & $\textrm{M}_{11}$, $B_{2}(3)$, $\textrm{J}_{2}$ ($\times 2$), ${}^2A_{3}(9)$, $B_{2}(5)$, $A_{3}(3)$ ($\times 2$)  &$ 5$, $21$, $26$, $28 $ ($\le 30$)  \\
\hline
$G_0^{16,16,48}$ & $29$  & ? & No & 1 &  & $A_{2}(3)$, ${}^2A_{2}(9)$ ($\times 2$), $\textrm{Alt}_{9}$, ${}^2A_{2}(81)$ ($\times 2$), $\textrm{HS}_{}$  &$ 3$, $4$, $5$, $9$, $21$, $26$, $29$, $30 $ ($\le 30$)  \\
$G_{1}^{16,16,48}$ & $29$  & Yes & No & 1 &  & ${}^2A_{2}(9)$, $\textrm{J}_{2}$, $\textrm{Alt}_{10}$, $B_{2}(5)$, ${}^2A_{2}(64)$ ($\times 2$), $A_{4}(2)$, $A_{2}(9)$, ${}^2A_{2}(81)$ ($\times 2$)  &$ 3$, $4$, $10 $ ($\le 30$)  \\
\hline
$G_0^{16,16,54}$ & $41$  & ? & No & 1 &  & $A_{2}(3)$ ($\times 2$), $B_{2}(3)$, $A_{2}(9)$ ($\times 3$)  &$ 3$, $4$, $18$, $22$, $25$, $26$, $27 $ ($\le 30$)  \\
\hline
$G_0^{16,18,40}$ & $43$  & Yes & No & 0 & $L_2(3^2)$ & $B_{2}(3)$ ($\times 2$), $\textrm{M}_{12}$ ($\times 5$)  &$ 6$, $18$, $24$, $27$, $30 $ ($\le 30$)  \\
\hline
$G_0^{16,18,48}$ & $35$  & ? & No & 2 &  & $A_{2}(3)$ ($\times 2$), $\textrm{Alt}_{10}$, $A_{2}(9)$ ($\times 3$)  &$ 3$, $4$, $10$, $17$, $19$, $30 $ ($\le 30$)  \\
\hline
$G_0^{16,18,54}$ & $47$  & ? & No & 2 &  & $A_{2}(3)$ ($\times 2$), ${}^2A_{2}(64)$ ($\times 2$), $A_{2}(9)$ ($\times 3$)  &$ 3$, $4$, $25$, $26$, $27 $ ($\le 30$)  \\
$G_{2}^{16,18,54}$ & $47$  & ? & No & 2 &  & $A_{2}(3)$ ($\times 2$), ${}^2A_{2}(64)$ ($\times 2$), $A_{2}(9)$ ($\times 3$)  &$ 3$, $4$, $20$, $21$, $22$, $24$, $25$, $26$, $27$, $29$, $30 $ ($\le 30$)  \\
\hline
$G_0^{16,24,40}$ & $45$  & Yes & No & 0 & $L_2(3^2)$ & $B_{2}(5)$ ($\times 2$), $A_{4}(2)$ ($\times 3$), $\textrm{Alt}_{11}$ ($\times 2$)  &$ 5$, $6$, $11$, $21$, $22 $ ($\le 30$)  \\
\hline
$G_0^{16,24,48}$ & $37$  & ? & No & 2 &  & $\textrm{Alt}_{9}$, $C_{3}(2)$ ($\times 5$), $\textrm{HS}_{}$  &$ 3$, $4$, $5$, $9$, $14$, $17$, $18$, $19$, $20$, $21$, $22$, $23$, $24$, $25$, $26$, $27$, $28$, $29$, $30 $ ($\le 30$)  \\
$G_{1}^{16,24,48}$ & $37$  & Yes & No & 2 &  & $\textrm{Alt}_{7}$, $\textrm{Alt}_{8}$ or $A_{2}(4)$ ($\times 2$), ${}^2A_{2}(25)$, $\textrm{J}_{2}$, $C_{3}(2)$ ($\times 2$), $\textrm{Alt}_{10}$, ${}^2A_{3}(9)$, $B_{2}(5)$, $A_{4}(2)$, $\textrm{HS}_{}$  &$ 3$, $4$, $7$, $8$, $10$, $12$, $15$, $16$, $18$, $19$, $20$, $22$, $23$, $24$, $25$, $26$, $27$, $28$, $29$, $30 $ ($\le 30$)  \\
\hline
$G_0^{16,24,54}$ & $49$  & Yes & No & 2 &  & $\textrm{Alt}_{9}$, $C_{3}(2)$, $\textrm{Alt}_{10}$  &$ 3$, $4$, $9$, $10$, $12$, $18$, $19$, $21$, $25$, $27$, $28$, $29$, $30 $ ($\le 30$)  \\
$G_{2}^{16,24,54}$ & $49$  & ? & No & 2 &  & $B_{2}(3)$, $\textrm{Alt}_{10}$ ($\times 3$)  &$ 3$, $4$, $10$, $12$, $14$, $16$, $19$, $20$, $22$, $23$, $24$, $26$, $27$, $28$, $30 $ ($\le 30$)  \\
\hline
$G_0^{16,26,40}$ & $45$  & ? & ? & 0 & $L_2(13^2)$ & ${}^2F_4(2)'$  &$ $ ($\le 30$)  \\
\hline
$G_0^{16,26,48}$ & $37$  & ? & No & 1 & $L_2(13)$ &   &$ 3$, $16$, $30 $ ($\le 30$)  \\
$G_{1}^{16,26,48}$ & $37$  & ? & ? & 1 &  &   &$ 3$, $4 $ ($\le 30$)  \\
\hline
$G_0^{16,26,54}$ & $49$  & ? & ? & 1 &  &   &$ 3 $ ($\le 30$)  \\
$G_{2}^{16,26,54}$ & $49$  & ? & ? & 1 &  &   &$ 3$, $28 $ ($\le 30$)  \\
\hline
$G_0^{18,18,40}$ & $49$  & Yes & No & 1 &  & $\textrm{M}_{12}$ ($\times 2$), $A_{3}(3)$ ($\times 4$)  &$ 3$, $5$, $12$, $17$, $18$, $19$, $20$, $21$, $22$, $24$, $26$, $27$, $29$, $30 $ ($\le 30$)  \\
\hline
$G_0^{18,18,48}$ & $41$  & ? & No & 3 &  & $A_{2}(3)$ ($\times 2$), $\textrm{Alt}_{10}$, ${}^2A_{2}(64)$ ($\times 2$), $\textrm{Alt}_{11}$, $A_{2}(9)$ ($\times 3$)  &$ 3$, $4$, $10$, $11$, $12$, $15$, $19$, $20$, $21$, $22$, $23$, $24$, $25$, $26$, $27$, $28$, $30 $ ($\le 30$)  \\
\hline
$G_0^{18,18,54}$ & $53$  & ? & No & 3 &  & $A_{2}(3)$ ($\times 2$), $A_{2}(9)$ ($\times 3$)  &$ 3$, $19$, $22$, $24$, $25$, $26$, $27$, $28$, $29$, $30 $ ($\le 30$)  \\
\hline
$G_0^{18,24,40}$ & $51$  & Yes & No & 1 & $L_2(3^2)$ & $\textrm{M}_{12}$ ($\times 6$), $\textrm{Alt}_{10}$ ($\times 2$), ${}^2A_{3}(9)$ ($\times 2$), $A_{3}(3)$ ($\times 3$), $\textrm{Alt}_{11}$ ($\times 4$)  &$ 3$, $5$, $6$, $10$, $11$, $12$, $15$, $16$, $17$, $18$, $21$, $22$, $23$, $24$, $25$, $26$, $27$, $28$, $30 $ ($\le 30$)  \\
\hline
$G_0^{18,24,48}$ & $43$  & Yes & No & 3 &  & $\textrm{Alt}_{10}$ ($\times 2$), ${}^2A_{3}(9)$, $A_{3}(3)$ ($\times 2$), $\textrm{Alt}_{11}$  &$ 3$, $4$, $10$, $11$, $12$, $13$, $15$, $18$, $19$, $20$, $21$, $22$, $23$, $24$, $25$, $26$, $27$, $28$, $29$, $30 $ ($\le 30$)  \\
\hline
$G_0^{18,24,54}$ & $55$  & Yes & No & 3 &  & $\textrm{Alt}_{10}$ ($\times 2$), $A_{3}(3)$ ($\times 4$), $\textrm{Alt}_{11}$ ($\times 2$)  &$ 3$, $4$, $10$, $11$, $12$, $15$, $18$, $19$, $20$, $21$, $22$, $23$, $24$, $25$, $26$, $27$, $28$, $29$, $30 $ ($\le 30$)  \\
$G_{2}^{18,24,54}$ & $55$  & Yes & No & 3 &  & $\textrm{Alt}_{9}$ ($\times 2$), $\textrm{Alt}_{10}$, $\textrm{Alt}_{11}$  &$ 3$, $4$, $9$, $10$, $11$, $12$, $15$, $18$, $19$, $20$, $21$, $22$, $23$, $24$, $25$, $26$, $27$, $28$, $30 $ ($\le 30$)  \\
\hline
$G_0^{18,26,40}$ & $51$  & ? & ? & 0 &  &   &$ $ ($\le 30$)  \\
\hline
$G_0^{18,26,48}$ & $43$  & Yes & ? & 2 &  & $G_{2}(3)$  &$ 3$, $27 $ ($\le 30$)  \\
\hline
$G_0^{18,26,54}$ & $55$  & ? & No & 2 &  & $A_{2}(3)$ ($\times 2$), $A_{2}(9)$ ($\times 3$)  &$ 3$, $13$, $26$, $27 $ ($\le 30$)  \\
$G_{2}^{18,26,54}$ & $55$  & ? & No & 2 &  & $A_{2}(3)$ ($\times 2$), $A_{2}(9)$ ($\times 3$)  &$ 3$, $13 $ ($\le 30$)  \\
\hline
$G_0^{24,24,40}$ & $53$  & Yes & No & 1 & $L_2(3^2)$, $L_2(3^2)$ & $\textrm{Alt}_{7}$ ($\times 2$), $\textrm{M}_{22}$ ($\times 2$), $\textrm{J}_{2}$ ($\times 4$), $C_{2}(4)$ ($\times 4$), $C_{3}(2)$, $B_{2}(5)$ ($\times 8$), $A_{3}(3)$, $A_{4}(2)$ ($\times 2$)  &$ 3$, $5$, $6$, $7$, $12$, $13$, $15$, $16$, $17$, $21$, $22$, $23$, $24$, $25$, $26$, $27$, $28$, $29$, $30 $ ($\le 30$)  \\
\hline
$G_0^{24,24,48}$ & $45$  & Yes & No & 3 &  & $\textrm{M}_{22}$, $C_{3}(2)$ ($\times 6$), ${}^2A_{3}(9)$ ($\times 5$), $B_{2}(5)$ ($\times 2$), $A_{3}(3)$  &$ 3$, $4$, $5$, $12$, $13$, $14$, $15$, $16$, $17$, $18$, $19$, $20$, $21$, $22$, $23$, $24$, $25$, $26$, $27$, $28$, $29$, $30 $ ($\le 30$)  \\
$G_{1}^{24,24,48}$ & $45$  & Yes & No & 3 &  & $\textrm{Alt}_{7}$ ($\times 3$), $\textrm{Alt}_{8}$ or $A_{2}(4)$ ($\times 2$), $\textrm{M}_{12}$, ${}^2A_{2}(25)$, $\textrm{J}_{2}$, $C_{3}(2)$ ($\times 3$), $A_{2}(7)$, ${}^2A_{3}(9)$, $B_{2}(5)$ ($\times 3$), $A_{4}(2)$, ${}^2A_{4}(4)$ ($\times 2$), $\textrm{HS}_{}$  &$ 3$, $4$, $7$, $8$, $13$, $14$, $15$, $18$, $19$, $20$, $21$, $22$, $23$, $24$, $25$, $26$, $27$, $28$, $29$, $30 $ ($\le 30$)  \\
\hline
$G_0^{24,24,54}$ & $57$  & Yes & No & 3 &  & $\textrm{Alt}_{9}$ ($\times 3$), $\textrm{Alt}_{10}$ ($\times 4$), ${}^2A_{3}(9)$, $\textrm{Alt}_{11}$ ($\times 2$)  &$ 3$, $4$, $9$, $10$, $11$, $12$, $13$, $15$, $16$, $18$, $19$, $20$, $21$, $22$, $23$, $24$, $25$, $26$, $27$, $28$, $29$, $30 $ ($\le 30$)  \\
\hline
$G_0^{24,26,40}$ & $53$  & ? & ? & 0 & $L_2(13^2)$ &   &$ $ ($\le 30$)  \\
\hline
$G_0^{24,26,48}$ & $45$  & ? & No & 2 & $L_2(13)$ & $A_{3}(3)$  &$ 3$, $13$, $14$, $15$, $16$, $26$, $27$, $28$, $29$, $30 $ ($\le 30$)  \\
$G_{1}^{24,26,48}$ & $45$  & ? & No & 2 &  &   &$ 3$, $4$, $14$, $28 $ ($\le 30$)  \\
\hline
$G_0^{24,26,54}$ & $57$  & Yes & No & 2 &  & $A_{3}(3)$  &$ 3$, $13$, $26$, $27$, $28 $ ($\le 30$)  \\
$G_{2}^{24,26,54}$ & $57$  & ? & ? & 2 &  &   &$ 3$, $13$, $27 $ ($\le 30$)  \\
\hline
$G_0^{26,26,40}$ & $53$  & ? & ? & 0 &  &   &$ 13 $ ($\le 30$)  \\
$G_{4}^{26,26,40}$ & $53$  & Yes & ? & 0 & $L_2(13^2)$ & ${}^2A_{2}(16)$, $A_{3}(3)$  &$ 13$, $26 $ ($\le 30$)  \\
\hline
$G_0^{26,26,48}$ & $45$  & ? & No & 1 &  & $A_{2}(3)$ ($\times 2$), $G_{2}(3)$  &$ 3$, $13$, $14$, $16$, $26 $ ($\le 30$)  \\
$G_{1}^{26,26,48}$ & $45$  & ? & No & 1 &  &   &$ 3$, $26$, $28 $ ($\le 30$)  \\
$G_{4}^{26,26,48}$ & $45$  & ? & No & 1 & $L_2(13)$ & $A_{2}(3)$ ($\times 2$), $G_{2}(3)$, $A_{3}(3)$  &$ 3$, $13$, $14$, $26$, $27$, $28$, $29 $ ($\le 30$)  \\
$G_{5}^{26,26,48}$ & $45$  & ? & No & 1 & $L_2(13)$ & $A_{2}(3)$ ($\times 2$), $A_{3}(3)$  &$ 3$, $13$, $14$, $27 $ ($\le 30$)  \\
\hline
$G_0^{26,26,54}$ & $57$  & ? & No & 1 &  & $G_{2}(3)$ ($\times 2$)  &$ 3$, $13 $ ($\le 30$)  \\
$G_{4}^{26,26,54}$ & $57$  & ? & No & 1 &  & $A_{2}(3)$ ($\times 2$), $G_{2}(3)$  &$ 3$, $13$, $27 $ ($\le 30$)  \\
\end{longtable}

\subsection{Half-girth type $(3,4,4)$}

\leavevmode
\begin{longtable}{lccccp{2cm}p{8cm}p{7cm}} $G$ & p.len. & VTF & (T) & $\dim(H_1)$ & $L_2$-quot's & Small quotients & Degrees of small alternating quotients\\
\hline
$G_0^{14,40,40}$ & $47$  & Yes & No & 0 & $L_2(7^2)$ & $\textrm{Alt}_{8}$ or $A_{2}(4)$ ($\times 5$), $C_{3}(2)$ ($\times 2$), $\textrm{Alt}_{10}$ ($\times 4$), ${}^2A_{3}(9)$ ($\times 2$), $A_{4}(2)$ ($\times 3$), $\textrm{Alt}_{11}$ ($\times 3$), $A_{2}(9)$  &$ 5$, $10$, $11$, $20$, $21$, $30 $ ($\le 30$)  \\
\hline
$G_0^{14,40,48}$ & $39$  & ? & ? & 0 & $L_2(7^2)$ & $\textrm{Alt}_{7}$, $\textrm{Alt}_{10}$, $A_{4}(2)$  &$ 7$, $10 $ ($\le 30$)  \\
\hline
$G_0^{14,40,54}$ & $51$  & Yes & ? & 0 &  & $\textrm{J}_{2}$, $C_{3}(2)$ ($\times 2$)  &$ 21$, $25 $ ($\le 30$)  \\
$G_{2}^{14,40,54}$ & $51$  & Yes & ? & 0 &  & $\textrm{J}_{2}$, $C_{3}(2)$ ($\times 2$)  &$ 20$, $21$, $22$, $25$, $27$, $30 $ ($\le 30$)  \\
\hline
$G_0^{14,48,48}$ & $31$  & Yes & No & 2 & $L_2(7)$ & $\textrm{Alt}_{7}$, $\textrm{Alt}_{8}$ or $A_{2}(4)$, $\textrm{J}_{2}$, $C_{3}(2)$ ($\times 2$), ${}^2A_{3}(9)$, $G_{2}(3)$ ($\times 2$)  &$ 3$, $7$, $8$, $15$, $16$, $22$, $23$, $24$, $27$, $28$, $29$, $30 $ ($\le 30$)  \\
$G_{1}^{14,48,48}$ & $31$  & ? & No & 2 &  &   &$ 3$, $4 $ ($\le 30$)  \\
\hline
$G_0^{14,48,54}$ & $43$  & ? & ? & 2 &  & $G_{2}(3)$  &$ 3$, $18 $ ($\le 30$)  \\
$G_{2}^{14,48,54}$ & $43$  & Yes & No & 2 &  & $C_{3}(2)$ ($\times 3$), ${}^2A_{3}(9)$, $G_{2}(3)$  &$ 3$, $14$, $15$, $21$, $22$, $28$, $29$, $30 $ ($\le 30$)  \\
\hline
$G_0^{14,54,54}$ & $55$  & ? & No & 2 &  &   &$ 3$, $21$, $28$, $29 $ ($\le 30$)  \\
$G_{2}^{14,54,54}$ & $55$  & Yes & No & 2 &  & $\textrm{Alt}_{10}$ ($\times 6$), ${}^2A_{3}(9)$ ($\times 2$)  &$ 3$, $10$, $13$, $14$, $17$, $19$, $20$, $21$, $23$, $24$, $27$, $28$, $29$, $30 $ ($\le 30$)  \\
$G_{8}^{14,54,54}$ & $55$  & ? & No & 2 &  &   &$ 3$, $18$, $21$, $27$, $30 $ ($\le 30$)  \\
\hline
$G_0^{16,40,40}$ & $47$  & Yes & No & 0 & $L_2(\infty^4)$ & $\textrm{M}_{11}$ ($\times 4$), $B_{2}(3)$ ($\times 7$), ${}^2A_{2}(25)$, $\textrm{J}_{2}$ ($\times 2$), $C_{2}(4)$ ($\times 2$), $\textrm{Alt}_{10}$ ($\times 4$), ${}^2A_{3}(9)$ ($\times 4$), $B_{2}(5)$ ($\times 11$), $A_{3}(3)$ ($\times 2$), $\textrm{Alt}_{11}$ ($\times 6$)  &$ 5$, $6$, $10$, $11$, $15$, $16$, $17$, $20$, $21$, $22$, $24$, $25$, $26$, $27$, $28$, $29$, $30 $ ($\le 30$)  \\
\hline
$G_0^{16,40,48}$ & $39$  & ? & No & 0 & $L_2(3^2)$ & $\textrm{M}_{11}$, $B_{2}(3)$, $\textrm{J}_{2}$ ($\times 2$), ${}^2A_{3}(9)$, $B_{2}(5)$ ($\times 5$), $A_{3}(3)$ ($\times 2$), $A_{4}(2)$ ($\times 3$), $\textrm{Alt}_{11}$ ($\times 2$)  &$ 5$, $6$, $11$, $16$, $18$, $21$, $22$, $23$, $24$, $26$, $27$, $28$, $29$, $30 $ ($\le 30$)  \\
\hline
$G_0^{16,40,54}$ & $51$  & Yes & No & 0 & $L_2(3^2)$ & $B_{2}(3)$ ($\times 5$), $\textrm{M}_{12}$ ($\times 5$), $C_{3}(2)$, ${}^2A_{3}(9)$ ($\times 2$), $A_{3}(3)$ ($\times 3$), ${}^2A_{4}(4)$  &$ 6$, $12$, $17$, $18$, $21$, $23$, $24$, $26$, $27$, $28$, $29$, $30 $ ($\le 30$)  \\
$G_{2}^{16,40,54}$ & $51$  & Yes & No & 0 & $L_2(3^2)$ & $B_{2}(3)$ ($\times 4$), $\textrm{M}_{12}$ ($\times 5$), $\textrm{Alt}_{10}$ ($\times 3$), ${}^2A_{3}(9)$ ($\times 4$), $A_{3}(3)$ ($\times 4$), ${}^2A_{4}(4)$  &$ 6$, $10$, $12$, $15$, $16$, $18$, $21$, $22$, $23$, $24$, $25$, $26$, $27$, $28$, $29$, $30 $ ($\le 30$)  \\
\hline
$G_0^{16,48,48}$ & $31$  & Yes & No & 2 &  & $\textrm{Alt}_{7}$, ${}^2A_{2}(9)$, $\textrm{Alt}_{8}$ or $A_{2}(4)$ ($\times 2$), $B_{2}(3)$ ($\times 5$), ${}^2A_{2}(25)$, $\textrm{J}_{2}$ ($\times 2$), $C_{3}(2)$ ($\times 5$), $\textrm{Alt}_{10}$ ($\times 2$), ${}^2A_{3}(9)$ ($\times 4$), $B_{2}(5)$ ($\times 5$), ${}^2A_{2}(64)$ ($\times 2$), $A_{3}(3)$ ($\times 5$), $A_{4}(2)$ ($\times 2$), $\textrm{Alt}_{11}$, $A_{2}(9)$, ${}^2A_{2}(81)$ ($\times 2$), $\textrm{HS}_{}$  &$ 3$, $4$, $7$, $8$, $10$, $11$, $12$, $15$, $16$, $18$, $19$, $20$, $22$, $23$, $24$, $25$, $26$, $27$, $28$, $29$, $30 $ ($\le 30$)  \\
$G_{1}^{16,48,48}$ & $31$  & Yes & No & 2 &  & $A_{2}(3)$, ${}^2A_{2}(9)$ ($\times 2$), $B_{2}(3)$ ($\times 8$), $\textrm{Alt}_{9}$ ($\times 2$), $C_{3}(2)$ ($\times 10$), ${}^2A_{3}(9)$, $A_{3}(3)$ ($\times 6$), $\textrm{Alt}_{11}$, ${}^2A_{2}(81)$ ($\times 2$), $\textrm{HS}_{}$ ($\times 2$)  &$ 3$, $4$, $5$, $9$, $11$, $14$, $15$, $17$, $18$, $19$, $20$, $21$, $22$, $23$, $24$, $25$, $26$, $27$, $28$, $29$, $30 $ ($\le 30$)  \\
\hline
$G_0^{16,48,54}$ & $43$  & Yes & No & 2 &  & $A_{2}(3)$ ($\times 2$), $B_{2}(3)$ ($\times 4$), $\textrm{Alt}_{9}$, $C_{3}(2)$, $\textrm{Alt}_{10}$, ${}^2A_{3}(9)$ ($\times 3$), $A_{3}(3)$, ${}^2A_{4}(4)$, $A_{2}(9)$ ($\times 3$)  &$ 3$, $4$, $9$, $10$, $12$, $17$, $18$, $19$, $21$, $22$, $24$, $25$, $26$, $27$, $28$, $29$, $30 $ ($\le 30$)  \\
$G_{2}^{16,48,54}$ & $43$  & Yes & No & 2 &  & $A_{2}(3)$ ($\times 2$), $B_{2}(3)$ ($\times 4$), $C_{3}(2)$, $\textrm{Alt}_{10}$ ($\times 3$), ${}^2A_{3}(9)$ ($\times 2$), $A_{3}(3)$ ($\times 3$), ${}^2A_{4}(4)$ ($\times 5$), $A_{2}(9)$ ($\times 3$)  &$ 3$, $4$, $10$, $12$, $13$, $14$, $15$, $16$, $17$, $18$, $19$, $20$, $21$, $22$, $23$, $24$, $25$, $26$, $27$, $28$, $29$, $30 $ ($\le 30$)  \\
\hline
$G_0^{16,54,54}$ & $55$  & Yes & No & 2 &  & $A_{2}(3)$ ($\times 2$), ${}^2A_{2}(64)$ ($\times 2$), ${}^2A_{4}(4)$ ($\times 2$), $A_{2}(9)$ ($\times 3$)  &$ 3$, $4$, $20$, $21$, $22$, $24$, $25$, $26$, $27$, $29$, $30 $ ($\le 30$)  \\
$G_{2}^{16,54,54}$ & $55$  & Yes & No & 2 &  & $A_{2}(3)$ ($\times 2$), $B_{2}(3)$ ($\times 6$), $\textrm{Alt}_{9}$ ($\times 2$), $C_{3}(2)$ ($\times 4$), $\textrm{Alt}_{10}$ ($\times 12$), ${}^2A_{3}(9)$ ($\times 3$), ${}^2A_{2}(64)$ ($\times 2$), $A_{3}(3)$ ($\times 5$), ${}^2A_{4}(4)$ ($\times 5$), $\textrm{Alt}_{11}$ ($\times 6$), $A_{2}(9)$ ($\times 3$)  &$ 3$, $4$, $9$, $10$, $11$, $12$, $13$, $14$, $15$, $16$, $17$, $18$, $19$, $20$, $21$, $22$, $23$, $24$, $25$, $26$, $27$, $28$, $29$, $30 $ ($\le 30$)  \\
$G_{8}^{16,54,54}$ & $55$  & Yes & No & 2 &  & $A_{2}(3)$ ($\times 2$), $B_{2}(3)$ ($\times 4$), $\textrm{Alt}_{9}$ ($\times 2$), ${}^2A_{3}(9)$ ($\times 3$), ${}^2A_{2}(64)$ ($\times 2$), $A_{3}(3)$ ($\times 7$), ${}^2A_{4}(4)$ ($\times 3$), $A_{2}(9)$ ($\times 3$)  &$ 3$, $4$, $9$, $12$, $18$, $19$, $21$, $22$, $23$, $24$, $25$, $26$, $27$, $28$, $29$, $30 $ ($\le 30$)  \\
\hline
$G_0^{18,40,40}$ & $53$  & Yes & No & 0 &  & $\textrm{Alt}_{7}$ ($\times 2$), $B_{2}(3)$ ($\times 5$), $\textrm{M}_{12}$ ($\times 2$), $\textrm{Alt}_{10}$ ($\times 8$), ${}^2A_{3}(9)$, ${}^2A_{4}(4)$ ($\times 3$)  &$ 5$, $7$, $10$, $15$, $17$, $20$, $21$, $22$, $24$, $25$, $26$, $27$, $30 $ ($\le 30$)  \\
\hline
$G_0^{18,40,48}$ & $45$  & Yes & No & 1 & $L_2(3^2)$ & $B_{2}(3)$ ($\times 5$), $\textrm{M}_{12}$ ($\times 7$), $\textrm{Alt}_{10}$ ($\times 2$), ${}^2A_{3}(9)$ ($\times 4$), $A_{3}(3)$ ($\times 10$), $\textrm{Alt}_{11}$ ($\times 5$)  &$ 3$, $5$, $6$, $10$, $11$, $12$, $14$, $15$, $16$, $17$, $18$, $21$, $22$, $23$, $24$, $25$, $26$, $27$, $28$, $29$, $30 $ ($\le 30$)  \\
\hline
$G_0^{18,40,54}$ & $57$  & ? & No & 1 &  & $B_{2}(3)$ ($\times 2$), $\textrm{M}_{12}$ ($\times 2$), $\textrm{Alt}_{10}$ ($\times 4$), $A_{3}(3)$ ($\times 14$), ${}^2A_{4}(4)$ ($\times 3$)  &$ 3$, $5$, $10$, $12$, $15$, $17$, $18$, $19$, $20$, $21$, $22$, $24$, $25$, $26$, $27$, $28$, $29$, $30 $ ($\le 30$)  \\
$G_{2}^{18,40,54}$ & $57$  & Yes & No & 1 &  & $B_{2}(3)$ ($\times 2$), $\textrm{M}_{12}$ ($\times 2$), $\textrm{Alt}_{9}$ ($\times 2$), ${}^2A_{3}(9)$ ($\times 3$), $A_{3}(3)$ ($\times 5$), ${}^2A_{4}(4)$ ($\times 4$)  &$ 3$, $5$, $9$, $12$, $15$, $17$, $18$, $19$, $20$, $21$, $22$, $23$, $24$, $25$, $26$, $27$, $28$, $29$, $30 $ ($\le 30$)  \\
\hline
$G_0^{18,48,48}$ & $37$  & Yes & No & 3 &  & $A_{2}(3)$ ($\times 2$), $B_{2}(3)$ ($\times 3$), $\textrm{Alt}_{10}$ ($\times 3$), ${}^2A_{3}(9)$ ($\times 4$), $A_{3}(3)$ ($\times 9$), $\textrm{Alt}_{11}$ ($\times 2$), $A_{2}(9)$ ($\times 3$)  &$ 3$, $4$, $10$, $11$, $12$, $13$, $14$, $15$, $17$, $18$, $19$, $20$, $21$, $22$, $23$, $24$, $25$, $26$, $27$, $28$, $29$, $30 $ ($\le 30$)  \\
\hline
$G_0^{18,48,54}$ & $49$  & Yes & No & 3 &  & $A_{2}(3)$ ($\times 2$), $B_{2}(3)$ ($\times 2$), $\textrm{Alt}_{10}$ ($\times 2$), ${}^2A_{2}(64)$ ($\times 2$), $A_{3}(3)$ ($\times 8$), $\textrm{Alt}_{11}$ ($\times 4$), $A_{2}(9)$ ($\times 3$)  &$ 3$, $4$, $10$, $11$, $12$, $14$, $15$, $18$, $19$, $20$, $21$, $22$, $23$, $24$, $25$, $26$, $27$, $28$, $29$, $30 $ ($\le 30$)  \\
$G_{2}^{18,48,54}$ & $49$  & Yes & No & 3 &  & $A_{2}(3)$ ($\times 2$), $B_{2}(3)$ ($\times 2$), $\textrm{Alt}_{9}$ ($\times 2$), $\textrm{Alt}_{10}$, ${}^2A_{3}(9)$ ($\times 3$), ${}^2A_{2}(64)$ ($\times 2$), $A_{3}(3)$, ${}^2A_{4}(4)$ ($\times 3$), $\textrm{Alt}_{11}$, $A_{2}(9)$ ($\times 3$)  &$ 3$, $4$, $9$, $10$, $11$, $12$, $13$, $15$, $18$, $19$, $20$, $21$, $22$, $23$, $24$, $25$, $26$, $27$, $28$, $29$, $30 $ ($\le 30$)  \\
\hline
$G_0^{18,54,54}$ & $61$  & ? & No & 3 &  & $A_{2}(3)$ ($\times 2$), $\textrm{Alt}_{9}$ ($\times 2$), ${}^2A_{4}(4)$ ($\times 2$), $A_{2}(9)$ ($\times 3$)  &$ 3$, $9$, $19$, $21$, $22$, $23$, $24$, $25$, $26$, $27$, $28$, $29$, $30 $ ($\le 30$)  \\
$G_{2}^{18,54,54}$ & $61$  & Yes & No & 3 &  & $A_{2}(3)$ ($\times 2$), $B_{2}(3)$ ($\times 2$), $\textrm{Alt}_{9}$ ($\times 10$), ${}^2A_{3}(9)$ ($\times 3$), $A_{3}(3)$, ${}^2A_{4}(4)$ ($\times 9$), $A_{2}(9)$ ($\times 3$)  &$ 3$, $9$, $12$, $15$, $18$, $19$, $21$, $22$, $24$, $25$, $26$, $27$, $28$, $29$, $30 $ ($\le 30$)  \\
$G_{8}^{18,54,54}$ & $61$  & Yes & No & 3 &  & $A_{2}(3)$ ($\times 2$), $B_{2}(3)$ ($\times 2$), $\textrm{Alt}_{9}$ ($\times 8$), $A_{3}(3)$ ($\times 10$), ${}^2A_{4}(4)$ ($\times 4$), $A_{2}(9)$ ($\times 3$)  &$ 3$, $9$, $12$, $18$, $19$, $21$, $22$, $23$, $24$, $25$, $26$, $27$, $28$, $29$, $30 $  ($\le 30$)  \\
\hline
$G_0^{24,40,40}$ & $55$  & Yes & No & 0 & $L_2(\infty^4)$ & $\textrm{Alt}_{7}$ ($\times 2$), $B_{2}(3)$ ($\times 2$), $\textrm{M}_{22}$ ($\times 2$), $\textrm{J}_{2}$ ($\times 2$), $C_{2}(4)$ ($\times 2$), $C_{3}(2)$ ($\times 2$), $\textrm{Alt}_{10}$ ($\times 2$), $B_{2}(5)$ ($\times 10$), $A_{4}(2)$ ($\times 4$), ${}^2A_{4}(4)$ ($\times 4$), $\textrm{Alt}_{11}$ ($\times 3$)  &$ 5$, $6$, $7$, $10$, $11$, $12$, $15$, $17$, $18$, $20$, $21$, $22$, $23$, $24$, $25$, $26$, $27$, $28$, $29$, $30 $ ($\le 30$)  \\
\hline
$G_0^{24,40,48}$ & $47$  & Yes & No & 1 & $L_2(3^2)$, $L_2(3^2)$ & $\textrm{Alt}_{7}$ ($\times 2$), $B_{2}(3)$ ($\times 3$), $\textrm{M}_{22}$ ($\times 2$), $\textrm{J}_{2}$ ($\times 4$), $C_{2}(4)$ ($\times 4$), $C_{3}(2)$ ($\times 3$), $B_{2}(5)$ ($\times 12$), $A_{3}(3)$ ($\times 2$), $A_{4}(2)$ ($\times 5$), ${}^2A_{4}(4)$, $\textrm{Alt}_{11}$ ($\times 4$)  &$ 3$, $5$, $6$, $7$, $11$, $12$, $13$, $15$, $16$, $17$, $18$, $19$, $21$, $22$, $23$, $24$, $25$, $26$, $27$, $28$, $29$, $30 $ ($\le 30$)  \\
\hline
$G_0^{24,40,54}$ & $59$  & Yes & No & 1 & $L_2(3^2)$ & $B_{2}(3)$ ($\times 4$), $\textrm{M}_{12}$ ($\times 6$), $\textrm{Alt}_{10}$ ($\times 12$), ${}^2A_{3}(9)$ ($\times 2$), $A_{3}(3)$ ($\times 3$), ${}^2A_{4}(4)$ ($\times 4$), $\textrm{Alt}_{11}$ ($\times 12$)  &$ 3$, $5$, $6$, $10$, $11$, $12$, $15$, $16$, $17$, $18$, $20$, $21$, $22$, $23$, $24$, $25$, $26$, $27$, $28$, $29$, $30 $ ($\le 30$)  \\
$G_{2}^{24,40,54}$ & $59$  & Yes & No & 1 & $L_2(3^2)$ & $B_{2}(3)$ ($\times 2$), $\textrm{M}_{12}$ ($\times 6$), $\textrm{Alt}_{9}$ ($\times 2$), $C_{3}(2)$ ($\times 4$), $\textrm{Alt}_{10}$ ($\times 7$), ${}^2A_{3}(9)$ ($\times 6$), $A_{3}(3)$ ($\times 7$), ${}^2A_{4}(4)$, $\textrm{Alt}_{11}$ ($\times 6$)  &$ 3$, $5$, $6$, $9$, $10$, $11$, $12$, $13$, $15$, $16$, $17$, $18$, $19$, $20$, $21$, $22$, $23$, $24$, $25$, $26$, $27$, $28$, $29$, $30 $ ($\le 30$)  \\
\hline
$G_0^{24,48,48}$ & $39$  & Yes & No & 3 &  & $\textrm{Alt}_{7}$ ($\times 3$), $\textrm{Alt}_{8}$ or $A_{2}(4)$ ($\times 4$), $B_{2}(3)$ ($\times 3$), $\textrm{M}_{12}$, ${}^2A_{2}(25)$ ($\times 2$), $\textrm{J}_{2}$ ($\times 2$), $C_{3}(2)$ ($\times 11$), $\textrm{Alt}_{10}$, $A_{2}(7)$, ${}^2A_{3}(9)$ ($\times 3$), $B_{2}(5)$ ($\times 7$), $A_{3}(3)$, $A_{4}(2)$ ($\times 2$), ${}^2A_{4}(4)$ ($\times 13$), $\textrm{Alt}_{11}$, $\textrm{HS}_{}$ ($\times 2$)  &$ 3$, $4$, $7$, $8$, $10$, $11$, $12$, $13$, $14$, $15$, $16$, $17$, $18$, $19$, $20$, $21$, $22$, $23$, $24$, $25$, $26$, $27$, $28$, $29$, $30 $ ($\le 30$)  \\
$G_{1}^{24,48,48}$ & $39$  & Yes & No & 3 &  & $B_{2}(3)$ ($\times 4$), $\textrm{Alt}_{9}$, $\textrm{M}_{22}$, $C_{3}(2)$ ($\times 17$), ${}^2A_{3}(9)$ ($\times 8$), $B_{2}(5)$ ($\times 5$), $A_{3}(3)$ ($\times 3$), ${}^2A_{4}(4)$ ($\times 8$), $\textrm{Alt}_{11}$, $\textrm{HS}_{}$  &$ 3$, $4$, $5$, $9$, $11$, $12$, $13$, $14$, $15$, $16$, $17$, $18$, $19$, $20$, $21$, $22$, $23$, $24$, $25$, $26$, $27$, $28$, $29$, $30 $ ($\le 30$)  \\
\hline
$G_0^{24,48,54}$ & $51$  & Yes & No & 3 &  & $B_{2}(3)$ ($\times 4$), $\textrm{Alt}_{9}$ ($\times 3$), $\textrm{Alt}_{10}$ ($\times 5$), ${}^2A_{3}(9)$, $A_{3}(3)$ ($\times 2$), ${}^2A_{4}(4)$ ($\times 3$), $\textrm{Alt}_{11}$ ($\times 4$)  &$ 3$, $4$, $9$, $10$, $11$, $12$, $13$, $14$, $15$, $16$, $17$, $18$, $19$, $20$, $21$, $22$, $23$, $24$, $25$, $26$, $27$, $28$, $29$, $30 $ ($\le 30$)  \\
$G_{2}^{24,48,54}$ & $51$  & ? & No & 3 &  & $B_{2}(3)$ ($\times 2$), $\textrm{Alt}_{9}$ ($\times 3$), $C_{3}(2)$ ($\times 3$), $\textrm{Alt}_{10}$ ($\times 5$), ${}^2A_{3}(9)$ ($\times 5$), $A_{3}(3)$ ($\times 10$), ${}^2A_{4}(4)$ ($\times 12$), $\textrm{Alt}_{11}$ ($\times 2$)  &$ 3$, $4$, $9$, $10$, $11$, $12$, $13$, $15$, $16$, $17$, $18$, $19$, $20$, $21$, $22$, $23$, $24$, $25$, $26$, $27$, $28$, $29$, $30 $ ($\le 30$)  \\
\hline
$G_0^{24,54,54}$ & $63$  & ? & No & 3 &  & $\textrm{Alt}_{9}$ ($\times 6$), $\textrm{Alt}_{10}$ ($\times 2$), $A_{3}(3)$ ($\times 4$), ${}^2A_{4}(4)$ ($\times 8$), $\textrm{Alt}_{11}$ ($\times 2$)  &$ 3$, $4$, $9$, $10$, $11$, $12$, $15$, $18$, $19$, $20$, $21$, $22$, $23$, $24$, $25$, $26$, $27$, $28$, $29$, $30 $ ($\le 30$)  \\
$G_{2}^{24,54,54}$ & $63$  & ? & No & 3 &  & $B_{2}(3)$ ($\times 2$), $\textrm{Alt}_{9}$ ($\times 9$), $C_{3}(2)$ ($\times 6$), $\textrm{Alt}_{10}$ ($\times 22$), ${}^2A_{3}(9)$ ($\times 8$), $A_{3}(3)$ ($\times 26$), ${}^2A_{4}(4)$ ($\times 12$), $\textrm{Alt}_{11}$ ($\times 12$)  &$ 3$, $4$, $9$, $10$, $11$, $12$, $13$, $14$, $15$, $16$, $17$, $18$, $19$, $20$, $21$, $22$, $23$, $24$, $25$, $26$, $27$, $28$, $29$, $30 $q ($\le 30$)  \\
$G_{8}^{24,54,54}$ & $63$  & Yes & No & 3 &  & $B_{2}(3)$ ($\times 4$), $\textrm{Alt}_{9}$ ($\times 14$), $\textrm{Alt}_{10}$, ${}^2A_{4}(4)$ ($\times 9$), $\textrm{Alt}_{11}$  &$ 3$, $4$, $9$, $10$, $11$, $12$, $15$, $18$, $19$, $20$, $21$, $22$, $23$, $24$, $25$, $26$, $27$, $28$, $29$, $30 $ ($\le 30$)  \\
\hline
$G_0^{26,40,40}$ & $55$  & Yes & No & 0 & $L_2(13^2)$ & $A_{3}(3)$ ($\times 3$)  &$ 5$, $20$, $21$, $27$, $28 $ ($\le 30$)  \\
\hline
$G_0^{26,40,48}$ & $47$  & ? & ? & 0 & $L_2(13^2)$ & ${}^2F_4(2)'$  &$ $ ($\le 30$)  \\
\hline
$G_0^{26,40,54}$ & $59$  & Yes & ? & 0 &  & $A_{3}(3)$ ($\times 3$)  &$ 30 $  ($\le 30$)  \\
$G_{2}^{26,40,54}$ & $59$  & ? & ? & 0 &  &   &$ 15 $ ($\le 30$)  \\
\hline
$G_0^{26,48,48}$ & $39$  & Yes & No & 2 &  & $G_{2}(3)$  &$ 3$, $4$, $14$, $28 $ ($\le 30$)  \\
$G_{1}^{26,48,48}$ & $39$  & Yes & No & 2 & $L_2(13)$ & $G_{2}(3)$ ($\times 4$), $A_{3}(3)$  &$ 3$, $13$, $14$, $15$, $16$, $26$, $27$, $28$, $29$, $30 $ ($\le 30$)  \\
\hline
$G_0^{26,48,54}$ & $51$  & ? & ? & 2 &  & $G_{2}(3)$  &$ 3$, $13$, $26$, $27$, $28 $  ($\le 30$)  \\
$G_{2}^{26,48,54}$ & $51$  & ? & No & 2 &  & $G_{2}(3)$, $A_{3}(3)$  &$ 3$, $13$, $26$, $27$, $28$, $29 $  ($\le 30$)  \\
\hline
$G_0^{26,54,54}$ & $63$  & ? & No & 2 &  & $A_{2}(3)$ ($\times 2$), $A_{2}(9)$ ($\times 3$)  &$ 3$, $13$, $26$, $27$, $30 $ ($\le 30$)  \\
$G_{2}^{26,54,54}$ & $63$  & Yes & No & 2 &  & $A_{2}(3)$ ($\times 2$), $A_{3}(3)$ ($\times 20$), $A_{2}(9)$ ($\times 3$)  &$ 3$, $13$, $16$, $19$, $22$, $25$, $26$, $27$, $28$, $29$, $30 $ ($\le 30$)  \\
$G_{8}^{26,54,54}$ & $63$  & Yes & ? & 2 &  & $A_{2}(3)$ ($\times 2$), $A_{3}(3)$ ($\times 6$), $A_{2}(9)$ ($\times 3$)  &$ 3$, $13 $ ($\le 30$)  \\
\end{longtable}

\subsection{Half-girth type $(4,4,4)$}

\leavevmode
\begin{longtable}{lccccp{2cm}p{8cm}p{7cm}} $G$ & p.len. & VTF & (T) & $\dim(H_1)$ & $L_2$-quot's & Small quotients & Degrees of small alternating quotients\\
\hline
$G_0^{40,40,40}$ & $57$  & Yes & No & 0 & $L_2(\infty^4)$, $L_2(\infty^4)$, $L_2(\infty^4)$, $L_2(\infty^4)$ & $\textrm{Alt}_{7}$, $B_{2}(3)$ ($\times 18$), $\textrm{M}_{12}$ ($\times 7$), ${}^2A_{2}(25)$ ($\times 2$), $\textrm{J}_{1}$ ($\times 4$), $A_{2}(5)$ ($\times 2$), $\textrm{J}_{2}$ ($\times 8$), $C_{2}(4)$ ($\times 21$), $\textrm{Alt}_{10}$ ($\times 15$), ${}^2A_{3}(9)$ ($\times 12$), $B_{2}(5)$ ($\times 90$), $A_{3}(3)$ ($\times 7$), $\textrm{HS}_{}$ ($\times 12$)  & $6$, $ 7$, $ 10$, $ 12$, $ 15$, $ 16$, $ 17$, $ 18$, $ 20$, $ 21$, $ 22$, $ 23$, $ 24$, $ 25$, $ 26$, $ 27$, $ 28$, $ 29$, $ 30$, $ 31$, $ 32$, $ 33$, $ 34$, $ 35$, $ 36$, $ 37$, $ 38$, $ 39$, $ 40$ ($\le 40$)  \\
\hline
$G_0^{40,40,48}$ & $49$  & Yes & No & 0 & $L_2(\infty^4)$ & $\textrm{Alt}_{7}$ ($\times 2$), $\textrm{M}_{11}$ ($\times 4$), $B_{2}(3)$ ($\times 8$), ${}^2A_{2}(25)$, $\textrm{M}_{22}$ ($\times 2$), $\textrm{J}_{2}$ ($\times 4$), $C_{2}(4)$ ($\times 2$), $C_{3}(2)$ ($\times 2$), $\textrm{Alt}_{10}$ ($\times 4$), ${}^2A_{3}(9)$ ($\times 4$), $B_{2}(5)$ ($\times 16$), $A_{3}(3)$ ($\times 2$), $A_{4}(2)$ ($\times 4$), ${}^2A_{4}(4)$ ($\times 10$), $\textrm{Alt}_{11}$ ($\times 7$)  & $5$, $ 6$, $ 7$, $ 10$, $ 11$, $ 12$, $ 15$, $ 16$, $ 17$, $ 18$, $ 20$, $ 21$, $ 22$, $ 23$, $ 24$, $ 25$, $ 26$, $ 27$, $ 28$, $ 29$, $ 30$, $ 31$, $ 32$, $ 33$, $ 34$, $ 35$, $ 36$, $ 37$, $ 38$, $ 39$, $ 40$ ($\le 40$)  \\
\hline
$G_0^{40,40,54}$ & $61$  & Yes & No & 0 &  & $\textrm{Alt}_{7}$ ($\times 2$), $B_{2}(3)$ ($\times 5$), $\textrm{M}_{12}$ ($\times 2$), $\textrm{Alt}_{10}$ ($\times 8$), ${}^2A_{3}(9)$ ($\times 15$), $A_{3}(3)$ ($\times 4$), ${}^2A_{4}(4)$ ($\times 7$)  & $5$, $ 7$, $ 10$, $ 15$, $ 16$, $ 17$, $ 19$, $ 20$, $ 21$, $ 22$, $ 23$, $ 24$, $ 25$, $ 26$, $ 27$, $ 28$, $ 29$, $ 30$, $ 31$, $ 32$, $ 33$, $ 34$, $ 35$, $ 36$, $ 37$, $ 38$, $ 39$, $ 40$ ($\le 40$)  \\
\hline
$G_0^{40,48,48}$ & $41$  & Yes & No & 1 & $L_2(3^2)$, $L_2(3^2)$ & $\textrm{Alt}_{7}$ ($\times 2$), $\textrm{M}_{11}$, $B_{2}(3)$ ($\times 18$), $\textrm{M}_{22}$ ($\times 2$), $\textrm{J}_{2}$ ($\times 6$), $C_{2}(4)$ ($\times 4$), $C_{3}(2)$ ($\times 6$), ${}^2A_{3}(9)$ ($\times 10$), $B_{2}(5)$ ($\times 20$), $A_{3}(3)$ ($\times 15$), $A_{4}(2)$ ($\times 8$), ${}^2A_{4}(4)$ ($\times 15$), $\textrm{Alt}_{11}$ ($\times 9$)  & $3$, $ 5$, $ 6$, $ 7$, $ 11$, $ 12$, $ 13$, $ 15$, $ 16$, $ 17$, $ 18$, $ 19$, $ 20$, $ 21$, $ 22$, $ 23$, $ 24$, $ 25$, $ 26$, $ 27$, $ 28$, $ 29$, $ 30$, $ 31$, $ 32$, $ 33$, $ 34$, $ 35$, $ 36$, $ 37$, $ 38$, $ 39$, $ 40$ ($\le 40$)  \\
\hline
$G_0^{40,48,54}$ & $53$  & Yes & No & 1 & $L_2(3^2)$ & $B_{2}(3)$ ($\times 11$), $\textrm{M}_{12}$ ($\times 7$), $\textrm{Alt}_{9}$ ($\times 2$), $C_{3}(2)$ ($\times 4$), $\textrm{Alt}_{10}$ ($\times 7$), ${}^2A_{3}(9)$ ($\times 14$), $A_{3}(3)$ ($\times 16$), ${}^2A_{4}(4)$ ($\times 3$), $\textrm{Alt}_{11}$ ($\times 7$)  & $3$, $ 5$, $ 6$, $ 9$, $ 10$, $ 11$, $ 12$, $ 13$, $ 14$, $ 15$, $ 16$, $ 17$, $ 18$, $ 19$, $ 20$, $ 21$, $ 22$, $ 23$, $ 24$, $ 25$, $ 26$, $ 27$, $ 28$, $ 29$, $ 30$, $ 31$, $ 32$, $ 33$, $ 34$, $ 35$, $ 36$, $ 37$, $ 38$, $ 39$, $ 40$ ($\le 40$)  \\
$G_{2}^{40,48,54}$ & $53$  & Yes & No & 1 & $L_2(3^2)$ & $B_{2}(3)$ ($\times 17$), $\textrm{M}_{12}$ ($\times 7$), $C_{3}(2)$ ($\times 2$), $\textrm{Alt}_{10}$ ($\times 12$), ${}^2A_{3}(9)$ ($\times 20$), $A_{3}(3)$ ($\times 22$), ${}^2A_{4}(4)$ ($\times 24$), $\textrm{Alt}_{11}$ ($\times 15$)  & $3$, $ 5$, $ 6$, $ 10$, $ 11$, $ 12$, $ 14$, $ 15$, $ 16$, $ 17$, $ 18$, $ 20$, $ 21$, $ 22$, $ 23$, $ 24$, $ 25$, $ 26$, $ 27$, $ 28$, $ 29$, $ 30$, $ 31$, $ 32$, $ 33$, $ 34$, $ 35$, $ 36$, $ 37$, $ 38$, $ 39$, $ 40$ ($\le 40$)  \\
\hline
$G_0^{40,54,54}$ & $65$  & Yes & No & 1 &  & $B_{2}(3)$ ($\times 8$), $\textrm{M}_{12}$ ($\times 2$), $\textrm{Alt}_{9}$ ($\times 2$), $\textrm{Alt}_{10}$ ($\times 4$), ${}^2A_{3}(9)$ ($\times 9$), $A_{3}(3)$ ($\times 17$), ${}^2A_{4}(4)$ ($\times 7$)  & $3$, $ 5$, $ 9$, $ 10$, $ 12$, $ 15$, $ 17$, $ 18$, $ 19$, $ 20$, $ 21$, $ 22$, $ 23$, $ 24$, $ 25$, $ 26$, $ 27$, $ 28$, $ 29$, $ 30$, $ 31$, $ 32$, $ 33$, $ 34$, $ 35$, $ 36$, $ 37$, $ 38$, $ 39$, $ 40$ ($\le 40$)  \\
$G_{2}^{40,54,54}$ & $65$  & Yes & No & 1 &  & $B_{2}(3)$ ($\times 12$), $\textrm{M}_{12}$ ($\times 2$), $C_{3}(2)$ ($\times 4$), $\textrm{Alt}_{10}$ ($\times 16$), ${}^2A_{3}(9)$ ($\times 14$), $A_{3}(3)$ ($\times 26$), ${}^2A_{4}(4)$ ($\times 40$), $\textrm{Alt}_{11}$ ($\times 10$)  & $3$, $ 5$, $ 10$, $ 11$, $ 12$, $ 15$, $ 16$, $ 17$, $ 18$, $ 19$, $ 20$, $ 21$, $ 22$, $ 23$, $ 24$, $ 25$, $ 26$, $ 27$, $ 28$, $ 29$, $ 30$, $ 31$, $ 32$, $ 33$, $ 34$, $ 35$, $ 36$, $ 37$, $ 38$, $ 39$, $ 40$ ($\le 40$)  \\
$G_{8}^{40,54,54}$ & $65$  & Yes & No & 1 &  & $B_{2}(3)$ ($\times 8$), $\textrm{M}_{12}$ ($\times 2$), $\textrm{Alt}_{9}$ ($\times 12$), ${}^2A_{3}(9)$ ($\times 12$), $A_{3}(3)$ ($\times 8$), ${}^2A_{4}(4)$ ($\times 16$)  & $3$, $ 5$, $ 9$, $ 12$, $ 15$, $ 17$, $ 18$, $ 19$, $ 20$, $ 21$, $ 22$, $ 23$, $ 24$, $ 25$, $ 26$, $ 27$, $ 28$, $ 29$, $ 30$, $ 31$, $ 32$, $ 33$, $ 34$, $ 35$, $ 36$, $ 37$, $ 38$, $ 39$, $ 40$ ($\le 40$)  \\
\hline
$G_0^{48,48,48}$ & $33$  & ? & No & 3 &  & $A_{2}(3)$, ${}^2A_{2}(9)$ ($\times 2$), $B_{2}(3)$ ($\times 27$), $\textrm{Alt}_{9}$ ($\times 3$), $\textrm{M}_{22}$, $C_{3}(2)$ ($\times 39$), ${}^2A_{3}(9)$ ($\times 21$), $B_{2}(5)$ ($\times 9$), $A_{3}(3)$ ($\times 33$), ${}^2A_{4}(4)$ ($\times 60$), $\textrm{Alt}_{11}$ ($\times 3$), ${}^2A_{2}(81)$ ($\times 2$), $\textrm{HS}_{}$ ($\times 3$)  & $3$, $ 4$, $ 5$, $ 9$, $ 11$, $ 12$, $ 13$, $ 14$, $ 15$, $ 16$, $ 17$, $ 18$, $ 19$, $ 20$, $ 21$, $ 22$, $ 23$, $ 24$, $ 25$, $ 26$, $ 27$, $ 28$, $ 29$, $ 30$, $ 31$, $ 32$, $ 33$, $ 34$, $ 35$, $ 36$, $ 37$, $ 38$, $ 39$, $ 40$ ($\le 40$)  \\
$G_{1}^{48,48,48}$ & $33$  & Yes & No & 3 &  & $\textrm{Alt}_{7}$ ($\times 3$), ${}^2A_{2}(9)$, $\textrm{Alt}_{8}$ or $A_{2}(4)$ ($\times 6$), $B_{2}(3)$ ($\times 24$), $\textrm{M}_{12}$, ${}^2A_{2}(25)$ ($\times 3$), $\textrm{J}_{2}$ ($\times 4$), $C_{3}(2)$ ($\times 27$), $\textrm{Alt}_{10}$ ($\times 3$), $A_{2}(7)$, ${}^2A_{3}(9)$ ($\times 15$), $B_{2}(5)$ ($\times 19$), ${}^2A_{2}(64)$ ($\times 2$), $A_{3}(3)$ ($\times 30$), $A_{4}(2)$ ($\times 4$), ${}^2A_{4}(4)$ ($\times 63$), $\textrm{Alt}_{11}$ ($\times 3$), $A_{2}(9)$, ${}^2A_{2}(81)$ ($\times 2$), $\textrm{HS}_{}$ ($\times 3$)  & $3$, $ 4$, $ 7$, $ 8$, $ 10$, $ 11$, $ 12$, $ 13$, $ 14$, $ 15$, $ 16$, $ 17$, $ 18$, $ 19$, $ 20$, $ 21$, $ 22$, $ 23$, $ 24$, $ 25$, $ 26$, $ 27$, $ 28$, $ 29$, $ 30$, $ 31$, $ 32$, $ 33$, $ 34$, $ 35$, $ 36$, $ 37$, $ 38$, $ 39$, $ 40$ ($\le 40$)  \\
\hline
$G_0^{48,48,54}$ & $45$  & Yes & No & 3 &  & $A_{2}(3)$ ($\times 2$), $B_{2}(3)$ ($\times 19$), $\textrm{Alt}_{9}$ ($\times 3$), $C_{3}(2)$ ($\times 3$), $\textrm{Alt}_{10}$ ($\times 6$), ${}^2A_{3}(9)$ ($\times 17$), $A_{3}(3)$ ($\times 28$), ${}^2A_{4}(4)$ ($\times 40$), $\textrm{Alt}_{11}$ ($\times 6$), $A_{2}(9)$ ($\times 3$)  & $3$, $ 4$, $ 9$, $ 10$, $ 11$, $ 12$, $ 13$, $ 14$, $ 15$, $ 16$, $ 17$, $ 18$, $ 19$, $ 20$, $ 21$, $ 22$, $ 23$, $ 24$, $ 25$, $ 26$, $ 27$, $ 28$, $ 29$, $ 30$, $ 31$, $ 32$, $ 33$, $ 34$, $ 35$, $ 36$, $ 37$, $ 38$, $ 39$, $ 40$ ($\le 40$)  \\
\hline
$G_0^{48,54,54}$ & $57$  & ? & No & 3 &  & $A_{2}(3)$ ($\times 2$), $B_{2}(3)$ ($\times 8$), $\textrm{Alt}_{9}$ ($\times 6$), $\textrm{Alt}_{10}$ ($\times 2$), ${}^2A_{3}(9)$ ($\times 9$), ${}^2A_{2}(64)$ ($\times 2$), $A_{3}(3)$ ($\times 11$), ${}^2A_{4}(4)$ ($\times 25$), $\textrm{Alt}_{11}$ ($\times 4$), $A_{2}(9)$ ($\times 3$)  & $3$, $ 4$, $ 9$, $ 10$, $ 11$, $ 12$, $ 13$, $ 14$, $ 15$, $ 18$, $ 19$, $ 20$, $ 21$, $ 22$, $ 23$, $ 24$, $ 25$, $ 26$, $ 27$, $ 28$, $ 29$, $ 30$, $ 31$, $ 32$, $ 33$, $ 34$, $ 35$, $ 36$, $ 37$, $ 38$, $ 39$, $ 40$ ($\le 40$)  \\
$G_{2}^{48,54,54}$ & $57$  & Yes & No & 3 &  & $A_{2}(3)$ ($\times 2$), $B_{2}(3)$ ($\times 10$), $\textrm{Alt}_{9}$ ($\times 9$), $C_{3}(2)$ ($\times 6$), $\textrm{Alt}_{10}$ ($\times 22$), ${}^2A_{3}(9)$ ($\times 14$), ${}^2A_{2}(64)$ ($\times 2$), $A_{3}(3)$ ($\times 36$), ${}^2A_{4}(4)$ ($\times 28$), $\textrm{Alt}_{11}$ ($\times 20$), $A_{2}(9)$ ($\times 3$)  & $3$, $ 4$, $ 9$, $ 10$, $ 11$, $ 12$, $ 13$, $ 14$, $ 15$, $ 16$, $ 17$, $ 18$, $ 19$, $ 20$, $ 21$, $ 22$, $ 23$, $ 24$, $ 25$, $ 26$, $ 27$, $ 28$, $ 29$, $ 30$, $ 31$, $ 32$, $ 33$, $ 34$, $ 35$, $ 36$, $ 37$, $ 38$, $ 39$, $ 40$ ($\le 40$)  \\
$G_{8}^{48,54,54}$ & $57$  & ? & No & 3 &  & $A_{2}(3)$ ($\times 2$), $B_{2}(3)$ ($\times 18$), $\textrm{Alt}_{9}$ ($\times 14$), $\textrm{Alt}_{10}$, ${}^2A_{3}(9)$ ($\times 15$), ${}^2A_{2}(64)$ ($\times 2$), $A_{3}(3)$ ($\times 19$), ${}^2A_{4}(4)$ ($\times 52$), $\textrm{Alt}_{11}$, $A_{2}(9)$ ($\times 3$)  & $3$, $ 4$, $ 9$, $ 10$, $ 11$, $ 12$, $ 13$, $ 15$, $ 18$, $ 19$, $ 20$, $ 21$, $ 22$, $ 23$, $ 24$, $ 25$, $ 26$, $ 27$, $ 28$, $ 29$, $ 30$, $ 31$, $ 32$, $ 33$, $ 34$, $ 35$, $ 36$, $ 37$, $ 38$, $ 39$, $ 40$ ($\le 40$)  \\
\hline
$G_0^{54,54,54}$ & $69$  & ? & No & 3 &  & $A_{2}(3)$ ($\times 2$), $\textrm{Alt}_{9}$ ($\times 6$), ${}^2A_{4}(4)$ ($\times 10$), $A_{2}(9)$ ($\times 3$)  & $3$, $ 9$, $ 19$, $ 21$, $ 22$, $ 23$, $ 24$, $ 25$, $ 26$, $ 27$, $ 28$, $ 29$, $ 30$, $ 31$, $ 32$, $ 33$, $ 34$, $ 35$, $ 36$, $ 37$, $ 38$, $ 39$, $ 40$ ($\le 40$)  \\
$G_{2}^{54,54,54}$ & $69$  & ? & No & 3 &  & $A_{2}(3)$ ($\times 2$), $B_{2}(3)$ ($\times 8$), $\textrm{Alt}_{9}$ ($\times 24$), ${}^2A_{3}(9)$ ($\times 9$), $A_{3}(3)$ ($\times 13$), ${}^2A_{4}(4)$ ($\times 41$), $A_{2}(9)$ ($\times 3$)  & $3$, $ 9$, $ 12$, $ 15$, $ 18$, $ 19$, $ 21$, $ 22$, $ 23$, $ 24$, $ 25$, $ 26$, $ 27$, $ 28$, $ 29$, $ 30$, $ 31$, $ 32$, $ 33$, $ 34$, $ 35$, $ 36$, $ 37$, $ 38$, $ 39$, $ 40$ ($\le 40$)  \\
\end{longtable}

\end{landscape}

\section{Epimorphisms}\label{sec:Epimorphisms}
\noindent
Among the groups in Section~\ref{sec:small_cubic_graphs} we have described ten homomorphisms that are composites of the seven morphisms between graphs of the following orders: $16 \to 8$, $18 \to 6$, $24 \to 6$, $24 \to 8$, $48 \to 24$, $48 \to 16$, $54 \to 18$. These homomorphisms give rise to homomorphisms among the groups in this appendix. These homomorphisms are indicated in the diagrams below (without multiplicities). Except for the groups $G^{6,40,48}_0$, $G^{6,48,48}_0$, $G^{6,48,54}_0$, $G^{6,48,54}_2$, $G^{6,54,54}_0$, $G^{6,54,54}_2$, $G^{6,54,54}_8$ (which occur multiple times to limit the size of the diagrams), each group occurs at most once in the diagrams. The groups that do not occur are precisely those whose superscripts are all $14$, $26$, or $40$, namely $G^{14,14,14}_0$, $G^{14,14,14}_1$, $G^{14,14,14}_2$, $G^{14,14,14}_6$, $G^{14,14,26}_0$, $G^{14,14,26}_1$, $G^{14,14,26}_3$, $G^{14,14,26}_4$, $G^{14,14,26}_5$, $G^{14,14,26}_7$, $G^{14,26,26}_0$, $G^{14,26,26}_1$, $G^{14,26,26}_3$, $G^{14,26,26}_4$, $G^{14,26,26}_5$, $G^{14,26,26}_{15}$, $G^{26,26,26}_0$, $G^{26,26,26}_1$, $G^{26,26,26}_5$, $G^{26,26,26}_{21}$; $G^{14,14,40}_0$, $G^{14,14,40}_4$, $G^{14,26,40}_0$, $G^{14,26,40}_4$, $G^{26,26,40}_0$, $G^{26,26,40}_4$; $G^{14,40,40}_0$, $G^{26,40,40}_0$; $G^{40,40,40}_0$.
The first five diagrams form the connected component of $G^{6,48,48}_0, G^{6,48,54}_0,G^{6,48,54}_2,G^{6,54,54}_0,G^{6,54,54}_2, G^{6,54,54}_8$, the next two diagrams the component of $G^{6,40,48}_0$ and the remaining ones are single components.

\centering

\begin{landscape}
\begin{tikzpicture}[>=latex,line join=bevel,]
\node (G^{6+48+54}_{2}) at (265.5bp,43.5bp) [] {$G^{6,48,54}_{2}$};
  \node (G^{6+48+54}_{0}) at (509.5bp,43.5bp) [] {$G^{6,48,54}_{0}$};
  \node (G^{6+54+54}_{2}) at (753.5bp,43.5bp) [] {$G^{6,54,54}_{2}$};
  \node (G^{8+54+54}_{0}) at (326.5bp,43.5bp) [] {$G^{8,54,54}_{0}$};
  \node (G^{8+54+54}_{2}) at (692.5bp,43.5bp) [] {$G^{8,54,54}_{2}$};
  \node (G^{24+54+54}_{0}) at (387.5bp,84.5bp) [] {$G^{24,54,54}_{0}$};
  \node (G^{18+48+54}_{2}) at (235.5bp,84.5bp) [] {$G^{18,48,54}_{2}$};
  \node (G^{24+54+54}_{2}) at (631.5bp,84.5bp) [] {$G^{24,54,54}_{2}$};
  \node (G^{16+18+54}_{2}) at (570.5bp,43.5bp) [] {$G^{16,18,54}_{2}$};
  \node (G^{16+18+54}_{0}) at (82.5bp,43.5bp) [] {$G^{16,18,54}_{0}$};
  \node (G^{18+48+54}_{0}) at (509.5bp,84.5bp) [] {$G^{18,48,54}_{0}$};
  \node (G^{8+54+54}_{8}) at (21.5bp,43.5bp) [] {$G^{8,54,54}_{8}$};
  \node (G^{48+54+54}_{2}) at (539.5bp,125.5bp) [] {$G^{48,54,54}_{2}$};
  \node (G^{48+54+54}_{8}) at (205.5bp,125.5bp) [] {$G^{48,54,54}_{8}$};
  \node (G^{16+18+18}_{0}) at (356.5bp,2.5bp) [] {$G^{16,18,18}_{0}$};
  \node (G^{18+24+54}_{0}) at (631.5bp,43.5bp) [] {$G^{18,24,54}_{0}$};
  \node (G^{18+18+24}_{0}) at (417.5bp,2.5bp) [] {$G^{18,18,24}_{0}$};
  \node (G^{16+54+54}_{2}) at (570.5bp,84.5bp) [] {$G^{16,54,54}_{2}$};
  \node (G^{16+54+54}_{0}) at (326.5bp,84.5bp) [] {$G^{16,54,54}_{0}$};
  \node (G^{18+18+48}_{0}) at (387.5bp,43.5bp) [] {$G^{18,18,48}_{0}$};
  \node (G^{16+54+54}_{8}) at (82.5bp,84.5bp) [] {$G^{16,54,54}_{8}$};
  \node (G^{24+54+54}_{8}) at (174.5bp,84.5bp) [] {$G^{24,54,54}_{8}$};
  \node (G^{18+24+54}_{2}) at (204.5bp,43.5bp) [] {$G^{18,24,54}_{2}$};
  \node (G^{6+54+54}_{8}) at (143.5bp,43.5bp) [] {$G^{6,54,54}_{8}$};
  \node (G^{6+54+54}_{0}) at (448.5bp,43.5bp) [] {$G^{6,54,54}_{0}$};
  \node (G^{48+54+54}_{0}) at (357.5bp,125.5bp) [] {$G^{48,54,54}_{0}$};
  \draw [->] (G^{18+48+54}_{0}) -- (G^{16+18+54}_{2});
  \draw [->] (G^{24+54+54}_{2}) -- (G^{18+24+54}_{0});
  \draw [->] (G^{24+54+54}_{2}) -- (G^{18+24+54}_{0});
  \draw [->] (G^{24+54+54}_{8}) -- (G^{8+54+54}_{8});
  \draw [->] (G^{18+24+54}_{2}) -- (G^{18+18+24}_{0});
  \draw [->] (G^{18+48+54}_{0}) -- (G^{18+18+48}_{0});
  \draw [->] (G^{48+54+54}_{2}) -- (G^{16+54+54}_{2});
  \draw [->] (G^{16+18+54}_{0}) -- (G^{16+18+18}_{0});
  \draw [->] (G^{18+48+54}_{0}) -- (G^{6+48+54}_{0});
  \draw [->] (G^{48+54+54}_{0}) -- (G^{16+54+54}_{0});
  \draw [->] (G^{18+18+48}_{0}) -- (G^{16+18+18}_{0});
  \draw [->] (G^{18+48+54}_{0}) -- (G^{18+24+54}_{0});
  \draw [->] (G^{18+48+54}_{2}) -- (G^{16+18+54}_{0});
  \draw [->] (G^{24+54+54}_{8}) -- (G^{18+24+54}_{2});
  \draw [->] (G^{24+54+54}_{8}) -- (G^{18+24+54}_{2});
  \draw [->] (G^{24+54+54}_{8}) -- (G^{6+54+54}_{8});
  \draw [->] (G^{16+54+54}_{0}) -- (G^{16+18+54}_{2});
  \draw [->] (G^{48+54+54}_{0}) -- (G^{24+54+54}_{0});
  \draw [->] (G^{18+48+54}_{2}) -- (G^{18+18+48}_{0});
  \draw [->] (G^{48+54+54}_{8}) -- (G^{16+54+54}_{8});
  \draw [->] (G^{24+54+54}_{0}) -- (G^{8+54+54}_{0});
  \draw [->] (G^{18+48+54}_{2}) -- (G^{18+24+54}_{2});
  \draw [->] (G^{16+54+54}_{2}) -- (G^{16+18+54}_{2});
  \draw [->] (G^{16+54+54}_{2}) -- (G^{16+18+54}_{2});
  \draw [->] (G^{16+54+54}_{8}) -- (G^{8+54+54}_{8});
  \draw [->] (G^{16+18+54}_{2}) -- (G^{16+18+18}_{0});
  \draw [->] (G^{24+54+54}_{0}) -- (G^{18+24+54}_{0});
  \draw [->] (G^{48+54+54}_{2}) -- (G^{24+54+54}_{2});
  \draw [->] (G^{48+54+54}_{8}) -- (G^{18+48+54}_{2});
  \draw [->] (G^{48+54+54}_{8}) -- (G^{18+48+54}_{2});
  \draw [->] (G^{48+54+54}_{8}) -- (G^{24+54+54}_{8});
  \draw [->] (G^{16+54+54}_{0}) -- (G^{8+54+54}_{0});
  \draw [->] (G^{24+54+54}_{0}) -- (G^{6+54+54}_{0});
  \draw [->] (G^{16+54+54}_{8}) -- (G^{16+18+54}_{0});
  \draw [->] (G^{16+54+54}_{8}) -- (G^{16+18+54}_{0});
  \draw [->] (G^{24+54+54}_{2}) -- (G^{8+54+54}_{2});
  \draw [->] (G^{48+54+54}_{2}) -- (G^{18+48+54}_{0});
  \draw [->] (G^{48+54+54}_{2}) -- (G^{18+48+54}_{0});
  \draw [->] (G^{18+24+54}_{0}) -- (G^{18+18+24}_{0});
  \draw [->] (G^{48+54+54}_{0}) -- (G^{18+48+54}_{0});
  \draw [->] (G^{16+54+54}_{2}) -- (G^{8+54+54}_{2});
  \draw [->] (G^{24+54+54}_{2}) -- (G^{6+54+54}_{2});
  \draw [->] (G^{18+18+48}_{0}) -- (G^{18+18+24}_{0});
  \draw [->] (G^{18+48+54}_{2}) -- (G^{6+48+54}_{2});
  \draw [->] (G^{48+54+54}_{0}) -- (G^{18+48+54}_{2});
  \draw [->] (G^{16+54+54}_{0}) -- (G^{16+18+54}_{0});
  \draw [->] (G^{24+54+54}_{0}) -- (G^{18+24+54}_{2});
\end{tikzpicture}

\begin{tikzpicture}[>=latex,line join=bevel,]
\node (G^{16+18+48}_{0}) at (82.5bp,43.5bp) [] {$G^{16,18,48}_{0}$};
  \node (G^{24+24+54}_{0}) at (570.5bp,43.5bp) [] {$G^{24,24,54}_{0}$};
  \node (G^{24+48+54}_{0}) at (387.5bp,84.5bp) [] {$G^{24,48,54}_{0}$};
  \node (G^{16+24+54}_{0}) at (387.5bp,43.5bp) [] {$G^{16,24,54}_{0}$};
  \node (G^{18+48+48}_{0}) at (326.5bp,84.5bp) [] {$G^{18,48,48}_{0}$};
  \node (G^{16+16+18}_{0}) at (52.5bp,2.5bp) [] {$G^{16,16,18}_{0}$};
  \node (G^{16+48+54}_{2}) at (204.5bp,84.5bp) [] {$G^{16,48,54}_{2}$};
  \node (G^{6+48+54}_{2}) at (631.5bp,43.5bp) [] {$G^{6,48,54}_{2}$};
  \node (G^{16+16+54}_{0}) at (21.5bp,43.5bp) [] {$G^{16,16,54}_{0}$};
  \node (G^{6+48+54}_{0}) at (509.5bp,43.5bp) [] {$G^{6,48,54}_{0}$};
  \node (G^{8+48+54}_{0}) at (265.5bp,43.5bp) [] {$G^{8,48,54}_{0}$};
  \node (G^{16+24+54}_{2}) at (204.5bp,43.5bp) [] {$G^{16,24,54}_{2}$};
  \node (G^{24+48+54}_{2}) at (448.5bp,84.5bp) [] {$G^{24,48,54}_{2}$};
  \node (G^{6+48+48}_{0}) at (143.5bp,43.5bp) [] {$G^{6,48,48}_{0}$};
  \node (G^{16+48+54}_{0}) at (265.5bp,84.5bp) [] {$G^{16,48,54}_{0}$};
  \node (G^{8+48+54}_{2}) at (326.5bp,43.5bp) [] {$G^{8,48,54}_{2}$};
  \node (G^{48+48+54}_{0}) at (326.5bp,125.5bp) [] {$G^{48,48,54}_{0}$};
  \node (G^{18+24+48}_{0}) at (448.5bp,43.5bp) [] {$G^{18,24,48}_{0}$};
  \node (G^{18+24+24}_{0}) at (493.5bp,2.5bp) [] {$G^{18,24,24}_{0}$};
  \node (G^{16+18+24}_{0}) at (341.5bp,2.5bp) [] {$G^{16,18,24}_{0}$};
  \draw [->] (G^{24+48+54}_{0}) -- (G^{24+24+54}_{0});
  \draw [->] (G^{48+48+54}_{0}) -- (G^{16+48+54}_{2});
  \draw [->] (G^{24+48+54}_{0}) -- (G^{16+24+54}_{2});
  \draw [->] (G^{16+24+54}_{2}) -- (G^{16+18+24}_{0});
  \draw [->] (G^{18+48+48}_{0}) -- (G^{6+48+48}_{0});
  \draw [->] (G^{16+48+54}_{2}) -- (G^{16+18+48}_{0});
  \draw [->] (G^{16+16+54}_{0}) -- (G^{16+16+18}_{0});
  \draw [->] (G^{16+18+48}_{0}) -- (G^{16+16+18}_{0});
  \draw [->] (G^{16+48+54}_{2}) -- (G^{8+48+54}_{2});
  \draw [->] (G^{16+48+54}_{0}) -- (G^{16+16+54}_{0});
  \draw [->] (G^{48+48+54}_{0}) -- (G^{16+48+54}_{0});
  \draw [->] (G^{18+48+48}_{0}) -- (G^{16+18+48}_{0});
  \draw [->] (G^{18+48+48}_{0}) -- (G^{16+18+48}_{0});
  \draw [->] (G^{18+48+48}_{0}) -- (G^{18+24+48}_{0});
  \draw [->] (G^{18+48+48}_{0}) -- (G^{18+24+48}_{0});
  \draw [->] (G^{16+18+48}_{0}) -- (G^{16+18+24}_{0});
  \draw [->] (G^{16+48+54}_{0}) -- (G^{16+18+48}_{0});
  \draw [->] (G^{24+48+54}_{2}) -- (G^{24+24+54}_{0});
  \draw [->] (G^{48+48+54}_{0}) -- (G^{24+48+54}_{2});
  \draw [->] (G^{24+24+54}_{0}) -- (G^{18+24+24}_{0});
  \draw [->] (G^{24+48+54}_{0}) -- (G^{18+24+48}_{0});
  \draw [->] (G^{24+48+54}_{2}) -- (G^{6+48+54}_{2});
  \draw [->] (G^{24+48+54}_{0}) -- (G^{6+48+54}_{0});
  \draw [->] (G^{48+48+54}_{0}) -- (G^{18+48+48}_{0});
  \draw [->] (G^{24+48+54}_{2}) -- (G^{8+48+54}_{2});
  \draw [->] (G^{16+48+54}_{2}) -- (G^{16+16+54}_{0});
  \draw [->] (G^{48+48+54}_{0}) -- (G^{24+48+54}_{0});
  \draw [->] (G^{18+24+48}_{0}) -- (G^{18+24+24}_{0});
  \draw [->] (G^{16+48+54}_{2}) -- (G^{16+24+54}_{2});
  \draw [->] (G^{24+48+54}_{0}) -- (G^{8+48+54}_{0});
  \draw [->] (G^{16+48+54}_{0}) -- (G^{16+24+54}_{0});
  \draw [->] (G^{18+24+48}_{0}) -- (G^{16+18+24}_{0});
  \draw [->] (G^{16+48+54}_{0}) -- (G^{8+48+54}_{0});
  \draw [->] (G^{24+48+54}_{2}) -- (G^{16+24+54}_{0});
  \draw [->] (G^{16+24+54}_{0}) -- (G^{16+18+24}_{0});
  \draw [->] (G^{24+48+54}_{2}) -- (G^{18+24+48}_{0});
\end{tikzpicture}

\begin{tikzpicture}[>=latex,line join=bevel,]
\node (G^{24+24+48}_{0}) at (265.5bp,43.5bp) [] {$G^{24,24,48}_{0}$};
  \node (G^{24+48+48}_{1}) at (143.5bp,84.5bp) [] {$G^{24,48,48}_{1}$};
  \node (G^{6+48+48}_{0}) at (204.5bp,43.5bp) [] {$G^{6,48,48}_{0}$};
  \node (G^{16+48+48}_{1}) at (82.5bp,84.5bp) [] {$G^{16,48,48}_{1}$};
  \node (G^{16+24+48}_{0}) at (143.5bp,43.5bp) [] {$G^{16,24,48}_{0}$};
  \node (G^{16+16+16}_{1}) at (21.5bp,2.5bp) [] {$G^{16,16,16}_{1}$};
  \node (G^{16+24+24}_{1}) at (189.5bp,2.5bp) [] {$G^{16,24,24}_{1}$};
  \node (G^{24+24+24}_{1}) at (265.5bp,2.5bp) [] {$G^{24,24,24}_{1}$};
  \node (G^{8+48+48}_{0}) at (82.5bp,43.5bp) [] {$G^{8,48,48}_{0}$};
  \node (G^{16+16+48}_{0}) at (21.5bp,43.5bp) [] {$G^{16,16,48}_{0}$};
  \node (G^{16+16+24}_{1}) at (98.5bp,2.5bp) [] {$G^{16,16,24}_{1}$};
  \node (G^{48+48+48}_{0}) at (112.5bp,125.5bp) [] {$G^{48,48,48}_{0}$};
  \draw [->] (G^{24+48+48}_{1}) -- (G^{16+24+48}_{0});
  \draw [->] (G^{24+48+48}_{1}) -- (G^{16+24+48}_{0});
  \draw [->] (G^{24+48+48}_{1}) -- (G^{24+24+48}_{0});
  \draw [->] (G^{24+48+48}_{1}) -- (G^{24+24+48}_{0});
  \draw [->] (G^{16+16+48}_{0}) -- (G^{16+16+16}_{1});
  \draw [->] (G^{16+16+48}_{0}) -- (G^{16+16+24}_{1});
  \draw [->] (G^{16+48+48}_{1}) -- (G^{8+48+48}_{0});
  \draw [->] (G^{24+48+48}_{1}) -- (G^{6+48+48}_{0});
  \draw [->] (G^{24+24+48}_{0}) -- (G^{24+24+24}_{1});
  \draw [->] (G^{24+24+48}_{0}) -- (G^{16+24+24}_{1});
  \draw [->] (G^{16+48+48}_{1}) -- (G^{16+24+48}_{0});
  \draw [->] (G^{16+48+48}_{1}) -- (G^{16+24+48}_{0});
  \draw [->] (G^{16+24+48}_{0}) -- (G^{16+24+24}_{1});
  \draw [->] (G^{16+24+48}_{0}) -- (G^{16+16+24}_{1});
  \draw [->] (G^{24+48+48}_{1}) -- (G^{8+48+48}_{0});
  \draw [->] (G^{48+48+48}_{0}) -- (G^{24+48+48}_{1});
  \draw [->] (G^{48+48+48}_{0}) -- (G^{24+48+48}_{1});
  \draw [->] (G^{48+48+48}_{0}) -- (G^{24+48+48}_{1});
  \draw [->] (G^{16+48+48}_{1}) -- (G^{16+16+48}_{0});
  \draw [->] (G^{16+48+48}_{1}) -- (G^{16+16+48}_{0});
  \draw [->] (G^{48+48+48}_{0}) -- (G^{16+48+48}_{1});
  \draw [->] (G^{48+48+48}_{0}) -- (G^{16+48+48}_{1});
  \draw [->] (G^{48+48+48}_{0}) -- (G^{16+48+48}_{1});
\end{tikzpicture}
\begin{tikzpicture}[>=latex,line join=bevel,]
\node (G^{24+48+48}_{0}) at (143.5bp,84.5bp) [] {$G^{24,48,48}_{0}$};
  \node (G^{16+24+48}_{1}) at (143.5bp,43.5bp) [] {$G^{16,24,48}_{1}$};
  \node (G^{24+24+48}_{1}) at (265.5bp,43.5bp) [] {$G^{24,24,48}_{1}$};
  \node (G^{16+16+16}_{0}) at (21.5bp,2.5bp) [] {$G^{16,16,16}_{0}$};
  \node (G^{16+48+48}_{0}) at (82.5bp,84.5bp) [] {$G^{16,48,48}_{0}$};
  \node (G^{8+48+48}_{1}) at (82.5bp,43.5bp) [] {$G^{8,48,48}_{1}$};
  \node (G^{6+48+48}_{0}) at (204.5bp,43.5bp) [] {$G^{6,48,48}_{0}$};
  \node (G^{24+24+24}_{0}) at (265.5bp,2.5bp) [] {$G^{24,24,24}_{0}$};
  \node (G^{16+16+24}_{0}) at (98.5bp,2.5bp) [] {$G^{16,16,24}_{0}$};
  \node (G^{16+24+24}_{0}) at (189.5bp,2.5bp) [] {$G^{16,24,24}_{0}$};
  \node (G^{16+16+48}_{1}) at (21.5bp,43.5bp) [] {$G^{16,16,48}_{1}$};
  \node (G^{48+48+48}_{1}) at (112.5bp,125.5bp) [] {$G^{48,48,48}_{1}$};
  \draw [->] (G^{24+48+48}_{0}) -- (G^{16+24+48}_{1});
  \draw [->] (G^{24+48+48}_{0}) -- (G^{16+24+48}_{1});
  \draw [->] (G^{24+48+48}_{0}) -- (G^{6+48+48}_{0});
  \draw [->] (G^{24+24+48}_{1}) -- (G^{16+24+24}_{0});
  \draw [->] (G^{24+48+48}_{0}) -- (G^{24+24+48}_{1});
  \draw [->] (G^{24+48+48}_{0}) -- (G^{24+24+48}_{1});
  \draw [->] (G^{16+48+48}_{0}) -- (G^{16+16+48}_{1});
  \draw [->] (G^{16+48+48}_{0}) -- (G^{16+16+48}_{1});
  \draw [->] (G^{16+48+48}_{0}) -- (G^{16+24+48}_{1});
  \draw [->] (G^{16+48+48}_{0}) -- (G^{16+24+48}_{1});
  \draw [->] (G^{48+48+48}_{1}) -- (G^{16+48+48}_{0});
  \draw [->] (G^{48+48+48}_{1}) -- (G^{16+48+48}_{0});
  \draw [->] (G^{48+48+48}_{1}) -- (G^{16+48+48}_{0});
  \draw [->] (G^{16+16+48}_{1}) -- (G^{16+16+16}_{0});
  \draw [->] (G^{48+48+48}_{1}) -- (G^{24+48+48}_{0});
  \draw [->] (G^{48+48+48}_{1}) -- (G^{24+48+48}_{0});
  \draw [->] (G^{48+48+48}_{1}) -- (G^{24+48+48}_{0});
  \draw [->] (G^{16+24+48}_{1}) -- (G^{16+16+24}_{0});
  \draw [->] (G^{16+16+48}_{1}) -- (G^{16+16+24}_{0});
  \draw [->] (G^{16+24+48}_{1}) -- (G^{16+24+24}_{0});
  \draw [->] (G^{24+24+48}_{1}) -- (G^{24+24+24}_{0});
  \draw [->] (G^{16+48+48}_{0}) -- (G^{8+48+48}_{1});
  \draw [->] (G^{24+48+48}_{0}) -- (G^{8+48+48}_{1});
\end{tikzpicture}
\begin{tikzpicture}[>=latex,line join=bevel,]
\node (G^{18+54+54}_{0}) at (82.5bp,84.5bp) [] {$G^{18,54,54}_{0}$};
  \node (G^{18+54+54}_{2}) at (204.5bp,84.5bp) [] {$G^{18,54,54}_{2}$};
  \node (G^{6+54+54}_{2}) at (204.5bp,43.5bp) [] {$G^{6,54,54}_{2}$};
  \node (G^{6+54+54}_{8}) at (82.5bp,43.5bp) [] {$G^{6,54,54}_{8}$};
  \node (G^{54+54+54}_{0}) at (82.5bp,125.5bp) [] {$G^{54,54,54}_{0}$};
  \node (G^{18+18+54}_{0}) at (143.5bp,43.5bp) [] {$G^{18,18,54}_{0}$};
  \node (G^{18+18+18}_{0}) at (143.5bp,2.5bp) [] {$G^{18,18,18}_{0}$};
  \node (G^{6+54+54}_{0}) at (21.5bp,43.5bp) [] {$G^{6,54,54}_{0}$};
  \node (G^{18+54+54}_{8}) at (143.5bp,84.5bp) [] {$G^{18,54,54}_{8}$};
  \node (G^{54+54+54}_{2}) at (143.5bp,125.5bp) [] {$G^{54,54,54}_{2}$};
  \draw [->] (G^{18+18+54}_{0}) -- (G^{18+18+18}_{0});
  \draw [->] (G^{18+54+54}_{0}) -- (G^{6+54+54}_{0});
  \draw [->] (G^{54+54+54}_{2}) -- (G^{18+54+54}_{8});
  \draw [->] (G^{18+54+54}_{8}) -- (G^{18+18+54}_{0});
  \draw [->] (G^{18+54+54}_{8}) -- (G^{18+18+54}_{0});
  \draw [->] (G^{18+54+54}_{2}) -- (G^{18+18+54}_{0});
  \draw [->] (G^{18+54+54}_{2}) -- (G^{18+18+54}_{0});
  \draw [->] (G^{18+54+54}_{2}) -- (G^{6+54+54}_{2});
  \draw [->] (G^{54+54+54}_{2}) -- (G^{18+54+54}_{2});
  \draw [->] (G^{18+54+54}_{0}) -- (G^{18+18+54}_{0});
  \draw [->] (G^{18+54+54}_{0}) -- (G^{18+18+54}_{0});
  \draw [->] (G^{18+54+54}_{8}) -- (G^{6+54+54}_{8});
  \draw [->] (G^{54+54+54}_{2}) -- (G^{18+54+54}_{0});
  \draw [->] (G^{54+54+54}_{0}) -- (G^{18+54+54}_{0});
  \draw [->] (G^{54+54+54}_{0}) -- (G^{18+54+54}_{0});
  \draw [->] (G^{54+54+54}_{0}) -- (G^{18+54+54}_{0});
\end{tikzpicture}
\begin{tikzpicture}[>=latex,line join=bevel,]
\node (G^{40+48+48}_{0}) at (112.5bp,84.5bp) [] {$G^{40,48,48}_{0}$};
  \node (G^{16+24+40}_{0}) at (143.5bp,2.5bp) [] {$G^{16,24,40}_{0}$};
  \node (G^{6+40+48}_{0}) at (204.5bp,2.5bp) [] {$G^{6,40,48}_{0}$};
  \node (G^{8+40+48}_{0}) at (82.5bp,2.5bp) [] {$G^{8,40,48}_{0}$};
  \node (G^{16+16+40}_{0}) at (21.5bp,2.5bp) [] {$G^{16,16,40}_{0}$};
  \node (G^{16+40+48}_{0}) at (82.5bp,43.5bp) [] {$G^{16,40,48}_{0}$};
  \node (G^{24+24+40}_{0}) at (265.5bp,2.5bp) [] {$G^{24,24,40}_{0}$};
  \node (G^{24+40+48}_{0}) at (143.5bp,43.5bp) [] {$G^{24,40,48}_{0}$};
  \draw [->] (G^{24+40+48}_{0}) -- (G^{16+24+40}_{0});
  \draw [->] (G^{24+40+48}_{0}) -- (G^{8+40+48}_{0});
  \draw [->] (G^{40+48+48}_{0}) -- (G^{16+40+48}_{0});
  \draw [->] (G^{40+48+48}_{0}) -- (G^{16+40+48}_{0});
  \draw [->] (G^{24+40+48}_{0}) -- (G^{6+40+48}_{0});
  \draw [->] (G^{24+40+48}_{0}) -- (G^{24+24+40}_{0});
  \draw [->] (G^{16+40+48}_{0}) -- (G^{8+40+48}_{0});
  \draw [->] (G^{16+40+48}_{0}) -- (G^{16+24+40}_{0});
  \draw [->] (G^{40+48+48}_{0}) -- (G^{24+40+48}_{0});
  \draw [->] (G^{40+48+48}_{0}) -- (G^{24+40+48}_{0});
  \draw [->] (G^{16+40+48}_{0}) -- (G^{16+16+40}_{0});
\end{tikzpicture}

\begin{tikzpicture}[>=latex,line join=bevel,]
\node (G^{16+40+54}_{2}) at (265.5bp,43.5bp) [] {$G^{16,40,54}_{2}$};
  \node (G^{18+40+54}_{0}) at (387.5bp,43.5bp) [] {$G^{18,40,54}_{0}$};
  \node (G^{40+54+54}_{0}) at (417.5bp,84.5bp) [] {$G^{40,54,54}_{0}$};
  \node (G^{18+24+40}_{0}) at (204.5bp,2.5bp) [] {$G^{18,24,40}_{0}$};
  \node (G^{24+40+54}_{2}) at (326.5bp,43.5bp) [] {$G^{24,40,54}_{2}$};
  \node (G^{16+18+40}_{0}) at (143.5bp,2.5bp) [] {$G^{16,18,40}_{0}$};
  \node (G^{6+40+48}_{0}) at (21.5bp,2.5bp) [] {$G^{6,40,48}_{0}$};
  \node (G^{16+40+54}_{0}) at (82.5bp,43.5bp) [] {$G^{16,40,54}_{0}$};
  \node (G^{40+54+54}_{8}) at (478.5bp,84.5bp) [] {$G^{40,54,54}_{8}$};
  \node (G^{40+48+54}_{2}) at (143.5bp,84.5bp) [] {$G^{40,48,54}_{2}$};
  \node (G^{18+40+54}_{2}) at (448.5bp,43.5bp) [] {$G^{18,40,54}_{2}$};
  \node (G^{6+40+54}_{2}) at (387.5bp,2.5bp) [] {$G^{6,40,54}_{2}$};
  \node (G^{24+40+54}_{0}) at (204.5bp,43.5bp) [] {$G^{24,40,54}_{0}$};
  \node (G^{6+40+54}_{0}) at (326.5bp,2.5bp) [] {$G^{6,40,54}_{0}$};
  \node (G^{40+48+54}_{0}) at (265.5bp,84.5bp) [] {$G^{40,48,54}_{0}$};
  \node (G^{18+18+40}_{0}) at (448.5bp,2.5bp) [] {$G^{18,18,40}_{0}$};
  \node (G^{40+54+54}_{2}) at (356.5bp,84.5bp) [] {$G^{40,54,54}_{2}$};
  \node (G^{8+40+54}_{0}) at (82.5bp,2.5bp) [] {$G^{8,40,54}_{0}$};
  \node (G^{8+40+54}_{2}) at (265.5bp,2.5bp) [] {$G^{8,40,54}_{2}$};
  \node (G^{18+40+48}_{0}) at (143.5bp,43.5bp) [] {$G^{18,40,48}_{0}$};
  \draw [->] (G^{18+40+54}_{0}) -- (G^{6+40+54}_{0});
  \draw [->] (G^{18+40+48}_{0}) -- (G^{6+40+48}_{0});
  \draw [->] (G^{40+48+54}_{0}) -- (G^{18+40+48}_{0});
  \draw [->] (G^{40+54+54}_{0}) -- (G^{18+40+54}_{2});
  \draw [->] (G^{16+40+54}_{0}) -- (G^{8+40+54}_{0});
  \draw [->] (G^{18+40+54}_{2}) -- (G^{18+18+40}_{0});
  \draw [->] (G^{24+40+54}_{2}) -- (G^{8+40+54}_{2});
  \draw [->] (G^{24+40+54}_{2}) -- (G^{18+24+40}_{0});
  \draw [->] (G^{18+40+54}_{0}) -- (G^{18+18+40}_{0});
  \draw [->] (G^{18+40+48}_{0}) -- (G^{16+18+40}_{0});
  \draw [->] (G^{40+54+54}_{8}) -- (G^{18+40+54}_{2});
  \draw [->] (G^{40+54+54}_{8}) -- (G^{18+40+54}_{2});
  \draw [->] (G^{24+40+54}_{2}) -- (G^{6+40+54}_{2});
  \draw [->] (G^{40+54+54}_{0}) -- (G^{18+40+54}_{0});
  \draw [->] (G^{40+48+54}_{2}) -- (G^{16+40+54}_{0});
  \draw [->] (G^{40+48+54}_{2}) -- (G^{24+40+54}_{0});
  \draw [->] (G^{16+40+54}_{0}) -- (G^{16+18+40}_{0});
  \draw [->] (G^{40+48+54}_{0}) -- (G^{16+40+54}_{2});
  \draw [->] (G^{40+48+54}_{2}) -- (G^{18+40+48}_{0});
  \draw [->] (G^{18+40+48}_{0}) -- (G^{18+24+40}_{0});
  \draw [->] (G^{24+40+54}_{0}) -- (G^{8+40+54}_{0});
  \draw [->] (G^{40+54+54}_{2}) -- (G^{18+40+54}_{0});
  \draw [->] (G^{40+54+54}_{2}) -- (G^{18+40+54}_{0});
  \draw [->] (G^{16+40+54}_{2}) -- (G^{8+40+54}_{2});
  \draw [->] (G^{24+40+54}_{0}) -- (G^{18+24+40}_{0});
  \draw [->] (G^{18+40+54}_{2}) -- (G^{6+40+54}_{2});
  \draw [->] (G^{40+48+54}_{0}) -- (G^{24+40+54}_{2});
  \draw [->] (G^{24+40+54}_{0}) -- (G^{6+40+54}_{0});
  \draw [->] (G^{16+40+54}_{2}) -- (G^{16+18+40}_{0});
\end{tikzpicture}
\end{landscape}

\begin{tikzpicture}[>=latex,line join=bevel,]
\node (G^{14+24+54}_{0}) at (265.5bp,43.5bp) [] {$G^{14,24,54}_{0}$};
  \node (G^{14+16+54}_{2}) at (21.5bp,43.5bp) [] {$G^{14,16,54}_{2}$};
  \node (G^{14+18+24}_{0}) at (143.5bp,2.5bp) [] {$G^{14,18,24}_{0}$};
  \node (G^{14+16+54}_{0}) at (204.5bp,43.5bp) [] {$G^{14,16,54}_{0}$};
  \node (G^{14+18+48}_{0}) at (82.5bp,43.5bp) [] {$G^{14,18,48}_{0}$};
  \node (G^{14+48+54}_{2}) at (82.5bp,84.5bp) [] {$G^{14,48,54}_{2}$};
  \node (G^{14+16+18}_{0}) at (82.5bp,2.5bp) [] {$G^{14,16,18}_{0}$};
  \node (G^{14+24+54}_{2}) at (143.5bp,43.5bp) [] {$G^{14,24,54}_{2}$};
  \node (G^{14+48+54}_{0}) at (204.5bp,84.5bp) [] {$G^{14,48,54}_{0}$};
  \draw [->] (G^{14+48+54}_{0}) -- (G^{14+18+48}_{0});
  \draw [->] (G^{14+48+54}_{2}) -- (G^{14+18+48}_{0});
  \draw [->] (G^{14+18+48}_{0}) -- (G^{14+18+24}_{0});
  \draw [->] (G^{14+48+54}_{0}) -- (G^{14+16+54}_{0});
  \draw [->] (G^{14+48+54}_{2}) -- (G^{14+24+54}_{2});
  \draw [->] (G^{14+24+54}_{2}) -- (G^{14+18+24}_{0});
  \draw [->] (G^{14+18+48}_{0}) -- (G^{14+16+18}_{0});
  \draw [->] (G^{14+16+54}_{0}) -- (G^{14+16+18}_{0});
  \draw [->] (G^{14+48+54}_{2}) -- (G^{14+16+54}_{2});
  \draw [->] (G^{14+48+54}_{0}) -- (G^{14+24+54}_{0});
  \draw [->] (G^{14+16+54}_{2}) -- (G^{14+16+18}_{0});
  \draw [->] (G^{14+24+54}_{0}) -- (G^{14+18+24}_{0});
\end{tikzpicture}

\begin{tikzpicture}[>=latex,line join=bevel,]
\node (G^{16+26+54}_{2}) at (21.5bp,43.5bp) [] {$G^{16,26,54}_{2}$};
  \node (G^{18+24+26}_{0}) at (143.5bp,2.5bp) [] {$G^{18,24,26}_{0}$};
  \node (G^{24+26+54}_{0}) at (265.5bp,43.5bp) [] {$G^{24,26,54}_{0}$};
  \node (G^{24+26+54}_{2}) at (143.5bp,43.5bp) [] {$G^{24,26,54}_{2}$};
  \node (G^{16+26+54}_{0}) at (204.5bp,43.5bp) [] {$G^{16,26,54}_{0}$};
  \node (G^{26+48+54}_{2}) at (204.5bp,84.5bp) [] {$G^{26,48,54}_{2}$};
  \node (G^{26+48+54}_{0}) at (82.5bp,84.5bp) [] {$G^{26,48,54}_{0}$};
  \node (G^{16+18+26}_{0}) at (82.5bp,2.5bp) [] {$G^{16,18,26}_{0}$};
  \node (G^{18+26+48}_{0}) at (82.5bp,43.5bp) [] {$G^{18,26,48}_{0}$};
  \draw [->] (G^{24+26+54}_{0}) -- (G^{18+24+26}_{0});
  \draw [->] (G^{18+26+48}_{0}) -- (G^{16+18+26}_{0});
  \draw [->] (G^{26+48+54}_{0}) -- (G^{24+26+54}_{2});
  \draw [->] (G^{26+48+54}_{0}) -- (G^{18+26+48}_{0});
  \draw [->] (G^{26+48+54}_{0}) -- (G^{16+26+54}_{2});
  \draw [->] (G^{24+26+54}_{2}) -- (G^{18+24+26}_{0});
  \draw [->] (G^{26+48+54}_{2}) -- (G^{16+26+54}_{0});
  \draw [->] (G^{26+48+54}_{2}) -- (G^{18+26+48}_{0});
  \draw [->] (G^{26+48+54}_{2}) -- (G^{24+26+54}_{0});
  \draw [->] (G^{18+26+48}_{0}) -- (G^{18+24+26}_{0});
  \draw [->] (G^{16+26+54}_{0}) -- (G^{16+18+26}_{0});
  \draw [->] (G^{16+26+54}_{2}) -- (G^{16+18+26}_{0});
\end{tikzpicture}

\begin{tikzpicture}[>=latex,line join=bevel,]
\node (G^{24+40+40}_{0}) at (82.5bp,43.5bp) [] {$G^{24,40,40}_{0}$};
  \node (G^{8+40+40}_{0}) at (127.5bp,2.5bp) [] {$G^{8,40,40}_{0}$};
  \node (G^{6+40+40}_{0}) at (51.5bp,2.5bp) [] {$G^{6,40,40}_{0}$};
  \node (G^{18+40+40}_{0}) at (21.5bp,43.5bp) [] {$G^{18,40,40}_{0}$};
  \node (G^{40+40+48}_{0}) at (112.5bp,84.5bp) [] {$G^{40,40,48}_{0}$};
  \node (G^{40+40+54}_{0}) at (21.5bp,84.5bp) [] {$G^{40,40,54}_{0}$};
  \node (G^{16+40+40}_{0}) at (143.5bp,43.5bp) [] {$G^{16,40,40}_{0}$};
  \draw [->] (G^{24+40+40}_{0}) -- (G^{8+40+40}_{0});
  \draw [->] (G^{16+40+40}_{0}) -- (G^{8+40+40}_{0});
  \draw [->] (G^{40+40+48}_{0}) -- (G^{24+40+40}_{0});
  \draw [->] (G^{40+40+54}_{0}) -- (G^{18+40+40}_{0});
  \draw [->] (G^{18+40+40}_{0}) -- (G^{6+40+40}_{0});
  \draw [->] (G^{24+40+40}_{0}) -- (G^{6+40+40}_{0});
  \draw [->] (G^{40+40+48}_{0}) -- (G^{16+40+40}_{0});
\end{tikzpicture}

\begin{tikzpicture}[>=latex,line join=bevel,]
\node (G^{14+48+48}_{1}) at (81.5bp,84.5bp) [] {$G^{14,48,48}_{1}$};
  \node (G^{14+16+24}_{1}) at (82.5bp,2.5bp) [] {$G^{14,16,24}_{1}$};
  \node (G^{14+16+48}_{0}) at (51.5bp,43.5bp) [] {$G^{14,16,48}_{0}$};
  \node (G^{14+24+24}_{1}) at (143.5bp,2.5bp) [] {$G^{14,24,24}_{1}$};
  \node (G^{14+16+16}_{1}) at (21.5bp,2.5bp) [] {$G^{14,16,16}_{1}$};
  \node (G^{14+24+48}_{0}) at (112.5bp,43.5bp) [] {$G^{14,24,48}_{0}$};
  \draw [->] (G^{14+48+48}_{1}) -- (G^{14+24+48}_{0});
  \draw [->] (G^{14+48+48}_{1}) -- (G^{14+24+48}_{0});
  \draw [->] (G^{14+24+48}_{0}) -- (G^{14+24+24}_{1});
  \draw [->] (G^{14+48+48}_{1}) -- (G^{14+16+48}_{0});
  \draw [->] (G^{14+48+48}_{1}) -- (G^{14+16+48}_{0});
  \draw [->] (G^{14+24+48}_{0}) -- (G^{14+16+24}_{1});
  \draw [->] (G^{14+16+48}_{0}) -- (G^{14+16+16}_{1});
  \draw [->] (G^{14+16+48}_{0}) -- (G^{14+16+24}_{1});
\end{tikzpicture}
\begin{tikzpicture}[>=latex,line join=bevel,]
\node (G^{14+48+48}_{0}) at (81.5bp,84.5bp) [] {$G^{14,48,48}_{0}$};
  \node (G^{14+16+24}_{0}) at (82.5bp,2.5bp) [] {$G^{14,16,24}_{0}$};
  \node (G^{14+24+24}_{0}) at (143.5bp,2.5bp) [] {$G^{14,24,24}_{0}$};
  \node (G^{14+16+48}_{1}) at (51.5bp,43.5bp) [] {$G^{14,16,48}_{1}$};
  \node (G^{14+24+48}_{1}) at (112.5bp,43.5bp) [] {$G^{14,24,48}_{1}$};
  \node (G^{14+16+16}_{0}) at (21.5bp,2.5bp) [] {$G^{14,16,16}_{0}$};
  \draw [->] (G^{14+16+48}_{1}) -- (G^{14+16+16}_{0});
  \draw [->] (G^{14+24+48}_{1}) -- (G^{14+24+24}_{0});
  \draw [->] (G^{14+48+48}_{0}) -- (G^{14+16+48}_{1});
  \draw [->] (G^{14+48+48}_{0}) -- (G^{14+16+48}_{1});
  \draw [->] (G^{14+24+48}_{1}) -- (G^{14+16+24}_{0});
  \draw [->] (G^{14+16+48}_{1}) -- (G^{14+16+24}_{0});
  \draw [->] (G^{14+48+48}_{0}) -- (G^{14+24+48}_{1});
  \draw [->] (G^{14+48+48}_{0}) -- (G^{14+24+48}_{1});
\end{tikzpicture}
\begin{tikzpicture}[>=latex,line join=bevel,]
\node (G^{16+24+26}_{1}) at (82.5bp,2.5bp) [] {$G^{16,24,26}_{1}$};
  \node (G^{16+26+48}_{0}) at (51.5bp,43.5bp) [] {$G^{16,26,48}_{0}$};
  \node (G^{16+16+26}_{1}) at (21.5bp,2.5bp) [] {$G^{16,16,26}_{1}$};
  \node (G^{24+24+26}_{1}) at (143.5bp,2.5bp) [] {$G^{24,24,26}_{1}$};
  \node (G^{24+26+48}_{0}) at (112.5bp,43.5bp) [] {$G^{24,26,48}_{0}$};
  \node (G^{26+48+48}_{1}) at (81.5bp,84.5bp) [] {$G^{26,48,48}_{1}$};
  \draw [->] (G^{26+48+48}_{1}) -- (G^{16+26+48}_{0});
  \draw [->] (G^{26+48+48}_{1}) -- (G^{16+26+48}_{0});
  \draw [->] (G^{26+48+48}_{1}) -- (G^{24+26+48}_{0});
  \draw [->] (G^{26+48+48}_{1}) -- (G^{24+26+48}_{0});
  \draw [->] (G^{24+26+48}_{0}) -- (G^{24+24+26}_{1});
  \draw [->] (G^{16+26+48}_{0}) -- (G^{16+24+26}_{1});
  \draw [->] (G^{24+26+48}_{0}) -- (G^{16+24+26}_{1});
  \draw [->] (G^{16+26+48}_{0}) -- (G^{16+16+26}_{1});
\end{tikzpicture}
\begin{tikzpicture}[>=latex,line join=bevel,]
\node (G^{24+24+26}_{0}) at (143.5bp,2.5bp) [] {$G^{24,24,26}_{0}$};
  \node (G^{16+24+26}_{0}) at (82.5bp,2.5bp) [] {$G^{16,24,26}_{0}$};
  \node (G^{16+26+48}_{1}) at (51.5bp,43.5bp) [] {$G^{16,26,48}_{1}$};
  \node (G^{26+48+48}_{0}) at (81.5bp,84.5bp) [] {$G^{26,48,48}_{0}$};
  \node (G^{16+16+26}_{0}) at (21.5bp,2.5bp) [] {$G^{16,16,26}_{0}$};
  \node (G^{24+26+48}_{1}) at (112.5bp,43.5bp) [] {$G^{24,26,48}_{1}$};
  \draw [->] (G^{24+26+48}_{1}) -- (G^{24+24+26}_{0});
  \draw [->] (G^{26+48+48}_{0}) -- (G^{24+26+48}_{1});
  \draw [->] (G^{26+48+48}_{0}) -- (G^{24+26+48}_{1});
  \draw [->] (G^{16+26+48}_{1}) -- (G^{16+24+26}_{0});
  \draw [->] (G^{16+26+48}_{1}) -- (G^{16+16+26}_{0});
  \draw [->] (G^{24+26+48}_{1}) -- (G^{16+24+26}_{0});
  \draw [->] (G^{26+48+48}_{0}) -- (G^{16+26+48}_{1});
  \draw [->] (G^{26+48+48}_{0}) -- (G^{16+26+48}_{1});
\end{tikzpicture}

\begin{tikzpicture}[>=latex,line join=bevel,]
\node (G^{14+18+18}_{0}) at (74.5bp,2.5bp) [] {$G^{14,18,18}_{0}$};
  \node (G^{14+54+54}_{8}) at (21.5bp,84.5bp) [] {$G^{14,54,54}_{8}$};
  \node (G^{14+54+54}_{0}) at (82.5bp,84.5bp) [] {$G^{14,54,54}_{0}$};
  \node (G^{14+54+54}_{2}) at (143.5bp,84.5bp) [] {$G^{14,54,54}_{2}$};
  \node (G^{14+18+54}_{2}) at (112.5bp,43.5bp) [] {$G^{14,18,54}_{2}$};
  \node (G^{14+18+54}_{0}) at (36.5bp,43.5bp) [] {$G^{14,18,54}_{0}$};
  \draw [->] (G^{14+54+54}_{8}) -- (G^{14+18+54}_{0});
  \draw [->] (G^{14+54+54}_{8}) -- (G^{14+18+54}_{0});
  \draw [->] (G^{14+54+54}_{2}) -- (G^{14+18+54}_{2});
  \draw [->] (G^{14+54+54}_{2}) -- (G^{14+18+54}_{2});
  \draw [->] (G^{14+54+54}_{0}) -- (G^{14+18+54}_{0});
  \draw [->] (G^{14+18+54}_{0}) -- (G^{14+18+18}_{0});
  \draw [->] (G^{14+18+54}_{2}) -- (G^{14+18+18}_{0});
  \draw [->] (G^{14+54+54}_{0}) -- (G^{14+18+54}_{2});
\end{tikzpicture}
\begin{tikzpicture}[>=latex,line join=bevel,]
\node (G^{18+26+54}_{0}) at (36.5bp,43.5bp) [] {$G^{18,26,54}_{0}$};
  \node (G^{18+18+26}_{0}) at (74.5bp,2.5bp) [] {$G^{18,18,26}_{0}$};
  \node (G^{26+54+54}_{2}) at (21.5bp,84.5bp) [] {$G^{26,54,54}_{2}$};
  \node (G^{26+54+54}_{8}) at (143.5bp,84.5bp) [] {$G^{26,54,54}_{8}$};
  \node (G^{26+54+54}_{0}) at (82.5bp,84.5bp) [] {$G^{26,54,54}_{0}$};
  \node (G^{18+26+54}_{2}) at (112.5bp,43.5bp) [] {$G^{18,26,54}_{2}$};
  \draw [->] (G^{26+54+54}_{8}) -- (G^{18+26+54}_{2});
  \draw [->] (G^{26+54+54}_{8}) -- (G^{18+26+54}_{2});
  \draw [->] (G^{26+54+54}_{2}) -- (G^{18+26+54}_{0});
  \draw [->] (G^{26+54+54}_{2}) -- (G^{18+26+54}_{0});
  \draw [->] (G^{18+26+54}_{2}) -- (G^{18+18+26}_{0});
  \draw [->] (G^{18+26+54}_{0}) -- (G^{18+18+26}_{0});
  \draw [->] (G^{26+54+54}_{0}) -- (G^{18+26+54}_{2});
  \draw [->] (G^{26+54+54}_{0}) -- (G^{18+26+54}_{0});
\end{tikzpicture}

\begin{tikzpicture}[>=latex,line join=bevel,]
\node (G^{14+14+16}_{1}) at (82.5bp,2.5bp) [] {$G^{14,14,16}_{1}$};
  \node (G^{14+14+48}_{0}) at (51.5bp,43.5bp) [] {$G^{14,14,48}_{0}$};
  \node (G^{14+14+24}_{1}) at (21.5bp,2.5bp) [] {$G^{14,14,24}_{1}$};
  \draw [->] (G^{14+14+48}_{0}) -- (G^{14+14+16}_{1});
  \draw [->] (G^{14+14+48}_{0}) -- (G^{14+14+24}_{1});
\end{tikzpicture}

\begin{tikzpicture}[>=latex,line join=bevel,]
\node (G^{14+14+24}_{0}) at (21.5bp,2.5bp) [] {$G^{14,14,24}_{0}$};
  \node (G^{14+14+16}_{0}) at (82.5bp,2.5bp) [] {$G^{14,14,16}_{0}$};
  \node (G^{14+14+48}_{1}) at (51.5bp,43.5bp) [] {$G^{14,14,48}_{1}$};
  \draw [->] (G^{14+14+48}_{1}) -- (G^{14+14+16}_{0});
  \draw [->] (G^{14+14+48}_{1}) -- (G^{14+14+24}_{0});
\end{tikzpicture}

\begin{tikzpicture}[>=latex,line join=bevel,]
\node (G^{14+14+48}_{4}) at (51.5bp,43.5bp) [] {$G^{14,14,48}_{4}$};
  \node (G^{14+14+24}_{5}) at (21.5bp,2.5bp) [] {$G^{14,14,24}_{5}$};
  \node (G^{14+14+16}_{5}) at (82.5bp,2.5bp) [] {$G^{14,14,16}_{5}$};
  \draw [->] (G^{14+14+48}_{4}) -- (G^{14+14+16}_{5});
  \draw [->] (G^{14+14+48}_{4}) -- (G^{14+14+24}_{5});
\end{tikzpicture}

\begin{tikzpicture}[>=latex,line join=bevel,]
\node (G^{14+14+48}_{5}) at (51.5bp,43.5bp) [] {$G^{14,14,48}_{5}$};
  \node (G^{14+14+24}_{4}) at (21.5bp,2.5bp) [] {$G^{14,14,24}_{4}$};
  \node (G^{14+14+16}_{4}) at (82.5bp,2.5bp) [] {$G^{14,14,16}_{4}$};
  \draw [->] (G^{14+14+48}_{5}) -- (G^{14+14+16}_{4});
  \draw [->] (G^{14+14+48}_{5}) -- (G^{14+14+24}_{4});
\end{tikzpicture}

\begin{tikzpicture}[>=latex,line join=bevel,]
\node (G^{14+26+48}_{0}) at (51.5bp,43.5bp) [] {$G^{14,26,48}_{0}$};
  \node (G^{14+24+26}_{3}) at (21.5bp,2.5bp) [] {$G^{14,24,26}_{3}$};
  \node (G^{14+16+26}_{3}) at (82.5bp,2.5bp) [] {$G^{14,16,26}_{3}$};
  \draw [->] (G^{14+26+48}_{0}) -- (G^{14+24+26}_{3});
  \draw [->] (G^{14+26+48}_{0}) -- (G^{14+16+26}_{3});
\end{tikzpicture}

\begin{tikzpicture}[>=latex,line join=bevel,]
\node (G^{14+16+26}_{7}) at (82.5bp,2.5bp) [] {$G^{14,16,26}_{7}$};
  \node (G^{14+26+48}_{1}) at (51.5bp,43.5bp) [] {$G^{14,26,48}_{1}$};
  \node (G^{14+24+26}_{7}) at (21.5bp,2.5bp) [] {$G^{14,24,26}_{7}$};
  \draw [->] (G^{14+26+48}_{1}) -- (G^{14+24+26}_{7});
  \draw [->] (G^{14+26+48}_{1}) -- (G^{14+16+26}_{7});
\end{tikzpicture}

\begin{tikzpicture}[>=latex,line join=bevel,]
\node (G^{14+16+26}_{0}) at (82.5bp,2.5bp) [] {$G^{14,16,26}_{0}$};
  \node (G^{14+26+48}_{4}) at (51.5bp,43.5bp) [] {$G^{14,26,48}_{4}$};
  \node (G^{14+24+26}_{0}) at (21.5bp,2.5bp) [] {$G^{14,24,26}_{0}$};
  \draw [->] (G^{14+26+48}_{4}) -- (G^{14+16+26}_{0});
  \draw [->] (G^{14+26+48}_{4}) -- (G^{14+24+26}_{0});
\end{tikzpicture}

\begin{tikzpicture}[>=latex,line join=bevel,]
\node (G^{14+24+26}_{1}) at (21.5bp,2.5bp) [] {$G^{14,24,26}_{1}$};
  \node (G^{14+16+26}_{1}) at (82.5bp,2.5bp) [] {$G^{14,16,26}_{1}$};
  \node (G^{14+26+48}_{5}) at (51.5bp,43.5bp) [] {$G^{14,26,48}_{5}$};
  \draw [->] (G^{14+26+48}_{5}) -- (G^{14+16+26}_{1});
  \draw [->] (G^{14+26+48}_{5}) -- (G^{14+24+26}_{1});
\end{tikzpicture}

\begin{tikzpicture}[>=latex,line join=bevel,]
\node (G^{26+26+48}_{0}) at (51.5bp,43.5bp) [] {$G^{26,26,48}_{0}$};
  \node (G^{16+26+26}_{3}) at (82.5bp,2.5bp) [] {$G^{16,26,26}_{3}$};
  \node (G^{24+26+26}_{3}) at (21.5bp,2.5bp) [] {$G^{24,26,26}_{3}$};
  \draw [->] (G^{26+26+48}_{0}) -- (G^{24+26+26}_{3});
  \draw [->] (G^{26+26+48}_{0}) -- (G^{16+26+26}_{3});
\end{tikzpicture}

\begin{tikzpicture}[>=latex,line join=bevel,]
\node (G^{16+26+26}_{0}) at (82.5bp,2.5bp) [] {$G^{16,26,26}_{0}$};
  \node (G^{24+26+26}_{0}) at (21.5bp,2.5bp) [] {$G^{24,26,26}_{0}$};
  \node (G^{26+26+48}_{1}) at (51.5bp,43.5bp) [] {$G^{26,26,48}_{1}$};
  \draw [->] (G^{26+26+48}_{1}) -- (G^{24+26+26}_{0});
  \draw [->] (G^{26+26+48}_{1}) -- (G^{16+26+26}_{0});
\end{tikzpicture}

\begin{tikzpicture}[>=latex,line join=bevel,]
\node (G^{24+26+26}_{5}) at (21.5bp,2.5bp) [] {$G^{24,26,26}_{5}$};
  \node (G^{16+26+26}_{5}) at (82.5bp,2.5bp) [] {$G^{16,26,26}_{5}$};
  \node (G^{26+26+48}_{4}) at (51.5bp,43.5bp) [] {$G^{26,26,48}_{4}$};
  \draw [->] (G^{26+26+48}_{4}) -- (G^{16+26+26}_{5});
  \draw [->] (G^{26+26+48}_{4}) -- (G^{24+26+26}_{5});
\end{tikzpicture}

\begin{tikzpicture}[>=latex,line join=bevel,]
\node (G^{16+26+26}_{1}) at (82.5bp,2.5bp) [] {$G^{16,26,26}_{1}$};
  \node (G^{26+26+48}_{5}) at (51.5bp,43.5bp) [] {$G^{26,26,48}_{5}$};
  \node (G^{24+26+26}_{1}) at (21.5bp,2.5bp) [] {$G^{24,26,26}_{1}$};
  \draw [->] (G^{26+26+48}_{5}) -- (G^{16+26+26}_{1});
  \draw [->] (G^{26+26+48}_{5}) -- (G^{24+26+26}_{1});
\end{tikzpicture}

\begin{tikzpicture}[>=latex,line join=bevel,]
\node (G^{14+16+40}_{0}) at (82.5bp,2.5bp) [] {$G^{14,16,40}_{0}$};
  \node (G^{14+40+48}_{0}) at (51.5bp,43.5bp) [] {$G^{14,40,48}_{0}$};
  \node (G^{14+24+40}_{0}) at (21.5bp,2.5bp) [] {$G^{14,24,40}_{0}$};
  \draw [->] (G^{14+40+48}_{0}) -- (G^{14+16+40}_{0});
  \draw [->] (G^{14+40+48}_{0}) -- (G^{14+24+40}_{0});
\end{tikzpicture}

\begin{tikzpicture}[>=latex,line join=bevel,]
\node (G^{24+26+40}_{0}) at (21.5bp,2.5bp) [] {$G^{24,26,40}_{0}$};
  \node (G^{26+40+48}_{0}) at (51.5bp,43.5bp) [] {$G^{26,40,48}_{0}$};
  \node (G^{16+26+40}_{0}) at (82.5bp,2.5bp) [] {$G^{16,26,40}_{0}$};
  \draw [->] (G^{26+40+48}_{0}) -- (G^{24+26+40}_{0});
  \draw [->] (G^{26+40+48}_{0}) -- (G^{16+26+40}_{0});
\end{tikzpicture}

\begin{tikzpicture}[>=latex,line join=bevel,]
\node (G^{14+18+26}_{3}) at (51.5bp,2.5bp) [] {$G^{14,18,26}_{3}$};
  \node (G^{14+26+54}_{0}) at (21.5bp,43.5bp) [] {$G^{14,26,54}_{0}$};
  \node (G^{14+26+54}_{2}) at (82.5bp,43.5bp) [] {$G^{14,26,54}_{2}$};
  \draw [->] (G^{14+26+54}_{2}) -- (G^{14+18+26}_{3});
  \draw [->] (G^{14+26+54}_{0}) -- (G^{14+18+26}_{3});
\end{tikzpicture}

\begin{tikzpicture}[>=latex,line join=bevel,]
\node (G^{14+26+54}_{4}) at (21.5bp,43.5bp) [] {$G^{14,26,54}_{4}$};
  \node (G^{14+26+54}_{6}) at (82.5bp,43.5bp) [] {$G^{14,26,54}_{6}$};
  \node (G^{14+18+26}_{0}) at (51.5bp,2.5bp) [] {$G^{14,18,26}_{0}$};
  \draw [->] (G^{14+26+54}_{4}) -- (G^{14+18+26}_{0});
  \draw [->] (G^{14+26+54}_{6}) -- (G^{14+18+26}_{0});
\end{tikzpicture}

\begin{tikzpicture}[>=latex,line join=bevel,]
\node (G^{14+18+40}_{0}) at (51.5bp,2.5bp) [] {$G^{14,18,40}_{0}$};
  \node (G^{14+40+54}_{0}) at (21.5bp,43.5bp) [] {$G^{14,40,54}_{0}$};
  \node (G^{14+40+54}_{2}) at (82.5bp,43.5bp) [] {$G^{14,40,54}_{2}$};
  \draw [->] (G^{14+40+54}_{0}) -- (G^{14+18+40}_{0});
  \draw [->] (G^{14+40+54}_{2}) -- (G^{14+18+40}_{0});
\end{tikzpicture}

\begin{tikzpicture}[>=latex,line join=bevel,]
\node (G^{26+40+54}_{0}) at (21.5bp,43.5bp) [] {$G^{26,40,54}_{0}$};
  \node (G^{18+26+40}_{0}) at (51.5bp,2.5bp) [] {$G^{18,26,40}_{0}$};
  \node (G^{26+40+54}_{2}) at (82.5bp,43.5bp) [] {$G^{26,40,54}_{2}$};
  \draw [->] (G^{26+40+54}_{0}) -- (G^{18+26+40}_{0});
  \draw [->] (G^{26+40+54}_{2}) -- (G^{18+26+40}_{0});
\end{tikzpicture}

\begin{tikzpicture}[>=latex,line join=bevel,]
\node (G^{14+14+54}_{0}) at (21.5bp,43.5bp) [] {$G^{14,14,54}_{0}$};
  \node (G^{14+14+18}_{0}) at (21.5bp,2.5bp) [] {$G^{14,14,18}_{0}$};
  \draw [->] (G^{14+14+54}_{0}) -- (G^{14+14+18}_{0});
\end{tikzpicture}

\begin{tikzpicture}[>=latex,line join=bevel,]
\node (G^{14+14+18}_{4}) at (21.5bp,2.5bp) [] {$G^{14,14,18}_{4}$};
  \node (G^{14+14+54}_{4}) at (21.5bp,43.5bp) [] {$G^{14,14,54}_{4}$};
  \draw [->] (G^{14+14+54}_{4}) -- (G^{14+14+18}_{4});
\end{tikzpicture}

\begin{tikzpicture}[>=latex,line join=bevel,]
\node (G^{26+26+54}_{0}) at (21.5bp,43.5bp) [] {$G^{26,26,54}_{0}$};
  \node (G^{18+26+26}_{0}) at (21.5bp,2.5bp) [] {$G^{18,26,26}_{0}$};
  \draw [->] (G^{26+26+54}_{0}) -- (G^{18+26+26}_{0});
\end{tikzpicture}

\begin{tikzpicture}[>=latex,line join=bevel,]
\node (G^{26+26+54}_{4}) at (21.5bp,43.5bp) [] {$G^{26,26,54}_{4}$};
  \node (G^{18+26+26}_{1}) at (21.5bp,2.5bp) [] {$G^{18,26,26}_{1}$};
  \draw [->] (G^{26+26+54}_{4}) -- (G^{18+26+26}_{1});
\end{tikzpicture}

\bibliographystyle{amsalpha}
\bibliography{biblio}

\end{document}